\date{}
\providecommand{\U}[1]{\protect\rule{.1in}{.1in}}
\theoremstyle{plain}
\newtheorem{corollary}{Corollary}
\newtheorem{lemma}{Lemma}
\newtheorem{proposition}{Proposition}
\newtheorem{remark}{Remark}
\newtheorem{theorem}{Theorem}
\numberwithin{equation}{section}
\def\func#1{\mathop{\rm #1}\nolimits}
\title{\bf Interpolation of Fredholm operators}
\author{I.~Asekritova, N.~Kruglyak and M.~Masty{\l}o}
\begin{document}
\maketitle
\renewcommand{\thefootnote}{\fnsymbol{footnote}}
\footnotetext{2010 \emph{Mathematics Subject Classification}:
Primary 46B70, 47A53.} \footnotetext {\emph{Key words and phrases}: Interpolation functor, real
interpolation method, Fredholm operators.}

\footnotetext{The third named author was supported by the
Foundation for Polish Science (FNP).}

\maketitle

\begin{flushright}
\emph{To Yuri Brudnyi on his 80th birthday}
\end{flushright}

\vspace{5 mm}

\begin{abstract}
\noindent We prove novel results on interpolation of Fredholm
operators including an abstract factorization theorem. The main
result of this paper provides sufficient conditions on the
parameters $\theta \in (0,1)$ and $q\in \lbrack 1,\infty ]$ under
which an operator $A$ is a~Fredholm operator from the real
interpolation space $(X_{0},X_{1})_{\theta ,q}$ to
$(Y_{0},Y_{1})_{\theta ,q} $ for a~given operator $A\colon
(X_{0},X_{1})\rightarrow (Y_{0},Y_{1})$ between compatible pairs
of Banach spaces such that its restrictions to the endpoint spaces
are Fredholm operators. These conditions are expressed in terms of
the corresponding indices generated by the $K$-functional of
elements from the kernel of the operator $A$ in the interpolation sum $%
X_{0}+X_{1}$.~If in addition $q\in \lbrack 1,\infty )$ and $A$ is
invertible operator on endpoint spaces, then these conditions are
also necessary.~We apply these results to obtain and present an
affirmative solution of the famous Lions-Magenes problem on the
real interpolation of closed subspaces. We also discuss some
applications to the spectral theory of operators as well as to
perturbation of the Hardy operator by identity on weighted
$L_{p}$-spaces.
\end{abstract}

\maketitle

\vspace{5 mm}

\section{Introduction}

Interpolation techniques that have been developed in recent years
are very powerful in studying various topics in Banach space as
well as operator space theory.~From the point of view of the
theory of operators it is useful to identify important properties
of operators which are stable for various interpolation
methods.~There are many properties of operators between Banach
spaces that are stable for the complex as well as for the real
method of interpolation.

In this paper we investigate the stability of Fredholm properties
on interpolation spaces. We notice that the class of Fredholm
operators is very important for several areas of mathematics,
including spectral theory of operators on Banach spaces and PDE's
(see, e.g., \cite{Ai, Vo}).

One of the fundamental questions in modern analysis is to
determine whether, for a~given Banach space operator $A\colon X\to
Y$, the range of $A$ is a~closed subspace of $Y$. For example, the
famous Fredholm alternative is based on the fact that if $X=Y$ and
$A = I-K$, where $K\colon X\to X$ is a~compact operator, then $A$
is a~Fredholm operator and so the range of $A$ is closed. In the
interpolation theory of operators this question leads to the
following challenging and still open problem posed by
J.-\,L.~Lions, whether the interpolation of closed subspaces of
a~compatible couple of Banach spaces leads to a~closed subspace of
an interpolation space? We note that from the remarkable result of
M.~Cwikel \cite{Cw} on one-sided interpolation of the compactness
property it follows that the Fredholm alternative is stable for
the real method of interpolation. This motivates the investigation
of stability of Fredholm properties on interpolation spaces. In
this paper we focus on real interpolation spaces, since for the
complex method even interpolation of the compactness property is
still unsolved despite considerable efforts (see, e.g., \cite{CK}
and \cite{CKM}).

It should be noted that there is no (even two-sided) interpolation
variant of Cwikel's result for Fredholm operators between real
interpolation spaces. In fact, we will see in Section $8$ that the
situation in the case of Fredholm operators even for couples of
Hilbert spaces is much more complicated than for compact
operators.

Generally notation will be introduced when needed. Nevertheless,
in this section we remind the reader of some standard notation and
state the conventions and definitions that will be used throughout
the paper.

Let $M$ and $N$ be linear subspaces of a~vector space $Y,$ as
usual $M+N$ will denote the sum of $M$ and $N$. If $M\cap N=\{0\}$
and $Y=M+N$, then $Y$ is called
the direct sum of $M$ and $N$; for this case we will use the notation $%
Y=M\oplus N$ and the space $N$ will be called an algebraic
complement of $M$. If $Y$ is a~normed space and $M$, $N$ are
closed subspaces of $Y$ such that $Y=M\oplus N$, then $Y$ is the
topological direct sum of $M$ and $N$ and $N$ will be called
a~topological complement of $M$. In this paper we will make
continual use of the fact that in a~Banach space every finite
dimensional subspace has a topological complement.

All operators between Banach spaces are assumed to be continuous and
linear.~Let $T\colon X\to Y$ be an operator between Banach spaces.~We say
that $T$ is \emph{upper semi-Fredholm} if the kernel $\ker _{X}T$ is finite
dimensional and the range $T(X)$ is closed. The operator $T\colon X\to Y$ is
called to be \emph{lower semi-Fredholm} if $T$ has finite codimensional
image (i.e., $\mathrm{codim}\,T:= \func{dim}Y/T(X)<\infty $). If $T$ is
upper and lower semi-Fredholm, then $T$ is called Fredholm operator; in this
case the integer $\mathrm{ind}(T)=\mathrm{ind}(T\colon X\to Y):=\mathrm{dim}(%
\mathrm{ker}\,T)-\mathrm{codim}\,T$ is called the \emph{index} of $T$.

We will often use that if $Y=T(X)\oplus N$, where $T\in L(X,Y)$
and $N$ is a~closed subspace of $Y$, then $T(X)$ is closed in $Y$
(see, e.g., \cite[Theorem 2.16, p.~76]{AB}).

For basic results and notation from interpolation theory we refer to \cite%
{BL} and \cite{BK}. We recall that a mapping ${\mathcal{F}}$ from the
category of compatible couples of Banach spaces into the category of Banach
spaces is said to be an \emph{interpolation functor} if, for any couple $%
\vec{X}=(X_{0},X_{1})$, ${\mathcal{F}}(\vec{X})$ is a Banach space
intermediate with respect to $\vec{X}$ (i.e., $X_{0}\cap X_{1}\subset {%
\mathcal{F}}(\vec{X})\subset X_{0}+X_{1}$), and $A\colon {\mathcal{F}}(\vec{X%
})\rightarrow {\mathcal{F}}(\vec{Y})$ for all Banach couples $\vec{X}$, $%
\vec{Y}$ and every $A\colon \vec{X}\rightarrow \vec{Y}$. Here as usual we
use the notation $A\colon \vec{X}\rightarrow \vec{Y}$ to mean that $A\colon
X_{0}+X_{1}\rightarrow Y_{0}+Y_{1}$ is a~linear map such that the
restrictions $A|_{X_{j}}\in L(X_{j},Y_{j})$ $(j=0,1)$. We let $L(\vec{X},%
\vec{Y})$ denote the Banach space of all operators $A\colon \vec{X}%
\rightarrow \vec{Y}$, equipped with the norm $\Vert A\Vert _{\vec{X}%
\rightarrow \vec{Y}}:=\max_{j=0,1}\,\Vert A|_{X_{j}}\Vert _{X_{j}\rightarrow
Y_{j}}$.

Suppose we are given an operator $A\colon (X_{0},X_{1})\rightarrow
(Y_{0},Y_{1})$. We say that $A$ is \emph{invertible} (resp., \emph{%
surjective, Fredholm, lower semi-Fredholm}, \emph{upper semi-Fredholm}) if
the restrictions $A|_{X_{0}}\colon X_{0}\rightarrow Y_{0}$ and $%
A|_{X_{1}}\colon X_{1}\rightarrow Y_{1}$ on endpoint spaces are
invertible (resp., surjective, Fredholm, lower semi-Fredholm,
upper semi-Fredholm). It will cause no confusion to write for
short $A\colon (X_{0},X_{1})\rightarrow (Y_{0},Y_{1})$ is
invertible (resp., surjective, Fredholm, lower semi-Fredholm,
upper semi-Fredholm).

In the present paper we work with the \emph{real method} of
interpolation. We recall that for any Banach couple $\vec
{X}=(X_0,X_1)$ and $t>0$ the \emph{Peetre $K$-functional} is
defined by:
\begin{equation*}
K(t,x;\vec {X})=K(t,x; X_0, X_1):= \func{inf}_{x=x_0+x_1}\{\|x_0\|_{X_0}+t%
\|x_1\|_{X_1}\},\quad \,x\in X_{0}+X_{1}.
\end{equation*}
If $0<\theta <1$ and $1\leq q\leq \infty $, then the real interpolation
space $\vec {X}_{\theta ,q}$ consists of all $x\in X_0+X_1$ such that
\begin{equation*}
\|x\|_{\vec {X}_{\theta ,q}}=\bigg (\int _{0}^{\infty
}\big(t^{-\theta }K(t,x; \vec {X})\big)^{q}\frac{dt}{t}\bigg
)^{1/q}<\infty
\end{equation*}
with obvious modification for $q=\infty $. We will use the well
known fact that the space $\vec {X}_{\theta ,q}$ (with equivalent
norm) can be defined in terms of $J$-\emph{functional} given by
\begin{equation*}
J(t,x;\vec {X})=J(t,x;X_0,X_1):=\max \{\|x\|_{X_0},t\|x\|_{X_1}\},\quad
\,x\in X_{0}\cap X_{1}.
\end{equation*}
Specifically, $\vec {X}_{\theta ,q}$ consists of all $x\in X_0+X_1$
representable in the form
\begin{equation*}
x=\sum _{k=-\infty }^{\infty }x_k\,\quad \,{\text {(convergence in $X_0+X_1)$%
}}
\end{equation*}
with the norm
\begin{equation*}
\|x\|= \func{inf}\bigg (\sum _{k=-\infty }^{\infty }\big(2^{-k\theta }J(2^k,x_k;\vec {X}%
)\big)^{q}\bigg)^{1/q}.
\end{equation*}


\section{Statement of the main results}

This section comprises our main results on interpolation of Fredholm operators.
We~note that the problem of invertibility of operators between interpolation
spaces is studied in the paper \cite{AK1}; the main result proved in this paper
gives a~description of parameters $\theta \in (0,1)$ and $q\in
\lbrack 0,\infty ]$ for which $A\colon (X_{0},X_{1})_{\theta
,q}\rightarrow (Y_{0},Y_{1})_{\theta ,q}$ is invertible provided
$A\colon (X_{0},X_{1})\rightarrow (Y_{0},Y_{1})$ is invertible on
endpoint spaces.

The main aim of this paper is to study interpolation of Fredholm operators.
Throughout we usually try to present results in as general form as possible.
We are concerned in the following~challenging problem: \emph{Given Banach couples $%
\vec{X}=(X_{0},X_{1})$ and $\vec{Y}=(Y_{0},Y_{1})$ and an operator
$A\colon (X_{0},X_{1})\rightarrow (Y_{0},Y_{1})$, which is a Fredholm operator
on endpoint spaces. Find necessary and sufficient conditions on parameters
$\theta \in (0,1)$, $q\in \lbrack 1,\infty ]$ such that the operator $A\colon
(X_{0},X_{1})_{\theta ,q}\rightarrow (Y_{0},Y_{1})_{\theta ,q}$ is
Fredholm.}

The study of the stability of Fredholm properties on complex
interpolation scales was initiated by \v{S}ne\u{\i}berg
\cite{Sh} and continued, e.g., in \cite{CSS}, \cite{CS} and
\cite{KaM}. We wish to mention here that Herrero \cite{He},
extending the result due to Barnes \cite{Barnes} for ordered
couples of $L_{p}$-spaces, proved the following stability result
on interpolation of Fredholm operators in the case of ordered
regular Banach couple $(X_{0},X_{1})$, i.e., $X_{0}\hookrightarrow
X_{1}$ with $X_{0}$ dense in $X_{1}$, which states that if
$T\colon (X_{0},X_{1})\rightarrow (X_{0},X_{1})$ is a Fredholm
operator with $\mathrm{ind}(T\colon
X_{0}\rightarrow X_{0})=\mathrm{ind}(T\colon X_{1}\rightarrow X_{1})$, then $%
T$ is also Fredholm on any interpolation space $X$ between $X_{0}$ and $%
X_{1} $ and $\mathrm{ind}(T\colon X\rightarrow
X)=\mathrm{ind}(T\colon X_{0}\rightarrow X_{0})$. However, these
assumptions are too restrictive. We would like to remark that even
for an operator which corresponds to the integral equation of the
second kind on weighted $L^{2}$-spaces the situation is complicated;
it is shown in the last section of the paper that there exists an
operator $A$ which is invertible on endpoint spaces $L^{2}(\omega _{0})$, $%
L^{2}(\omega _{1})$, however for a~given $\theta \in (0,1)$ the operator $A$
restricted to the space $\left( L^{2}(\omega _{0}),L^{2}(\omega _{1})\right)
_{\theta ,2}=L^{2}(\omega _{0}^{1-\theta }\omega _{1}^{\theta })$ is either
Fredholm or not Fredholm.

Thus it is of interest to have information how the property of being
a~Fredholm operator varies with parameters of a~given interpolation
method. It should be pointed out that in the current literature there are no general
abstract results  related to this problem.

Let us now summarize some of the main results of the paper. In Section 3,
we show that the problem on interpolation of Fredholm operators between Banach couples
can be reduced to surjective operators on corresponding Banach couples. More precisely,
we show (see Lemma \ref{reduction}) that if an operator $A\colon
(X_{0},X_{1})\rightarrow (Y_{0},Y_{1})$ between Banach couples is
Fredholm,
then it is possible to construct in natural way a Banach couple $(\widetilde{%
X}_{0},\widetilde{X}_{1})$ and an operator $\tilde{A}\colon (\widetilde{X}%
_{0},\widetilde{X}_{1})\rightarrow (Y_{0},Y_{1})$ such that $\tilde{A}\colon
\widetilde{X}_{i}\rightarrow Y_{i}$ is surjective for $i=0,1$. Moreover
operator $A\colon {\mathcal{F}}(X_{0},X_{1})\rightarrow {\mathcal{F}}%
(Y_{0},Y_{1})$ is Fredholm operator if and only if $\tilde{A}\colon {%
\mathcal{F}}(\widetilde{X}_{0},\widetilde{X}_{1})\rightarrow {\mathcal{F}}%
(Y_{0},Y_{1})$ is Fredholm operator. Thus for simplicity of
presentation we assume that $A\colon (X_{0},X_{1})\rightarrow
(Y_{0},Y_{1})$ is a~surjective Fredholm operator.

Furthermore, we provide sufficient conditions on parameters
$\theta $, $q$ under which the operator $A\colon
(X_{0},X_{1})_{\theta ,q}\rightarrow (Y_{0},Y_{1})_{\theta ,q}$ is
Fredholm. We also prove that if $q\in \lbrack 1,\infty )$, then
these conditions are necessary in the case when $A\colon
(X_{0},X_{1})\rightarrow (Y_{0},Y_{1})$ is invertible on endpoint
spaces.

Let us fix parameters $\theta \in (0,1)$ and $q\in \left[1,\infty
\right]$. In the description of the main results the key role is played by
the kernel of the operator $A\colon X_{0}+X_{1}\rightarrow
Y_{0}+Y_{1}$,
\begin{align*}
{\text{ker}}_{X_{0}+X_{1}}A=\left\{ x\in
X_{0}+X_{1};\,Ax=0\right\},
\end{align*}%
and its two linear subspaces  defined as
\begin{align}
V_{\theta ,q}^{0}& :=\left\{ x\in \ker
_{X_{0}+X_{1}}A;\,\,\int_{0}^{1} \big(t^{-\theta}
K(t,x; X_{0},X_{1})\big)^{q}\frac{dt}{t}<\infty \right\},  \label{L2} \\
V_{\theta ,q}^{1}& :=\left\{ x\in \ker _{X_{0}+X_{1}}A;\,\,\int_{1}^{\infty
}\big(t^{-\theta} K(t,x; X_{0},X_{1})\big)^{q}\frac{dt}{t}%
<\infty \right\}, \label{L3}
\end{align}%
with obvious modification for $q=\infty $.~Note that the kernel of
$A$ in the space $(X_{0},X_{1})_{\theta ,q}$ is equal to
$V_{\theta ,q}^{0}\cap V_{\theta ,q}^{1}$ and moreover it is
finite-dimensional provided $A\colon (X_{0},X_{1})_{\theta
,q}\rightarrow (Y_{0},Y_{1})_{\theta ,q}$ is a~Fredholm operator.

In Section 3 we prove that if $A\colon (X_{0},X_{1})\rightarrow
(Y_{0},Y_{1})$ is a~Fredholm operator such that
\begin{align*}
\ker_{X_{0}+X_{1}}A=(V_{\theta ,q}^{0}+V_{\theta ,q}^{1})\oplus
\widetilde{V},
\end{align*}%
then $\widetilde{V}$ is finite-dimensional and its dimension is
equal to the codimension of $A\left( (X_{0},X_{1})_{\theta
,q}\right) $ in $(Y_{0},Y_{1})_{\theta ,q}$ provided $%
1\leq q<\infty $.

We point out that finite dimension of the spaces $V_{\theta
,q}^{0}\cap V_{\theta ,q}^{1}$ and $\widetilde{V}$ are necessary
but not sufficient conditions for $A\colon
(X_{0},X_{1})_{\theta ,q}\rightarrow (Y_{0},Y_{1})_{\theta ,q}$ to
be a~Fredholm operator.

To discuss further results more precisely we need to define
special operators. Let $A_{1}$ be a~quotient operator
\begin{equation*}
A_{1}\colon X_{0}+X_{1}\rightarrow (X_{0}+X_{1})/(V_{\theta ,q}^{0}\cap
V_{\theta ,q}^{1}),
\end{equation*}%
which maps the couple $(X_{0},X_{1})$ to the quotient couple $%
(Z_{0},Z_{1})=\left( A_{1}(X_{0}\right) ,A_{1}(X_{1}))$, and let
$A_{2}$ be a~quotient operator
\begin{equation*}
A_{2}\colon A_{1}(X_{0})+A_{1}(X_{1})\rightarrow
(A_{1}(X_{0})+A_{1}(X_{1}))/A_{1}(\widetilde{V}),
\end{equation*}%
which maps couple $\left( Z_{0},Z_{1}\right) $ to $\left( W_{0},W_{1}\right)
=\left( A_{2}(Z_{0}\right) ,A_{2}(Z_{1}))=\left( A_{2}A_{1}(X_{0}\right)
,A_{2}A_{1}(X_{1}))$.

It is not difficult to show that
\begin{align}
A=A_{3}A_{2}A_{1}{\text {,}}  \label{L1}
\end{align}
where $A_{3}$ is uniquely defined operator from the couple $%
\left
(W_{0},W_{1}\right )$ to the couple $(Y_{0},Y_{1})$ with \ the kernel $%
A_{2}A_{1}(\ker _{X_{0}+X_{1}}A)$.

Observe that if $A\colon (X_{0},X_{1})\rightarrow (Y_{0},Y_{1})$
is a~Fredholm and surjective operator on endpoint spaces, then the
constructed operators $A_{1}$, $A_{2}$ and $A_{3}$ are also
Fredholm and surjective operators on endpoint spaces. Moreover, if
$A\colon (X_{0},X_{1})_{\theta ,q}\rightarrow
(Y_{0},Y_{1})_{\theta ,q}$ is a~Fredholm operator, then it
can be shown (see Theorem \ref{TN2} on factorization) that the operators $%
A_{1}$, $A_{2}$ and $A_{3}$ constructed above have the following properties:

a)~The operator $A_{1}\colon (X_{0},X_{1})_{\theta ,q}\rightarrow
(Z_{0},Z_{1})_{\theta ,q}$ is surjective and the kernel $\ker
_{X_{0}+X_{1}}A_{1}$ is finite dimensional and is contained in $%
(X_{0},X_{1})_{\theta ,q}$;

b)~The operator $A_{2}\colon (Z_{0},Z_{1})_{\theta ,q}\rightarrow
(W_{0},W_{1})_{\theta ,q}$ is injective and $A_{2}((Z_{0},Z_{1})_{\theta
,q}) $ is a~closed subspace of $(W_{0},W_{1})_{\theta ,q}$ of finite
codimension, which is equal to the dimension of its kernel in the sum, i.e., $%
\ker _{Z_{0}+Z_{1}}A_{2}$ (it is also equal to codimension of $%
A((X_{0},X_{1})_{\theta ,q})$ in $(Y_{0},Y_{1})_{\theta ,q}$);

c)~The operator $A_3\colon (W_0,W_1)_{\theta ,q}\to (Y_0,Y_1)_{\theta ,q}$
is invertible.


The decomposition $A=A_{3}A_{2}A_{1}$ and properties a)-c) of operators $A_{1}$,
$A_{2}$, $A_{3}$ lead to the investigations of the following three classes
of Fredholm operators denoted by ${\mathbb{F}}_{\theta ,q}^{1}$, ${\mathbb{F}%
}_{\theta ,q}^{2}$ and ${\mathbb{F}}_{\theta ,q}^{3}$.

The class ${\mathbb{F}}_{\theta ,q}^{1}$ consists of all
surjective operators $A\colon (X_{0},X_{1})\rightarrow
(Y_{0},Y_{1})$ such that \linebreak $\dim (\ker
_{X_{0}+X_{1}}A)<\infty $, $A\colon (X_{0},X_{1})_{\theta
,q}\rightarrow (Y_{0},Y_{1})_{\theta ,q}$ is surjective and $\ker
_{X_{0}+X_{1}}A\subset (X_{0},X_{1})_{\theta ,q}$.

The class ${\mathbb{F}}_{\theta ,q}^{2}$ consists of all
surjective operators $A\colon (X_{0},X_{1})\!\rightarrow\!
(Y_{0},Y_{1})$ such that \linebreak $\dim (\ker
_{X_{0}+X_{1}}A)<\infty $, $A\colon (X_{0},X_{1})_{\theta
,q}\rightarrow (Y_{0},Y_{1})_{\theta ,q}$ is injective and
\[
{\text{dim}}\big((Y_{0},Y_{1})_{\theta ,q}/A((X_{0},X_{1})_{\theta
,q})\big) =\text{dim}\,(\ker _{X_{0}+X_{1}}A).
\]
The class ${\mathbb{F}}_{\theta ,q}^{3}$ consists of all
surjective Fredholm operators $A\colon (X_{0},X_{1})\rightarrow
(Y_{0},Y_{1})$ such that $A\colon (X_{0},X_{1})_{\theta
,q}\rightarrow (Y_{0},Y_{1})_{\theta ,q}$ is invertible. For this
class we do not require that $\ker _{X_{0}+X_{1}}A$ is finite
dimensional.

Note that if $A\colon (X_{0},X_{1})_{\theta ,q}\rightarrow
(Y_{0},Y_{1})_{\theta ,q}$ is a~Fredholm operator, then $A_1\in
{\mathbb{F}}_{\theta
,q}^{1}$, $A_2\in {\mathbb{F}}_{\theta ,q}^{2}$ and $A_3\in {\mathbb{F}}%
_{\theta ,q}^{3}$.~In order to describe results we need to recall the
definition of generalized dilation indices introduced in \cite{AK1}. Let $%
\vec {X}=(X_0,X_1)$ be a~Banach couple and $\Omega $ be a~subset of $X_0+X_1$%
.We denote by $\beta (\Omega )$ (resp., $\beta _0(\Omega )$, and
$\beta _{\infty }(\Omega )$) the infimum of all $\theta \in \left
[0,1\right ]$ for which there exists $\gamma =\gamma (\theta
,\Omega )>0$ such that
\begin{align}
s^{-\theta }\,K(s,x;\vec {X})\geq \gamma \,t^{-\theta }\,K(t,x;\vec {X})
\label{L7}
\end{align}
for all $x\in \Omega $ and all $0<s\leq t$ (resp., all $0<s\leq t\leq 1$,
and $1\leq s\leq t<\infty $).

We also define $\alpha (\Omega )$ (resp., $\alpha _0(\Omega )$, and $\alpha
_{\infty }(\Omega )$) to be the supremum of all $\theta \in
\left
[0,1\right
]$ for which there exists $\gamma =\gamma (\theta ,\Omega
)>0$ such that
\begin{align}
s^{-\theta }\,K(s,x;\vec {X})\leq \gamma \,t^{-\theta }\,K(t,x;\vec {X})
\label{L7_second}
\end{align}
for all $x\in \Omega $ and all $0<s\leq t$ (resp., all $0<s<t\leq 1$, and
all $1\leq s<t<\infty $). If the set $\Omega $ consists of one element,
i.e., $\Omega =\left \{x\right \}$, then the indices $\alpha (\Omega ),\beta
(\Omega )$ will be denoted by $\alpha (x),\beta (x)$ (similarly we write $%
\alpha _{0}(x),\beta _{0}(x),\alpha _{\infty }(x),\beta _{\infty }(x)$).

The following theorem provides sufficient conditions under which an operator
$A\colon (X_0,X_1)\to (Y_{0},Y_{1})$ belongs to one of the classes ${\mathbb{%
F}}_{\theta ,q}^{1},{\mathbb{F}}_{\theta ,q}^{2}$, ${\mathbb{F}}_{\theta
,q}^{3}$.


\begin{theorem}
\label{TN3} Suppose that an operator $A\colon
(X_{0},X_{1})\rightarrow (Y_{0},Y_{1})$ is surjective and Fredholm
on the endpoint spaces. Then the following statements are
true{\rm:}
\end{theorem}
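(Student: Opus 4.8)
Write $\Omega:=\ker_{X_{0}+X_{1}}A$ and recall that, since $A$ is surjective and Fredholm on the endpoint spaces, $A$ is surjective on $X_{0}+X_{1}$ and $V_{\theta,q}^{0}\cap V_{\theta,q}^{1}=\ker_{(X_{0},X_{1})_{\theta,q}}A$. The plan is to prove the three assertions (sufficient conditions for $A$ to lie in $\mathbb{F}_{\theta,q}^{1}$, in $\mathbb{F}_{\theta,q}^{2}$, and in $\mathbb{F}_{\theta,q}^{3}$) one by one, each time reducing the required membership to the size of $V_{\theta,q}^{0}$ and $V_{\theta,q}^{1}$, which the dilation indices of $\Omega$ control. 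The common device, which I would isolate first as a lemma, is the following dictionary. Fixing $t=1$ in \eqref{L7_second}, one sees that $\theta<\alpha_{0}(\Omega)$ forces $K(s,x;\vec{X})\le C\,s^{\theta_{1}}K(1,x;\vec{X})$ for $0<s\le1$ and some $\theta_{1}>\theta$, uniformly on $\Omega$, whence $\Omega\subset V_{\theta,q}^{0}$; dually \eqref{L7} shows $\theta>\beta_{\infty}(\Omega)$ forces $K(t,x;\vec{X})\le C\,t^{\theta_{2}}K(1,x;\vec{X})$ for $t\ge1$ and some $\theta_{2}<\theta$, whence $\Omega\subset V_{\theta,q}^{1}$, and when both hold $\|x\|_{(X_{0},X_{1})_{\theta,q}}\le C\,K(1,x;\vec{X})$ for all $x\in\Omega$. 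Running the same inequalities the other way, $\theta>\beta_{0}(\Omega)$ forces $\int_{0}^{1}\bigl(s^{-\theta}K(s,x;\vec{X})\bigr)^{q}\,\frac{ds}{s}=\infty$ for every $x\in\Omega\setminus\{0\}$, i.e. $V_{\theta,q}^{0}=\{0\}$, and $\theta<\alpha_{\infty}(\Omega)$ forces $V_{\theta,q}^{1}=\{0\}$. Uniformity in $x$ of the constants is built into the definition of the indices of the \emph{set} $\Omega$, so there is nothing extra to check.

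For the assertion that $A\in\mathbb{F}_{\theta,q}^{1}$ (under the hypothesis $\dim\Omega<\infty$ and $\beta_{\infty}(\Omega)<\theta<\alpha_{0}(\Omega)$): the dictionary gives $\Omega\subset(X_{0},X_{1})_{\theta,q}$, so $\ker_{(X_{0},X_{1})_{\theta,q}}A=\Omega$ is finite-dimensional and contained in the interpolation space, which is all of the definition of $\mathbb{F}_{\theta,q}^{1}$ except surjectivity of $A$ on $(X_{0},X_{1})_{\theta,q}$; this is the main obstacle. Given $y\in(Y_{0},Y_{1})_{\theta,q}$, for each $t>0$ choose a near-optimal decomposition $y=y_{0}^{(t)}+y_{1}^{(t)}$ for $K(t,y;\vec{Y})$ and lift each summand through the surjections $A|_{X_{j}}$, which are open by the open mapping theorem, to get $x^{(t)}$ with $Ax^{(t)}=y$ and $K(t,x^{(t)};\vec{X})\le C\,K(t,y;\vec{Y})$. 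The lifts $x^{(2^{k})}$, $k\in\mathbb{Z}$, differ pairwise by elements of $\Omega$, and the task is to glue them along the dyadic net into a single $x$ with $Ax=y$ and $\|x\|_{(X_{0},X_{1})_{\theta,q}}\le C\,\|y\|_{(Y_{0},Y_{1})_{\theta,q}}$; the telescoping corrections lie in $\Omega$, where by the dictionary the $(X_{0},X_{1})_{\theta,q}$-norm is dominated by $K(1,\cdot;\vec{X})$, so they can be absorbed. I expect this gluing to be the technical heart: it is the surjectivity half of the invertibility theorem of \cite{AK1} applied to the couple obtained from $A$ by factoring out $\Omega$, and in the write-up one would either reproduce that argument or verify the hypotheses of \cite{AK1} for the relevant couple and quote it.

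For the assertions that $A\in\mathbb{F}_{\theta,q}^{2}$ and $A\in\mathbb{F}_{\theta,q}^{3}$ the situation is dual and lighter. If $\dim\Omega<\infty$ and $\beta_{0}(\Omega)<\theta<\alpha_{\infty}(\Omega)$, the dictionary gives $V_{\theta,q}^{0}=V_{\theta,q}^{1}=\{0\}$, so $A$ is injective on $(X_{0},X_{1})_{\theta,q}$; moreover in a splitting $\Omega=(V_{\theta,q}^{0}+V_{\theta,q}^{1})\oplus\widetilde{V}$ one has $\widetilde{V}=\Omega$, and the codimension formula recalled in Section~3 yields $\dim\bigl((Y_{0},Y_{1})_{\theta,q}/A((X_{0},X_{1})_{\theta,q})\bigr)=\dim\widetilde{V}=\dim\Omega$, which is exactly the defining equality of $\mathbb{F}_{\theta,q}^{2}$. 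If instead $\theta>\max\{\beta_{0}(\Omega),\beta_{\infty}(\Omega)\}$ or $\theta<\min\{\alpha_{0}(\Omega),\alpha_{\infty}(\Omega)\}$, then one of $V_{\theta,q}^{0},V_{\theta,q}^{1}$ is $\{0\}$ and, by the dictionary again, the other equals $\Omega$; hence $V_{\theta,q}^{0}\cap V_{\theta,q}^{1}=\{0\}$ and $\widetilde{V}=\{0\}$, so $A$ is injective on $(X_{0},X_{1})_{\theta,q}$ and, by the codimension formula when $q<\infty$ (and a direct $K$-functional argument when $q=\infty$), surjective onto $(Y_{0},Y_{1})_{\theta,q}$, therefore invertible there by the open mapping theorem, i.e. $A\in\mathbb{F}_{\theta,q}^{3}$. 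The only nonroutine ingredient in these last two parts is the codimension formula of Section~3; everything else is the index bookkeeping of the first paragraph.
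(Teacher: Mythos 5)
Your ``dictionary'' between the dilation indices of $\Omega=\ker_{X_0+X_1}A$ and the vanishing or saturation of $V^0_{\theta,q}$, $V^1_{\theta,q}$ is correct and is indeed the starting point of the paper's argument for all three parts. Your sketch of part (a) is also directionally right (and the paper proves surjectivity by a direct $J$-functional decomposition, gluing along the dyadic net and absorbing kernel corrections, as you anticipate), although you leave the construction unexecuted.

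The serious gap is in parts (b) and (c). You invoke ``the codimension formula recalled in Section~3'' to conclude $\dim\bigl((Y_0,Y_1)_{\theta,q}/A((X_0,X_1)_{\theta,q})\bigr)=\dim\widetilde V$, and then declare $A\in\mathbb F^2_{\theta,q}$ or $\mathbb F^3_{\theta,q}$. But all the codimension statements in Section~3 (Theorems \ref{lowerFredholm}, \ref{fredholm}, \ref{realfredholm}) are \emph{if-and-only-if} statements in which the nontrivial direction presupposes precisely what you are trying to establish: that $A$ is Fredholm on $(X_0,X_1)_{\theta,q}$, or equivalently that $A((X_0,X_1)_{\theta,q})$ is closed of finite codimension (in the form of the direct-sum decomposition $(Y_0,Y_1)_{\theta,q}=A((X_0,X_1)_{\theta,q})\oplus A(P_{X_0}\widetilde V)$, which is one of the equivalent conditions, not a free conclusion). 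So the argument is circular. In the paper, part (b) requires a substantial new construction (building $M=A(\mathrm{Pr}_{X_0}(\ker A))$, showing $M\cap A((X_0,X_1)_{\theta,q})=\{0\}$, and proving the splitting $(Y_0,Y_1)_{\theta,q}=A((X_0,X_1)_{\theta,q})\oplus M$ via Proposition~\ref{Proposition4}, Lemma~\ref{Lemma6}, Proposition~\ref{Proposition6} and a further lemma), none of which your proposal supplies. Part (c) has an additional problem: you replace the actual hypothesis $\ker A=V^0_{\theta,q}+V^1_{\theta,q}$ and $\beta(V^1_{\theta,q})<\theta<\alpha(V^0_{\theta,q})$ with the stronger and genuinely different condition ``$\theta>\max\{\beta_0(\Omega),\beta_\infty(\Omega)\}$ or $\theta<\min\{\alpha_0(\Omega),\alpha_\infty(\Omega)\}$,'' which forces one of $V^0,V^1$ to vanish --- but the theorem is meant to cover cases where both are nontrivial and the kernel splits as a sum. (The paper proves (c) by reducing to the invertibility result of \cite{AK1}, not via a codimension formula.) To repair the proposal you would need either to prove the required direct-sum decomposition by hand for part (b), or to correctly reduce part (c) to the invertibility theorem under the stated hypothesis.
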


\textrm{a)} \emph{If}
\begin{equation}
\beta _{\infty }(\mathrm{ker}_{X_{0}+X_{1}}A)<\theta <\alpha _{0}(\mathrm{ker%
}_{X_{0}+X_{1}}A),  \label{K1}
\end{equation}
then $\mathrm{ker}_{X_{0}+X_{1}}A\subset (X_{0},X_{1})_{\theta ,q}$ and $%
A\colon (X_{0},X_{1})_{\theta ,q}\rightarrow (Y_{0},Y_{1})_{\theta
,q}$ is a~surjective operator, i.e., $(\ref{K1})$ and
$\func{dim}\,(\ker_{X_{0}+X_{1}}A)<\infty$ imply $A\in
{\mathbb{F}}_{\theta ,q}^{1}$.

\vspace{1.5 mm}

\textrm{b)} \emph{If }$\func{dim}\,(\ker _{X_{0}+X_{1}}A)<\infty $\emph{\ and%
}
\begin{equation}
\beta _{0}(\mathrm{ker}_{X_{0}+X_{1}}\,A)<\theta <\alpha _{\infty }(\mathrm{%
ker}_{X_{0}+X_{1}}\,A),  \label{K2}
\end{equation}%
\emph{then $A\colon (X_{0},X_{1})_{\theta ,q}\rightarrow
(Y_{0},Y_{1})_{\theta ,q}$ is injective and codimension of $%
A((X_{0},X_{1})_{\theta ,q})$ in $(Y_{0},Y_{1})_{\theta ,q}$ is equal to the
dimension of $\mathrm{ker}_{X_{0}+X_{1}}\,A$, i.e., $A\in {\mathbb{F}}%
_{\theta ,q}^{2}$.}

\textrm{c)} \emph{If $\mathrm{ker}_{X_{0}+X_{1}}\,A=V_{\theta
,q}^{0}+V_{\theta ,q}^{1}$ and}
\begin{equation}
\beta (V_{\theta ,q}^{1})<\theta <\alpha \left( V_{\theta ,q}^{0}\right) ,
\label{K3}
\end{equation}%
\emph{then the operator $A\colon (X_{0},X_{1})_{\theta ,q}\rightarrow
(Y_{0},Y_{1})_{\theta ,q}$ is invertible, i.e., $A\in {\mathbb{F}}_{\theta
,q}^{3}$.}

\vspace{2mm}

In connection with this result an important question arises whether the
conditions (\ref{K1})-(\ref{K3}) are also necessary. The next theorem
shows that it is so if we suppose that the operator $A\colon
(X_{0},X_{1})\rightarrow (Y_{0},Y_{1})$\ is invertible on endpoint
spaces.


\begin{theorem}
\label{TN4} Suppose that an operator $A\colon
(X_{0},X_{1})\rightarrow (Y_{0},Y_{1})$ is invertible on endpoint
spaces. Then conditions $({\ref{K1}})$, $({\ref{K2}})$ and
$({\ref{K3}})$ are necessary conditions for $A$ to belong to the
classes ${\mathbb{F}}_{\theta ,q}^{1}$, ${\mathbb{F}}_{\theta
,q}^{2}$ and ${\mathbb{F}}_{\theta ,q}^{3}$, respectively.
\end{theorem}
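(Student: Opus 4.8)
The plan is to argue by contradiction in each of the three cases, after first translating membership in $\mathbb{F}_{\theta ,q}^{j}$ into quantitative information about the kernel $N:=\ker_{X_0+X_1}A$ and its position in $X_0+X_1$; endpoint invertibility is what makes this translation possible. Writing $A_j:=A|_{X_j}$, the map $\delta\colon Y_0\cap Y_1\to X_0+X_1$, $\delta (y):=A_0^{-1}y-A_1^{-1}y$, is a linear surjection onto $N$ with $\ker\delta=A(X_0\cap X_1)$, and $N\cap X_j=\{0\}$. From the trivial bounds $K(t,x;\vec X)\le\|x\|_{X_0}$ ($x\in X_0$) and $K(t,x;\vec X)\le t\|x\|_{X_1}$ ($x\in X_1$) it follows that every element of $X_0$ satisfies the $\int_1^\infty$-condition and every element of $X_1$ the $\int_0^1$-condition; combining this with $\delta (y)=(A_0^{-1}y-p)-(A_1^{-1}y-p)$, valid for any preimage $p$ of $y$ in an intermediate space, one gets $N=V_{\theta ,q}^{0}+V_{\theta ,q}^{1}$ whenever $A\colon\vec X_{\theta ,q}\to\vec Y_{\theta ,q}$ is surjective (so in $\mathbb{F}_{\theta,q}^1$ and $\mathbb{F}_{\theta,q}^3$), and, since $V_{\theta ,q}^{0}\cap V_{\theta ,q}^{1}=\ker_{\vec X_{\theta ,q}}A$ is $\{0\}$ under injectivity, this is the direct-sum decomposition required in $(\ref{K3})$. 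When $A$ has finite-codimensional range in $\vec Y_{\theta,q}$ of codimension $m=\dim N$ (as in $\mathbb{F}_{\theta,q}^2$), the linear map $N\to\vec Y_{\theta ,q}/A(\vec X_{\theta ,q})$ sending $\delta (y)$ to $[y]$ is well defined (because $\ker\delta\subset A(\vec X_{\theta ,q})$) and, by density of $Y_0\cap Y_1$ and a dimension count, an isomorphism; hence every nonzero $v=\delta(y)\in N$ gives $y\in Y_0\cap Y_1$ outside the (closed) range.

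The necessity of $(\ref{K3})$ for $\mathbb{F}_{\theta,q}^3$, when $A$ is invertible on the endpoints, was obtained in \cite{AK1}; the argument runs by contradiction. Assuming, say, $\alpha (V_{\theta ,q}^{0})\le\theta$, pick $v\in V_{\theta ,q}^{0}$ witnessing the failure of the lower $K$-estimate at an exponent $\theta '=\theta +\varepsilon$ on a pair of scales, write $v=A_0^{-1}y-A_1^{-1}y$ with $A_0^{-1}y\in\vec X_{\theta ,q}$ (previous paragraph), apply the fundamental lemma of interpolation to $A_0^{-1}y$ and to $A_1^{-1}y$, and truncate the resulting $J$-representations at two scales chosen in terms of the witnessing pair. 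Pushing these truncated sums through $A$ gives finitely supported sums in $Y_0\cap Y_1$ whose $\vec Y_{\theta ,q}$-norms can be estimated against the $K$-functional of $v$; balancing the two scales produces a normalized sequence in $\vec X_{\theta ,q}$ whose $A$-images tend to $0$ in $\vec Y_{\theta ,q}$, contradicting the boundedness of $A^{-1}$. The inequality $\beta (V_{\theta ,q}^{1})<\theta$ is the mirror image under $t\mapsto 1/t$ (which swaps the couple with its reverse and $\theta$ with $1-\theta$).

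For $\mathbb{F}_{\theta,q}^1$ and $\mathbb{F}_{\theta,q}^2$ one exploits duality: for regular couples and $q<\infty$, $(\vec X_{\theta ,q})^{\ast}=(\vec X^{\ast})_{\theta ,q'}$, the adjoint $A^{\ast}\colon(Y_0^{\ast},Y_1^{\ast})\to(X_0^{\ast},X_1^{\ast})$ is again invertible on the endpoints with $\dim\ker_{Y_0^{\ast}+Y_1^{\ast}}A^{\ast}=\dim N$, surjectivity of $A$ on $\vec X_{\theta ,q}$ corresponds to $A^{\ast}$ being bounded below on the duals, and the $K$--$J$ polarity of Brudnyi--Krugljak exchanges the one-sided indices $\alpha_0,\beta_\infty$ of $N$ in $\vec X$ with $\beta_0,\alpha_\infty$ of $\ker A^{\ast}$ in $\vec Y^{\ast}$ (the endpoints $0$ and $\infty$ being interchanged as well). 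Thus necessity of $(\ref{K1})$ for $\mathbb{F}_{\theta,q}^1$ and of $(\ref{K2})$ for $\mathbb{F}_{\theta,q}^2$ are equivalent, and it is enough to treat the latter. Applying the argument of the second paragraph to a putative nonzero $v\in V_{\theta ,q}^{1}$: since $A_1^{-1}y\in X_1$ it is tame at $0$, and since $v\in V_{\theta ,q}^{1}$ it is tame at $\infty$, so $A_1^{-1}y\in\vec X_{\theta ,q}$ and $y=A(A_1^{-1}y)$ lies in the range, contradicting the first paragraph; hence $V_{\theta ,q}^{0}=V_{\theta ,q}^{1}=\{0\}$, which already forces $\beta _0(N)\le\theta\le\alpha_\infty (N)$. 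To upgrade these to strict inequalities, suppose $\alpha _\infty (N)=\theta$; since $\dim N<\infty$ there is a borderline $v_\ast$ with $\alpha_\infty (v_\ast )=\theta$, and truncating the $J$-representation of $A_1^{-1}y_{v_\ast}$ at diverging scales (its low-frequency part being harmless because $A_1^{-1}y_{v_\ast}\in X_1$) while tracking the exact sub-polynomial rate of the $K$-functional at $\infty$, one exhibits elements of $Y_0\cap Y_1$ whose classes modulo $A(\vec X_{\theta ,q})$ are independent, contradicting $\dim\big(\vec Y_{\theta ,q}/A(\vec X_{\theta ,q})\big)=m$. The inequality $\beta _0(N)<\theta$ follows by the $t\mapsto 1/t$ symmetry.

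The principal obstacle is precisely this last step: converting ``index $=\theta$'' into a convergence or linear-independence statement in $\vec X_{\theta ,q}$ or $\vec Y_{\theta ,q}$ requires using the precise sub-polynomial manner in which the $K$-functional of the extremal element fails to improve past exponent $\theta$, together with a careful coupling of the truncation scales; this is also where the hypothesis $q<\infty$ is essential, both directly and through the duality identification. A further point, peculiar to $\mathbb{F}_{\theta,q}^3$, is that $\ker A$ need not be finite-dimensional, so the extremal element must be replaced by a sequence and every $K$-functional estimate must be made uniform over the subspaces $V_{\theta ,q}^{0}$ and $V_{\theta ,q}^{1}$.
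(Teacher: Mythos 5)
Your overall skeleton—cite \cite{AK1} for ${\mathbb{F}}^3_{\theta,q}$, exploit duality to link ${\mathbb{F}}^1_{\theta,q}$ and ${\mathbb{F}}^2_{\theta,q}$, and reduce each to index information about $N:=\ker_{X_0+X_1}A$—broadly matches the paper's, and your first paragraph (the map $N\to\vec Y_{\theta,q}/A(\vec X_{\theta,q})$, $\delta(y)\mapsto[y]$, is an isomorphism for $A\in{\mathbb{F}}^2_{\theta,q}$, whence $V^0_{\theta,q}=V^1_{\theta,q}=\{0\}$) is sound. But the gap is exactly where you say the "principal obstacle" lies, and your sketch does not close it. First, the assertion that $V^0_{\theta,q}=V^1_{\theta,q}=\{0\}$ "already forces $\beta_0(N)\le\theta\le\alpha_\infty(N)$" is not justified: $V^0_{\theta,q}=\{0\}$ only says that $\int_0^1\big(t^{-\theta}K(t,v)\big)^q\,dt/t$ diverges for every nonzero $v\in N$, which controls $\alpha_0(v)$ pointwise but gives nothing like the \emph{uniform} inequality $s^{-\theta}K(s,v)\ge\gamma\,t^{-\theta}K(t,v)$ over all $v\in N$, $0<s\le t\le 1$, that defines $\beta_0(N)\le\theta$. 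Second and more seriously, the strict inequalities are the content of the theorem, and "truncating the $J$-representation while tracking the exact sub-polynomial rate of the $K$-functional" is a wish, not a method. The paper obtains strictness by an entirely different mechanism: the hypotheses on $A$ are transferred into boundedness of the operator $T_0$ on the subspace $U^0$ of the weighted sequence space $\ell_{\theta,q}(x)$, uniformly in $x\in\ker A$ (Lemma~\ref{operatortT0}); the uniform lower distance estimate of Proposition~\ref{Proposition1} sets up a quotient-space reformulation in which the Kruglyak--Milman stability theorem (\cite{KM}, Theorem 12, and its $c_0$-analogue Theorem~\ref{KM}) applies; this yields boundedness of $T_0$ for all $\tilde\theta\in(\theta-\varepsilon,\theta+\varepsilon)$ with $\varepsilon$ \emph{independent of} $x$ (Lemma~\ref{Lemma4}), and Lemmas~\ref{Lemma3},~\ref{Lemma5} then convert this to strict index inequalities (Theorem~\ref{T1}). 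Nothing in your outline identifies, or can substitute for, this perturbation-of-$\theta$ step.

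There are two further structural problems. Your direct argument for ${\mathbb{F}}^2_{\theta,q}$ does not touch the case $\theta\notin\Omega_A$ (with $\Omega_A$ as in (\ref{NM17})), which the paper must treat separately in Theorem~\ref{TT2} by induction on $\dim(\ker_{X_0+X_1}A)$, using Corollary~\ref{Corollary2} and the quotient construction of Proposition~\ref{PrN2}. And deducing $(\ref{K1})$ for ${\mathbb{F}}^1_{\theta,q}$ from $(\ref{K2})$ by duality requires the couples to be regular and $q<\infty$ so that $(\vec X_{\theta,q})^*\cong\vec X'_{\theta,q'}$; the theorem is stated for arbitrary Banach couples and all $q\in[1,\infty]$, which is precisely why the paper supplies a self-contained direct proof of necessity for ${\mathbb{F}}^1_{\theta,q}$ (Theorem~\ref{TTT1}) and offers the duality reduction only as an optional shortcut under those extra hypotheses in Section~7. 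Your chosen entry point is also the harder one: the direct proof of $(\ref{K1})$ is substantially shorter and cleaner than that of $(\ref{K2})$, so if duality is to be used at all the economical order is to prove $(\ref{K1})$ first and derive $(\ref{K2})$—the route the paper actually takes.
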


Note that Theorems \ref{TN3}-\ref{TN4} are true for all $q\in \left[
1,\infty \right] $. If we combine these theorems with factorization Theorem %
\ref{TN2}, then we immediately obtain necessary and sufficient
conditions for an operator $A\colon (X_{0},X_{1})\rightarrow
(Y_{0},Y_{1})$ to be a~Fredholm operator from
$(X_{0},X_{1})_{\theta ,q}$ to $(Y_{0},Y_{1})_{\theta ,q}$.


\begin{corollary}
\label{TN5} Suppose that an operator $A\colon
(X_{0},X_{1})\rightarrow (Y_{0},Y_{1})$ is invertible on endpoint
spaces and let $1\leq q<\infty $. Then the following conditions
are necessary and sufficient for \linebreak $A\colon
(X_{0},X_{1})_{\theta ,q}\rightarrow (Y_{0},Y_{1})_{\theta ,q}$ to
be a~Fredholm operator\textrm{:}
\end{corollary}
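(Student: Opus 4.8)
To prove Corollary~\ref{TN5} the plan is to combine the factorization $A=A_3A_2A_1$ of Theorem~\ref{TN2} with the sufficiency Theorem~\ref{TN3} and the necessity Theorem~\ref{TN4}, applied separately to the three factors. Since $A\colon(X_0,X_1)\to(Y_0,Y_1)$ is invertible on the endpoint spaces it is, in particular, surjective and Fredholm there, so the quotient couples $(Z_0,Z_1)$, $(W_0,W_1)$ and the decomposition $A=A_3A_2A_1$ are available and $A_1$, $A_2$, $A_3$ are surjective Fredholm operators on the endpoints. The first step is the elementary observation that, for this particular decomposition, $A\colon(X_0,X_1)_{\theta,q}\to(Y_0,Y_1)_{\theta,q}$ is a Fredholm operator \emph{if and only if} $A_1\in\mathbb{F}^1_{\theta,q}$, $A_2\in\mathbb{F}^2_{\theta,q}$ and $A_3\in\mathbb{F}^3_{\theta,q}$: the forward implication is the one noted above, just after the definition of the classes $\mathbb{F}^i_{\theta,q}$, while the converse follows because the composition on $(X_0,X_1)_{\theta,q}$ of a surjection with finite-dimensional kernel ($A_1$), an injection with closed finite-codimensional range ($A_2$), and an invertible operator ($A_3$) is again Fredholm --- here $A_2$ injective gives $\ker(A_2A_1)=\ker A_1$ on $(X_0,X_1)_{\theta,q}$, $A_1$ onto $(Z_0,Z_1)_{\theta,q}$ gives $A_2A_1((X_0,X_1)_{\theta,q})=A_2((Z_0,Z_1)_{\theta,q})$, and $A_3$ invertible preserves both the finite-dimensional kernel and the closed finite-codimensional range.

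Secondly, I would show that invertibility of $A$ on the endpoints forces each $A_i$ to be invertible on the endpoints, so that Theorems~\ref{TN3} and~\ref{TN4} both apply to $A_1$, $A_2$, $A_3$. Indeed $A_1|_{X_j}$ is onto $Z_j$ and its kernel is $(V^{0}_{\theta,q}\cap V^{1}_{\theta,q})\cap X_j\subseteq\ker_{X_j}A=\{0\}$, so $A_1$ is invertible on endpoints; then $A_3A_2$ is injective on each $Z_j=A_1(X_j)$ because $A=A_3A_2A_1$ is, and combined with surjectivity this makes $A_2$, and hence $A_3$, invertible on endpoints.

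With these preliminaries, sufficiency comes from Theorem~\ref{TN3}: part~(a) applied to $A_1$ with $\Omega=\ker_{X_0+X_1}A_1=V^{0}_{\theta,q}\cap V^{1}_{\theta,q}$, part~(b) to $A_2$ with $\Omega=\ker_{Z_0+Z_1}A_2$, and part~(c) to $A_3$ with $\Omega=\ker_{W_0+W_1}A_3=A_2A_1(\ker_{X_0+X_1}A)$ yield $A_1\in\mathbb{F}^1_{\theta,q}$, $A_2\in\mathbb{F}^2_{\theta,q}$, $A_3\in\mathbb{F}^3_{\theta,q}$, hence $A$ is Fredholm on $(X_0,X_1)_{\theta,q}$ by the first step. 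Necessity is dual: if $A$ is Fredholm on the interpolation space then the first step places each $A_i$ in $\mathbb{F}^i_{\theta,q}$, and Theorem~\ref{TN4} --- applicable because each $A_i$ is invertible on endpoints, and using $1\leq q<\infty$ to identify the codimension of $A_2((Z_0,Z_1)_{\theta,q})$ with $\dim\widetilde{V}$ as in Section~3 --- returns $(\ref{K1})$ for $A_1$, $(\ref{K2})$ for $A_2$ and $(\ref{K3})$ for $A_3$. The conditions in the corollary are precisely these three, transcribed in terms of the original data.

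The main obstacle, and essentially the only point requiring real work, is this transcription: rewriting $(\ref{K1})$--$(\ref{K3})$ for $A_1$, $A_2$, $A_3$ --- which a priori involve the $K$-functionals of the quotient couples $(Z_0,Z_1)$ and $(W_0,W_1)$ --- purely in terms of $K(\cdot,\cdot;X_0,X_1)$, the operator $A$, and the subspaces $V^{0}_{\theta,q}$, $V^{1}_{\theta,q}$, $\widetilde{V}$ of $\ker_{X_0+X_1}A$. This requires checking that on the relevant kernels the $K$-functionals of the quotient couples are equivalent to $K(\cdot,\cdot;X_0,X_1)$, so that the indices $\alpha,\beta$ and their endpoint versions $\alpha_0,\beta_0,\alpha_\infty,\beta_\infty$ pass unchanged through the two quotient maps, together with the identifications $\ker_{X_0+X_1}A_1=V^{0}_{\theta,q}\cap V^{1}_{\theta,q}$ and $\ker_{Z_0+Z_1}A_2\cong\widetilde{V}$ (the latter because $A_1$ is injective on $\widetilde{V}$, since $\widetilde{V}\cap(V^{0}_{\theta,q}+V^{1}_{\theta,q})=\{0\}$), and the decomposition $\ker_{X_0+X_1}A=(V^{0}_{\theta,q}+V^{1}_{\theta,q})\oplus\widetilde{V}$ with $\dim\widetilde{V}<\infty$ established in Section~3. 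Everything else is a direct assembly of Theorems~\ref{TN2}, \ref{TN3} and~\ref{TN4}.
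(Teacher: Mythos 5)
Your overall plan is exactly the paper's: Corollary~\ref{TN5} is obtained by combining the factorization Theorem~\ref{TN2} with the sufficiency Theorem~\ref{TN3} and the necessity Theorem~\ref{TN4}, applied factor by factor to $A_1$, $A_2$, $A_3$. Your first two preliminary observations are correct and worth making explicit: the ``if and only if'' between Fredholmness of $A$ on $(X_0,X_1)_{\theta,q}$ and membership $A_i\in\mathbb{F}^i_{\theta,q}$, and the transfer of invertibility on the endpoints from $A$ to the three factors (which is what makes Theorem~\ref{TN4} applicable).

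The problem is in what you describe as ``the main obstacle.'' You claim the remaining work is to rewrite conditions $(\ref{K1})$--$(\ref{K3})$ for $A_1,A_2,A_3$ ``purely in terms of $K(\cdot,\cdot;X_0,X_1)$,'' and that this requires showing the $K$-functionals of the quotient couples are equivalent to $K(\cdot,\cdot;X_0,X_1)$ on the relevant kernels so that the dilation indices pass through the quotient maps unchanged. Neither part of this is right. First, the corollary as stated does \emph{not} ask for the indices to be recomputed in $(X_0,X_1)$: it explicitly takes the indices of $A_1(\widetilde V)$ with respect to the quotient couple $(A_1(X_0),A_1(X_1))$ and those of $A_2A_1(V^j_{\theta,q})$ with respect to $(A_2A_1(X_0),A_2A_1(X_1))$, so the conditions in part b) \emph{are} literally $(\ref{K1})$--$(\ref{K3})$ for $A_1$, $A_2$, $A_3$ once one uses the constructions $\ker_{X_0+X_1}A_1=V^0_{\theta,q}\cap V^1_{\theta,q}$, $\ker_{Z_0+Z_1}A_2=A_1(\widetilde V)$ and $\ker_{W_0+W_1}A_3=A_2A_1(\ker_{X_0+X_1}A)$. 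Second, the equivalence you propose as a remedy would not hold in general: by Proposition~\ref{KF} the $K$-functional in a quotient couple is $K(t,Qx;Q(X_0),Q(X_1))=\inf_{v\in\ker Q}K(t,x+v;X_0,X_1)$, which is typically strictly smaller than $K(t,x;X_0,X_1)$, so the indices need not coincide. What actually requires verification is not a $K$-functional equivalence but the set-theoretic identification of the kernel decompositions through the quotients — in particular that the subspaces $V^0_{\theta,q}(A_3)$, $V^1_{\theta,q}(A_3)$ of $\ker_{W_0+W_1}A_3$ in the sense of $(\ref{L2})$--$(\ref{L3})$ (now taken in $(W_0,W_1)$) are exactly $A_2A_1(V^0_{\theta,q})$ and $A_2A_1(V^1_{\theta,q})$, and that $A_2A_1(\ker_{X_0+X_1}A)=A_2A_1(V^0_{\theta,q})+A_2A_1(V^1_{\theta,q})$, so that the hypothesis of Theorem~\ref{TN3}\,c) and Theorem~\ref{TN4} for the class $\mathbb{F}^3_{\theta,q}$ matches the corollary's formulation. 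Once the ``transcription'' is understood this way the argument closes, but as written your final step rests on an unnecessary and generally false claim about $K$-functionals.
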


\textrm{a)} \emph{The spaces $V_{\theta ,q}^{0}\cap V_{\theta ,q}^{1}$ and $%
\widetilde{V}$ are finite dimensional;}

\textrm{b)} \emph{The indices of the spaces $V_{\theta ,q}^{0}\cap V_{\theta
,q}^{1}$, $A_{1}(\widetilde {V})$, $A_{2}A_{1}(V_{\theta ,q}^{0})$, $%
A_{2}A_{1}(V_{\theta ,q}^{1})$ satisfy inequalities}
\begin{equation}
\beta _{\infty }(V_{\theta ,q}^{0}\cap V_{\theta ,q}^{1})<\theta <\alpha
_{0}(V_{\theta ,q}^{0}\cap V_{\theta ,q}^{1}),  \label{L14}
\end{equation}

\begin{equation}
\beta _{0}(A_{1}(\widetilde {V}))<\theta <\alpha _{\infty }(A_{1}(%
\widetilde {V})),  \label{L15}
\end{equation}

\begin{equation*}
\beta (A_{2}A_{1}(V_{\theta ,q}^{1}))<\theta <\alpha \left(
A_{2}A_{1}(V_{\theta ,q}^{0})\right) ,
\end{equation*}%
\textit{where the indices are calculated with respect to Banach couples }$%
(X_{0},X_{1})$\textit{, }$(A_{1}(X_{0}),A_{1}(X_{1}))$\textit{\ and }$%
(A_{2}A_{1}(X_{0}),A_{2}A_{1}(X_{1}))$\textit{, respectively. }

\vspace{2.5 mm}

The necessity part of Corollary \ref{TN5} explains why the problem
of interpolation of Fredholm operators is rather difficult even
for the real method.~We notice here that in the case of the
complex method, even interpolation of compact operators is not
fully understood.

We also remark that the condition $\text{dim}(V_{\theta, q}^0 \cap
V_{\theta, q}^1)<\infty$ implies that the kernel of an operator
$A\colon (X_0, X_1)_{\theta, q} \to (Y_0, Y_1)_{\theta, q}$ is
finite dimensional and all the other conditions  in Corollary
\ref{TN5} ensure that $A((X_0, X_1)_{\theta, q})$ is a~closed
subspace of finite codimension in the space $(Y_0, Y_1)_{\theta,
q}$. \linebreak We note that condition $1\leq q<\infty$ appeared
in this corollary because it is required in the factorization
theorem.

It should be pointed out that Corollary $1$ provides an algorithm
that allows one to check for given $\theta \in (0, 1)$ and $q \in
[1, \infty)$ whether the weak analog of the long-standing problem
of interpolation of subspaces by the real method has a positive or
negative solution (see section $8$ for details).

It is also worth stressing that the problem of interpolation of
subspaces by interpolation methods was posed by J.-\,L.~Lions and
E.~Magenes in their monograph \cite{LM}. In connection with
important applications of the well-known methods in the theory of
interpolation (the method of traces, the method of means and the
complex method of interpolation), the mentioned problem was
repeated by J.-\,L.~Lions in Notices Amer.~Math.~Soc.~22~(1975),
pp.~124--126 in section related to Problems in interpolation of
operators and applications.

We conclude with the following remark that results on
interpolation of subspaces found deep applications in various
problems of modern analysis. We hope that our results will find
further important applications in many other areas including
PDE's. In the current paper we do not concentrate on specific
applications, which would require us to prove some involved technical
results. This will done in detail in a~separate paper.

\section{Some basic general results}

In this section we will prove some general results on interpolation of
Fredholm operators which will be used throughout the paper.

\subsection{Reducing the problem to the case of surjective operator}

An interpolation functor ${\mathcal{F}}$ is said to be \emph{admissible}
provided for any Banach couple $(X_{0},X_{1})$, ${\mathcal{F}}(X_{0},X_{1})$
is contained in the closure of $X_{0}\cap X_{1}$ in $X_{0}+X_{1}$. We note
that if $\varphi \colon (0,\infty )\times (0,\infty )\rightarrow (0,\infty )$
is a fundamental function of the functor (i.e., ${\mathcal{F}}(s{\mathbb{R}}%
,t{\mathbb{R}})=\varphi (s,t){\mathbb{R}}$ for all $s,t>0$), then it can be
shown that ${\mathcal{F}}$ is admissible provided $\lim_{s\rightarrow
0}\varphi (s,1)=\lim_{t\rightarrow 0}\varphi (1,t)=0$. Real $\left( \cdot
\right) _{\theta ,q}$ functors and complex $\left[ \cdot \right] _{\theta }$
functors are admissible functors.

Now we state and prove a~lemma that is interesting on its own. It shows that
interpolation of Fredholm operators between Banach couples can be reduced to
surjective operators on corresponding Banach couples. The proof is based on
some simple trick from \cite{ACK}.

\begin{lemma}
\label{reduction} Let $(X_{0},X_{1})$ and $(Y_{0},Y_{1})$ be
Banach couples and let $A\colon (X_{0},X_{1})\rightarrow
(Y_{0},Y_{1})$ be an operator such that $A(X_0)$ and $A(X_1)$ are
complemented subspaces in $Y_0$ and $Y_1$, respectively. Then
there exist a Banach couple
$(\widetilde{X}_{0},\widetilde{X}_{1})$ and an operator
$\tilde{A}\colon (\widetilde{X}_{0},\widetilde{X}_{1})\rightarrow
(Y_{0},Y_{1})$ such that $\tilde{A}\colon
\widetilde{X}_{i}\rightarrow Y_{i}$ is surjective for $i=0$, $1$.
If in addition $A\colon (X_{0},X_{1})\rightarrow (Y_{0},Y_{1})$ is
a~Fredholm operator and ${\mathcal{F}}$ is an admissible
interpolation functor, then $A\colon
{\mathcal{F}}(X_{0},X_{1})\rightarrow
{\mathcal{F}}(Y_{0},Y_{1})$ is Fredholm operator if and only if $\tilde{A}%
\colon {\mathcal{F}}(\widetilde{X}_{0},\widetilde{X}_{1})\rightarrow {%
\mathcal{F}}(Y_{0},Y_{1})$ is Fredholm operator.
\end{lemma}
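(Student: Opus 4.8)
The plan is to build $(\widetilde X_0,\widetilde X_1)$ explicitly by splitting off the finite-dimensional defects of $A$ on the endpoints. Since $A$ is Fredholm on the couple, write $Y_i = A(X_i)\oplus F_i$ with $F_i$ finite-dimensional (here we use that $A(X_i)$ is complemented, which for a Fredholm operator is automatic since finite-codimensional closed subspaces are complemented). The natural candidate is $\widetilde X_i := X_i \oplus F_i$ (external direct sum, with say the $\ell^1$-norm), with the obvious compatible-couple structure coming from $X_0+X_1$ and $F_0+F_1 \subset Y_0+Y_1$; one has to check that $\widetilde X_0 \cap \widetilde X_1$ and $\widetilde X_0 + \widetilde X_1$ make sense, which they do because the $F_i$ sit inside the ambient space $Y_0+Y_1$ and the $X_i$ inside $X_0+X_1$, and these two ambient spaces are unrelated — so really $\widetilde X_0+\widetilde X_1 = (X_0+X_1)\oplus(F_0+F_1)$. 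Define $\tilde A\colon \widetilde X_i \to Y_i$ by $\tilde A(x,f) := Ax + f$. Then $\tilde A$ is surjective on each endpoint by construction, and this is the first assertion.

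For the second assertion I would argue by comparing the two operators via the inclusion $\iota\colon X_i \hookrightarrow \widetilde X_i$, $x\mapsto (x,0)$, which is an interpolation morphism, and the projection $\pi\colon \widetilde X_i \to X_i$, $(x,f)\mapsto x$, also a morphism. Note $\tilde A \circ \iota = A$ as maps $X_0+X_1 \to Y_0+Y_1$. Applying the admissible functor ${\mathcal F}$, we get ${\mathcal F}(\widetilde X_0,\widetilde X_1) = {\mathcal F}(X_0,X_1) \oplus {\mathcal F}(F_0,F_1)$, and since each $F_i$ is finite-dimensional the space ${\mathcal F}(F_0,F_1)$ is just a finite-dimensional space sitting inside $F_0+F_1$ (here admissibility, or just finite-dimensionality, guarantees ${\mathcal F}(F_0,F_1)=F_0+F_1$ as sets with equivalent norms, because all norms on a finite-dimensional space are equivalent and $F_0\cap F_1$ may be smaller — one should be slightly careful, but the upshot is it is finite-dimensional and $\tilde A$ maps it isomorphically onto a finite-dimensional piece of $Y_0+Y_1$). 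Consequently $\tilde A$ on ${\mathcal F}(\widetilde X_0,\widetilde X_1)$ differs from $A$ on ${\mathcal F}(X_0,X_1)$ by a finite-rank/finite-dimensional modification of domain, and Fredholmness of one is equivalent to Fredholmness of the other: concretely, $\tilde A$ is Fredholm on ${\mathcal F}(\widetilde X_0,\widetilde X_1)$ iff its restriction to the closed finite-codimensional subspace ${\mathcal F}(X_0,X_1)$ is Fredholm, and that restriction is exactly $A$.

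For the kernel and cokernel bookkeeping: $\ker \tilde A|_{{\mathcal F}(\widetilde X)} = \{(x,f): Ax=-f\}$, which is finite-dimensional iff $\ker A|_{{\mathcal F}(X)}$ is (the extra coordinates live in the finite-dimensional ${\mathcal F}(F_0,F_1)$); and the range of $\tilde A$ on ${\mathcal F}(\widetilde X)$ equals $A({\mathcal F}(X)) + {\mathcal F}(F_0,F_1)$ inside ${\mathcal F}(Y)$, which is closed of finite codimension iff $A({\mathcal F}(X))$ is. These two equivalences together give the claim.

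The main obstacle I anticipate is the finite-dimensional subtlety in the identification ${\mathcal F}(F_0,F_1)$ and, relatedly, making sure the decomposition ${\mathcal F}(\widetilde X_0,\widetilde X_1)={\mathcal F}(X_0,X_1)\oplus{\mathcal F}(F_0,F_1)$ holds as a \emph{topological} direct sum with the norm behaving well — this is where admissibility of ${\mathcal F}$ and the standard fact (quoted in the introduction) that finite-dimensional subspaces of Banach spaces are complemented get used. Everything else is a routine chase of inclusions and projections through the functor.
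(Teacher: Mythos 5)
Your construction is close in spirit to the paper's but differs in a way that matters for how clean the final step is.~You form $\widetilde X_i := X_i \oplus F_i$ with $F_i$ both living in a single ``extra'' coordinate slot, so that $\widetilde X_0 \cap \widetilde X_1 = (X_0 \cap X_1) \oplus (F_0 \cap F_1)$; since $F_0 \cap F_1$ may well be nonzero, you are then forced into the finite--rank bookkeeping you sketch.~The paper instead uses \emph{three} coordinate slots, setting $\widetilde X_0 := X_0 \times M_0 \times \{0\}$ and $\widetilde X_1 := X_1 \times \{0\} \times M_1$ inside $(X_0+X_1)\times M_0 \times M_1$ (with $M_i$ the complements, not required finite--dimensional for the first assertion).~Because the $M_0$--part and the $M_1$--part sit in different slots, $\widetilde X_0 \cap \widetilde X_1 = (X_0\cap X_1)\times\{0\}\times\{0\}$, and admissibility of $\mathcal F$ then gives $\mathcal F(\widetilde X_0,\widetilde X_1)=\mathcal F(X_0,X_1)\times\{0\}\times\{0\}$ with no extra piece at all.~Since $\tilde A$ restricted to that space is literally $A$, the Fredholm equivalence is immediate --- no perturbation argument, no discussion of what $\mathcal F(F_0,F_1)$ is.~This is the point of the ``simple trick from \cite{ACK}'' that the paper alludes to; your version buys nothing and costs a layer of care.

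Two smaller issues in your writeup.~First, the identification you hedge on comes out the opposite way from your guess: by admissibility, $\mathcal F(F_0,F_1)$ is contained in the closure of $F_0\cap F_1$ in $F_0+F_1$, hence equals $F_0\cap F_1$ (finite--dimensional, so closed), not $F_0+F_1$.~Your overall argument still goes through because you only used finite--dimensionality, but the claim as written is wrong.~Second, the first assertion of the lemma is stated under the weaker hypothesis that $A(X_i)$ is merely complemented in $Y_i$ (no Fredholmness), and your proof fixes $F_i$ finite--dimensional from the outset; you should phrase the first half with arbitrary topological complements $M_i$, specializing to finite--dimensional $F_i$ only when the Fredholm hypothesis enters.~Finally, be careful with the ``range is closed iff'' step: a subspace $W$ with $W+F$ closed and $F$ finite--dimensional need not itself be closed; the safe route (which your codimension count implicitly takes) is the standard fact that an operator with finite--dimensional kernel and finite algebraic codimension of range is automatically Fredholm, so one never needs to argue closedness directly.
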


\begin{proof} It follows by hypothesis that we can write $Y_{i}$ as the
topological direct sum $Y_{i}=A(X_{i})\oplus M_{i}$ for $i=0,1$. Let
\begin{equation*}
\widetilde{X}_{0}:=X_{0}\times M_{0}\times \{0\},\quad \,\widetilde{X}%
_{1}:=X_{1}\times \{0\}\times M_{1}
\end{equation*}%
be Banach spaces equipped with the norms
\begin{equation*}
\Vert (x_{0},m_{0},0)\Vert _{\widetilde{X}_{0}}=\Vert x_{0}\Vert
_{X_{0}}+\Vert m_{0}\Vert _{Y_{0}},\quad \,\Vert (x_{1},0,m_{1})\Vert _{%
\widetilde{X}_{1}}=\Vert x_{1}\Vert _{X_{1}}+\Vert m_{1}\Vert _{Y_{1}}.
\end{equation*}%
Now, we define an operator $\tilde{A}\colon (\widetilde{X}_{0},\widetilde{X}%
_{1})\rightarrow (Y_{0},Y_{1})$ by the formula
\begin{equation*}
\tilde{A}(x,m_{0},m_{1})=A(x)+m_{0}+m_{1},\quad \,(x,m_{0},m_{1})\in
(X_{0}+X_{1})\times M_{0}\times M_{1}.
\end{equation*}%
Obviously $\tilde{A}(\widetilde{X}_{i})=Y_{i}$, and the operator $\tilde{A}%
\colon (\widetilde{X}_{0},\widetilde{X}_{1})\rightarrow (Y_{0},Y_{1})$ is
a~Fredholm on endpoint spaces. To this end, notice that our hypothesis that $%
{\mathcal{F}}$ is admissible yields that
\begin{equation*}
{\mathcal{F}}(\widetilde{X}_{0},\widetilde{X}_{1})={\mathcal{F}}%
(X_{0},X_{1})\times \{0\}\times \{0\}.
\end{equation*}%
In consequence the operator $\tilde{A}\colon {\mathcal{F}}(\widetilde{X}_{0},%
\widetilde{X}_{1})\rightarrow {\mathcal{F}}(Y_{0},Y_{1})$ is Fredholm if and
only if the operator $A\colon {\mathcal{F}}(X_{0},X_{1})\rightarrow {%
\mathcal{F}}(Y_{0},Y_{1})$ is Fredholm.
\end{proof}

\subsection{The decomposition property and the space $\protect\widetilde{V}$}

Let $A$ be an intermediate space with respect to a~couple
$(A_{0},A_{1})$ of Banach spaces. We say that $A$ has the
decomposition property on the couple $(A_{0},A_{1})$ if
\begin{equation*}
A=A\cap A_{0}+A\cap A_{1}.
\end{equation*}%
Following \cite{AK1}, an interpolation functor ${\mathcal{F}}$ is said to
have the \emph{decomposition property} on the couple $\vec{X}=(X_{0},X_{1})$
if ${\mathcal{F}}(\vec{X})$ has the decomposition property on $\vec{X}$.

If ${\mathcal{F}}$ is an interpolation functor, then for a~given operator $%
A\colon \vec{X}\rightarrow \vec{Y}$ we put
\begin{equation*}
V^{0}(A)=V_{{\mathcal{F}}}^{0}(A):=\{x\in {\text{ker}}_{X_{0}+X_{1}}A;%
\,x=x_{0}+x_{1},\,x_{0}\in {\mathcal{F}}(\vec{X})\cap X_{0},\,x_{1}\in
X_{1}\},
\end{equation*}%
and
\begin{equation*}
V^{1}(A)=V_{{\mathcal{F}}}^{1}(A):=\{x\in {\text{ker}}_{X_{0}+X_{1}}A;%
\,x=x_{0}+x_{1},\,x_{0}\in X_{0},\,x_{1}\in {\mathcal{F}}(\vec{X})\cap
X_{1}\}.
\end{equation*}%
Then the space $\widetilde{V}$ is any linear subspace of $\mathrm{ker}%
_{X_{0}+X_{1}}A$ such that%
\begin{equation*}
\mathrm{ker}_{X_{0}+X_{1}}A=(V^{0}(A)+V^{1}(A))\oplus \widetilde{V}.
\end{equation*}

Let $U={\text{span}}\{e_{1},e_{2},...,e_{n}\}$ $\subset
\widetilde{V}$, where $e_{1},...,e_{n}$ are linearly independent. Let us fix the decompositions
\begin{equation*}
e_{i}=e_{i}^{(0)}+e_{i}^{(1)},\quad \,1\leq i\leq n,
\end{equation*}%
where $e_{i}^{(0)}\in X_{0}$, $e_{i}^{(1)}\in X_{1}$ for each $1\leq i\leq n$
and define operators $P_{X_{0}}$ $:U\rightarrow X_{0}$ and $%
P_{X_{1}}:U\rightarrow X_{1}$ by formulas%
\begin{equation}
P_{X_{0}}(\sum_{i=1}^{n}\lambda _{i}e_{i})=\sum_{i=1}^{n}\lambda
_{i}e_{i}^{(0)},\quad \,P_{X_{1}}(u)=\sum_{i=1}^{n}\lambda _{i}e_{i}^{(1)}.
\label{PX}
\end{equation}%
If $\dim (\widetilde{V})<\infty $, then we will define operators
$P_{X_{0}}$, $P_{X_{1}}$ on the whole space $\widetilde{V}$.

Now, we are ready to state the following result, which will be very useful
in the proof of factorization theorem. We would like to remind the reader
that the interpolation functor ${\mathcal{F}}$ is called regular if $X_{0}\cap
X_{1}$ is dense in ${\mathcal{F}}(X_{0},X_{1})$ for all couples $%
(X_{0},X_{1})$.

\begin{theorem}
\label{lowerFredholm} Suppose that an operator $A\colon
(X_{0},X_{1})\rightarrow (Y_{0},Y_{1})$ be such that $Y_{0}\cap
Y_{1}=A(X_{0})\cap A(X_{1})$. Assume that a~regular interpolation functor ${%
\mathcal{F}}$ has the decomposition property on $(X_{0},X_{1})$.
Then the following two properties are equivalent{\rm:}
\begin{itemize}
\item[\rm{(i)}] $A\colon {\mathcal{F}}(X_{0},X_{1})\rightarrow
{\mathcal{F}}(Y_{0},Y_{1})$ is a lower semi-Fredholm operator.
\item[\rm{(ii)}] $A({\mathcal{F}}(X_{0},X_{1}))$ is a closed subspace of ${\mathcal{F}}%
(Y_{0},Y_{1})$ and $\mathrm{dim}\,\widetilde{V}<\infty $.
\end{itemize}
Moreover, if one of these properties holds then
\begin{equation}
\mathrm{dim}\,{\mathcal{F}}(Y_{0},Y_{1})/A({\mathcal{F}}(X_{0},X_{1}))=%
\mathrm{dim}\,\widetilde{V} \label{NM9}
\end{equation}%
and
\begin{equation}
{\mathcal{F}}(Y_{0},Y_{1})=A({\mathcal{F}}(X_{0},X_{1}))\oplus A(P_{X_{0}}(%
\widetilde{V})).  \label{NM5}
\end{equation}
\end{theorem}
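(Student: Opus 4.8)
The plan is to prove the two implications separately and then extract the dimension formula \eqref{NM9} and the complement formula \eqref{NM5} from the constructions used along the way. Throughout, write $F = {\mathcal F}(\vec X)$, $G = {\mathcal F}(\vec Y)$, and $K := \ker_{X_0+X_1}A$; recall that by hypothesis $F = F\cap X_0 + F\cap X_1$ and that $X_0\cap X_1$ is dense in $F$ (and similarly $Y_0\cap Y_1$ dense in $G$). The assumption $Y_0\cap Y_1 = A(X_0)\cap A(X_1)$ will be used to transfer decompositions of elements of $G$ back into $X_0+X_1$: given $y\in Y_0\cap Y_1$, pick $x_j\in X_j$ with $Ax_j=y$, so that $x_0-x_1\in K$; this is the mechanism linking the geometry of $A$ on $\vec Y$ to the subspace $\widetilde V$.

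First I would prove (i) $\Rightarrow$ (ii) together with \eqref{NM9} and \eqref{NM5}. So assume $A\colon F\to G$ is lower semi-Fredholm, i.e. $\mathrm{codim}\, A(F) = m < \infty$ in $G$; in particular $A(F)$ is closed. The key step is to show $\dim\widetilde V \le m$ and simultaneously produce the splitting \eqref{NM5}. Take any $e_1,\dots,e_n$ linearly independent in $\widetilde V$ and fix decompositions $e_i = e_i^{(0)} + e_i^{(1)}$ with $e_i^{(j)}\in X_j$, giving $P_{X_0}(U)\subset F\cap X_0\subset F$ (since $F$ has the decomposition property, $F\cap X_0\subset F$ automatically, but the point is $e_i^{(0)}\in X_0$ and we want it in $F$ — here one uses that one may choose the decomposition of $e_i$ with $e_i^{(0)}\in F\cap X_0$; this is exactly the definition of $\widetilde V$ being a complement of $V^0(A)+V^1(A)$, so in fact a generic $e_i\in\widetilde V$ need NOT have such a decomposition, and the correct reading is that $P_{X_0}(e_i)=e_i^{(0)}\in X_0$ while $Ae_i^{(0)} = Ae_i - Ae_i^{(1)} = -Ae_i^{(1)}\in Y_0\cap Y_1$). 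Thus $A(P_{X_0}(e_i))\in Y_0\cap Y_1\subset G$. The claim is that the classes $[A(P_{X_0}(e_1))],\dots,[A(P_{X_0}(e_n))]$ are linearly independent in $G/A(F)$: a relation $\sum\lambda_i A(P_{X_0}(e_i)) = A(f)$ with $f\in F$ would give, writing $f = f_0 + f_1$ with $f_j\in F\cap X_j$, that $\sum\lambda_i e_i^{(0)} - f_0 \in X_0$ and $A$ of it equals $-\sum\lambda_i e_i^{(1)} - A f_1 + A f \in$ something forcing $\sum\lambda_i e_i\in V^0(A)+V^1(A)$, hence $\sum\lambda_i e_i = 0$ in $\widetilde V$, hence all $\lambda_i=0$. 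This gives $n\le m$, so $\dim\widetilde V\le m<\infty$; then with $n=\dim\widetilde V$ maximal the same computation shows $A(P_{X_0}(\widetilde V))$ is an algebraic complement of $A(F)$ in $G$, and being finite-dimensional it is closed and topological, which is \eqref{NM5}; comparing dimensions, $\dim\widetilde V = m$, which is \eqref{NM9}.

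Next I would prove (ii) $\Rightarrow$ (i). Assume $A(F)$ is closed in $G$ and $\dim\widetilde V = n < \infty$. I want $\mathrm{codim}\, A(F)\le n$ (then $A\colon F\to G$ is lower semi-Fredholm, and \eqref{NM9}, \eqref{NM5} follow by combining with the previous paragraph's reverse inequality). The main obstacle, and the technical heart of the proof, is this: show that $A(F\cap X_0) + A(F\cap X_1) + A(P_{X_0}(\widetilde V))$ is dense in $G$. Granting this, since $F$ has the decomposition property we have $A(F) = A(F\cap X_0+F\cap X_1) = A(F\cap X_0)+A(F\cap X_1)$, so $A(F) + A(P_{X_0}(\widetilde V))$ is dense in $G$; as $A(F)$ is closed and $A(P_{X_0}(\widetilde V))$ is finite-dimensional, their sum is closed (using the fact quoted in the excerpt that $T(X)$ is closed when $Y = T(X)\oplus N$ with $N$ closed, after first splitting off the part of $A(P_{X_0}(\widetilde V))$ lying in $A(F)$), hence equals $G$, giving $\mathrm{codim}\,A(F)\le n$. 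To prove the density claim: take $y\in Y_0\cap Y_1$ (dense in $G$ by regularity); pick $x_j\in X_j$ with $Ax_j = y$, so $x := x_0 - x_1\in K = (V^0(A)+V^1(A))\oplus\widetilde V$; write $x = v^0 + v^1 + w$ with $v^j\in V^j(A)$, $w\in\widetilde V$. By definition of $V^0(A)$, write $v^0 = a_0 + a_1$ with $a_0\in F\cap X_0$, $a_1\in X_1$ and $A a_0 = -A a_1$; similarly $v^1 = b_0 + b_1$ with $b_1\in F\cap X_1$. Then $y = Ax_0 = A(x_1 + x) = Ax_1 + A(v^0) + A(v^1) + Aw$; rearranging and using $A a_0 = A v^0$ (since $Aa_1 = -Aa_0$... careful: $Av^0 = Aa_0 + Aa_1 = 0$, as $v^0\in K$!). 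So actually $Av^0 = Av^1 = Aw = 0$ and this naive route gives $y = Ax_0$ with nothing gained — the correct argument must instead approximate $y$ by elements $A\xi$ with $\xi\in F\cap X_0 + F\cap X_1 + P_{X_0}(\widetilde V)$, and this is where one genuinely uses that $X_0\cap X_1$ is dense in $F\cap X_j$ combined with an open-mapping / near-surjectivity estimate for $A$ restricted to the relevant pieces. Concretely: for $y\in Y_0\cap Y_1$ and $\varepsilon>0$, use density to get $\tilde x_j\in X_0\cap X_1$ close to $x_j$ in $X_j$; then decompose and push the finite-dimensional "defect" living in $\widetilde V$ onto $P_{X_0}(\widetilde V)$, absorbing the rest into $A(F\cap X_0)+A(F\cap X_1)$ up to error $\varepsilon$. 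I expect this density/approximation step to require the most care — it is essentially a quantitative version of the decomposition property interacting with regularity — while the finite-dimensional bookkeeping and the use of the closed-range facts are routine.

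Finally, having both inequalities $\dim\widetilde V\le\mathrm{codim}\,A(F)$ and $\mathrm{codim}\,A(F)\le\dim\widetilde V$ whenever either side is finite, I would assemble \eqref{NM9}; and \eqref{NM5} was already obtained in the (i) $\Rightarrow$ (ii) step once we know $\dim\widetilde V = \mathrm{codim}\,A(F)<\infty$, since then the linearly independent classes $[A(P_{X_0}(e_i))]$ span $G/A(F)$, so $A(P_{X_0}(\widetilde V))\cap A(F) = \{0\}$ and $A(F)\oplus A(P_{X_0}(\widetilde V)) = G$ as a topological direct sum.
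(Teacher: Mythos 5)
Your sketch of (i)$\Rightarrow$(ii) is essentially the paper's: take $e_1,\dots,e_n$ independent in $\widetilde V$, check that $AP_{X_0}$ is injective on their span, that the images land in $Y_0\cap Y_1$, and that the classes are independent modulo $A({\mathcal F}(X_0,X_1))$, giving $\dim\widetilde V\le m$. But there is a genuine gap in your (ii)$\Rightarrow$(i) step, and it infects \eqref{NM5} as well, since you defer the proof of the complement formula to a density argument that is never carried out.

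You correctly observe that the naive computation $Av^0=Av^1=Aw=0$ gives nothing, and then propose to replace it by an $\varepsilon$-approximation of $y\in Y_0\cap Y_1$ using density of $X_0\cap X_1$ and some open-mapping estimate. That route is neither what the paper does nor clearly executable; the paper establishes the \emph{exact} inclusion $Y_0\cap Y_1\subset A({\mathcal F}(X_0,X_1))\oplus A(P_{X_0}(\widetilde V))$ with no loss, and the ingredient you are missing is an algebraic rearrangement that exploits how the $X_0/X_1$-components of elements of $V^0(A)$ and $V^1(A)$ sit inside ${\mathcal F}(X_0,X_1)$. Concretely, having written $x_0-x_1=v_0+v_1+\tilde v$ with $v_j\in V^j(A)$, $\tilde v\in\widetilde V$, and $v_j=v_j^{(0)}+v_j^{(1)}$ with $v_j^{(i)}\in X_i$, you should not apply $A$ to $v_j$ (that kills it) but instead note that
\begin{equation*}
u_0:=x_0-v_0^{(0)}-v_1^{(0)}-P_{X_0}(\tilde v)=x_1+v_0^{(1)}+v_1^{(1)}+P_{X_1}(\tilde v)\in X_0\cap X_1,
\end{equation*}
and then apply $A$ to $x_0=u_0+v_0^{(0)}+v_1^{(0)}+P_{X_0}(\tilde v)$. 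By the very definitions, $v_0^{(0)}\in{\mathcal F}(X_0,X_1)\cap X_0$ and $v_1^{(1)}\in{\mathcal F}(X_0,X_1)\cap X_1$, so $A(v_0^{(0)})\in A({\mathcal F}(X_0,X_1))$ and $A(v_1^{(0)})=-A(v_1^{(1)})\in A({\mathcal F}(X_0,X_1))$; together with $u_0\in X_0\cap X_1\subset{\mathcal F}(X_0,X_1)$ this gives $y=Ax_0\in A({\mathcal F}(X_0,X_1))+A(P_{X_0}(\widetilde V))$ on the nose. This inclusion uses only $\dim\widetilde V<\infty$, not lower semi-Fredholmness, so the same computation serves both directions: combined with regularity (density of $Y_0\cap Y_1$) and closedness of the sum of a closed subspace and a finite-dimensional one, it yields \eqref{NM5}, the equality $\dim\widetilde V=m$ in the forward direction, and the finite codimension in the backward direction. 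Until you replace the vague approximation step with this exact decomposition, the proof of (ii)$\Rightarrow$(i) and of \eqref{NM5} is incomplete.
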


\vspace{1.5 mm}

\begin{proof} (i)$\Rightarrow $(ii). As $A\colon {\mathcal{F}}%
(X_{0},X_{1})\rightarrow {\mathcal{F}}(Y_{0},Y_{1})$ is a~lower semi-Fredholm
operator then $m=\mathrm{dim}\,{\mathcal{F}}(Y_{0},Y_{1})/A({\mathcal{F}}%
(X_{0},X_{1}))<\infty $. We first show that
${\text{dim}}\,\widetilde{V}\leq m$. Suppose that
${\text{dim}}\,\widetilde{V}\geq n$. Thus there exists a~linear
independent system $\{e_{1},...,e_{n}\}$ in $\widetilde{V}$.
Consider $U={\text{span}}\{e_{1},e_{2},...,e_{n}\}$ and operators
$P_{X_{0}}\colon U\rightarrow X_{0}$ and $P_{X_{1}}\colon
U\rightarrow X_{1}$ defined in (\ref{PX}). We show that both
$AP_{X_{0}}$ and $AP_{X_{1}}$ are injective operators on $U$. To
see this observe that if $P_{X_{0}}(u)\in \ker _{X_{0}+X_{1}}A\cap
X_{0}$ then $P_{X_{1}}(u)=u-P_{X_{0}}(u)\in X_{1}\cap
\ker _{X_{0}+X_{1}}A.$ Combining this with embeddings $X_{0}\cap {\text{ker}%
}_{X_{0}+X_{1}}A\subset V^{1}(A)$ and $X_{1}\cap {\text{ker}}%
_{X_{0}+X_{1}}A\subset V^{0}(A)$ we conclude that
\begin{equation*}
u=P_{X_{0}}(u)+P_{X_{1}}(u)\in U\cap (V^{0}(A)+V^{1}(A))=\{0\}.
\end{equation*}%
In consequence $AP_{X_{0}}$ is an injective operator on $U$.
Similarly, $AP_{X_{1}}$ is also an injective operator on $U$.

Moreover, since $u\in \ker _{X_{0}+X_{1}}A$,
\begin{equation*}
A(u)=A(P_{X_{0}}(u)+P_{X_{1}}(u))=A(P_{X_{0}}(u))+A(P_{X_{1}}(u))=0,
\end{equation*}%
thus
\begin{equation*}
A(P_{X_{0}}(u))=-A(P_{X_{1}}(u))\in Y_{0}\cap Y_{1}.
\end{equation*}%
This shows that $W:=A(P_{X_{0}}(U))=A(P_{X_{1}}(U))$ is an $n$-dimensional
subspace of $Y_{0}\cap Y_{1}$. We claim that
\begin{equation*}
W\cap A({\mathcal{F}}(X_{0},X_{1}))=\{0\}.
\end{equation*}%
To see this, suppose that there exists $u\in U$ and $x\in {\mathcal{F}}%
(X_{0},X_{1})$ such that
\begin{equation*}
A(P_{X_{0}}(u))=-A(P_{X_{1}}(u))=Ax.
\end{equation*}%
Then $P_{X_{0}}(u)-x$, $P_{X_{1}}(u)+x\in \ker _{X_{0}+X_{1}}A$. Since ${%
\mathcal{F}}$ has a decomposition property on $(X_{0},X_{1})$, $%
x=x_{0}+x_{1} $ with $x_{0}\in X_{0}\cap {\mathcal{F}}(X_{0},X_{1})$ and $%
x_{1}\in X_{1}\cap {\mathcal{F}}(X_{0},X_{1})$. Therefore,
\begin{equation*}
P_{X_{0}}(u)-x=\big (P_{X_{0}}(u)-x_{0}\big )-x_{1}\in V^{1}(A),
\end{equation*}%
\begin{equation*}
P_{X_{1}}(u)+x=x_{0}+\big (P_{X_{1}}(u)+x_{1}\big )\in V^{0}(A),
\end{equation*}%
and consequently
\begin{equation*}
u=P_{X_{0}}(u)+P_{X_{1}}(u)\in V^{0}(A)+V^{1}(A).
\end{equation*}%
Thus $u=0$ because $u\in \widetilde{V}\cap \big (V^{0}(A)+V^{1}(A)\big )%
=\{0\}$ and so the claim is proved. So $n\leq m$ and therefore $\dim (%
\widetilde{V})\leq m$. So to prove that $\dim (\widetilde{V})=m$ it is
enough to show
\begin{equation*}
{\mathcal{F}}(Y_{0},Y_{1})=A({\mathcal{F}}(X_{0},X_{1}))\oplus A(P_{X_{0}}(%
\widetilde{V})).
\end{equation*}%
Let us take $U=\widetilde{V}.$ Then the following inclusion holds:
\begin{equation*}
Y_{0}\cap Y_{1}\subset A({\mathcal{F}}(X_{0},X_{1}))\oplus W,
\end{equation*}%
where $W=AP_{X_{0}}(\widetilde{V})$. To prove this fix $y\in
Y_{0}\cap Y_{1}$. Then our hypothesis $Y_{0}\cap
Y_{1}=A(X_{0})\cap A(X_{1})$ implies that there exist $x_{0}\in
X_{0}$ and $x_{1}\in X_{1}$ such that
\begin{equation*}
Ax_{0}=Ax_{1}=y.
\end{equation*}%
Since $x_{0}-x_{1}\in \ker _{X_{0}+X_{1}}A$, there exist elements
$v_{0}\in V^{0}(A)$, $v_{1}\in V^{1}(A)$ and $\widetilde{v}\in
\widetilde{V}$ such that
\begin{equation*}
x_{0}-x_{1}=v_{0}+v_{1}+\tilde{v}.
\end{equation*}%
For $j=0,1$ let $v_{j}=v_{j}^{(0)}+v_{j}^{(1)}$, where $v_{j}^{(0)}\in X_{0}$
and $v_{j}^{(1)}\in X_{1}$. Then we have
\begin{equation*}
x_{0}-x_{1}=v_{0}^{(0)}+v_{0}^{(1)}+v_{1}^{(0)}+v_{1}^{(1)}+P_{X_{0}}(\tilde{%
v})+P_{X_{1}}(\tilde{v}).
\end{equation*}%
This implies that
\begin{equation*}
u_{0}:=x_{0}-v_{0}^{(0)}-v_{1}^{(0)}-P_{X_{0}}(\tilde{v}%
)=x_{1}+v_{0}^{(1)}+v_{1}^{(1)}+P_{X_{1}}(\tilde{v})\in X_{0}\cap X_{1}
\end{equation*}%
and thus
\begin{equation*}
x_{0}=u_{0}+v_{0}^{(0)}+v_{1}^{(0)}+P_{X_{0}}(\tilde{v}).
\end{equation*}%
Combining this we obtain
\begin{equation*}
y=A(x_{0})=A(u_{0})+A(v_{0}^{(0)})+A(v_{1}^{(0)})+A(P_{X_{0}}(\tilde{v})).
\end{equation*}%
Note that from the definition of the spaces $V^{0}(A)+V^{1}(A)$, it follows
that
\begin{equation*}
v_{j}^{(j)}\in {\mathcal{F}}(X_{0},X_{1}),\quad \,j=0,1.
\end{equation*}%
Hence
\begin{equation*}
A(v_{0}^{(0)})\in A({\mathcal{F}}(X_{0},X_{1}))\quad {\text{and $%
A(v_{1}^{(0)})=-A(v_{1}^{(1)})\in A({\mathcal{F}}(X_{0},X_{1}))$}}.
\end{equation*}%
Since $u_{0}\in X_{0}\cap X_{1}\in {\mathcal{F}}(X_{0},X_{1}))$ therefore $%
A(u_{0})+A(v_{0}^{(0)})+A(v_{1}^{0})\in A({\mathcal{F}}(X_{0},X_{1}))$ and
we have
\begin{equation*}
y\in A({\mathcal{F}}(X_{0},X_{1}))\oplus W.
\end{equation*}%
Our hypothesis that $A$ is a~lower semi-Fredholm operator implies that $A({%
\mathcal{F}}(X_{0},X_{1}))$ is a~closed subspace of ${\mathcal{F}}%
(Y_{0},Y_{1})$, and so $A((X_{0},X_{1}))\oplus W$ is a~closed subspace of ${%
\mathcal{F}}(Y_{0},Y_{1}))$ (by the fact that $W$ is finite dimensional
subspace of ${\mathcal{F}}(Y_{0},Y_{1})$). Combining this with the inclusion
$Y_{0}\cap Y_{1}\subset A({\mathcal{F}}(X_{0},X_{1}))\oplus W$ and
regularity of ${\mathcal{F}}$, we obtain the following topological direct
sum,
\begin{equation}
{\mathcal{F}}(Y_{0},Y_{1})=A({\mathcal{F}}(X_{0},X_{1}))\oplus A(P_{X_{0}}(%
\widetilde{V})).  \label{K4}
\end{equation}%
So $\dim \widetilde{V}=m$ and formula (\ref{NM5}) is also proved.

(ii)$\Rightarrow $(i) Since the space $\widetilde{V}$ is finite
dimensional the above proof shows that
$A({\mathcal{F}}(X_{0},X_{1}))$ is a~closed subspace of
${\mathcal{F}}(Y_{0},Y_{1})$ and moreover
$A(P_{X_{0}}(\widetilde{V}))$ is a~
subspace of $Y_{0}\cap Y_{1}$ with dimension equal to the dimension of $%
\widetilde{V}$, so equality (\ref{K4}) implies (i).
\end{proof}

\begin{remark}
In the case when the interpolation functor ${\mathcal{F}}$ is not regular, the
above proof gives that $\dim \widetilde{V}\leq \mathrm{dim}\,{\mathcal{F}}%
(Y_{0},Y_{1})/A({\mathcal{F}}(X_{0},X_{1}))$.
\end{remark}

\begin{remark}
\label{REM} In the proof it was shown that the operator
$AP_{X_{0}}$ is injective and maps $\widetilde{V}$ into $Y_{0}\cap
Y_{1}$.
\end{remark}

Theorem \ref{lowerFredholm} looks similar to the following result
due to Albrecht and Schindler \cite{ASch}, which extends Herrero's
result mentioned in the introduction and shows that under some
restrictive hypotheses we have stability of Fredholm properties
under \textit{any} interpolation functor.

\begin{theorem}
Let $\vec{X}=(X_{0},X_{1})$, $\vec{Y}=(Y_{0},Y_{1})$ be Banach couples such
that $X_{1}\subset X_{0}$, $Y_{1}\subset Y_{0}$, and $Y_{1}$ is dense in $%
Y_{0}$. Suppose that $T\colon \vec{X}\rightarrow \vec{Y}$ is a~Fredholm
operator with $\mathrm{ind}(T\colon X_{0}\rightarrow Y_{0})=\mathrm{ind}%
(T\colon X_{1}\rightarrow Y_{1})$, and let ${\mathcal{F}}$ be an arbitrary
interpolation functor.~Then the operator $T\colon {\mathcal{F}}(\vec{X}%
)\rightarrow {\mathcal{F}}(\vec{Y})$ is Fredholm with $\mathrm{ind}(T)=%
\mathrm{ind}(T\colon X_{0}\rightarrow Y_{0})$. Moreover $\ker _{{\mathcal{F}}%
(\vec{X})}T=\ker _{X_{0}}T=\ker _{X_{1}}T$, and there exists
a~finite dimensional subspace $H$ of $Y_{1}$ $($independent of
${\mathcal{F}})$ such that $Y_{j}=H\oplus T(X_{j})$ for $j=0,1$ and ${\mathcal{F}}(\vec{Y}%
)=H\oplus T({\mathcal{F}}(\vec{Y}))$.
\end{theorem}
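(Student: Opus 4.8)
The plan is to distil the common structure of $T$ at the two endpoints and then realise both $\mathcal{F}(\vec{X})$ and $\mathcal{F}(\vec{Y})$ as retracts, in the spirit of Lemma~\ref{reduction}. \emph{First} I would show the endpoint kernels coincide. Put $N_{j}=\ker_{X_{j}}T$, so $N_{1}\subseteq N_{0}$ since $X_{1}\subseteq X_{0}$, and let $i$ be the common index, so that $\mathrm{codim}_{Y_{j}}T(X_{j})=\dim N_{j}-i=:d_{j}$. Because $T(X_{0})$ is closed in $Y_{0}$ and $Y_{1}$ is dense in $Y_{0}$, the quotient map $Y_{0}\to Y_{0}/T(X_{0})$ restricts to a surjection of $Y_{1}$ onto the finite-dimensional space $Y_{0}/T(X_{0})$ with kernel $Y_{1}\cap T(X_{0})$, whence $\mathrm{codim}_{Y_{1}}(Y_{1}\cap T(X_{0}))=d_{0}$. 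Since $T(X_{1})\subseteq Y_{1}\cap T(X_{0})\subseteq Y_{1}$, additivity of codimension along this chain gives $\dim\bigl((Y_{1}\cap T(X_{0}))/T(X_{1})\bigr)=d_{1}-d_{0}=\dim N_{1}-\dim N_{0}\le 0$, which forces $\dim N_{1}=\dim N_{0}$ and $T(X_{1})=Y_{1}\cap T(X_{0})$. Hence $N:=N_{0}=N_{1}$ is a finite-dimensional subspace of $X_{0}\cap X_{1}$, and so $\ker_{\mathcal{F}(\vec{X})}T=\mathcal{F}(\vec{X})\cap N=N$ for every functor $\mathcal{F}$.

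\emph{Next}, since $T(X_{0})$ is closed of codimension $d_{0}$ and $Y_{1}$ is dense, lifting a basis of $Y_{0}/T(X_{0})$ into $Y_{1}$ produces a $d_{0}$-dimensional $H\subseteq Y_{1}$ with $Y_{0}=T(X_{0})\oplus H$; intersecting with $Y_{1}$ and using $Y_{1}\cap T(X_{0})=T(X_{1})$ yields $Y_{1}=T(X_{1})\oplus H$, so $H$ is simultaneously a topological complement of $T(X_{j})$ in $Y_{j}$ for $j=0,1$ and depends only on $T,\vec{X},\vec{Y}$. Now form the couple $\vec{Z}=(T(X_{0}),T(X_{1}))$ with the subspace topologies inherited from $\vec{Y}$; by the open mapping theorem $T$ corestricts to a morphism $T^{\circ}\colon\vec{X}\to\vec{Z}$ which is surjective on endpoints with kernel $N$, the inclusion $\iota\colon\vec{Z}\to\vec{Y}$ is a morphism with $T=\iota\circ T^{\circ}$, and the projection $P\colon Y_{0}\to T(X_{0})$ along $H$ restricts (using the previous sentence) to a morphism $P\colon\vec{Y}\to\vec{Z}$ with $P\circ\iota=\mathrm{id}_{\vec{Z}}$. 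Applying $\mathcal{F}$, the map $\mathcal{F}(\iota)$ is injective, $\mathcal{F}(P)\mathcal{F}(\iota)=\mathrm{id}$, and $\ker_{\mathcal{F}(\vec{Y})}\mathcal{F}(P)=\mathcal{F}(\vec{Y})\cap H=H$ because $H\subseteq Y_{0}\cap Y_{1}\subseteq\mathcal{F}(\vec{Y})$; hence $\mathcal{F}(\vec{Y})=H\oplus\mathcal{F}(\iota)\bigl(\mathcal{F}(\vec{Z})\bigr)$ as a topological direct sum.

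\emph{Then} I would check that $T^{\circ}$ stays surjective after applying $\mathcal{F}$. As $\ker T^{\circ}=N$ is finite-dimensional and contained in $X_{0}\cap X_{1}$, pick a closed complement $X_{0}'$ of $N$ in $X_{0}$; then $X_{1}':=X_{1}\cap X_{0}'$ is a closed complement of $N$ in $X_{1}$, so $\vec{X}$ is the direct sum of the couples $(N,N)$ and $(X_{0}',X_{1}')$, and $T^{\circ}$ restricts to an isomorphism of couples $(X_{0}',X_{1}')\xrightarrow{\sim}\vec{Z}$. Since $\mathcal{F}$ commutes with finite direct sums and $\mathcal{F}(N,N)=N$, the operator $\mathcal{F}(T^{\circ})\colon\mathcal{F}(\vec{X})=N\oplus\mathcal{F}(X_{0}',X_{1}')\to\mathcal{F}(\vec{Z})$ is surjective with kernel $N$. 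Therefore $T(\mathcal{F}(\vec{X}))=\mathcal{F}(\iota)(\mathcal{F}(\vec{Z}))$, and combining with the second paragraph, $\mathcal{F}(\vec{Y})=H\oplus T(\mathcal{F}(\vec{X}))$. Consequently $T(\mathcal{F}(\vec{X}))$ is closed of finite codimension $\dim H=d_{0}$, $\ker_{\mathcal{F}(\vec{X})}T=N$, and $\mathrm{ind}(T\colon\mathcal{F}(\vec{X})\to\mathcal{F}(\vec{Y}))=\dim N-d_{0}=i$, with $H\subseteq Y_{1}$ independent of $\mathcal{F}$, which is exactly the assertion.

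\emph{The main obstacle} is the first step: the equality $\ker_{X_{0}}T=\ker_{X_{1}}T$ is precisely where the hypotheses of equal endpoint indices and density of $Y_{1}$ in $Y_{0}$ are consumed, and the codimension bookkeeping must be run along the correct chain $T(X_{1})\subseteq Y_{1}\cap T(X_{0})\subseteq Y_{1}$. A secondary subtlety is the surjectivity step: for a morphism of couples that is surjective on both endpoints, surjectivity after applying $\mathcal{F}$ is false in general (this is essentially the subspace problem), and what rescues the argument here is that the kernel is finite-dimensional and lies in $X_{0}\cap X_{1}$, so it can be split off compatibly on both endpoints.
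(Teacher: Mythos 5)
The paper does not prove this theorem; it is attributed to Albrecht and Schindler and stated only for comparison with the preceding Theorem on lower semi-Fredholm operators, so there is no in-paper proof to compare against. Your argument is correct and self-contained.

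Both key steps exploit that the hypotheses push the relevant finite-dimensional spaces into the intersections $X_{0}\cap X_{1}$ and $Y_{0}\cap Y_{1}$, which is precisely what makes the result hold for an \emph{arbitrary} interpolation functor. First, the codimension bookkeeping along $T(X_{1})\subseteq Y_{1}\cap T(X_{0})\subseteq Y_{1}$, using that a dense subspace maps onto the finite-dimensional quotient $Y_{0}/T(X_{0})$, combined with equality of the endpoint indices, forces $\ker_{X_{0}}T=\ker_{X_{1}}T$ and $Y_{1}\cap T(X_{0})=T(X_{1})$. This places $N=\ker T$ inside $X_{1}=X_{0}\cap X_{1}$ and lets you choose the complement $H$ inside $Y_{1}=Y_{0}\cap Y_{1}$. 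Second, once that is done the situation is a compatible direct-sum decomposition of couples: $\vec{X}=(N,N)\oplus (X_{0}',X_{1}')$ and $\vec{Y}=\vec{Z}\oplus (H,H)$ with $\vec{Z}=(T(X_{0}),T(X_{1}))$, and $T$ restricts to a couple isomorphism $(X_{0}',X_{1}')\to \vec{Z}$. Every interpolation functor preserves such decompositions, because the projections and inclusions are morphisms of couples and functoriality transports the idempotent identities; hence $\mathcal{F}(\vec{X})=N\oplus \mathcal{F}(X_{0}',X_{1}')$, $\mathcal{F}(\vec{Y})=H\oplus\mathcal{F}(\vec{Z})$, and $T(\mathcal{F}(\vec{X}))=\mathcal{F}(\vec{Z})$. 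This is the kernel-side analogue of the retract trick in the paper's reduction lemma, and it sidesteps all the regularity, admissibility and decomposition-property hypotheses needed for the paper's sharper criteria precisely because everything already lives in the intersections. You are also right to flag the surjectivity of $T^{\circ}$ after interpolation as the subtle point: without the finite-dimensional kernel sitting in $X_{0}\cap X_{1}$, this would be exactly the hard subspace problem.

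A minor remark: the final equality in the statement reads $\mathcal{F}(\vec{Y})=H\oplus T(\mathcal{F}(\vec{Y}))$, which is evidently a misprint for $T(\mathcal{F}(\vec{X}))$; your proof produces the corrected form.
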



We also have a~variant of the Theorem \ref{lowerFredholm} in the case of
Fredholm operators. For the sake of completeness we formulate it separately.


\begin{theorem}
\label{fredholm} Suppose that an operator $A\colon (X_{0},X_{1})\rightarrow
(Y_{0},Y_{1})$ be such that $Y_{0}\cap Y_{1}=A(X_{0})\cap A(X_{1})$. Assume
that a~regular interpolation functor ${\mathcal{F}}$ has the decomposition
property on $\vec{X}$ and $A\colon {\mathcal{F}}(X_{0},X_{1})\rightarrow {%
\mathcal{F}}(Y_{0},Y_{1})$ is a~Fredholm operator with $n(A)=\mathrm{dim}(%
\mathrm{ker}_{X_{0}+X_{1}}A\cap {\mathcal{F}}(X_{0},X_{1}))$ and $d(A)=%
\mathrm{dim}\,\big ({\mathcal{F}}(Y_{0},Y_{1})/A({\mathcal{F}}(X_{0},X_{1}))%
\big).$ Then
\begin{itemize}
\item[{\rm(i)}] The space $V^{0}(A)\cap V^{1}(A)$ is finite
dimensional with $\func{dim}(V^{0}(A)\cap V^{1}(A))=n(A).$
\item[{\rm(ii)}] The space $\widetilde{V}$ is finite dimensional
with $\mathrm{dim}\,\widetilde{V}=d(A)$ and
\begin{equation*}
{\mathcal{F}}(Y_{0},Y_{1})=A({\mathcal{F}}(X_{0},X_{1}))\oplus A(P_{X_{0}}(%
\widetilde{V})).
\end{equation*}
\end{itemize}
\end{theorem}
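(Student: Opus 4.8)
The strategy is to reduce Theorem~\ref{fredholm} to the lower semi-Fredholm statement of Theorem~\ref{lowerFredholm} together with a dual bookkeeping argument for the kernel. Since $A\colon {\mathcal{F}}(X_0,X_1)\to{\mathcal{F}}(Y_0,Y_1)$ is Fredholm, it is in particular lower semi-Fredholm, so Theorem~\ref{lowerFredholm} applies directly and gives at once that $\widetilde{V}$ is finite dimensional with $\dim\widetilde{V}=d(A)$ and that
\[
{\mathcal{F}}(Y_0,Y_1)=A({\mathcal{F}}(X_0,X_1))\oplus A(P_{X_0}(\widetilde{V})).
\]
This is precisely part~(ii). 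So the only real work is part~(i): identifying $\ker_{X_0+X_1}A\cap{\mathcal{F}}(X_0,X_1)$ with $V^0(A)\cap V^1(A)$ and computing its dimension.

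For the kernel, first observe that $\ker_{X_0+X_1}A\cap{\mathcal{F}}(X_0,X_1)\subset V^0(A)\cap V^1(A)$: if $x\in\ker A$ lies in ${\mathcal{F}}(\vec X)$, then by the decomposition property $x=x_0+x_1$ with $x_0\in X_0\cap{\mathcal{F}}(\vec X)$ and $x_1\in X_1\cap{\mathcal{F}}(\vec X)$; this representation witnesses membership in $V^0(A)$, and the same representation (reading $x_1$ as the ${\mathcal{F}}(\vec X)\cap X_1$ part and $x_0\in X_0$) witnesses membership in $V^1(A)$. Conversely, if $x\in V^0(A)\cap V^1(A)$ then $x$ has a representation $x=x_0+x_1$ with $x_0\in{\mathcal{F}}(\vec X)\cap X_0$ and $x_1\in X_1$, and \emph{another} representation $x=x_0'+x_1'$ with $x_0'\in X_0$, $x_1'\in{\mathcal{F}}(\vec X)\cap X_1$. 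Then $x_0-x_0'=x_1'-x_1\in X_0\cap X_1\subset{\mathcal{F}}(\vec X)$, so $x_0'=x_0-(x_0-x_0')\in{\mathcal{F}}(\vec X)$ and hence $x=x_0'+x_1'\in{\mathcal{F}}(\vec X)$. Thus $\ker_{X_0+X_1}A\cap{\mathcal{F}}(\vec X)=V^0(A)\cap V^1(A)$, and since the left side is finite dimensional (equal to $\ker_{{\mathcal{F}}(\vec X)}A$, which is finite dimensional by the Fredholm hypothesis) of dimension $n(A)$, we get $\dim(V^0(A)\cap V^1(A))=n(A)$.

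The one point that needs a little care is the claim $v_j^{(j)}\in{\mathcal{F}}(X_0,X_1)$ used implicitly above and already invoked in the proof of Theorem~\ref{lowerFredholm}: this is exactly how the spaces $V^0(A)$, $V^1(A)$ are set up, namely the $X_0$-component of an element of $V^0(A)$ lies in ${\mathcal{F}}(\vec X)\cap X_0$ and likewise for $V^1(A)$ and $X_1$. I would state this explicitly as the content of the definitions rather than reprove it. The main obstacle — such as it is — is purely organizational: making sure the two-representation argument for $V^0(A)\cap V^1(A)\subset{\mathcal{F}}(\vec X)$ is written cleanly, since it is the only step not handed to us verbatim by Theorem~\ref{lowerFredholm}. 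Everything else is a direct citation of the preceding theorem applied in the Fredholm (hence lower semi-Fredholm) case.
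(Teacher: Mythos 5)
Your proof is correct and takes essentially the same approach as the paper: the paper also derives the theorem as a direct consequence of Theorem~\ref{lowerFredholm} together with the identity $V^{0}(A)\cap V^{1}(A)={\mathcal{F}}(X_{0},X_{1})\cap \ker_{X_{0}+X_{1}}A$, which it attributes to the decomposition property without elaborating. You have simply supplied the short two-representation argument that the paper leaves implicit.
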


\textit{\ }This theorem is a direct consequence of Theorem \ref%
{lowerFredholm} and the fact that our hypothesis that
$\mathcal{F}$ has the decomposition property on $\vec{X}$ easily
gives
\begin{equation}
V^{0}(A)\cap V^{1}(A)={\mathcal{F}}((X_{0},X_{1}))\cap {\text{ker}}%
_{X_{0}+X_{1}}A.  \label{NM6}
\end{equation}

Since $(\cdot )_{\theta ,q}$ is a~regular interpolation functor for all $%
\theta \in (0,1)$ and $q\in \lbrack 1,\infty )$ the following result is an
immediate consequence of Theorem \ref{fredholm}. In this theorem we denote by $%
\widetilde{V}$ a~linear subspace of
$\mathrm{ker}_{X_{0}+X_{1}}\,A$ such that
\begin{equation}
\mathrm{ker}_{X_{0}+X_{1}}A=(V_{\theta ,q}^{0}+V_{\theta ,q}^{1})\oplus
\widetilde{V}.  \label{NM1}
\end{equation}

\begin{theorem}
\label{realfredholm} Suppose that an operator $A\colon
(X_{0},X_{1})\rightarrow (Y_{0},Y_{1})$ be surjective and $q\in \lbrack
1,\infty )$. Then $A\colon (X_{0},X_{1})_{\theta ,q}\rightarrow
(Y_{0},Y_{1})_{\theta ,q}$ is a~Fredholm operator with $n(A)=\mathrm{dim(ker}%
_{X_{0}+X_{1}}\,A\cap (X_{0},X_{1})_{\theta ,q})$ and $d(A)=\mathrm{dim}\,%
\big ((Y_{0},Y_{1})_{\theta ,q}/A((X_{0},X_{1})_{\theta ,q})\big
)$ if and only if the following conditions are satisfied{\rm:}

\begin{itemize}
\item[{\rm(i)}] The space $V_{\theta ,q}^{0}\cap V_{\theta
,q}^{1}$~is finite dimensional with $\func{dim}(V_{\theta
,q}^0\cap V_{\theta ,q}^{1})=n(A)$.

\item[{\rm(ii)}] The space $\widetilde{V}$ is finite dimensional
with $\mathrm{dim}\,\widetilde{V}=d(A)$ and
\begin{equation*}
(Y_{0},Y_{1})_{\theta ,q}=A((X_{0},X_{1})_{\theta ,q})\oplus A(P_{X_{0}}(%
\widetilde{V})).
\end{equation*}
\end{itemize}
\end{theorem}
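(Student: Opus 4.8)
The plan is to obtain this result by specialising Theorems~\ref{lowerFredholm} and~\ref{fredholm} to the regular interpolation functor $\mathcal{F}=(\cdot)_{\theta,q}$ with $q\in[1,\infty)$; the work then consists in verifying the hypotheses of those theorems and in rewriting their conclusions in terms of the spaces $V_{\theta,q}^{0}$, $V_{\theta,q}^{1}$ of $(\ref{L2})$, $(\ref{L3})$. Since $A$ is surjective we have $A(X_{i})=Y_{i}$ for $i=0,1$, hence $A(X_{0})\cap A(X_{1})=Y_{0}\cap Y_{1}$, which is the standing hypothesis of Theorem~\ref{fredholm}; regularity of $(\cdot)_{\theta,q}$ for $q<\infty$ was recalled above. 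It remains to record that $(\cdot)_{\theta,q}$ has the decomposition property on every couple $\vec{X}$: taking a $J$-representation $x=\sum_{k\in\mathbb{Z}}x_{k}$ of an element $x\in(X_{0},X_{1})_{\theta,q}$, the tails $u=\sum_{k\le 0}x_{k}$ and $v=\sum_{k>0}x_{k}$ lie in $X_{0}\cap(X_{0},X_{1})_{\theta,q}$ and $X_{1}\cap(X_{0},X_{1})_{\theta,q}$ respectively (routine $\ell^{q}$-estimates against the convergent geometric sums), and $x=u+v$.

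The heart of the matter is the identification $V^{j}(A)=V_{\theta,q}^{j}$, $j=0,1$, where $V^{0}(A)$ and $V^{1}(A)$ are the spaces introduced in the previous subsection (with $\mathcal{F}=(\cdot)_{\theta,q}$). Using the decomposition property one sees that $V^{0}(A)=\{x\in\ker_{X_{0}+X_{1}}A:\ x\in(X_{0},X_{1})_{\theta,q}+X_{1}\}$ and, symmetrically, $V^{1}(A)=\{x\in\ker_{X_{0}+X_{1}}A:\ x\in(X_{0},X_{1})_{\theta,q}+X_{0}\}$. Hence it suffices to prove the $K$-functional description $(X_{0},X_{1})_{\theta,q}+X_{1}=\{x\in X_{0}+X_{1}:\ \int_{0}^{1}(t^{-\theta}K(t,x;\vec{X}))^{q}\,\frac{dt}{t}<\infty\}$ together with its analogue for $(X_{0},X_{1})_{\theta,q}+X_{0}$ with $\int_{1}^{\infty}$. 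The inclusion ``$\subseteq$'' is easy: if $x=u+v$ with $u\in(X_{0},X_{1})_{\theta,q}$ and $v\in X_{1}$ then $K(t,x;\vec{X})\le K(t,u;\vec{X})+t\,\|v\|_{X_{1}}$, and $\int_{0}^{1}(t^{-\theta}t\,\|v\|_{X_{1}})^{q}\,\frac{dt}{t}<\infty$ since $(1-\theta)q>0$. The reverse inclusion is the main obstacle and is the classical fundamental-lemma argument: from $\int_{0}^{1}(t^{-\theta}K(t,x;\vec{X}))^{q}\,\frac{dt}{t}<\infty$ one gets $K(t,x;\vec{X})\to 0$ as $t\to 0$, and choosing near-optimal decompositions $x=a_{k}+b_{k}$ at the dyadic points $t=2^{k}$, $k\le 0$, and telescoping the increments $b_{k-1}-b_{k}\in X_{0}\cap X_{1}$ (which satisfy $J(2^{k},b_{k-1}-b_{k};\vec{X})\le C\,K(2^{k},x;\vec{X})$) produces $a=\sum_{k\le 0}(b_{k-1}-b_{k})\in(X_{0},X_{1})_{\theta,q}$ with $x-a\in X_{1}$, the $J$-sum converging because $\sum_{k\le 0}(2^{-k\theta}K(2^{k},x;\vec{X}))^{q}\le C\int_{0}^{1}(t^{-\theta}K(t,x;\vec{X}))^{q}\,\frac{dt}{t}$; see also \cite{AK1}.

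Once $V^{j}(A)=V_{\theta,q}^{j}$ is in place, the subspace $\widetilde{V}$ of $(\ref{NM1})$ and the operators $P_{X_{0}}$, $P_{X_{1}}$ coincide with those appearing in Theorems~\ref{lowerFredholm} and~\ref{fredholm}, and $(\ref{NM6})$ gives $V^{0}(A)\cap V^{1}(A)=(X_{0},X_{1})_{\theta,q}\cap\ker_{X_{0}+X_{1}}A$, so that $n(A)=\dim(V_{\theta,q}^{0}\cap V_{\theta,q}^{1})$. The ``only if'' direction of the statement is then exactly Theorem~\ref{fredholm} applied to $\mathcal{F}=(\cdot)_{\theta,q}$. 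For the ``if'' direction, condition (i) says that the kernel $(X_{0},X_{1})_{\theta,q}\cap\ker_{X_{0}+X_{1}}A$ of $A$ on $(X_{0},X_{1})_{\theta,q}$ is finite dimensional, while condition (ii)---i.e.\ $\dim\widetilde{V}<\infty$ together with $(Y_{0},Y_{1})_{\theta,q}=A((X_{0},X_{1})_{\theta,q})\oplus A(P_{X_{0}}(\widetilde{V}))$---forces $A((X_{0},X_{1})_{\theta,q})$ to be a~closed subspace of finite codimension, its complement $A(P_{X_{0}}(\widetilde{V}))$ being finite dimensional; hence the implication (ii)$\Rightarrow$(i) of Theorem~\ref{lowerFredholm} shows that $A\colon(X_{0},X_{1})_{\theta,q}\to(Y_{0},Y_{1})_{\theta,q}$ is lower semi-Fredholm with defect $\dim\widetilde{V}=d(A)$, and combining this with (i) we conclude that $A$ is Fredholm on $(X_{0},X_{1})_{\theta,q}$ with the asserted $n(A)$ and $d(A)$. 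The only non-routine ingredient in this scheme is the $K$-functional description of $(X_{0},X_{1})_{\theta,q}+X_{1}$ and $(X_{0},X_{1})_{\theta,q}+X_{0}$ underlying the identifications $V^{j}(A)=V_{\theta,q}^{j}$.
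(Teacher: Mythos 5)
Your argument is correct and follows the same route the paper takes, namely specializing Theorems~\ref{lowerFredholm} and~\ref{fredholm} to $\mathcal{F}=(\cdot)_{\theta,q}$; the paper simply asserts this is ``an immediate consequence'' while you explicitly verify the two ingredients it leaves tacit, namely the decomposition property of $(\cdot)_{\theta,q}$ and the identification $V^{j}(A)=V_{\theta,q}^{j}$ via the $K$-functional description of $(X_{0},X_{1})_{\theta,q}+X_{1-j}$.
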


We finish this subsection with a~general construction of
interpolation functor which has decomposition property on any
Banach couple. Let $A$ be an intermediate Banach space with
respect to the couple $\vec{A}=(A_{0},A_{1})$. Following Aronszajn
and Gagliardo \cite{AG}, for any Banach couple $\vec{X} $, we
define the so called \emph{minimal} interpolation functor
$G_{\vec{A}}^{A}$ by
\begin{equation*}
G_{\vec{A}}^{A}(\vec{X}):=\Big \{\sum_{n=1}^{\infty
}T_{n}a_{n};\,\,\,\sum_{n=1}^{\infty }\Vert T_{n}\Vert _{\vec{A}\rightarrow
\vec{X}}\Vert a_{n}\Vert _{A}<\infty \Big \}.
\end{equation*}%
The norm is given by
\begin{equation*}
\Vert x\Vert =\func{inf}\Big \{\sum_{n=1}^{\infty }\Vert T_{n}\Vert _{\vec{A}%
\rightarrow \vec{X}}\Vert a_{n}\Vert _{A};\,\,\,x=\sum_{n=1}^{\infty
}T_{n}a_{n}\Big \}.
\end{equation*}%
We note that $G_{\vec{A}}^{A}$ is the minimal interpolation functor
satisfying $A\hookrightarrow G_{\vec{A}}^{A}(\vec{A})$.

We omit the technical proof of the following result.

\begin{proposition}
If a~Banach space $A$ has the decomposition property with respect to the
couple $\vec{A}$, then the minimal interpolation functor $G_{\vec{A}}^{A}$
has the decomposition property on every Banach couple $\vec{X}$.
\end{proposition}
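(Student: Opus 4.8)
The plan is to reduce everything to one bounded decomposition in $A$ and then transport it through the representing operators $T_n$. The only nontrivial input is the following quantitative strengthening of the hypothesis: $A$, $A\cap A_{0}$ and $A\cap A_{1}$ (the latter two endowed with their natural intersection norms) are Banach spaces, the addition map $(A\cap A_{0})\oplus_{1}(A\cap A_{1})\to A$, $(u,v)\mapsto u+v$, is bounded, and by the decomposition hypothesis $A=A\cap A_{0}+A\cap A_{1}$ it is surjective. Hence the open mapping theorem yields a constant $C>0$ such that every $a\in A$ admits a splitting $a=a^{(0)}+a^{(1)}$ with $a^{(j)}\in A\cap A_{j}$ and $\|a^{(0)}\|_{A\cap A_{0}}+\|a^{(1)}\|_{A\cap A_{1}}\le C\,\|a\|_{A}$.

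Now fix a Banach couple $\vec{X}=(X_{0},X_{1})$; the inclusion $G_{\vec{A}}^{A}(\vec{X})\cap X_{0}+G_{\vec{A}}^{A}(\vec{X})\cap X_{1}\subset G_{\vec{A}}^{A}(\vec{X})$ is automatic, so only the reverse inclusion must be proved. Take $x\in G_{\vec{A}}^{A}(\vec{X})$ and a representation $x=\sum_{n}T_{n}a_{n}$ with $\sum_{n}\|T_{n}\|_{\vec{A}\to\vec{X}}\,\|a_{n}\|_{A}<\infty$. Apply the bounded splitting above to each $a_{n}$, writing $a_{n}=a_{n}^{(0)}+a_{n}^{(1)}$, and set $x_{0}:=\sum_{n}T_{n}a_{n}^{(0)}$, $x_{1}:=\sum_{n}T_{n}a_{n}^{(1)}$. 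Since $a_{n}^{(0)}\in A\cap A_{0}\subset A_{0}$, one has $T_{n}a_{n}^{(0)}\in X_{0}$ with $\|T_{n}a_{n}^{(0)}\|_{X_{0}}\le\|T_{n}\|_{\vec{A}\to\vec{X}}\|a_{n}^{(0)}\|_{A\cap A_{0}}\le C\,\|T_{n}\|_{\vec{A}\to\vec{X}}\|a_{n}\|_{A}$, so the series defining $x_{0}$ converges absolutely in $X_{0}$; symmetrically $x_{1}$ converges absolutely in $X_{1}$.

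The next point is that $x_{0}$ (and likewise $x_{1}$) lies in $G_{\vec{A}}^{A}(\vec{X})$ as well: because $A\cap A_{0}$ embeds continuously into $A$, the expression $x_{0}=\sum_{n}T_{n}a_{n}^{(0)}$ is itself an admissible representation, for $\sum_{n}\|T_{n}\|_{\vec{A}\to\vec{X}}\|a_{n}^{(0)}\|_{A}\le\sum_{n}\|T_{n}\|_{\vec{A}\to\vec{X}}\|a_{n}^{(0)}\|_{A\cap A_{0}}<\infty$. Finally, all the series in sight converge absolutely in $X_{0}+X_{1}$ (using $A\hookrightarrow A_{0}+A_{1}$), so the pointwise identity $T_{n}a_{n}=T_{n}a_{n}^{(0)}+T_{n}a_{n}^{(1)}$ may be summed term by term to give $x=x_{0}+x_{1}$ with $x_{0}\in G_{\vec{A}}^{A}(\vec{X})\cap X_{0}$ and $x_{1}\in G_{\vec{A}}^{A}(\vec{X})\cap X_{1}$, which is precisely the decomposition property of $G_{\vec{A}}^{A}(\vec{X})$ on $\vec{X}$.

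I expect the main obstacle to be purely organizational rather than conceptual: one must be careful to use the \emph{uniform} constant $C$ from the open mapping theorem (a mere set-theoretic decomposition of each $a_{n}$ would not guarantee that $\sum_{n}\|T_{n}\|_{\vec{A}\to\vec{X}}\|a_{n}^{(j)}\|$ stays finite), and one must keep track of absolute convergence simultaneously in $X_{0}$, in $X_{1}$, and in $X_{0}+X_{1}$ in order to legitimately rearrange the double series. Beyond that, the minimality of $G_{\vec{A}}^{A}$ is not used at all — only its explicit description as a space of convergent series $\sum_{n}T_{n}a_{n}$.
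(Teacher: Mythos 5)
Your proof is correct and complete. Note that the paper in fact writes ``We omit the technical proof of the following result,'' so there is no proof in the paper to compare against; you have supplied one.

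The key observation you identified is exactly right: the hypothesis $A = A\cap A_0 + A\cap A_1$ is \emph{a priori} only set-theoretic, and to make the argument work one needs the uniform constant coming from the open mapping theorem applied to the bounded surjective addition map $(A\cap A_0)\oplus_1(A\cap A_1)\to A$. Your application of that constant to control $\sum_n \|T_n\|_{\vec A\to\vec X}\,\|a_n^{(j)}\|_{A\cap A_j}$ is clean, and your three estimates --- absolute convergence of $\sum_n T_n a_n^{(0)}$ in $X_0$, absolute convergence of $\sum_n T_n a_n^{(1)}$ in $X_1$, and the fact that both partial sums also define admissible representations so that $x_0, x_1\in G_{\vec A}^A(\vec X)$ --- give precisely what is needed. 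One small remark: the closing paragraph overstates the delicacy of the final step. Since all three series converge absolutely in $X_0+X_1$ and the identity $T_na_n = T_na_n^{(0)}+T_na_n^{(1)}$ holds term by term, the equality $x=x_0+x_1$ is an immediate consequence of linearity of limits in a Banach space; there is no double-series rearrangement to worry about. Also worth pointing out explicitly (you use it implicitly in the orbit estimate): the norm of $A\cap A_0$ dominates both the $A$-norm and the $A_0$-norm, which is what lets you pass from $\|a_n^{(0)}\|_{A\cap A_0}$ to both $\|a_n^{(0)}\|_{A_0}$ (for the $X_0$-convergence) and $\|a_n^{(0)}\|_{A}$ (for membership in the orbit space).
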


\subsection{Functors $\left( \cdot \right) _{\protect\theta ,\infty }$ and $%
\left( \cdot \right) _{\protect\theta, c_{0}}$}

There are difficulties connected with the functor $\left( \cdot
\right) _{\theta ,q}$ for $q=\infty $. The reason of these
difficulties is that this functor is not regular. Some of these
difficulties can be overcome by using the functor $\left( \cdot
\right) _{\theta, c_{0}}$, which is regular and the norm on $(X_0,
X_1)_{\theta, c_0}$ for any couple $\left( X_{0},X_{1}\right)$ is
the restriction of the norm of $\left( X_{0},X_{1}\right) _{\theta
,\infty }$ on the closure of $X_{0}\cap X_{1}$ in $\left(
X_{0},X_{1}\right) _{\theta,\infty}$.

It should be noted that the functor $(\cdot)_{\theta, c_0}$ has
not been studied enough in Interpolation Theory. Below we present some
results which will be used in the proof of Theorem 15.

We shall be concerned with orbital methods generated by a~single element.
Let $\vec{A}=(A_{0},A_{1})$ be a~given Banach couple. For any Banach couple $%
\vec{X}=(X_{0},X_{1})$, let ${\mathcal{I}}(\vec{A},\vec{X})\subset L(\vec{A}%
,\vec{X})$ be a~Banach space of operators with the left ideal property which
holds for all Banach couples $\vec{X}$ and $\vec{Y}$, i.e., $T\in {\mathcal{I%
}}(\vec{A},\vec{X})$ and $S\in L(\vec{X},\vec{Y})$ implies $ST\in {\mathcal{I%
}}(\vec{A},\vec{Y})$ with
\begin{equation*}
\Vert ST\Vert _{{\mathcal{I}}(\vec{A},\vec{Y})}\leq \Vert S\Vert _{L(\vec{X},%
\vec{Y})}\,\Vert T\Vert _{{\mathcal{I}}(\vec{A},\vec{X})}.
\end{equation*}%
For any $0\neq a\in A_{0}+A_{1}$, we define $Orb_{a,\,{\mathcal{I}}}^{\vec{A}%
}(\vec{X})$ the space (called the orbit of $a$ generated by ${\mathcal{I}}$
) of all elements $x\in X_{0}+X_{1}$ of the form $x=Ta$ for some $T\in {%
\mathcal{I}}(\vec{A},\vec{X})$. This construction defines an exact
interpolation functor provided $Orb_{a,\,{\mathcal{I}}}^{\vec{A}}(\vec{X})$
is equipped with the norm
\begin{equation*}
\Vert x\Vert =\func{inf}\{\Vert T\Vert _{{\mathcal{I}}(\vec{A},\vec{X}%
)};\,x=Ta,\,T\in {\mathcal{I}}(\vec{A},\vec{X})\}.
\end{equation*}%
In the case when ${\mathcal{I}}(\vec{A},\vec{X})=L(\vec{A},\vec{X})$, we
obtain the classical interpolation orbit introduced in \cite{AG}. In this
case we write $Orb_{a}^{\vec{A}}(\cdot )$ instead of $Orb_{a,\,L}^{\vec{A}%
}(\cdot )$.

Let $\vec{X}=\left( X_{0},X_{1}\right) $ be a Banach couple. Then the space $%
\left( X_{0},X_{1}\right) _{\theta ,\infty }$ is defined as the set of all $%
x\in X_{0}+X_{1}$ such that%
\begin{equation*}
\left\Vert x\right\Vert _{\theta ,\infty }=\sup_{t>0}t^{-\theta}
K(t,x; \vec{X})<\infty \text{.}
\end{equation*}%
Note that the space $(X_0, X_1)_{\theta, \infty}$ can be obtained in another way.
To see this let us consider the couple $\vec{L}%
^{1}=\left( L^{1}(\frac{dt}{t}),L^{1}(t^{-1},\frac{dt}{t}\right)
)$, which
consists of functions%
\begin{equation*}
\left\Vert f\right\Vert _{L^{1}}:=\int_{0}^{\infty }\left\vert
f(t)\right\vert \frac{dt}{t}<\infty, \text{ }\left\Vert
f\right\Vert
_{L^{1}(t^{-1})}=\int_{0}^{\infty }\left\vert f(t)\right\vert \frac{dt}{t^{2}%
}<\infty \text{.}
\end{equation*}
It is easy to check that for every $\theta \in (0, 1)$ the
elements $a_{\theta}$ given by
\begin{equation}
a_{\theta} (t) =\theta (1-\theta )t^{\theta}, \quad\, t>0
\label{AP1}
\end{equation}%
satisfy%
\begin{equation*}
K(t,a_{\theta };\vec{L}^{1})=t^{\theta }, \quad\, t>0.
\end{equation*}%

In what follows we will need the so-called orbital interpolation
space $Orb_{a_{\theta }}(\vec{X})$ equipped with the norm
\begin{equation*}
\left\Vert x\right\Vert _{Orb_{a_{\theta
}}(\vec{X})}=\inf_{Ta_{\theta }=x}\left\Vert T\right\Vert
_{L\mathbb{(}\vec{L}^{1},\vec{X})}\text{.}
\end{equation*}%

Since $K(t,a_{\theta };\vec{L}^{1})=t^{\theta }$, it follows from
equivalence theorem (see \cite{BK}) that the Banach space
$Orb_{a_{\theta }}(\vec{X})$ coincides with $\left(
X_{0},X_{1}\right)_{\theta ,\infty }$ and
\begin{equation*}
\left\Vert x\right\Vert _{\theta ,\infty }\approx \left\Vert
x\right\Vert _{Orb_{a_{\theta }}(\vec{X})}
\end{equation*}%
with constant of equivalence not more than 18.

We need a similar result for the space $\left( X_{0},X_{1}\right) _{\theta
,c_{0}}$ which consists of element $x\in X_{0}+X_{1}$ such that
\begin{equation*}
\lim_{t\rightarrow 0}\frac{K(t,x;X_{0},X_{1})}{t^{\theta }}%
=\lim_{t\rightarrow \infty }\frac{K(t,x;X_{0},X_{1})}{t^{\theta }}=0
\end{equation*}%
and%
\begin{equation*}
\left\Vert x\right\Vert _{\theta ,c_{0}}=\sup_{t>0}\frac{K(t,x;X_{0},X_{1})}{%
t^{\theta }}<\infty \text{.}
\end{equation*}%
It is known that the space $\left( X_{0},X_{1}\right) _{\theta
,c_{0}}$ coincides with the closure of the intersection $X_{0}\cap
X_{1}$ in the space $\left( X_{0},X_{1}\right) _{\theta ,\infty
}$.

We need an orbital description of the space $\left(
X_{0},X_{1}\right) _{\theta
,c_{0}}$. Let consider the set of operators $T$ $\in L\mathbb{(}\vec{L}^{1},%
\vec{X})$ which has property%
\begin{equation*}
T(L^{1}+L^{1}(t^{-1}))\subset X_{0}\cap X_{1}
\end{equation*}%
and its closure in $L\mathbb{(}\vec{L}^{1},\vec{X})$, which we define by $L%
\mathbb{(}\vec{L}^{1},\vec{X})^{0}$. Clearly this set has a left ideal
property, i.e. for any operator $A:\vec{X}\rightarrow \vec{Y}$ and any
operator $T\in L\mathbb{(}\vec{L}^{1},\vec{X})^{0}$ we have $AT\in L\mathbb{(%
}\vec{L}^{1},\vec{Y})^{0}$.

Let us consider the interpolation space $Orb_{a_{\theta
}}^{0}(\vec{X})$
defined by the norm%
\begin{equation*}
\left\Vert x\right\Vert _{Orb_{a_{\theta
}}^{0}(\vec{X})}=\inf_{Ta_{\theta }=x}\left\Vert T\right\Vert
_{L\mathbb{(}\vec{L}^{1},\vec{X})^{0}}\text{.}
\end{equation*}

The next theorem provides an orbital description of the space
$\left( X_{0},X_{1}\right) _{\theta ,c_{0}}$.

\begin{theorem}
For every $\theta \in (0, 1)$ the following formula holds{\rm:}
\begin{equation*}
\left( X_{0},X_{1}\right) _{\theta ,c_{0}} = Orb_{a_{\theta }}^{0}(\vec{X})%
\text{,}
\end{equation*}
with the norms being equivalent with constants independent of
$\theta$.
\end{theorem}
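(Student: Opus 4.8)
The plan is to establish the two continuous inclusions $Orb^{0}_{a_{\theta}}(\vec{X})\hookrightarrow (X_{0},X_{1})_{\theta ,c_{0}}$ and $(X_{0},X_{1})_{\theta ,c_{0}}\hookrightarrow Orb^{0}_{a_{\theta}}(\vec{X})$ separately, being careful that the norm constants obtained do not depend on $\theta$. The first one is easy and uses only the already recorded equivalence $Orb_{a_{\theta}}(\vec{X})=(X_{0},X_{1})_{\theta ,\infty}$ (constant $\le 18$) together with the fact that $a_{\theta}\in L^{1}+L^{1}(t^{-1})$ (its restriction to $(0,1)$ lies in $L^{1}$ and its restriction to $(1,\infty)$ lies in $L^{1}(t^{-1})$, since $\int_{0}^{1}t^{\theta-1}\,dt$ and $\int_{1}^{\infty}t^{\theta-2}\,dt$ are finite). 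Indeed, if $x=Ta_{\theta}$ with $T\in L(\vec{L}^{1},\vec{X})^{0}$, choose operators $T_{n}$ with $T_{n}(L^{1}+L^{1}(t^{-1}))\subset X_{0}\cap X_{1}$ and $\|T-T_{n}\|_{L(\vec{L}^{1},\vec{X})}\to 0$; then $T_{n}a_{\theta}\in X_{0}\cap X_{1}$ and
\[
\|x-T_{n}a_{\theta}\|_{(X_{0},X_{1})_{\theta ,\infty}}\le 18\,\|x-T_{n}a_{\theta}\|_{Orb_{a_{\theta}}(\vec{X})}\le 18\,\|T-T_{n}\|_{L(\vec{L}^{1},\vec{X})}\longrightarrow 0,
\]
so $x$ lies in the closure of $X_{0}\cap X_{1}$ in $(X_{0},X_{1})_{\theta ,\infty}$, i.e.\ in $(X_{0},X_{1})_{\theta ,c_{0}}$, and $\|x\|_{\theta ,c_{0}}=\|x\|_{\theta ,\infty}\le 18\,\|x\|_{Orb^{0}_{a_{\theta}}(\vec{X})}$.

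For the reverse inclusion fix $0\neq x\in (X_{0},X_{1})_{\theta ,c_{0}}$ and set $g(u)=K(e^{u},x;\vec{X})e^{-u\theta}$, so that $0<g\le M:=\|x\|_{\theta ,c_{0}}$ and $g(u)\to 0$ as $u\to\pm\infty$ by the very definition of the $c_{0}$--space. Concavity of $t\mapsto K(t,x;\vec{X})$ and monotonicity of $t\mapsto t^{-1}K(t,x;\vec{X})$ yield $-\theta g\le g'\le (1-\theta )g$ a.e., so $g$ varies slowly: $e^{-|u-v|}\le g(v)/g(u)\le e^{|u-v|}$. I would then realize $x$ as $x=Ta_{\theta}$ with
\[
Tf=\int_{0}^{\infty }a_{\theta}(s)^{-1}f(s)\,u(s)\,\frac{ds}{s},\qquad u(s)\in X_{0}\cap X_{1},
\]
where the profile $\eta (s):=a_{\theta}(s)^{-1}J(s,u(s);\vec{X})$ satisfies (a) $\sup_{s>0}\eta (s)\le CM$ for an absolute constant $C$, and (b) $\eta (s)\to 0$ as $s\to 0^{+}$ and as $s\to\infty$. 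Property (a) gives $\|Tf\|_{X_{j}}\le\int_{0}^{\infty }a_{\theta}(s)^{-1}|f(s)|\,\|u(s)\|_{X_{j}}\,\frac{ds}{s}\le CM\,\|f\|$, on $L^{1}$ for $j=0$ and on $L^{1}(t^{-1})$ for $j=1$, hence $\|T\|_{L(\vec{L}^{1},\vec{X})}\le CM$, while $Ta_{\theta}=\int_{0}^{\infty }u(s)\,\frac{ds}{s}=x$. Property (b) gives $T\in L(\vec{L}^{1},\vec{X})^{0}$: the truncations $T_{n}f:=\int_{1/n}^{n}a_{\theta}(s)^{-1}f(s)u(s)\,\frac{ds}{s}$ map $L^{1}+L^{1}(t^{-1})$ into $X_{0}\cap X_{1}$ — on the compact interval $[1/n,n]$ the integrand is bounded in $X_{0}\cap X_{1}$ and every $f\in L^{1}+L^{1}(t^{-1})$ is integrable against $\frac{ds}{s}$ there — while $\|T-T_{n}\|_{L(\vec{L}^{1},\vec{X})}\le\sup_{|\ln s|>\ln n}\eta (s)\to 0$. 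Thus $x=Ta_{\theta}\in Orb^{0}_{a_{\theta}}(\vec{X})$ with $\|x\|_{Orb^{0}_{a_{\theta}}(\vec{X})}\le CM=C\|x\|_{\theta ,c_{0}}$, and together with the first inclusion this proves the theorem with constants independent of $\theta$.

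The step I expect to be the main obstacle is the construction of the representation $x=\int_{0}^{\infty }u(s)\,\frac{ds}{s}$ realizing (a) and (b) simultaneously, which requires balancing two competing demands. For the uniform bound $\sup_{s}\eta (s)\le CM$ one needs an \emph{efficient} $J$--representation, namely one with $\sup_{s}s^{-\theta }J(s,u(s);\vec{X})\le c\,\theta (1-\theta )M$, since only then is the singular factor $a_{\theta}(s)^{-1}=(\theta (1-\theta )s^{\theta})^{-1}$ absorbed; such a representation is furnished by the strong, continuous--parameter form of the $K$--divisibility theorem — the same machinery from \cite{BK} that underlies $Orb_{a_{\theta}}(\vec{X})=(X_{0},X_{1})_{\theta ,\infty}$ — applied to the majorant $K(t,x;\vec{X})\le\int_{0}^{\infty }\omega _{s}(t)\,\frac{ds}{s}$, where $\omega _{s}(t)$ is a suitable multiple of $g(\ln s)\,s^{\theta}\min (1,t/s)$ and the slow variation of $g$ is used to verify the majorization with an absolute constant. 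For the endpoint decay $\eta (s)\to 0$ one invokes $g(\ln s)\to 0$ there and tapers the decomposition near $s=0$ and $s=\infty$ slowly enough that the mass it carries back into $x$ changes by an arbitrarily small amount, so that a harmless global rescaling restores $x=\int u(s)\,\frac{ds}{s}$ without spoiling (a). Making this simultaneous control precise, with all constants genuinely independent of $\theta$, is the delicate point; once it is done, the remaining verifications are routine.
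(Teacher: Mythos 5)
Your strategy is essentially the same as the paper's: build an operator $T$ with $Ta_{\theta}=x$ from a near-optimal $J$-representation of $x$, check the operator bound to get membership in $L(\vec{L}^{1},\vec{X})$, and use the endpoint decay of the $J$-profile to show $T$ is a limit of operators mapping into $X_{0}\cap X_{1}$ (hence lies in $L(\vec{L}^{1},\vec{X})^{0}$). Your treatment of the easy inclusion $Orb^{0}_{a_{\theta}}(\vec{X})\hookrightarrow (X_{0},X_{1})_{\theta ,c_{0}}$ is fine — indeed slightly more careful than the paper's, since you explicitly check that $x$ lies in the closure of $X_{0}\cap X_{1}$ (via $a_{\theta}\in L^{1}+L^{1}(t^{-1})$), whereas the paper only records the norm inequality.

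The gap is in the hard direction. You yourself identify the construction of a profile $u(s)$ with \emph{simultaneously} the efficiency bound $\sup_{s}s^{-\theta }J(s,u(s);\vec{X})\le c\,\theta (1-\theta )\|x\|_{\theta ,c_{0}}$ and the endpoint decay $\eta (s)\to 0$ as "the main obstacle" and "the delicate point," but you only sketch how strong $K$-divisibility plus slow variation of $g$ plus a tapering-and-rescaling argument ought to produce it; no construction is actually given. The paper avoids this entirely by invoking the discrete dyadic equivalence theorem, which hands over a decomposition $x=\sum_{i}u_{i}$ already satisfying $\sup_{i}2^{-\theta i}J(2^{i},u_{i};\vec{X})\approx\|x\|_{\theta ,c_{0}}$ together with the two-sided tail vanishing, and then simply defines
\begin{equation*}
T_{N}(f)=\sum_{k=-N}^{N-1}\frac{u_{k}}{\int_{2^{k}}^{2^{k+1}}a_{\theta }(t)\frac{dt}{t}}\int_{2^{k}}^{2^{k+1}}f(t)\frac{dt}{t}.
\end{equation*}
The weight $\int_{2^{k}}^{2^{k+1}}a_{\theta }(t)\,\frac{dt}{t}=(1-\theta )(2^{\theta }-1)2^{k\theta}$ already contains the $\theta (1-\theta )$ you are trying to manufacture by an "efficient" representation, so no extra normalization or rescaling is needed, and the endpoint decay of $2^{-\theta i}J(2^{i},u_{i})$ immediately gives $T_{N}\to T$ in $L(\vec{L}^{1},\vec{X})^{0}$. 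Until you actually produce your continuous decomposition with both properties and verify that the majorization step in $K$-divisibility yields a $\theta$-free constant, the hard inclusion of your proposal is not proved; switching to the discrete $J$-representation, as the paper does, removes the obstacle rather than facing it.
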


\begin{proof} Fix $x\in (X_0, X_1)_{\theta ,c_{0}}$; then
from the equivalence theorem it follows that there exists a~decomposition%
\begin{equation*}
x=\sum_{i\in \mathbb{Z}}u_{i},
\end{equation*}%
where the series converges absolutely in $X_{0}+X_{1}$ and
\begin{equation}
\left\Vert x\right\Vert _{\theta ,c_{0}}\approx \sup_{i\in \mathbb{Z}}\frac{%
J(2^{i},u_{i};X_{0},X_{1})}{2^{\theta i}}, \text{ }  \label{AP3}
\end{equation}%
\begin{equation}
\lim_{i\rightarrow -\infty }\frac{J(2^{i},u_{i};X_{0},X_{1})}{2^{\theta i}}%
=\lim_{i\rightarrow \infty }\frac{J(2^{i},u_{i};X_{0},X_{1})}{2^{\theta i}}=0%
\text{.}  \label{AP2}
\end{equation}%
For each positive integer $N$ we define an operator $T_{N}\in
L\mathbb{(}\vec{L}^{1},\vec{X})^{0}$ by
\begin{equation*}
T_{N}(f)=\sum_{k=-N}^{N-1}\frac{u_{k}}{\int_{2^{k}}^{2^{k+1}}a_{\theta }(t)%
\frac{dt}{t}}\int_{2^{k}}^{2^{k+1}}f(t)\frac{dt}{t}, \quad\, f\in
L^1 + L^1(t^{-1}).
\end{equation*}%
We claim that
\begin{equation*}
\left\Vert T_{N}\right\Vert
_{L\mathbb{(}\vec{L}^{1},\vec{X})^{0}}\leq \gamma \left\Vert
x\right\Vert _{\theta ,c_{0}}\text{.}
\end{equation*}%
To see this observe that from (\ref{AP3}) it follows that%
\begin{equation*}
\sup_{i\in \mathbb{Z}}\frac{\left\Vert u_{i}\right\Vert
_{X_{0}}}{2^{\theta i}}\leq \gamma \left\Vert x\right\Vert
_{\theta ,c_{0}}, \quad\,\,\,
\sup_{i\in \mathbb{Z}}\frac{2^{i}\left\Vert u_{i}\right\Vert _{X_{1}}}{%
2^{\theta i}}\leq \gamma \left\Vert x\right\Vert _{\theta ,c_{0}}
\end{equation*}%
and so for every $f\in L^1$,%
\begin{equation*}
\left\Vert T_{N}f\right\Vert _{X_{0}}\leq \gamma \sum_{k=-N}^{N-1}\frac{%
\left\Vert u_{k}\right\Vert _{X_{0}}}{2^{\theta k}}\int_{2^{k}}^{2^{k+1}}%
\left\vert f(t)\right\vert \frac{dt}{t}\leq \gamma \left\Vert
x\right\Vert _{\theta ,c_{0}}\left\Vert f\right\Vert _{L^{1}}
\end{equation*}%
and similarly for every $f\in L^1(t^{-1})$,%
\begin{equation*}
\left\Vert T_{N}f\right\Vert _{X_{1}}\leq \gamma \sum_{k=-N}^{N-1}\frac{%
2^{k}\left\Vert u_{k}\right\Vert _{X_{1}}}{2^{\theta k}}%
\int_{2^{k}}^{2^{k+1}}\left\vert f(t)\right\vert \frac{dt}{t^{2}}\leq \gamma
\left\Vert x\right\Vert _{\theta ,c_{0}}\left\Vert f\right\Vert
_{L^{1}(t^{-1})\text{.}}
\end{equation*}%
It is also clear that $T_{N}$ maps $L^{1}+L^{1}(t^{-1})$ into
$X_{0}\cap X_{1}$. Moreover, from (\ref{AP2}) it follows that as
$N\rightarrow \infty $, the operators $T_{N}$ converge to the operator $T\in L\mathbb{(}\vec{L}^{1},\vec{%
X})^{0}$ defined by the formula
\begin{equation*}
T(f)=\sum_{k\in \mathbb{Z}}\frac{u_{k}}{\int_{2^{k}}^{2^{k+1}}a_{\theta }(t)%
\frac{dt}{t}}\int_{2^{k}}^{2^{k+1}}f(t)\frac{dt}{t}\text{.}
\end{equation*}%

Since $Ta_{\theta }=\sum_{k\in \mathbb{Z}}u_{k}=x$, we obtain%
\begin{equation*}
\left\Vert x\right\Vert _{Orb_{a_{\theta
}}^{0}(\vec{X})}=\inf_{Ta_{\theta }=x}\left\Vert T\right\Vert
_{L\mathbb{(}\vec{L}^{1},\vec{X})^{0}}\leq \gamma \left\Vert
x\right\Vert _{\theta ,c_{0}}\text{.}
\end{equation*}%

To prove the opposite inequality we just need to notice that%
\begin{equation*}
\left\Vert x\right\Vert _{\theta ,c_{0}}=\left\Vert x\right\Vert _{\theta
,\infty }\leq \gamma \inf_{Ta_{\theta }=x}\left\Vert T\right\Vert _{L\mathbb{%
(}\vec{L}^{1},\vec{X})}\leq \inf_{Ta_{\theta }=x}\left\Vert T\right\Vert _{L%
\mathbb{(}\vec{L}^{1},\vec{X})^{0}}=\left\Vert x\right\Vert
_{Orb_{a_{\theta }}^{0}(\vec{X})}.
\end{equation*}
\end{proof}

Let $0<\theta _{0}<\theta _{1}<1$ and $a_{\theta _{0}}$,
$a_{\theta _{1}}$ be corresponding functions of the form
(\ref{AP1}). In \cite{KM} a~distance $\rho$ was defined
between functors $Orb_{a_{\theta _{0}}}$ and $%
Orb_{a_{\theta _{1}}}$ by the formula
\begin{equation*}
\rho (Orb_{a_{\theta _{0}}},
Orb_{a_{\theta_{1}}})=\sup_{\vec{X}}\Big(\sup_{\left\Vert
T\right\Vert _{L\mathbb{(}\vec{L}^{1},\vec{X})}\leq
1} \Big|\left\Vert Ta_{\theta _{0}}\right\Vert _{Orb_{a_{\theta _{0}}}(%
\vec{X})}-\left\Vert Ta_{\theta _{1}}\right\Vert_{Orb_{a_{\theta _{1}}}(%
\vec{X})}\Big|\Big)
\end{equation*}
and it was shown that
\begin{equation}
\rho (Orb_{a_{\theta _{0}}}, Orb_{a_{\theta _{1}}})\leq \gamma\,
\frac{\theta _{1}-\theta _{0}}{\min (\theta _{0},1-\theta _{1})}
\label{AP4}
\end{equation}%
with constant $\gamma >0$ independent of $\theta _{0},\theta
_{1}$. Similarly we define distance between functors
$Orb_{a_{\theta _{0}}}^{0}$, $Orb_{a_{\theta _{1}}}^{0}$ (we need
only instead of $\left\Vert T\right\Vert
_{L\mathbb{(}\vec{L}^{1},\vec{X})}\leq 1$ use $\left\Vert
T\right\Vert _{L\mathbb{(}\vec{L}^{1},\vec{X})^{0}}\leq 1$), i.e.,
\begin{equation*}
\rho (Orb_{a_{\theta_{0}}}^{0}, Orb_{a_{\theta _{1}}}^{0})=\sup_{\vec{X}%
}\Big(\sup_{\left\Vert T\right\Vert
_{L\mathbb{(}\vec{L}^{1},\vec{X})^{0}}\leq
1}\Big |\left\Vert Ta_{\theta _{0}}\right\Vert_{Orb_{a_{\theta_{0}}}(%
\vec{X})}-\left\Vert Ta_{\theta _{1}}\right\Vert_{Orb_{a_{\theta _{1}}}(%
\vec{X})}\Big|\Big).
\end{equation*}%
Since the proof of estimate (\ref{AP4}) in \cite{KM} also works
for the functors $Orb_{a_{\theta_{0}}}^{0}$, $Orb_{a_{\theta
_{1}}}^{0}$, we conclude that%
\begin{equation*}
\rho (Orb_{a_{\theta _{0}}}^{0}, Orb_{a_{\theta _{1}}}^{0})\leq \gamma \frac{%
\theta _{1}-\theta _{0}}{\min (\theta _{0},1-\theta _{1})}
\end{equation*}%
with constant $\gamma >0$ independent of $\theta _{0},\theta _{1}$. Lets $U=L%
\mathbb{(}\vec{L}^{1},\vec{X})^{0}$ and $U_{\theta }$ be a closed
subspace of $U$ which consists of operators $T\in
L\mathbb{(}\vec{L}^{1},\vec{X})^{0}$ such that $Ta_{\theta }=0$.
Then the space $Orb_{a_{\theta }}^{0}(\vec{X})$
with the norm%
\begin{equation*}
\left\Vert x\right\Vert _{orb_{a_{\theta
}}^{0}(\vec{X})}=\inf_{Ta_{\theta }=x}\left\Vert T\right\Vert
_{L\mathbb{(}\vec{L}^{1},\vec{X})^{0}}
\end{equation*}
can be interpreted as a~norm in quotient space $U/U_{\theta }$ and
applying general result from \cite{KM} (see Section 6
"invertibility theorems for quotient spaces") we obtained

\begin{theorem}
\label{KM}Let $A\colon \vec{X}\rightarrow \vec{Y}$ be a bounded
linear operator. Suppose also that the operator
\begin{equation}
A\colon Orb_{a_{\theta _{0}}}^{0}(\vec{X})\rightarrow Orb_{a_{\theta _{0}}}^{0}(%
\vec{Y})  \label{AP6}
\end{equation}%
is invertible. Then there exists $\varepsilon >0$ such that from
$\rho (Orb_{a_{\theta _{0}}}^{0}, Orb_{a_{\theta _{1}}}^{0})\leq
\varepsilon $ it follows that the operator
\begin{equation}
A\colon Orb_{a_{\theta _{1}}}^{0}(\vec{X})\rightarrow Orb_{a_{\theta _{1}}}^{0}(%
\vec{Y})  \label{AP5}
\end{equation}%
is also invertible.
\end{theorem}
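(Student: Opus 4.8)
The plan is to deduce Theorem~\ref{KM} from the abstract invertibility theorem for quotient spaces of \cite{KM}, Section~6, via the quotient description of $Orb_{a_{\theta}}^{0}(\vec{X})$ recorded immediately before the statement. First I would set $U_{\vec{X}}:=L(\vec{L}^{1},\vec{X})^{0}$, $U_{\vec{Y}}:=L(\vec{L}^{1},\vec{Y})^{0}$, and for $\theta\in\{\theta_{0},\theta_{1}\}$ the closed subspaces
\[
(U_{\vec{X}})_{\theta}:=\{T\in U_{\vec{X}};\,Ta_{\theta}=0\},\qquad (U_{\vec{Y}})_{\theta}:=\{S\in U_{\vec{Y}};\,Sa_{\theta}=0\},
\]
so that, by the very definition of the norm on $Orb_{a_{\theta}}^{0}$ as an infimum of $\|T\|_{L(\vec{L}^{1},\,\cdot\,)^{0}}$ over $T$ with $Ta_{\theta}=x$, one has the isometric identifications $Orb_{a_{\theta}}^{0}(\vec{X})=U_{\vec{X}}/(U_{\vec{X}})_{\theta}$ and $Orb_{a_{\theta}}^{0}(\vec{Y})=U_{\vec{Y}}/(U_{\vec{Y}})_{\theta}$.

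Second, I would observe that the given operator $A\colon\vec{X}\to\vec{Y}$ induces, by left composition $T\mapsto AT$, a bounded operator $\widehat{A}\colon U_{\vec{X}}\to U_{\vec{Y}}$ with $\|\widehat{A}\|\le\|A\|_{\vec{X}\to\vec{Y}}$ (this is exactly the left ideal property of $L(\vec{L}^{1},\,\cdot\,)^{0}$ noted above), and that since $(AT)a_{\theta}=A(Ta_{\theta})$ we have $\widehat{A}\big((U_{\vec{X}})_{\theta}\big)\subset(U_{\vec{Y}})_{\theta}$; hence $\widehat{A}$ descends to the quotients, and under the isometric identifications fixed above this descent is precisely the orbital operator $A\colon Orb_{a_{\theta}}^{0}(\vec{X})\to Orb_{a_{\theta}}^{0}(\vec{Y})$ appearing in \eqref{AP6} and \eqref{AP5}. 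Thus the situation is literally an instance of the abstract setup of \cite{KM}, Section~6: two fixed Banach spaces $U_{\vec{X}}$, $U_{\vec{Y}}$, one fixed operator $\widehat{A}$ between them, two pairs of closed subspaces $\big((U_{\vec{X}})_{\theta_{0}},(U_{\vec{Y}})_{\theta_{0}}\big)$ and $\big((U_{\vec{X}})_{\theta_{1}},(U_{\vec{Y}})_{\theta_{1}}\big)$ each compatible with $\widehat{A}$, and the distance $\rho(Orb_{a_{\theta_{0}}}^{0},Orb_{a_{\theta_{1}}}^{0})$ controlling how far apart the corresponding quotient functors are. Invoking that abstract result with $\theta_{0}$ as the base point yields a threshold $\varepsilon=\varepsilon(A)>0$ such that $\rho(Orb_{a_{\theta_{0}}}^{0},Orb_{a_{\theta_{1}}}^{0})\le\varepsilon$ together with invertibility of the $\theta_{0}$-descent forces invertibility of the $\theta_{1}$-descent, which is the conclusion of the theorem. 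Here the hypothesis $\rho(\cdots)\le\varepsilon$ is in fact stronger than needed, since $\rho$ is a supremum over all couples while only $\vec{X}$ and $\vec{Y}$ enter.

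The only point that genuinely requires checking — and the main obstacle — is that the machinery of \cite{KM}, Section~6, applies verbatim when the full operator space $L(\vec{L}^{1},\vec{X})$ is replaced by its closed subspace $L(\vec{L}^{1},\vec{X})^{0}$. I would go through that argument and verify that its only inputs are: (i)~the left ideal property of the class of operators employed; (ii)~the representation of the orbit space as the quotient of that class modulo the annihilator of $a_{\theta}$; and (iii)~a distance estimate of the type \eqref{AP4}. None of these uses anything specific to $L(\vec{L}^{1},\vec{X})$ as opposed to $L(\vec{L}^{1},\vec{X})^{0}$, and (iii)~was already recorded above to hold for the $0$-functors (the proof of \eqref{AP4} in \cite{KM} carries over). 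Once this line-by-line transfer is carried out, no further estimates are needed, so the proof reduces to citing the abstract theorem and filling in the short bookkeeping of the present paragraph.
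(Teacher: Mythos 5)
Your proposal is correct and follows essentially the same route as the paper: identify $Orb_{a_\theta}^{0}(\vec{X})$ with the quotient $U_{\vec{X}}/(U_{\vec{X}})_\theta$, note that the estimate $(\ref{AP4})$ transfers to the $0$-functors, and then invoke the invertibility theorem for quotient spaces from Section 6 of \cite{KM}. The paper is equally terse here, merely citing that abstract result; your added bookkeeping about the induced operator $\widehat{A}$ descending to quotients and the checklist of inputs is exactly the right way to make the transfer explicit.
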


\begin{remark}
\label{KMR} From the results of the Section $6$ in $\cite{KM}$ it
also follows that $\varepsilon >0$ can be estimated from
\textbf{below} by the norms of $\left\Vert A\right\Vert
_{X_{i}\rightarrow Y_{i}}$, $i=0,1$ and the norm of the inverse to
the operator $(\ref{AP6})$. Moreover, for $\varepsilon >0$ small
enough, the norm of the inverse to the operator $(\ref{AP5})$ can
be estimated from \textbf{above} by the norms of $\left\Vert
A\right\Vert _{X_{i}\rightarrow Y_{i}}$, $i=0,1$ and the norm of
the inverse to the operator $(\ref{AP6})$.
\end{remark}

\section{Factorization Theorem}

This section is devoted to an important theorem on factorization of
Fredholm operators between real interpolation spaces discussed in
the introduction. In the formulation of this theorem a~special
type of quotient operators will be used.

\subsection{Special type of quotient operators}

Let $\vec{U}=(U_{0},U_{1})$ be a~Banach couple and $U$ be a~closed
subspace of the sum $U_{0}+U_{1}$. Clearly $U\cap U_{i}$ is a~closed subspace of $%
U_{i}$ for $i=0,1$. We denote by $Q\colon U_{0}+U_{1}\rightarrow
(U_{0}+U_{1})/U$ the quotient map generated by a~subspace $U$.~Since $%
Q(U_{0})\hookrightarrow (U_{0}+U_{1})/U$ and $Q(U_{1})\hookrightarrow
(U_{0}+U_{1})/U$, $(Q(U_{0}),Q(U_{1}))$ forms a~Banach couple.~We note that
the norm $Q(U_{0})+Q(U_{1})$ is equal to the norm of the quotient space $%
(U_{0}+U_{1})/U$ and the following can be easily verified:
\begin{align*}
\Vert w\Vert _{(U_{0}+U_{1})/U}& =\func{inf}\big \{\Vert u\Vert
_{U_{0}+U_{1}};\,Qu=w\}=\func{inf}\{\Vert u_{0}\Vert _{U_{0}}+\Vert
u_{1}\Vert _{U_{1}};\,w=Qu_{0}+Qu_{1}\big \} \\
& =\func{inf}_{w=w_{0}+w_{1}}\big (\func{inf}\big \{\Vert u_{0}\Vert
_{U_{0}}+\Vert u_{1}\Vert _{U_{1}};\,Qu_{0}=w_{0},\,Qu_{0}=w_{0}\big \}\big )
\\
& =\func{inf}\big \{\Vert w_{0}\Vert _{Q(U_{0})}+\Vert w_{1}\Vert
_{Q(U_{1})};\,w=w_{0}+w_{1}\big \}=\Vert w\Vert _{Q(U_{0})+Q(U_{1})}.
\end{align*}

We need to fix a~natural notation; an operator $B\colon
(X_{0},X_{1})\rightarrow (Y_{0},Y_{1})$ is said to be an
isomorphism between Banach couples $(X_{0},X_{1})$ and
$(Y_{0},Y_{1})$ if $B$ is invertible on endpoint spaces and in
addition $B\colon X_{0}+X_{1}\rightarrow Y_{0}+Y_{1}$ is
invertible.

It is clear that if $B\colon (X_{0},X_{1})\rightarrow
(Y_{0},Y_{1})$ is an isomorphism, then $B\colon
{\mathcal{F}}(X_{0},X_{1})\rightarrow {\mathcal{F}}(Y_{0},Y_{1})$
is invertible for any interpolation functor ${\mathcal{F}}$.


\begin{proposition}
\label{PrN2} Let $A\colon (X_{0},X_{1})\rightarrow (Y_{0},Y_{1})$
be a~surjective operator between Banach couples and let $U=\ker
_{X_{0}+X_{1}}A$. Then $A$ factors as follows{\rm:}
\begin{equation*}
A\colon (X_{0},X_{1})\overset{Q}{\longrightarrow }(Q(X_{0}),Q(X_{1}))\overset%
{B}{\longrightarrow }(Y_{0},Y_{1}),
\end{equation*}%
where $Q\colon X_{0}+X_{1}\rightarrow (X_{0}+X_{1})/U$ is a~quotient
operator and $B$ is an isomorphism defined by $B\widehat{x}=Ax$, where $%
\widehat{x}=Qx\in Q(X_{0})+Q(X_{1})$.
\end{proposition}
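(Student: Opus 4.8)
The plan is to observe first that $U=\ker_{X_0+X_1}A$ is a closed subspace of $X_0+X_1$, being the kernel of the bounded map $A\colon X_0+X_1\to Y_0+Y_1$, so the construction of the preceding subsection applies verbatim: $(Q(X_0),Q(X_1))$ is a genuine Banach couple, each $Q(X_i)$ is isometric to $X_i/(X_i\cap U)=X_i/\ker_{X_i}A$ (hence complete), and $Q(X_0)+Q(X_1)=(X_0+X_1)/U$ with equal norms. With this in hand I would first check that $B$ is well defined: if $Qx=Qx'$ then $x-x'\in U=\ker A$, so $Ax=Ax'$, and thus $B\widehat{x}:=Ax$ is independent of the chosen representative of $\widehat{x}=Qx$. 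The identity $A=B\circ Q$ is then immediate from the very definition of $B$, and $Q|_{X_i}\colon X_i\to Q(X_i)$ is by construction the canonical quotient operator.

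Next I would verify that $B$ carries the couple $(Q(X_0),Q(X_1))$ into $(Y_0,Y_1)$ boundedly. Given $\widehat{x}\in Q(X_i)$, every representative can be chosen inside $X_i$; for such $x_i\in X_i$ one has $B\widehat{x}=Ax_i\in A(X_i)\subseteq Y_i$ and $\|B\widehat{x}\|_{Y_i}=\|Ax_i\|_{Y_i}\le\|A|_{X_i}\|\,\|x_i\|_{X_i}$. Taking the infimum over all $x_i\in X_i$ with $Qx_i=\widehat{x}$ yields $\|B\widehat{x}\|_{Y_i}\le\|A|_{X_i}\|\,\|\widehat{x}\|_{Q(X_i)}$; the same estimate with representatives in $X_0+X_1$ shows $B\colon (X_0+X_1)/U=Q(X_0)+Q(X_1)\to Y_0+Y_1$ is bounded.

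Then I would establish the invertibility. On the endpoint space $B|_{Q(X_i)}\colon Q(X_i)\to Y_i$ is injective, since $B\widehat{x}=0$ with $\widehat{x}=Qx_i$ forces $x_i\in\ker_{X_0+X_1}A=U$, i.e. $\widehat{x}=0$; it is surjective because $A|_{X_i}\colon X_i\to Y_i$ is onto by hypothesis, so any $y\in Y_i$ equals $Ax_i=B(Qx_i)$. As $Q(X_i)$ and $Y_i$ are Banach spaces and $B|_{Q(X_i)}$ is a continuous bijection, the open mapping theorem makes it an isomorphism. For the sum, surjectivity of $A$ on the endpoints gives $A(X_0+X_1)\supseteq A(X_0)+A(X_1)=Y_0+Y_1$, so $A\colon X_0+X_1\to Y_0+Y_1$ is onto with kernel $U$; the induced map $(X_0+X_1)/U\to Y_0+Y_1$, which after the identification $(X_0+X_1)/U=Q(X_0)+Q(X_1)$ is exactly $B$, is a continuous bijection between Banach spaces and hence an isomorphism, again by the open mapping theorem. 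Thus $B$ is an isomorphism of Banach couples and $A=B\circ Q$ is the asserted factorization.

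There is no real obstacle here; the argument is essentially bookkeeping. The one point deserving care is the boundedness estimate for $B$ on the quotient spaces: one must take the infimum over representatives lying in $X_i$ (not merely in $X_0+X_1$) in order to recover the correct norm $\|\cdot\|_{Q(X_i)}$. Beyond that, the proof uses completeness of $X_i$, $Y_i$, $Y_0+Y_1$ and of the quotients repeatedly, so that the open mapping theorem is available at each step.
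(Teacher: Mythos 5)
Your proof is correct and follows the same route as the paper's (well-definedness of $B$, boundedness by taking the infimum over representatives, injectivity on the sum, surjectivity from surjectivity of $A$ on endpoints, open mapping theorem), just spelled out in fuller detail than the paper's very terse argument. The careful point you flag — taking the infimum over representatives in $X_i$ rather than $X_0+X_1$ to recover $\|\cdot\|_{Q(X_i)}$ — is exactly the right thing to be precise about.
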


\begin{proof} Clearly that the map $B$ is well defined and bounded from $%
Q(X_{i})$ to $Y_{i}$ for $i=0,1$. Our hypothesis on $A$ implies that $%
B\colon (Q(X_{0}),Q(X_{1}))\rightarrow (Y_{0},Y_{1})$ is surjective. Since $%
B\colon Q(X_{0})+Q(X_{1})\rightarrow Y_{0}+Y_{1}$ is injective, it follows
from Banach Theorem on inverse operator that the operator $B$ is an isomorphism.
To conclude it is enough to observe that $A=BQ$.
\end{proof}

We will need the following property of a~quotient couple.

\begin{proposition}
\label{KF} If $(X_0,X_1)$ is a~Banach couple and $Q\colon X_{0}+X_{1}\to
(X_{0}+X_{1})/U$ is the quotient map. Then for all $x\in X_0+X_1$ we have
\begin{equation*}
K(t,Qx;Q(X_{0}),Q(X_{1}))= \func{inf}_{v\in U}K(t,x+v;X_{0},X_{1}),\quad
\,t>0.
\end{equation*}
\end{proposition}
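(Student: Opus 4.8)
The statement to prove is Proposition~\ref{KF}: for the quotient map $Q\colon X_0+X_1\to (X_0+X_1)/U$ one has
\[
K(t,Qx;Q(X_0),Q(X_1))=\inf_{v\in U}K(t,x+v;X_0,X_1),\quad t>0.
\]
The plan is to establish the two inequalities separately, the heart of the matter being the unwinding of the infimum defining the $K$-functional on the quotient couple in terms of representatives in $X_0+X_1$. First I would recall (as was already verified in the discussion preceding Proposition~\ref{PrN2}) that the norm of $Q(X_0)+Q(X_1)$ coincides with the quotient norm of $(X_0+X_1)/U$, and more precisely that for $w=w_0+w_1$ with $w_j\in Q(X_j)$ the norm $\|w_0\|_{Q(X_0)}+t\|w_1\|_{Q(X_1)}$ is computed as an infimum over preimages $u_j\in X_j$ with $Qu_j=w_j$.

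For the inequality ``$\le$'': fix any $v\in U$ and any decomposition $x+v=x_0+x_1$ with $x_j\in X_j$. Then $Qx=Q(x+v)=Qx_0+Qx_1$ is a decomposition of $Qx$ in the quotient couple with $Qx_j\in Q(X_j)$ and $\|Qx_j\|_{Q(X_j)}\le\|x_j\|_{X_j}$, so
\[
K(t,Qx;Q(X_0),Q(X_1))\le \|x_0\|_{X_0}+t\|x_1\|_{X_1}.
\]
Taking the infimum over decompositions of $x+v$ and then over $v\in U$ gives $K(t,Qx;Q(X_0),Q(X_1))\le \inf_{v\in U}K(t,x+v;X_0,X_1)$.

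For the reverse inequality ``$\ge$'': fix $\varepsilon>0$ and pick a near-optimal decomposition $Qx=w_0+w_1$ with $w_j\in Q(X_j)$ and $\|w_0\|_{Q(X_0)}+t\|w_1\|_{Q(X_1)}\le K(t,Qx;Q(X_0),Q(X_1))+\varepsilon$. By definition of the norms $\|\cdot\|_{Q(X_j)}$ (= quotient norms), choose $x_j\in X_j$ with $Qx_j=w_j$ and $\|x_j\|_{X_j}\le\|w_j\|_{Q(X_j)}+\varepsilon$. Then $Q(x_0+x_1)=w_0+w_1=Qx$, so $x_0+x_1-x\in\ker Q=U$; set $v:=x_0+x_1-x\in U$. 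Now $x+v=x_0+x_1$ is a legitimate decomposition in $X_0+X_1$, whence
\[
\inf_{v'\in U}K(t,x+v';X_0,X_1)\le K(t,x+v;X_0,X_1)\le\|x_0\|_{X_0}+t\|x_1\|_{X_1}\le K(t,Qx;Q(X_0),Q(X_1))+(1+t)\varepsilon+\varepsilon.
\]
Letting $\varepsilon\to 0$ finishes it. I expect the main (minor) obstacle to be purely bookkeeping: making sure the quotient-norm representatives $x_j$ can be chosen simultaneously in the \emph{correct} endpoint space $X_j$ — this is exactly where one uses that $\|w_j\|_{Q(X_j)}$ is the quotient norm of $X_j/(U\cap X_j)$, and that $Q(X_j)$ carries precisely this norm — together with the fact that $U\subset X_0+X_1$ is closed so that $\ker Q=U$ and the quotient couple is genuinely a Banach couple. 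No compactness or completeness beyond this is needed; the argument is an $\varepsilon$-chase.
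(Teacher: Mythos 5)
Your proof is correct and follows essentially the same argument as the paper's: the ``$\le$'' direction comes from $\|Q\|_{X_j\to Q(X_j)}\le1$ together with $Q(x+v)=Qx$ for $v\in U$, and the ``$\ge$'' direction is the identical $\varepsilon$-chase of choosing a near-optimal decomposition of $Qx$ in the quotient couple, lifting each piece to $X_j$, and observing that the error term $x_0+x_1-x$ lies in $U=\ker Q$. The paper writes the ``$\le$'' side more compactly (directly as a $K$-functional inequality from $\|Q\|\le1$) and uses a multiplicative $(1+\varepsilon)$ rather than your additive $\varepsilon$, but these are purely cosmetic differences.
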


\begin{proof} Since $\left\Vert Q\right\Vert _{X_{i}\rightarrow
Q(X_{i})}=1 $, for $i=0,1$, thus for all $v\in U$ and $t>0$ we have
\begin{equation*}
K(t,Qx;Q(X_{0}),Q(X_{1}))=K(t,Q(x+v);Q(X_{0}),Q(X_{1}))\leq
K(t,x+v;X_{0},X_{1}).
\end{equation*}%
Hence $K(t,Qx;Q(X_{0}),Q(X_{1}))\leq \func{inf}_{v\in
U}K(t,x+v;X_{0},X_{1})$. To prove the opposite inequality let us
choose elements $z_{0}\in Q(X_{0})$ and $z_{1}\in Q(X_{0})$ such
that $z_{0}+z_{1}=Qx$ and
\begin{equation*}
\left\Vert z_{0}\right\Vert _{Q(X_{0})}+t\left\Vert z_{0}\right\Vert
_{Q(X_{1})}\leq (1+\varepsilon )K(t,Qx;Q(X_{0}),Q(X_{1})).
\end{equation*}%
From the definition of the norms in $Q(X_{0})$ and $Q(X_{1})$, it follows
that there exist elements $x_{0}\in X_{0}$, $x_{1}\in X_{1}$ such that $%
Qx_{0}=z_{0}$, $Qx_{1}=z_{1}$ and
\begin{equation*}
\left\Vert x_{0}\right\Vert _{X_{0}}+t\left\Vert x_{1}\right\Vert
_{X_{1}}\leq \left\Vert z_{0}\right\Vert _{Q(X_{0})}+t\left\Vert
z_{0}\right\Vert _{Q(X_{1})}+\varepsilon
\end{equation*}%
Clearly that $v:=x_{0}+x_{1}-x\in {\text{ker}}\,Q$ and so
\begin{align*}
K(t,x+v;X_{0},X_{1})& =K(t,x_{0}+x_{1};X_{0},X_{1})\leq \left( \left\Vert
x_{0}\right\Vert _{X_{0}}+t\left\Vert x_{1}\right\Vert _{X_{1}}\right) \\
& \leq (\left\Vert z_{0}\right\Vert _{Q(X_{0})}+t\left\Vert z_{0}\right\Vert
_{Q(X_{1})})+\varepsilon \\
& \leq (1+\varepsilon )K(t,Qx;Q(X_{0}),Q(X_{1}))+\varepsilon {\text{.}}
\end{align*}%
Since $\varepsilon >0$ was arbitrary, we obtain the desired inequality.
\end{proof}

\subsection{Decomposition of Fredholm operator and formulation of the theorem%
}

Everywhere below the parameters $\theta \in \left( 0,1\right) $
and $q\in \left[ 1,\infty \right) $ are fixed and $A\colon
(X_{0},X_{1})\rightarrow (Y_{0},Y_{1})$ be a~surjective, Fredholm
operator such that $A\colon (X_{0},X_{1})_{\theta ,q}\rightarrow
(Y_{0},Y_{1})_{\theta ,q}$ is Fredholm. As $q\in \left[ 1,\infty
\right) $ then it follows from Theorem \ref{realfredholm} that the
spaces $V_{\theta ,q}^{0}\cap V_{\theta ,q}^{1}$ and $%
\widetilde{V}$ (see (\ref{NM1})) are finite-dimensional. Now we
can define operators $A_{1},$ $A_{2}$, $A_{3}$ in the following
way. Since $V_{\theta ,q}^{0}\cap V_{\theta ,q}^{1}$ is a~finite
dimensional space, it is a~closed subspace of $X_{0}+X_{1}$ and so
we can define an operator $A_{1}$ as a~quotient operator by the
formula
\begin{equation*}
A_{1}\colon X_{0}+X_{1}\rightarrow (X_{0}+X_{1})/(V_{\theta
,q}^{0}\cap V_{\theta ,q}^{1})
\end{equation*}%
We put $(Z_{0},Z_{1}):=(A_{1}(X_{0}),A_{1}(X_{1}))$. Now since the
space $\widetilde{V}$ is a~finite dimensional space,
$A_{1}(\widetilde{V})$ is a~closed subspace of
$Z_{0}+Z_{1}=A_{1}(X_{0})+A_{1}(X_{1})$ and we can define operator
$A_{2}$ as a~quotient operator by the formula
\begin{equation*}
A_{2}\colon Z_{0}+Z_{1}\rightarrow (Z_{0}+Z_{1})/A_{1}(\widetilde{V}).
\end{equation*}%
and
\begin{equation*}
(W_{0},W_{1}):=(A_{2}(Z_{0}),A_{2}(Z_{1}))\text{.}
\end{equation*}%
To define the operator $A_{3}$ we note that $\ker _{X_{0}+X_{1}}A$ is a
closed subspace of $X_{0}+X_{1}$ and $\ker _{X_{0}+X_{1}}A_{2}A_{1}\subset
\ker _{X_{0}+X_{1}}A$ therefore space $A_{2}(A_{1}(\ker _{X_{0}+X_{1}}A))$
is a closed subspace of the
\begin{equation*}
A_{2}(A_{1}(X_{0}+X_{1}))=A_{2}(A_{1}(X_{0}))+A_{2}(A_{1}(X_{1}))=W_{0}+W_{1}.
\end{equation*}%
Now we define an operator $Q$ by the formula
\begin{equation*}
Q\colon W_{0}+W_{1}\rightarrow (W_{0}+W_{1})/A_{2}(A_{1}((\ker
_{X_{0}+X_{1}}A)).
\end{equation*}%
We observe that operator $QA_{2}A_{1}$ is surjective on spaces $X_{0}$, $%
X_{1}$ and its kernel in $X_{0}+X_{1}$ coincides with the kernel of $A$.
Hence we may apply Proposition \ref{PrN2} to deduce that there exists an
operator $B\colon (Q(W_{0}),Q(W_{1}))\rightarrow (Y_{0},Y_{1})$ such that $%
A=BQA_{2}A_{1}$.

Throughout the paper we denote an operator $BQ$ by $A_{3}$. The
main result of this section is the following theorem, whose
formulation uses the
classes ${\mathbb{F}}_{\theta ,q}^{1}$, ${\mathbb{F}}_{\theta ,q}^{2}$, ${%
\mathbb{F}}_{\theta ,q}^{3}$, defined in introduction.


\begin{theorem}
\label{TN2} Suppose that an operator $A\colon (X_{0},X_{1})\rightarrow
(Y_{0},Y_{1})$ is Fredholm and surjective operator and let $1\leq $ $%
q<\infty $. Then $A\colon (X_{0},X_{1})_{\theta ,q}\rightarrow
(Y_{0},Y_{1})_{\theta ,q}$ is a~Fredholm operator if and only if
there exist Banach couples $(Z_{0},Z_{1})$, $(W_{0},W_{1})$ and
operators $A_1$, $A_2$, $A_3$ such that $A$ factors as
follows{\rm:}
\begin{equation*}
A\colon (X_0, X_1) \stackrel{A_1} \longrightarrow (Z_0, Z_1)
\stackrel{A_2} \longrightarrow (W_0, W_1) \stackrel{A_3}
\longrightarrow (Y_0, Y_1),
\end{equation*}%
where $A_{1}$, $A_{2}$, $A_{3}$ are operators from the classes
${\mathbb{F}}_{\theta ,q}^{1}$, ${\mathbb{F}}_{\theta ,q}^{2}$,
${\mathbb{F}}_{\theta ,q}^{3}$, respectively.
\end{theorem}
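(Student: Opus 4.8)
The plan is to prove the two implications separately, with the ``only if'' direction being essentially the content of the discussion preceding the theorem and the ``if'' direction being a straightforward composition argument.

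For the ``only if'' direction, suppose $A\colon (X_{0},X_{1})_{\theta ,q}\rightarrow (Y_{0},Y_{1})_{\theta ,q}$ is Fredholm. Since $q\in[1,\infty)$ the functor $(\cdot)_{\theta,q}$ is regular, and by Theorem \ref{realfredholm} the spaces $V_{\theta,q}^{0}\cap V_{\theta,q}^{1}$ and $\widetilde{V}$ are finite-dimensional, so the quotient operators $A_1$, $A_2$ constructed above are well defined and $A=A_3A_2A_1$ by Proposition \ref{PrN2} applied to the surjective operator $QA_2A_1$. It remains to verify the three membership claims. For $A_1\in{\mathbb{F}}_{\theta,q}^{1}$: $A_1$ is surjective on endpoints by construction, $\ker_{X_0+X_1}A_1=V_{\theta,q}^{0}\cap V_{\theta,q}^{1}$ is finite-dimensional, it is contained in $(X_0,X_1)_{\theta,q}$ since $V_{\theta,q}^{0}\cap V_{\theta,q}^{1}=\ker_{(X_0,X_1)_{\theta,q}}A$ (finite-dimensional subspaces lie in every intermediate space, or directly: each such $x$ has $\int_0^\infty(t^{-\theta}K(t,x))^q\,dt/t<\infty$), and $A_1\colon(X_0,X_1)_{\theta,q}\to(Z_0,Z_1)_{\theta,q}$ is surjective --- here one uses that $(Z_0,Z_1)_{\theta,q}$ is the real interpolation space of the quotient couple and that $A_1$ maps $(X_0,X_1)_{\theta,q}$ onto it, which follows from Proposition \ref{KF} together with the fact that the kernel of $A_1$ already lies in the interpolation space. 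For $A_2\in{\mathbb{F}}_{\theta,q}^{2}$ and $A_3\in{\mathbb{F}}_{\theta,q}^{3}$ one argues similarly, reading off injectivity of $A_2$, closedness and finite codimension of its range equal to $\dim\ker_{Z_0+Z_1}A_2=\dim\widetilde V=d(A)$ from Theorem \ref{realfredholm} (ii) applied to the operator $A_2\colon(Z_0,Z_1)\to(W_0,W_1)$, and invertibility of $A_3$ from the fact that $A_3$ has trivial kernel on the sum (its kernel is $A_2A_1(\ker_{X_0+X_1}A)$ modulo the quotients, which is $\{0\}$), is surjective, and that the whole of $(Y_0,Y_1)_{\theta,q}$ is reached once the finite-dimensional obstruction has been quotiented out.

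For the ``if'' direction, suppose such a factorization $A=A_3A_2A_1$ with $A_i$ in the respective classes exists. Then on the interpolation spaces we have the composition $A_1\colon(X_0,X_1)_{\theta,q}\to(Z_0,Z_1)_{\theta,q}$, $A_2\colon(Z_0,Z_1)_{\theta,q}\to(W_0,W_1)_{\theta,q}$, $A_3\colon(W_0,W_1)_{\theta,q}\to(Y_0,Y_1)_{\theta,q}$. The first is surjective with finite-dimensional kernel (hence upper semi-Fredholm, indeed Fredholm with closed range since a surjection has closed range), the second is injective with closed range of finite codimension (hence Fredholm), and the third is invertible (hence Fredholm). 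Since the class of Fredholm operators is closed under composition, $A=A_3A_2A_1\colon(X_0,X_1)_{\theta,q}\to(Y_0,Y_1)_{\theta,q}$ is Fredholm. One should also check that the composition of the restrictions to the interpolation spaces indeed agrees with the restriction of $A$ itself, which is immediate since $A=A_3A_2A_1$ as maps $X_0+X_1\to Y_0+Y_1$ and each factor maps the relevant interpolation space into the next.

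The main obstacle is the ``only if'' direction, and within it the verification that $A_1$ is \emph{surjective} on the interpolation spaces, not merely that it maps $(X_0,X_1)_{\theta,q}$ into $(Z_0,Z_1)_{\theta,q}$: one must show that every $z\in(Z_0,Z_1)_{\theta,q}=(A_1(X_0),A_1(X_1))_{\theta,q}$ has a preimage in $(X_0,X_1)_{\theta,q}$. This uses Proposition \ref{KF}: $K(t,z;Z_0,Z_1)=\inf_{v\in V_{\theta,q}^0\cap V_{\theta,q}^1}K(t,x+v;X_0,X_1)$ for any $x$ with $A_1x=z$, so finiteness of $\int_0^\infty(t^{-\theta}K(t,z))^q\,dt/t$ yields, after choosing near-optimal $v(t)$ and a careful gluing/discretization argument over dyadic intervals (splitting the $K$-functional contributions and summing), an $x'$ in the kernel coset with $\int_0^\infty(t^{-\theta}K(t,x';X_0,X_1))^q\,dt/t<\infty$, i.e.\ $x'\in(X_0,X_1)_{\theta,q}$ and $A_1x'=z$. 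The analogous surjectivity-modulo-finite-codimension statements for $A_2$ and the reaching of all of $(Y_0,Y_1)_{\theta,q}$ by $A_3$ are handled by the same quotient-couple identity. Everything else is bookkeeping with Theorem \ref{realfredholm} and Proposition \ref{PrN2}.
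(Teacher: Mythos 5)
Your overall structure (easy ``if'' via composition of Fredholm operators; ``only if'' via constructing the three quotients and checking $A_1\in\mathbb{F}^1_{\theta,q}$, $A_2\in\mathbb{F}^2_{\theta,q}$, $A_3\in\mathbb{F}^3_{\theta,q}$ using Theorem \ref{realfredholm}, Proposition \ref{PrN2} and Proposition \ref{KF}) matches the paper. But the step you yourself single out as the main obstacle --- surjectivity of $A_1\colon(X_0,X_1)_{\theta,q}\to(Z_0,Z_1)_{\theta,q}$ --- is not established by the argument you sketch. Proposition \ref{KF} gives $K(t,A_1x;Z_0,Z_1)=\inf_{v\in V}K(t,x+v;X_0,X_1)$ with $V=V^0_{\theta,q}\cap V^1_{\theta,q}$, so finiteness of $\int_0^\infty\bigl(t^{-\theta}K(t,z;Z_0,Z_1)\bigr)^q\,dt/t$ only tells you $\int_0^\infty\bigl(t^{-\theta}\inf_{v}K(t,x+v)\bigr)^q\,dt/t<\infty$, whereas to produce a preimage in $(X_0,X_1)_{\theta,q}$ you need $\inf_{v}\int_0^\infty\bigl(t^{-\theta}K(t,x+v)\bigr)^q\,dt/t<\infty$. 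There is no general inequality moving the infimum outside the integral, and ``choosing near-optimal $v(t)$ and gluing over dyadic intervals'' does not produce a single corrector $v$ working for all $t$ simultaneously. This is exactly the obstruction the paper's Lemma \ref{NL2} must and does circumvent --- crucially, by \emph{using} the assumed Fredholmness of $A$ on $(X_0,X_1)_{\theta,q}$, not by a $K$-functional lifting.

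The paper's route in Lemma \ref{NL2} is: since $A((X_0,X_1)_{\theta,q})$ is closed and $\ker_{(X_0,X_1)_{\theta,q}}A=V^0_{\theta,q}\cap V^1_{\theta,q}$, the open mapping theorem gives $\|Ax\|_{(Y_0,Y_1)_{\theta,q}}\approx\inf_{v}\|x+v\|_{(X_0,X_1)_{\theta,q}}$; combining with Proposition \ref{KF} yields $\|A_1x\|_{(Z_0,Z_1)_{\theta,q}}\lesssim\|Ax\|_{(Y_0,Y_1)_{\theta,q}}$, and $A=A_3A_2A_1$ gives the reverse, so $\|A_1x\|\approx\|Ax\|$. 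This equivalence makes $A_1((X_0,X_1)_{\theta,q})$ closed in $(Z_0,Z_1)_{\theta,q}$. Then a purely algebraic argument shows $Z_0\cap Z_1\subset A_1((X_0,X_1)_{\theta,q})$: if $u=A_1x_0=A_1x_1$ with $x_i\in X_i$, then $x_0-x_1\in V^0_{\theta,q}\cap V^1_{\theta,q}\subset(X_0,X_1)_{\theta,q}$, which lets one correct $x_0$ to an element of $X_0\cap X_1$ plus something in $(X_0,X_1)_{\theta,q}$; finally density of $Z_0\cap Z_1$ in $(Z_0,Z_1)_{\theta,q}$ (here $q<\infty$ is used) gives surjectivity. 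You should also note that the invertibility of $A_3$ in Lemma \ref{NL4} rests on the explicit decomposition $(W_0,W_1)_{\theta,q}=A_2((Z_0,Z_1)_{\theta,q})\oplus A_2A_1(P_{X_0}\widetilde V)$ established when proving $A_2\in\mathbb{F}^2_{\theta,q}$, rather than on the soft assertion that the obstruction has been ``quotiented out.''
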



Since the classes ${\mathbb{F}}_{\theta ,q}^{1}$\textit{,}${\mathbb{F}}%
_{\theta ,q}^{2}$\textit{, }${\mathbb{F}}_{\theta ,q}^{3}$ consist
of Fredholm operators on $(X_{0},X_{1})_{\theta ,q}$ and
the superposition of Fredholm operators is a~Fredholm operator, the
sufficiency is obvious. Thus we
only need to prove that for the factorization $%
A=A_{3}A_{2}A_{1}$ constructed above we have $A_{i}\in {\mathbb{F}}_{\theta ,q}^{i}$, $%
i=1,2,3 $.

\subsection{The operator $A_{1}$ belongs to the class ${\mathbb{F}}_{\protect%
\theta ,q}^{1}$}

We recall that the class ${\mathbb{F}}_{\theta ,q}^{1}$ consists
of all surjective operators $T\colon (X_{0},X_{1})\rightarrow
(Z_{0},Z_{1})$ with finite dimensional $\ker _{X_{0}+X_{1}}T$ such
that
\begin{itemize}
\item[{\rm{a)}}] $T\colon (X_{0},X_{1})_{\theta ,q}\rightarrow
(Y_{0},Y_{1})_{\theta ,q}$ is surjective.
\item[{\rm{b)}}] $\ker
_{X_{0}+X_{1}}T\subset (X_{0},X_{1})_{\theta ,q}$.
\end{itemize}

By construction, the operator $A_{1}$ is surjective from the
couple $(X_{0},X_{1})$ to the couple $(Z_{0},Z_{1})$ and has
finite dimensional kernel $V_{\theta ,q}^{0}\cap V_{\theta
,q}^{1}$, which belongs to $(X_{0},X_{1})_{\theta ,q}$. Thus in
order to show that $A_{1}\in {\mathbb{F}}_{\theta ,q}^{1}$, it is
enough to prove the following lemma.


\begin{lemma}
\label{NL2} An operator $A_{1}\colon (X_{0},X_{1})\rightarrow
(Z_{0},Z_{1})$ maps $(X_{0},X_{1})_{\theta ,q}$ onto
$(Z_{0},Z_{1})_{\theta ,q}$.
\end{lemma}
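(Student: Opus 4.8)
\textbf{Proof plan for Lemma \ref{NL2}.}

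The goal is to show that the quotient operator $A_1\colon X_0+X_1\to (X_0+X_1)/(V_{\theta,q}^0\cap V_{\theta,q}^1)$ sends $(X_0,X_1)_{\theta,q}$ \emph{onto} the interpolation space of the quotient couple $(Z_0,Z_1)=(A_1(X_0),A_1(X_1))$. One inclusion is free: $A_1\colon (X_0,X_1)\to(Z_0,Z_1)$ has norm $\le 1$ on endpoints, so by the interpolation property $A_1\big((X_0,X_1)_{\theta,q}\big)\subset (Z_0,Z_1)_{\theta,q}$. The content is the reverse inclusion: given $w=A_1 x\in (Z_0,Z_1)_{\theta,q}$ we must produce $x'\in(X_0,X_1)_{\theta,q}$ with $A_1 x'=w$, i.e. we must correct $x$ by an element of $\ker A_1=V_{\theta,q}^0\cap V_{\theta,q}^1$ so that the corrected element lies in $(X_0,X_1)_{\theta,q}$.

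The plan is to use the $J$-method description of the quotient-couple norm together with Proposition \ref{KF}. By Proposition \ref{KF}, $K(t,A_1x;Z_0,Z_1)=\inf_{v\in V_{\theta,q}^0\cap V_{\theta,q}^1}K(t,x+v;X_0,X_1)$. So $w\in(Z_0,Z_1)_{\theta,q}$ means $\int_0^\infty\big(t^{-\theta}\inf_v K(t,x+v;X_0,X_1)\big)^q\,\frac{dt}{t}<\infty$. First I would split this integral over $(0,1)$ and $(1,\infty)$. On $(0,1)$, finiteness of $\int_0^1\big(t^{-\theta}\inf_v K(t,x+v)\big)^q\frac{dt}{t}$ together with the fact that the infimum is attained up to a constant factor by a single $v^{(0)}$ (using a Holmstedt-type / discretization argument, or a direct selection) should give us $v^{(0)}$ with $x+v^{(0)}\in V_{\theta,q}^0$; symmetrically on $(1,\infty)$ we get $v^{(1)}$ with $x+v^{(1)}\in V_{\theta,q}^1$. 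The delicate point is that a priori the near-optimal $v$ depends on $t$, so one cannot literally pick one $v$; the standard way around this is to discretize $t=2^k$, pick $v_k$ near-optimal on each dyadic block, and show the $v_k$ can be replaced by a single $v$ because they differ by elements whose $K$-functional is controlled — here the finite-dimensionality of $\ker A_1=V_{\theta,q}^0\cap V_{\theta,q}^1$ is crucial, since on a finite-dimensional subspace of $X_0+X_1$ all the relevant $K$-functionals are equivalent to any fixed norm, so differences of the $v_k$ are uniformly small and telescoping/averaging produces a genuine limit $v$.

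Once we have $v^{(0)}\in\ker A_1$ with $x+v^{(0)}\in V_{\theta,q}^0$ and $v^{(1)}\in\ker A_1$ with $x+v^{(1)}\in V_{\theta,q}^1$, note $v^{(0)}-v^{(1)}\in\ker A_1=V_{\theta,q}^0\cap V_{\theta,q}^1\subset (X_0,X_1)_{\theta,q}$. Then $x':=x+v^{(0)}$ satisfies $A_1x'=A_1x=w$, and $x'=(x+v^{(1)})+(v^{(0)}-v^{(1)})$ exhibits $x'$ as a sum of an element of $V_{\theta,q}^1$ and an element of $(X_0,X_1)_{\theta,q}$; combined with $x'\in V_{\theta,q}^0$ this forces $x'\in V_{\theta,q}^0\cap V_{\theta,q}^1+(X_0,X_1)_{\theta,q}$ — but more directly, $x'\in V_{\theta,q}^0$ gives $\int_0^1(t^{-\theta}K(t,x';X_0,X_1))^q\frac{dt}{t}<\infty$ and the decomposition $x'=(x+v^{(1)})+(v^{(0)}-v^{(1)})$ with both summands having finite $\int_1^\infty$-part gives $\int_1^\infty(t^{-\theta}K(t,x';X_0,X_1))^q\frac{dt}{t}<\infty$; adding the two halves yields $x'\in(X_0,X_1)_{\theta,q}$, completing the proof. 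I expect the main obstacle to be exactly the $t$-dependence of the near-optimal corrections: making rigorous the passage from ``for each $t$ there is a good $v$'' to ``there is one $v$ good for all small $t$'' (respectively all large $t$), for which the finite-dimensionality of the kernel must be exploited carefully, presumably via a compactness or uniform-equivalence-of-norms argument on $\ker A_1$.
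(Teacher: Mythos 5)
You flag, correctly, that the delicate step is passing from a $t$-dependent near-optimal correction $v(t)\in\ker A_1$ to a single $v$, and you propose to close this gap using discretization plus finite-dimensionality of $\ker A_1=V_{\theta,q}^0\cap V_{\theta,q}^1$, the point being that all norms on a finite-dimensional space are equivalent. This does not close the gap. The norms $v\mapsto K(t,v;\vec{X})$ on $\ker A_1$ are indeed mutually equivalent for each fixed $t$, but the constants in the comparison between different values of $t$ blow up as $t\to 0$ or $t\to\infty$: for $v\in\ker A_1\subset(X_0,X_1)_{\theta,q}$ one has $K(t,v;\vec{X})\le C\,t^{\theta}\|v\|_{X_0+X_1}$ for $0<t\le 1$, yet there is no matching lower bound of that order, and the generally valid bound $K(t,v;\vec{X})\ge\min(1,t)\,\|v\|_{X_0+X_1}$ is too weak. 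Consequently smallness of $K(2^{-n},v_n-v_{n+1};\vec{X})$ against the weight $2^{n\theta}$ does not yield smallness of $\|v_n-v_{n+1}\|_{X_0+X_1}$, and the sequence $\{v_n\}$ need not converge. No telescoping or compactness argument on $\ker A_1$ alone rescues the construction.

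More significantly, your argument makes no use of the standing hypothesis of Section~4.2, in force when Lemma~\ref{NL2} is proved, that $A\colon(X_{0},X_{1})_{\theta,q}\to(Y_{0},Y_{1})_{\theta,q}$ is a Fredholm operator (so in particular has closed range). Without this hypothesis the statement would be an assertion about an arbitrary quotient map $Q\colon(X_{0},X_{1})_{\theta,q}\to(Q(X_{0}),Q(X_{1}))_{\theta,q}$ by a finite-dimensional subspace $U$ of $(X_{0},X_{1})_{\theta,q}$, and such quotient maps are not surjective in general; this is essentially a quotient-side analogue of the Lions--Magenes subspace interpolation problem discussed in Section~8. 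The paper's proof starts from Proposition~\ref{KF} exactly as you do, but then invokes closedness of $A((X_{0},X_{1})_{\theta,q})$ and the Banach theorem on the inverse operator to establish $\inf_{v}\|x+v\|_{(X_{0},X_{1})_{\theta,q}}\approx\|Ax\|_{(Y_{0},Y_{1})_{\theta,q}}$, combines this with the factorization $A=A_{3}A_{2}A_{1}$ and boundedness of $A_{2}$, $A_{3}$ on the interpolation spaces to get the two-sided equivalence $\|A_{1}x\|_{(Z_{0},Z_{1})_{\theta,q}}\approx\|Ax\|_{(Y_{0},Y_{1})_{\theta,q}}$, deduces from this that $A_{1}((X_{0},X_{1})_{\theta,q})$ is closed in $(Z_{0},Z_{1})_{\theta,q}$, and finishes by showing $Z_{0}\cap Z_{1}$ lies in the image and invoking density of $Z_{0}\cap Z_{1}$ in $(Z_{0},Z_{1})_{\theta,q}$, which is where $q<\infty$ is needed. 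The leverage coming from the ambient Fredholm operator $A$ on the interpolation level is indispensable; a purely local $K$-functional argument has no substitute for it.
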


\begin{proof} Since $A_{1}$ is a~quotient operator, it follows
from Proposition \ref{KF} that
\begin{equation*}
K(t,A_{1}x;Z_{0},Z_{1})=\func{inf}_{v\in V_{\theta ,q}^{0}\cap V_{\theta
,q}^{1}}K(t,x+v;X_{0},X_{1}).
\end{equation*}%
Hence
\begin{equation}
\left\Vert A_{1}x\right\Vert _{(Z_{0},Z_{1})_{\theta ,q}}=\bigg (%
\int_{0}^{\infty }\bigg (t^{-\theta }\func{inf}_{v\in V_{\theta ,q}^{0}\cap
V_{\theta ,q}^{1}}K(t,x+v;\vec{X})\bigg )^{q}\frac{dt}{t}\bigg )^{1/q}{\text{%
.}}  \label{NM7}
\end{equation}%
Since $A((X_{0},X_{1})_{\theta ,q})$ is a~closed subspace of $%
(Y_{0},Y_{1})_{\theta ,q}$ and $\ker _{X_{0}+X_{1}}A\cap
(X_{0},X_{1})_{\theta ,q}=V_{\theta ,q}^{0}\cap V_{\theta ,q}^{1}$, it
follows from the Banach theorem on inverse operator that
\begin{align}
\left\Vert Ax\right\Vert _{(Y_{0},Y_{1})_{\theta ,q}}& \approx \func{inf}%
_{v\in V_{\theta ,q}^{0}\cap V_{\theta ,q}^{1}}\left\Vert x+v\right\Vert
_{(X_{0},X_{1})_{\theta ,q}} \label{N9} \\
& = \func{inf}_{v\in V_{\theta ,q}^{0}\cap V_{\theta
,q}^{1}}\left(
\int_{0}^{\infty }\left( t^{-\theta }K(t,x+v;\vec{X})\right) ^{q}\frac{dt}{t}%
\right) ^{1/q},  \notag
\end{align}%
with constant of equivalence of norms independent of $x\in
(X_{0},X_{1})_{\theta ,q}$. Combine (\ref{NM7}) and (\ref{N9}) we deduce
that for all $x\in (X_{0},X_{1})_{\theta ,q}$ we have
\begin{equation*}
\left\Vert A_{1}x\right\Vert _{(Z_{0},Z_{1})_{\theta ,q}}\leq \gamma
\left\Vert Ax\right\Vert _{(Y_{0},Y_{1})_{\theta ,q}}
\end{equation*}%
with $\gamma >0$ independent of $x\in (X_{0},X_{1})_{\theta ,q}$. Since $%
A=A_{3}A_{2}A_{1}$, we conclude that for all elements $A_{1}x\in
(Z_{0},Z_{1})_{\theta ,q}$
\begin{equation*}
\left\Vert Ax\right\Vert _{(Y_{0},Y_{1})_{\theta ,q}}\leq \gamma \left\Vert
A_{1}x\right\Vert _{(Z_{0},Z_{1})_{\theta ,q}}
\end{equation*}%
and so we obtain the equivalence
\begin{equation}
\left\Vert Ax\right\Vert _{(Y_{0},Y_{1})_{\theta ,q}}\approx \left\Vert
A_{1}x\right\Vert _{(A_{1}(X_{0}),A_{1}(X_{1}))_{\theta ,q}}  \label{N10}
\end{equation}%
for all $x\in (X_{0},X_{1})_{\theta ,q}$ with the constants of equivalence of
norms independent of $x$.

We claim that $A_{1}((X_{0},X_{1})_{\theta ,q})$ is a closed subspace of $%
(Z_{0},Z_{1})_{\theta ,q}$. To see this assume that sequence
$x_{n}\in (X_{0},X_{1})_{\theta ,q}$ is such that $A_{1}(x_{n})$
converges to some element $z$ in $(Z_{0},Z_{1})_{\theta ,q}$. Then
$\{Ax_{n}\}=\left\{
A_{3}A_{2}(A_{1}x_{n})\right\} $ is a~Cauchy sequence in $%
(Y_{0},Y_{1})_{\theta ,q}$. Since $A((X_{0},X_{1})_{\theta ,q})$
is closed in $(Y_{0},Y_{1})_{\theta ,q}$, it follows that the
sequence $\{Ax_{n}\}$ converges to some element $Ax$ where $x\in
(X_{0},X_{1})_{\theta ,q}$. Then from (\ref{N10}), we conclude
that the sequence $\{A_{1}(x_{n})\}$ converges to $A_{1}(x)$ in
$(Z_{0},Z_{1})_{\theta ,q}$ and this proves the claim. Now observe
that if we show that $Z_{0}\cap Z_{1}=A_{1}(X_{0})\cap
A_{1}(X_{1})\subset A_{1}((X_{0},X_{1})_{\theta ,q})$, then from
$1\leq q<\infty $ it will follow that $A_{1}((X_{0},X_{1})_{\theta
,q})=(Z_{0},Z_{1})_{\theta ,q}$. Thus it is enough to prove that
\begin{equation*}
A_{1}(X_{0})\cap A_{1}(X_{1})\subset A_{1}((X_{0},X_{1})_{\theta ,q}).
\end{equation*}%
Fix $u\in A_{1}(X_{0})\cap A_{1}(X_{1})$. Since an operator $A_{1}$ maps
spaces $X_{0},X_{1}$ respectively onto spaces $A_{1}(X_{0}),$ $A_{1}(X_{1})$ there exist
elements $x_{0}\in X_{0}$ and $x_{1}\in X_{1}$ such that
\begin{equation*}
A_{1}(x_{0})=A_{1}(x_{1})=u.
\end{equation*}%
Hence
\begin{equation*}
v=x_{0}-x_{1}\in \ker _{X_{0}+X_{1}}A_{1}=V_{\theta ,q}^{0}\cap
V_{\theta ,q}^{1},
\end{equation*}%
i.e.,
\begin{equation*}
x_{0}-x_{1}=v\in V_{\theta ,q}^{0}\cap V_{\theta ,q}^{1}\subset
(X_{0},X_{1})_{\theta ,q}{\text{.}}
\end{equation*}%
Let $v=v_{0}+v_{1}$, $v_{0}\in X_{0}$, $v_{1}\in X_{1}$ be any decomposition
of element $v\in (X_{0},X_{1})_{\theta ,q}$, then $v_{0}$, $v_{1}\in
(X_{0},X_{1})_{\theta ,q}$ and
\begin{equation*}
x_{0}-v_{0}=x_{1}+v_{1}\in X_{0}\cap X_{1}{\text{.}}
\end{equation*}%
Denoting this element by $\tilde{x}$ we have
\begin{equation*}
u=A_{1}x_{0}=A_{1}\tilde{x}+A_{1}v_{0}\in
A_{1}((X_{0},X_{1})_{\theta ,q}),
\end{equation*}%
and this completes the proof.
\end{proof}

\subsection{The operator $A_{2}$ belongs to the class ${\mathbb{F}}_{\protect%
\theta ,q}^{2}$}

The class ${\mathbb{F}}_{\theta ,q}^{2}$ consists of all surjective
operators $T\colon (Z_{0},Z_{1})\rightarrow (W_{0},W_{1})$ with finite
dimensional $\ker _{Z_{0}+Z_{1}}T$ \ such that
\begin{itemize}
\item[{\rm{a)}}] operator $T\colon (Z_{0},Z_{1})_{\theta
,q}\rightarrow (W_{0},W_{1})_{\theta ,q}$ is injective.
\item[{\rm{b)}}] $T((Z_{0},Z_{1})_{\theta ,q})$ is a closed finite
codimensional subspace of $(W_{0},W_{1})_{\theta ,q}$ such that
\end{itemize}

\begin{equation*}
{\text{dim}}(W_{0},W_{1})_{\theta ,q}/T((Z_{0},Z_{1})_{\theta ,q})=\func{dim}%
\,(\ker _{Z_{0}+Z_{1}}T).
\end{equation*}

Note that operator $A_{2}$ by construction is a~quotient operator
therefore it is a~surjective operator from the couple $(Z_{0},Z_{1})$ to the couple $%
(W_{0},W_{1})$. Moreover, its kernel $\ker _{Z_{0}+Z_{1}}A_{2}=A_{1}(%
\widetilde{V})$ is a finite dimensional subspace by Theorem
\ref{realfredholm}. So to prove that $A_{2}\in
{\mathbb{F}}_{\theta ,q}^{2} $ we just need to prove the following
lemma.

\begin{lemma}
\label{NL3} Operator $A_{2}\colon (Z_{0},Z_{1})_{\theta ,q}\rightarrow
(W_{0},W_{1})_{\theta ,q}$ is injective and codimension of $%
A_{2}((Z_{0},Z_{1})_{\theta ,q})$ in $(W_{0},W_{1})_{\theta ,q}$ is finite
and is equal to the dimension of $\mathrm{\ker }_{Z_{0}+Z_{1}}A_{2}$.
\end{lemma}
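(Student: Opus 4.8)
The plan is to establish the two assertions of the lemma separately, reducing the codimension statement to Theorem~\ref{lowerFredholm} applied to $A_2\colon (Z_0,Z_1)\to (W_0,W_1)$ with the regular functor $(\cdot)_{\theta,q}$ (recall $q<\infty$). I will use freely that $V_{\theta,q}^0\cap V_{\theta,q}^1$ and $\widetilde V$ are finite dimensional (from Theorem~\ref{realfredholm}), that $\ker_{Z_0+Z_1}A_2=A_1(\widetilde V)$ and $\ker_{X_0+X_1}A_1=V_{\theta,q}^0\cap V_{\theta,q}^1$ by construction, and the following two consequences of Lemma~\ref{NL2} obtained by lifting through the quotient map $A_1$: the space $(Z_0,Z_1)_{\theta,q}=A_1\big((X_0,X_1)_{\theta,q}\big)$ has the decomposition property on $(Z_0,Z_1)$ (transfer the standard $J$-decomposition property of $(X_0,X_1)_{\theta,q}$ on $(X_0,X_1)$ through $A_1$), and $(Z_0,Z_1)_{\theta,q}\cap Z_j=A_1\big((X_0,X_1)_{\theta,q}\cap X_j\big)$ for $j=0,1$.

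First I would prove injectivity. Since $\ker_{Z_0+Z_1}A_2=A_1(\widetilde V)$, it suffices to check $A_1(\widetilde V)\cap (Z_0,Z_1)_{\theta,q}=\{0\}$. If $A_1v\in (Z_0,Z_1)_{\theta,q}$ with $v\in\widetilde V$, then by Lemma~\ref{NL2} we may write $A_1v=A_1x$ with $x\in (X_0,X_1)_{\theta,q}$, whence $v-x\in\ker A_1=V_{\theta,q}^0\cap V_{\theta,q}^1\subset (X_0,X_1)_{\theta,q}$; thus $v\in (X_0,X_1)_{\theta,q}\cap\ker_{X_0+X_1}A=V_{\theta,q}^0\cap V_{\theta,q}^1$, and since $v\in\widetilde V$ with $\widetilde V\cap (V_{\theta,q}^0+V_{\theta,q}^1)=\{0\}$ we get $v=0$. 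In particular $A_1|_{\widetilde V}$ is injective, so $\dim A_1(\widetilde V)=\dim\widetilde V$.

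Next I would show that $A_2\big((Z_0,Z_1)_{\theta,q}\big)$ is closed in $(W_0,W_1)_{\theta,q}$, using the factorization $A=A_3A_2A_1$ together with the norm equivalence $\|A_1x\|_{(Z_0,Z_1)_{\theta,q}}\approx\|Ax\|_{(Y_0,Y_1)_{\theta,q}}$ established in~(\ref{N10}). Since $A_3\colon (W_0,W_1)_{\theta,q}\to (Y_0,Y_1)_{\theta,q}$ is bounded and the endpoint restrictions of $A_2$ have norm at most one, for every $\zeta=A_1x\in (Z_0,Z_1)_{\theta,q}$,
\[
\|\zeta\|_{(Z_0,Z_1)_{\theta,q}}\approx\|Ax\|_{(Y_0,Y_1)_{\theta,q}}=\|A_3A_2\zeta\|_{(Y_0,Y_1)_{\theta,q}}\le\|A_3\|\,\|A_2\zeta\|_{(W_0,W_1)_{\theta,q}}\le\|A_3\|\,\|\zeta\|_{(Z_0,Z_1)_{\theta,q}},
\]
so $\|A_2\zeta\|_{(W_0,W_1)_{\theta,q}}\approx\|\zeta\|_{(Z_0,Z_1)_{\theta,q}}$; hence $A_2$ maps $(Z_0,Z_1)_{\theta,q}$ isomorphically onto its image, which is therefore closed (equivalently, one may repeat the Cauchy-sequence argument from the proof of Lemma~\ref{NL2}, using that $A((X_0,X_1)_{\theta,q})$ is closed).

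Finally I would compute the codimension. I would show $V^0(A_2)=V^1(A_2)=\{0\}$, so that the subspace $\widetilde V_{A_2}$ complementing $V^0(A_2)+V^1(A_2)$ in $\ker_{Z_0+Z_1}A_2$ equals all of $\ker_{Z_0+Z_1}A_2=A_1(\widetilde V)$, which is finite dimensional. For $V^0(A_2)$: let $v\in\widetilde V$ with $A_1v=z_0+z_1$, $z_0\in (Z_0,Z_1)_{\theta,q}\cap Z_0$, $z_1\in Z_1$; writing $z_0=A_1\xi_0$ with $\xi_0\in (X_0,X_1)_{\theta,q}\cap X_0$ and $z_1=A_1x_1$ with $x_1\in X_1$, we get $v-\xi_0-x_1\in\ker A_1=V_{\theta,q}^0\cap V_{\theta,q}^1\subset (X_0,X_1)_{\theta,q}$, and splitting this element via the decomposition property of $(X_0,X_1)_{\theta,q}$ rewrites $v$ as a sum of an element of $(X_0,X_1)_{\theta,q}\cap X_0$ and an element of $X_1$, i.e. $v\in V^0(A)=V_{\theta,q}^0$; with $v\in\widetilde V$ and $\widetilde V\cap (V_{\theta,q}^0+V_{\theta,q}^1)=\{0\}$ this forces $v=0$, and the case of $V^1(A_2)$ is symmetric. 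Now $(\cdot)_{\theta,q}$ is regular with the decomposition property on $(Z_0,Z_1)$ and $W_0\cap W_1=A_2(Z_0)\cap A_2(Z_1)$ holds trivially, so Theorem~\ref{lowerFredholm}, direction (ii)$\Rightarrow$(i), applies to $A_2$: from the closedness just proved and $\dim\widetilde V_{A_2}<\infty$ it gives that $A_2$ is lower semi-Fredholm with
\[
\dim\big((W_0,W_1)_{\theta,q}/A_2((Z_0,Z_1)_{\theta,q})\big)=\dim\widetilde V_{A_2}=\dim A_1(\widetilde V)=\dim\ker_{Z_0+Z_1}A_2,
\]
which together with injectivity finishes the proof. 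I expect the main obstacle to be the vanishing of $V^0(A_2)$ and $V^1(A_2)$: this is precisely what keeps the codimension as large as $\dim\ker_{Z_0+Z_1}A_2$ rather than smaller, and it is the only step where the specific construction of $A_1$ and of $\widetilde V$ — and not merely finite dimensionality — is used; closedness of the image is the second technical point, and it is dealt with via the factorization as above.
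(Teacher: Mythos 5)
Your proof is correct, and for the codimension part it takes a genuinely different route from the paper. The paper transfers the decomposition $(Y_{0},Y_{1})_{\theta,q}=A((X_{0},X_{1})_{\theta,q})\oplus A(P_{X_{0}}(\widetilde V))$ from Theorem~\ref{realfredholm} through $A_{2}$, explicitly exhibits the complementary subspace $A_{2}A_{1}P_{X_{0}}(\widetilde V)$, shows it lies in $W_{0}\cap W_{1}$, and closes the argument by density of $W_{0}\cap W_{1}$ in $(W_{0},W_{1})_{\theta,q}$ (using $q<\infty$). You instead verify the hypotheses of the abstract Theorem~\ref{lowerFredholm} directly for $A_{2}\colon(Z_{0},Z_{1})\to(W_{0},W_{1})$: the key point is your check that $V^{0}(A_{2})=V^{1}(A_{2})=\{0\}$, which identifies $\widetilde V_{A_{2}}$ with all of $\ker_{Z_{0}+Z_{1}}A_{2}$ and lets the formula $\dim(W_{0},W_{1})_{\theta,q}/A_{2}((Z_{0},Z_{1})_{\theta,q})=\dim\widetilde V_{A_{2}}$ do the counting. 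That verification uses the two lifting facts $(Z_{0},Z_{1})_{\theta,q}=A_{1}((X_{0},X_{1})_{\theta,q})$ (Lemma~\ref{NL2}) and $(Z_{0},Z_{1})_{\theta,q}\cap Z_{j}=A_{1}((X_{0},X_{1})_{\theta,q}\cap X_{j})$, both of which follow easily from $\ker A_{1}=V_{\theta,q}^{0}\cap V_{\theta,q}^{1}\subset(X_{0},X_{1})_{\theta,q}$, together with the standard decomposition property of $(\cdot)_{\theta,q}$; the hypothesis $W_{0}\cap W_{1}=A_{2}(Z_{0})\cap A_{2}(Z_{1})$ is automatic since $A_{2}(Z_{j})=W_{j}$. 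Your injectivity argument (showing $A_{1}(\widetilde V)\cap(Z_{0},Z_{1})_{\theta,q}=\{0\}$ directly) is also a small variant of the paper's, which derives injectivity of $A_{2}$ from injectivity of $A_{3}A_{2}=$ (push-forward of $A$) on $(Z_{0},Z_{1})_{\theta,q}$; both are fine. The closedness step via the norm equivalence obtained from $A=A_{3}A_{2}A_{1}$ and Lemma~\ref{NL2} is essentially identical in the two arguments. Overall your approach is more modular — it reuses the general Section~3.2 machinery and avoids re-running the density argument — at the cost of not producing the complementary subspace explicitly (which the paper later needs in Lemma~\ref{NL4}).
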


\begin{proof} Let $\left\{ e_{1},...,e_{n}\right\} $ be a basis in $%
\widetilde{V}$, consider decompositions%
\begin{equation*}
e_{i}=e_{i}^{0}+e_{i}^{1},\text{ \ }e_{i}^{0}\in X_{0},\text{ }e_{i}^{1}\in
X_{1,}\text{ \ \ }i=1,...,n
\end{equation*}%
and define linear operators $P_{X_{0}}:\widetilde{V}\rightarrow
X_{0},P_{X_{1}}:\widetilde{V}\rightarrow X_{1}$
\begin{equation*}
P_{X_{0}}(\sum_{i=1}^{n}\lambda _{i}e_{i})=\sum_{i=1}^{n}\lambda
_{i}e_{i}^{0},\text{ \ }P_{X_{1}}(\sum_{i=1}^{n}\lambda
_{i}e_{i})=\sum_{i=1}^{n}\lambda _{i}e_{i}^{1}\text{.}
\end{equation*}%
Since $A_{1}$ maps $(X_{0},X_{1})_{\theta ,q}$ onto $(Z_{0},Z_{1})_{\theta
,q}$ and its kernel is equal to $\ker _{X_{0}+X_{1}}A\cap
(X_{0},X_{1})_{\theta ,q}$, it follows from $A=A_{3}A_{2}A_{1}$ that the
operator $A_{3}A_{2}\colon (Z_{0},Z_{1})_{\theta ,q}\rightarrow
(Y_{0},Y_{1})_{\theta ,q}$ is injective. In consequence, the operator $%
A_{2}\colon (Z_{0},Z_{1})_{\theta ,q}\rightarrow (W_{0},W_{1})_{\theta ,q}$
is also injective. From Theorem \ref{realfredholm} we have
\begin{equation}
(Y_{0},Y_{1})_{\theta ,q}=A((X_{0},X_{1})_{\theta ,q})\oplus A(P_{X_{0}}(%
\widetilde{V}))=A((X_{0},X_{1})_{\theta ,q})\oplus A_{3}A_{2}A_{1}(P_{X_{0}}(%
\widetilde{V}))\text{.}  \label{NM2}
\end{equation}%
Moreover, the dimension of \ $A(P_{X_{0}}(\widetilde{V}))$ is
equal to the dimension of the space $\widetilde{V}$ (see Remark
\ref{REM}), which is equal
to the dimension of the space $A_{1}(\widetilde{V})=\ker _{Z_{0}+Z_{1}}A_{2}$
Thus from (\ref{NM2}) it follows that the space $A_{2}(A_{1}P_{X_{0}}(%
\widetilde{V}))$ has dimension equal to the dimension of $\ker
_{Z_{0}+Z_{1}}A_{2}$ and
\begin{equation*}
A_{2}((Z_{0},Z_{1})_{\theta ,q})\cap A_{2}(A_{1}P_{X_{0}}(\widetilde{V}%
))=A_{2}A_{1}((X_{0},X_{1})_{\theta ,q})\cap A_{2}(A_{1}P_{X_{0}}(\widetilde{V}%
))=\left\{ 0\right\} .
\end{equation*}%
Now we will show that $A_{2}(A_{1}P_{X_{0}}(\widetilde{V}))\subset
W_{0}\cap
W_{1}$. Indeed, from equality $P_{X_{0}}\tilde{v}+P_{X_{1}}\tilde{v}=\tilde{v%
}$ for any element $\tilde{v}\in \widetilde{V}$ we have
\begin{equation*}
0=A_{2}A_{1}\tilde{v}=A_{2}A_{1}P_{X_{0}}\tilde{v}+A_{2}A_{1}P_{X_{1}}\tilde{%
v}
\end{equation*}%
and hence
\begin{equation*}
A_{2}A_{1}P_{X_{0}}\tilde{v}=-A_{2}A_{1}P_{X_{1}}\tilde{v}\in W_{0}\cap
W_{1}.
\end{equation*}%
Thus
\begin{equation*}
A_{2}((Z_{0},Z_{1})_{\theta
,q})+A_{2}(A_{1}P_{X_{0}}(\widetilde{V}))\subset
(W_{0},W_{1})_{\theta ,q}.
\end{equation*}%
To prove lemma we just need to show that $A_{2}((Z_{0},Z_{1})_{\theta ,q})$
is a closed subspace of $(W_{0},W_{1})_{\theta ,q}$ and that
\begin{equation*}
A_{2}((Z_{0},Z_{1})_{\theta ,q})+A_{2}(A_{1}P_{X_{0}}(\widetilde{V}%
))=(W_{0},W_{1})_{\theta ,q}.
\end{equation*}%
From Lemma \ref{NL2} and the Banach theorem of inverse operator,
we deduce that for any $z\in (Z_{0},Z_{1})_{\theta ,q}$ there
exists an $x\in (X_{0},X_{1})_{\theta ,q}$ such that $A_{1}x=z$
with
\begin{equation*}
\left\Vert x\right\Vert _{(X_{0},X_{1})_{\theta ,q}}\leq \gamma \left\Vert
Ax\right\Vert _{(Y_{0},Y_{1})_{\theta ,q}}.
\end{equation*}%
Combining the above estimate with $A=A_{3}A_{2}A_{1}$ and $A_{1}x=z$, we
obtain
\begin{align*}
\left\Vert A_{2}z\right\Vert _{(W_{0},W_{1})_{\theta ,q}}& \leq \gamma
\left\Vert z\right\Vert _{(Z_{0},Z_{1})_{\theta ,q}}\leq \gamma \left\Vert
x\right\Vert _{(X_{0},X_{1})_{\theta ,q}}\leq \\
\gamma \left\Vert Ax\right\Vert _{(Y_{0},Y_{1})_{\theta ,q}}& \leq \gamma
\left\Vert A_{3}A_{2}z\right\Vert _{(Y_{0},Y_{1})_{\theta ,q}}\leq \gamma
\left\Vert A_{2}z\right\Vert _{(W_{0},W_{1})_{\theta ,q}}.
\end{align*}%
If the sequence $\{w_{n}\}=\{A_{2}z_{n}\}$ with $z_{n}\in
(Z_{0},Z_{1})_{\theta ,q}$ for $n\in {\mathbb{N}}$ converges to
some element $w$ in the norm $(W_{0},W_{1})_{\theta ,q}$, then the
sequence $\{z_{n}\}$ is a~Cauchy's sequence and so converges to
some $z\in (Z_{0},Z_{1})_{\theta ,q}$. Hence
\begin{equation*}
A_{2}z=w
\end{equation*}%
and $w\in (W_{0},W_{1})_{\theta ,q}$. This shows that the space $%
A_{2}(A_{1}P_{X_{0}}(\widetilde{V}))\oplus
A_{2}((Z_{0},Z_{1})_{\theta ,q})$ is a~closed subspace of
$(W_{0},W_{1})_{\theta ,q}$ as a sum of a~closed
subspace and a~finite dimensional subspace. As $1\leq q<\infty $ then $%
W_{0}\cap W_{1}$ is dense linear subspace of
$(W_{0},W_{1})_{\theta ,q}$. Thus to show equality
\begin{equation}
A_{2}((Z_{0},Z_{1})_{\theta ,q})\oplus A_{2}(A_{1}P_{X_{0}}(\widetilde{V}%
))=(W_{0},W_{1})_{\theta ,q},  \label{NM3}
\end{equation}%
it is enough to prove that
\begin{equation}
W_{0}\cap W_{1}\subset A_{2}(A_{1}P_{X_{0}}(\widetilde{V}%
))+A_{2}((Z_{0},Z_{1})_{\theta ,q}).  \label{NM8}
\end{equation}%
Let $w\in W_{0}\cap W_{1}$. Since operator, $A_{2}A_{1}$ is
surjective on the spaces $X_{0}$ and $X_{1}$, there exist elements
$x_{0}\in X_{0},x_{1}\in X_{1}$ such that
\begin{equation*}
A_{2}A_{1}x_{0}=A_{2}A_{1}x_{1}=w.
\end{equation*}%
So $x_{0}-x_{1}\in \ker _{X_{0}+X_{1}}A_{2}A_{1}$ and we can find elements $%
v\in V_{\theta ,q}^{0}\cap V_{\theta ,q}^{1}$ and $\tilde{v}\in
\widetilde{V}$ such that \
\begin{equation*}
x_{0}-x_{1}=v+\tilde{v}.
\end{equation*}%
We decompose $v$ as $v=v_{X_{0}}+v_{X_{1}}$ with $v_{X_{0}}\in X_{0}$ and $%
v_{X_{1}}\in X_{1}$. Then
\begin{equation*}
x_{0}-x_{1}=v_{X_{0}}+v_{X_{1}}+P_{X_{0}}(\tilde{v})+P_{X_{1}}(\tilde{v})
\end{equation*}%
and so
\begin{equation*}
x_{0}-v_{X_{0}}-P_{X_{0}}(\tilde{v})=x_{1}+v_{X_{1}}+P_{X_{1}}(\tilde{v}).
\end{equation*}%
Let $u: x_{0}-v_{X_{0}}-P_{X_{0}}(\tilde{v})$; then $u\in
X_{0}\cap X_{1}$ and
\begin{equation*}
x_{0}=u+v_{X_{0}}+P_{X_{0}}(\tilde{v}){\text{.}}
\end{equation*}%
Therefore
\begin{equation*}
w=A_{2}A_{1}x_{0}=A_{2}A_{1}u+A_{2}A_{1}(v_{X_{0}})+A_{2}A_{1}(P_{X_{0}}(%
\tilde{v})).
\end{equation*}%
As $u\in X_{0}\cap X_{1}\in (X_{0},X_{1})_{\theta ,q}$ so $A_{2}A_{1}u\in
A_{2}((Z_{0},Z_{1})_{\theta ,q})$. As $v\in V_{\theta ,q}^{0}\cap V_{\theta
,q}^{1}\in (X_{0},X_{1})_{\theta ,q}$ therefore $v_{X_{0}}\in
(X_{0},X_{1})_{\theta ,q}$ and so $A_{2}A_{1}(v_{X_{0}})\in
A_{2}((Z_{0},Z_{1})_{\theta ,q})$ and we obtain (\ref{NM8}).
\end{proof}

\begin{remark}
\label{DIM} As the dimension of the kernel $A_{2}$ in
$Z_{0}+Z_{1}$ is equal to the dimension of space $\widetilde{V}$,
which is equal to the codimension of the space
$A((X_{0},X_{1})_{\theta ,q})$ in $(Y_{0},Y_{1})_{\theta ,q}$
$($see Theorem $\ref{realfredholm})$, therefore from Lemma
$\ref{NL3}$ we see that the codimension of
$A_{2}((Z_{0},Z_{1})_{\theta ,q})$ in $(W_{0},W_{1})_{\theta ,q}$
is equal to the codimension of the space $A((X_{0},X_{1})_{\theta
,q})$ in $(Y_{0},Y_{1})_{\theta ,q}$.
\end{remark}

\subsection{The operator $A_{3}$ belongs to the class ${\mathbb{F}}_{\protect%
\theta ,q}^{3}$}

Let us recall the class of operators ${\mathbb{F}}%
_{\theta ,q}^{3}$ consists of all $T\colon
(W_{0},W_{1})\rightarrow (Y_{0},Y_{1})$, which are surjective and
Fredholm on endpoint spaces and are such that $T\colon
(W_{0},W_{1})_{\theta ,q}\rightarrow (Y_{0},Y_{1})_{\theta ,q}$ is
invertible.

Since the operator $A=A_{3}A_{2}A_{1}$ is surjective and Fredholm on
endpoint spaces, the operator $A_{3}$ is also surjective on
endpoint spaces. Moreover, since the operators $A_{1}$ and $A_{2}$ are
surjective and $A$ is a Fredholm on endpoint spaces therefore
operator $A_{3}$ is a Fredholm operator on endpoint spaces. So we
just need to prove the following lemma.

\begin{lemma}
\label{NL4} An operator $A_{3}\colon (W_{0},W_{1})_{\theta ,q}\to
(Y_{0},Y_{1})_{\theta ,q}$ is invertible.
\end{lemma}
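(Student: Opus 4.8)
**The plan is to show that $A_3$, which by construction is surjective and Fredholm on the endpoint spaces with kernel equal to $A_2A_1(\ker_{X_0+X_1}A)$, is in fact injective on the endpoint spaces and hence an isomorphism of couples, from which invertibility on the interpolation space follows.** First I would unwind the construction: recall that $A_3 = BQ$, where $Q\colon W_0+W_1 \to (W_0+W_1)/A_2A_1(\ker_{X_0+X_1}A)$ is the quotient map and $B\colon (Q(W_0),Q(W_1)) \to (Y_0,Y_1)$ is the isomorphism of couples supplied by Proposition \ref{PrN2} (applied to the surjective operator $QA_2A_1$ whose kernel in $X_0+X_1$ is exactly $\ker_{X_0+X_1}A$). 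Since $B$ is an isomorphism of couples, it is in particular invertible on $\mathcal F(W_0,W_1)$ for any interpolation functor, so $B\colon (Q(W_0),Q(W_1))_{\theta,q} \to (Y_0,Y_1)_{\theta,q}$ is invertible. Therefore it suffices to prove that the quotient map $Q\colon (W_0,W_1)_{\theta,q} \to (Q(W_0),Q(W_1))_{\theta,q}$ is invertible.

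The key point is that $Q$ has kernel $A_2A_1(\ker_{X_0+X_1}A)$, and I claim this kernel is actually trivial \emph{as a subspace of} $(W_0,W_1)_{\theta,q}$, indeed trivial in $W_0+W_1$. To see this, observe that $\ker_{X_0+X_1}A = (V_{\theta,q}^0 + V_{\theta,q}^1)\oplus\widetilde V$, that $\ker_{X_0+X_1}A_1 = V_{\theta,q}^0\cap V_{\theta,q}^1$, and that $\ker_{Z_0+Z_1}A_2 = A_1(\widetilde V)$. Chasing through the definitions of $A_1$ and $A_2$ as successive quotient maps, one checks that $A_2A_1$ kills exactly $V_{\theta,q}^0\cap V_{\theta,q}^1$ and then $\widetilde V$, while it is injective on a complement; more carefully, $A_2A_1(\ker_{X_0+X_1}A) = A_2A_1\big((V_{\theta,q}^0+V_{\theta,q}^1)\oplus\widetilde V\big) = A_2A_1(V_{\theta,q}^0 + V_{\theta,q}^1)$, and I would argue this equals $\{0\}$ because $A_1(V_{\theta,q}^0 + V_{\theta,q}^1) \subset A_1(\widetilde V) $-complement considerations force it into $\ker A_2$. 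Thus $Q$ is injective on $W_0+W_1$, hence injective on each $W_i$ and on $(W_0,W_1)_{\theta,q}$, and $Q$ is surjective onto $(Q(W_0),Q(W_1))_{\theta,q}$ by the same Proposition \ref{KF}–type argument used in Lemma \ref{NL2} (since $Q$ is a quotient map, $K(t,Qw;Q(W_0),Q(W_1)) = \inf_{v}K(t,w+v;W_0,W_1) = K(t,w;W_0,W_1)$ once the kernel is $\{0\}$, giving an isometry onto its range and surjectivity via density of $W_0\cap W_1$ for $q<\infty$).

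Alternatively, and perhaps more cleanly, I would invoke Remark \ref{DIM} together with Lemma \ref{NL2} and Lemma \ref{NL3}: we already know $A_1\colon (X_0,X_1)_{\theta,q}\to(Z_0,Z_1)_{\theta,q}$ is surjective with kernel $V_{\theta,q}^0\cap V_{\theta,q}^1$, that $A_2$ is injective on $(Z_0,Z_1)_{\theta,q}$ with image of codimension $\dim\widetilde V$ equal to $\operatorname{codim} A((X_0,X_1)_{\theta,q})$, and that $A = A_3A_2A_1$ is Fredholm on $(X_0,X_1)_{\theta,q}$. Counting: $A_3$ restricted to $A_2((Z_0,Z_1)_{\theta,q})$ composed with $A_2A_1$ recovers $A$, and since $A_2A_1\colon (X_0,X_1)_{\theta,q}\to (W_0,W_1)_{\theta,q}$ has the \emph{same} kernel as $A$ (namely $V_{\theta,q}^0\cap V_{\theta,q}^1$, because $A_1$ already quotients it out and $A_2$ is injective) and image of the \emph{same} codimension as $A((X_0,X_1)_{\theta,q})$ (by Remark \ref{DIM}), the operator $A_3$ must be injective on $(W_0,W_1)_{\theta,q}$ and surjective onto $(Y_0,Y_1)_{\theta,q}$ with closed range — hence invertible by the Banach theorem on the inverse operator.

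\textbf{The main obstacle} I anticipate is the bookkeeping showing that $A_2A_1(\ker_{X_0+X_1}A) = \{0\}$ in $W_0+W_1$ (equivalently, that $\ker_{W_0+W_1}A_3 = \{0\}$), i.e. verifying that the two successive quotients $A_1$ and $A_2$ between them annihilate \emph{all} of $\ker_{X_0+X_1}A$ and not merely $V_{\theta,q}^0\cap V_{\theta,q}^1$ and a copy of $\widetilde V$. This requires carefully tracking how the direct-sum decomposition $\ker_{X_0+X_1}A = (V_{\theta,q}^0+V_{\theta,q}^1)\oplus\widetilde V$ interacts with the images under $A_1$, using that $A_1(V_{\theta,q}^0+V_{\theta,q}^1) \subseteq \ker_{Z_0+Z_1}A_2 = A_1(\widetilde V)$ is false in general — rather one must show $A_2$ annihilates $A_1$ of the whole kernel because $A_1$ of the whole kernel lands inside $A_1(\widetilde V)$ modulo $A_1(V_{\theta,q}^0\cap V_{\theta,q}^1)=0$. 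Once this kernel-chasing lemma is in place, the rest is a routine application of the Banach inverse theorem and the already-established surjectivity of $Q$ onto the quotient couple's interpolation space, plus the fact (from Proposition \ref{PrN2}) that $B$ is an isomorphism of couples and therefore invertible on $(\cdot)_{\theta,q}$.
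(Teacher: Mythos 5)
Your first and primary route rests on a false premise: the claim that $A_{2}A_{1}(\ker_{X_{0}+X_{1}}A)=\{0\}$, i.e.\ that $A_{3}$ is injective on $W_{0}+W_{1}$, does not hold in general. Write $K=\ker_{X_{0}+X_{1}}A=(V_{\theta,q}^{0}+V_{\theta,q}^{1})\oplus\widetilde V$. Since $\ker A_{1}=V_{\theta,q}^{0}\cap V_{\theta,q}^{1}\subset V_{\theta,q}^{0}+V_{\theta,q}^{1}$ and $(V_{\theta,q}^{0}+V_{\theta,q}^{1})\cap\widetilde V=\{0\}$, a short computation gives
$A_{1}(K)=A_{1}(V_{\theta,q}^{0}+V_{\theta,q}^{1})\oplus A_{1}(\widetilde V)$
with $A_{1}$ injective on both $\widetilde V$ and on any complement of $V_{\theta,q}^{0}\cap V_{\theta,q}^{1}$ inside $V_{\theta,q}^{0}+V_{\theta,q}^{1}$. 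Since $\ker A_{2}=A_{1}(\widetilde V)$, the operator $A_{2}$ is injective on $A_{1}(V_{\theta,q}^{0}+V_{\theta,q}^{1})$, and therefore
$A_{2}A_{1}(K)\cong (V_{\theta,q}^{0}+V_{\theta,q}^{1})/(V_{\theta,q}^{0}\cap V_{\theta,q}^{1})$,
which is nonzero (indeed often infinite-dimensional) whenever $V_{\theta,q}^{0}+V_{\theta,q}^{1}\supsetneq V_{\theta,q}^{0}\cap V_{\theta,q}^{1}$. This is exactly why the paper, in defining the class $\mathbb{F}_{\theta,q}^{3}$, emphasizes that $\ker_{X_{0}+X_{1}}A$ is not assumed finite-dimensional for this class: $A_{3}$ is only Fredholm on endpoint spaces, not injective there, and is certainly not an isomorphism of couples. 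Your attempted ``kernel-chasing'' fix (that $A_{1}(K)$ lands in $A_{1}(\widetilde V)$ modulo $A_{1}(V_{\theta,q}^{0}\cap V_{\theta,q}^{1})=0$) is exactly the false inclusion you need; you even flag the difficulty yourself but do not resolve it.

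Your second route --- the dimension count via Remark \ref{DIM} and Lemmas \ref{NL2}, \ref{NL3} --- is conceptually on the right track and closer to what the paper does, but the bookkeeping alone does not yield invertibility. Knowing that $A_{3}$ carries $A_{2}A_{1}((X_{0},X_{1})_{\theta,q})$ bijectively onto $A((X_{0},X_{1})_{\theta,q})$ and that these two subspaces have the same (finite) codimensions in $(W_{0},W_{1})_{\theta,q}$ and $(Y_{0},Y_{1})_{\theta,q}$ respectively does not by itself imply $A_{3}$ is injective on all of $(W_{0},W_{1})_{\theta,q}$: a bounded operator can restrict to a bijection between codimension-$d$ subspaces without being globally injective. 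The missing ingredient is precisely the explicit direct-sum decomposition $(W_{0},W_{1})_{\theta,q}=A_{2}(A_{1}((X_{0},X_{1})_{\theta,q}))\oplus A_{2}A_{1}(P_{X_{0}}\widetilde V)$ established in Lemma \ref{NL3} (formula (\ref{NM4})), paired with $(Y_{0},Y_{1})_{\theta,q}=A((X_{0},X_{1})_{\theta,q})\oplus A(P_{X_{0}}\widetilde V)$ from Theorem \ref{realfredholm}. The paper applies $A_{3}$ to the first decomposition, uses $A=A_{3}A_{2}A_{1}$ to recognize the image as the second decomposition to get surjectivity, and then proves injectivity by decomposing any $w\in\ker A_{3}\cap(W_{0},W_{1})_{\theta,q}$ as $w=w_{0}+w_{1}$ along the two summands and observing $A_{3}w_{1}=-A_{3}w_{0}\in A((X_{0},X_{1})_{\theta,q})\cap A(P_{X_{0}}\widetilde V)=\{0\}$, whence $w_{0}=w_{1}=0$ by injectivity of $A_{3}$ on each summand (the latter coming from $(X_{0},X_{1})_{\theta,q}\cap\ker A=\ker A_{1}$ and injectivity of $AP_{X_{0}}$ on $\widetilde V$, cf.\ Remark \ref{REM}). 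You should incorporate this decomposition-based argument rather than relying on counting or on triviality of $\ker_{W_{0}+W_{1}}A_{3}$.
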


\begin{proof} In the proof in the Lemma \ref{NL3} we obtain (see (\ref{NM3}%
)) the decomposition $A_{2}((Z_{0},Z_{1})_{\theta ,q})\oplus
A_{2}(A_{1}P_{X_{0}}(\widetilde{V}))=(W_{0},W_{1})_{\theta ,q}$ , so from Lemma %
\ref{NL2} we have%
\begin{equation}
A_{2}(A_{1}(X_{0},X_{1})_{\theta ,q})\oplus A_{2}A_{1}(P_{X_{0}}\widetilde{V}%
)))=(W_{0},W_{1})_{\theta ,q}.  \label{NM4}
\end{equation}%
Hence
\begin{equation*}
A_{3}A_{2}(A_{1}(X_{0},X_{1})_{\theta ,q})+A_{3}A_{2}A_{1}(P_{X_{0}}\widetilde{V}%
)))\subset (Y_{0},Y_{1})_{\theta ,q}.
\end{equation*}%
Notice that the left hand side of the above equality can be written as $%
A((X_{0},X_{1})_{\theta ,q})+A(P_{X_{0}}\widetilde{V})$, which is equal to $%
(Y_{0},Y_{1})_{\theta ,q}$ (by Theorem \ref{realfredholm}), i.e., we have%
\begin{equation*}
A_{3}((W_{0},W_{1})_{\theta ,q})=A_{3}A_{2}(A_{1}(X_{0},X_{1})_{\theta
,q})\oplus A_{3}A_{2}A_{1}(P_{X_{0}}\widetilde{V})))=(Y_{0},Y_{1})_{\theta ,q}%
\text{.}
\end{equation*}
Thus $A_{3}((W_{0},W_{1})_{\theta ,q})=(Y_{0},Y_{1})_{\theta ,q}$.
From the construction we have%
\begin{equation*}
(X_0, X_1)_{\theta, q}\cap \ker_{X_{0}+X_{1}}A=\ker
_{X_{0}+X_{1}}A_{1}
\end{equation*}%
and the operator $AP_{X_{0}}$ is injective on $\widetilde{V}$ (see Remark \ref{DIM}%
)) therefore the kernels of operator $A_{3}$ on $A_{2}(A_{1}(X_{0},X_{1})_{%
\theta ,q})$ and $A_{2}A_{1}(P_{X_{0}}\widetilde{V})))$ are
trivial. Moreover, injectivity of $AP_{X_{0}}$ on $\widetilde{V}$
implies equality of codimensions of
$A_{2}(A_{1}(X_{0},X_{1})_{\theta ,q})$ in $(W_{0},W_{1})_{\theta
,q}$ and $A((X_0, X_1)_{\theta, q} = (Y_0,Y_1)_{\theta, q}$. From
this it follows that $A_{3}$ restricted to $(W_{0},W_{1})_{\theta
,q}$ has also a~trivial kernel. Indeed,
suppose that there exists element $w \in (W_{0},W_{1})_{\theta ,q}$ such that $%
A_{3}w =0$. Then we can decompose (see (\ref{NM4}))%
\begin{equation*}
w = w_{0} + w_{1}, \quad\, w_{0}\in
A_{2}(A_{1}(X_{0},X_{1})_{\theta ,q}), \quad\, w_{1}\in A_{2}A_{1}(P_{X_{0}}%
\widetilde{V}))).
\end{equation*}%
Then%
\begin{equation*}
A_{3}w_{1}=-A_{3} w_{0}\in A(X_{0},X_{1})_{\theta ,q}\cap
A(P_{X_{0}}\widetilde{V})=\left\{ 0\right\} .
\end{equation*}%
Thus $w_{0}$, $w_{1}$ are equal to zero. We have proved that
$A_{3}$ is injective and onto and so $A_{3}$ is invertible by the
Banach theorem on inverse operators.
\end{proof}

\section{Proof of Theorem \protect\ref{TN3} on sufficiency}

Theorem \ref{TN3} consists of three parts a), b) and c) with
rather long proofs. So for convenience we formulate each part as a
theorem.

\subsection{Sufficient condition for the class ${\mathbb{F}}_{\protect\theta %
,q}^{1}$}

Our aim is the following theorem.

\vspace{2mm}

\begin{theorem}
Let $A\colon (X_{0},X_{1})\rightarrow (Y_{0},Y_{1})$ be
a~surjective and Fredholm linear operator on endpoint spaces.
Suppose that
\begin{equation*}
\beta _{\infty }(\ker _{X_{0}+X_{1}}A)<\theta <\alpha _{0}(\ker
_{X_{0}+X_{1}}A).
\end{equation*}%
Then {\rm{a)}} $A$ is surjective from $(X_{0},X_{1})_{\theta ,q}$
to $(Y_{0},Y_{1})_{\theta ,q}$ and {\rm{b)}} $\ker
_{X_{0}+X_{1}}A\subset $ $(X_{0},X_{1})_{\theta ,q}$.
\end{theorem}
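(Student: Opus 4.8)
Write $N:=\ker_{X_{0}+X_{1}}A$, a closed subspace of $X_{0}+X_{1}$ (since $A$ is continuous on the sum). I would prove (b) first, directly from the index hypothesis. Fix $w\in N$. Since $\theta>\beta_{\infty}(N)$, pick $\theta_{1}\in(\beta_{\infty}(N),\theta)$ and $\gamma_{1}>0$ with $s^{-\theta_{1}}K(s,w;\vec{X})\ge\gamma_{1}\,t^{-\theta_{1}}K(t,w;\vec{X})$ for $1\le s\le t$; taking $s=1$ gives $K(t,w;\vec{X})\le\gamma_{1}^{-1}K(1,w;\vec{X})\,t^{\theta_{1}}$ for $t\ge1$, hence $\int_{1}^{\infty}(t^{-\theta}K(t,w;\vec{X}))^{q}\,\frac{dt}{t}<\infty$ since $\theta_{1}-\theta<0$, i.e.\ $w\in V_{\theta,q}^{1}$. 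Symmetrically, from $\theta<\alpha_{0}(N)$ choose $\theta_{2}\in(\theta,\alpha_{0}(N))$ and $\gamma_{2}>0$ with $s^{-\theta_{2}}K(s,w;\vec{X})\le\gamma_{2}\,t^{-\theta_{2}}K(t,w;\vec{X})$ for $0<s<t\le1$; taking $t=1$ gives $K(s,w;\vec{X})\le\gamma_{2}K(1,w;\vec{X})\,s^{\theta_{2}}$ for $s\le1$, so $\int_{0}^{1}(s^{-\theta}K(s,w;\vec{X}))^{q}\,\frac{ds}{s}<\infty$, i.e.\ $w\in V_{\theta,q}^{0}$. Thus $N\subseteq V_{\theta,q}^{0}\cap V_{\theta,q}^{1}\subseteq(X_{0},X_{1})_{\theta,q}$, which is (b); moreover these estimates give the quantitative inclusion $\|w\|_{(X_{0},X_{1})_{\theta,q}}\le C\,K(1,w;\vec{X})$ for all $w\in N$, which I will use in the proof of (a).

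For (a) I would first reduce to a quotient problem. By Proposition \ref{PrN2}, $A=BQ$, where $Q\colon X_{0}+X_{1}\to(X_{0}+X_{1})/N$ is the quotient map onto the quotient couple $(Q(X_{0}),Q(X_{1}))$ and $B\colon(Q(X_{0}),Q(X_{1}))\to(Y_{0},Y_{1})$ is an isomorphism of couples, hence invertible from $(Q(X_{0}),Q(X_{1}))_{\theta,q}$ onto $(Y_{0},Y_{1})_{\theta,q}$; so it suffices to show that $Q$ carries $(X_{0},X_{1})_{\theta,q}$ onto $(Q(X_{0}),Q(X_{1}))_{\theta,q}$. Given $\xi$ in the target, write $\xi=Qx$ with $x\in X_{0}+X_{1}$; by Proposition \ref{KF}, $\varphi(t):=K(t,\xi;Q(X_{0}),Q(X_{1}))=\inf_{v\in N}K(t,x+v;\vec{X})$ and $\int_{0}^{\infty}(t^{-\theta}\varphi(t))^{q}\,\frac{dt}{t}=\|\xi\|_{(Q(X_{0}),Q(X_{1}))_{\theta,q}}^{q}<\infty$. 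The task is to find one $v\in N$ with $x+v\in(X_{0},X_{1})_{\theta,q}$ and $\|x+v\|_{(X_{0},X_{1})_{\theta,q}}\lesssim\|\xi\|$.

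To produce $v$, I would choose $v_{k}\in N$ with $K(2^{k},x+v_{k};\vec{X})\le2\varphi(2^{k})$ for each $k\in\mathbb{Z}$, so that the increments $d_{k}:=v_{k+1}-v_{k}\in N$ satisfy $K(2^{j},d_{k};\vec{X})\lesssim\varphi(2^{k})$ for $|j-k|\le1$ (by subadditivity of $K$ and the doubling inequality $K(2t,\cdot)\le2K(t,\cdot)$). The plan is then: (i) using $\beta_{\infty}(N)<\theta$, which makes $t\mapsto t^{-\theta_{1}}K(t,w;\vec{X})$ quasi-decreasing on $[1,\infty)$ for every $w\in N$, show that $v_{0}+\sum_{0\le j<k}d_{j}$ converges in $X_{0}+X_{1}$ as $k\to\infty$ to some $v^{+}\in N$ with $\int_{1}^{\infty}(t^{-\theta}K(t,x+v^{+};\vec{X}))^{q}\,\frac{dt}{t}<\infty$; (ii) symmetrically, using $\theta<\alpha_{0}(N)$, show that $v_{0}-\sum_{k\le j<0}d_{j}$ converges as $k\to-\infty$ to some $v^{-}\in N$ with $\int_{0}^{1}(t^{-\theta}K(t,x+v^{-};\vec{X}))^{q}\,\frac{dt}{t}<\infty$; (iii) note that $v^{+}-v^{-}=\sum_{j\in\mathbb{Z}}d_{j}\in N$ (the series converging in the Banach space $X_{0}+X_{1}$, in which $N$ is closed), so by (b) $v^{+}-v^{-}\in(X_{0},X_{1})_{\theta,q}$. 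Writing $x+v^{+}=(x+v^{-})+(v^{+}-v^{-})$ and using subadditivity of $K$, the element $x+v^{+}$ has a finite $\int_{0}^{1}$-part (from $x+v^{-}$ and the kernel element) and a finite $\int_{1}^{\infty}$-part (from $v^{+}$), hence $x+v^{+}\in(X_{0},X_{1})_{\theta,q}$; since $Q(x+v^{+})=Qx=\xi$, this yields surjectivity of $Q$, hence (a), with the norm bound read off from (i)–(iii) and (b).

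The main obstacle is steps (i)–(ii). The increments are controlled only in the weighted sense $K(2^{k},d_{k};\vec{X})\lesssim\varphi(2^{k})$, and $\varphi$ need not decay at $0$ or $\infty$, so $\sum_{k}d_{k}$ need not converge in $X_{0}+X_{1}$ for the naive choice of the $v_{k}$; in particular a large-scale increment evaluated at a smaller scale $\ge1$ is a priori only $\lesssim\varphi(2^{k})$, and this backward spread must be kept from accumulating. I expect the cure to be a sharper selection of the $v_{k}$ — equivalently, a $K$-divisibility decomposition $\varphi\approx\sum_{k}\omega_{k}$ into pieces $\omega_{k}$ peaked near $2^{k}$, each lifted through $Q$ — so that the increment inserted at scale $2^{k}$ is genuinely localized there and its $K$-functional decays away from $2^{k}$ at a geometric rate dictated by the index inequalities for $N$; the accumulated errors then form a discrete convolution with a summable kernel and lie in the required weighted $\ell^{q}$-space. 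It is exactly at this point that both halves of the hypothesis, $\beta_{\infty}(N)<\theta$ and $\theta<\alpha_{0}(N)$, together with part (b), are indispensable.
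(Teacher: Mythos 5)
Your proof of part (b) is correct and is the paper's own argument: the index inequalities give pointwise decay $K(t,w;\vec{X})\lesssim t^{\theta\mp\varepsilon}$ near $\infty$ and $0$, so both halves of the $\ell^q$-weighted integral are finite, and the constants depend only on $K(1,w;\vec{X})$.

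For part (a), your framing via Propositions~\ref{PrN2} and~\ref{KF} --- reduce to showing that the quotient map $Q$ carries $(X_0,X_1)_{\theta,q}$ onto $(Q(X_0),Q(X_1))_{\theta,q}$, then lift --- is a valid and attractive reduction, but the construction you sketch does not close, and you correctly identify where: with the naive choice of $v_k\in N$ realizing $K(2^k,x+v_k;\vec{X})\le 2\varphi(2^k)$, the increments $d_k=v_{k+1}-v_k\in N$ satisfy $\|d_k\|_{X_0+X_1}=K(1,d_k)\le K(2^k,d_k)\lesssim\varphi(2^k)$, and the hypothesis $\theta>\beta_\infty(N)$ only gives a \emph{lower} bound $K(1,d_k)\gtrsim 2^{-k(\theta-\varepsilon)}K(2^k,d_k)$ at scale $1$, not the upper bound one would need; since $\{\varphi(2^k)\}_{k>0}$ is only known to lie in the weighted space $\ell^q(\{2^{-\theta k}\})$ and need not be summable, $\sum_{k>0}d_k$ can fail to converge in $X_0+X_1$, so $v^+$ need not exist. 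Your proposed repair (a $K$-divisibility decomposition of $\varphi$ into peaked pieces and a finer choice of lifts) is a reasonable direction, but as stated it is only a hope, not a proof.

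The paper avoids this difficulty by not working at the $K$-functional side of the quotient at all. It takes a $J$-method representation $y=\sum_k y_k$ of the target element $y$, lifts each $y_k$ through the endpoint surjections to obtain $x_0^k\in X_0$, $x_1^k\in X_1$ with $Ax_0^k=Ax_1^k=y_k$ and $\|x_0^k\|_{X_0}+2^k\|x_1^k\|_{X_1}\lesssim J(2^k,y_k;\vec{Y})$, sets $v_k:=x_0^k-x_1^k\in\ker_{X_0+X_1}A$ with $K(2^k,v_k;\vec{X})\lesssim J(2^k,y_k;\vec{Y})$, decomposes each $v_k=v_0^k+v_1^k$ near-optimally at scale $2^k$, and then shows that both $\tilde{x}:=\sum_{k\le0}x_0^k+\sum_{k>0}x_1^k-\sum_{k\le0}v_0^k+\sum_{k>0}v_1^k$ and $\bar{x}:=\sum_{k\le0}v_0^k-\sum_{k>0}v_1^k$ lie in $(X_0,X_1)_{\theta,q}$. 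Absolute convergence of these series comes for free from the $J$-side control plus H\"older (geometric factors $2^{\theta k}$ and $2^{(\theta-1)k}$), the bound on $K(2^n,\tilde{x})$ is a telescoping estimate reducing to boundedness of the discrete Calder\'on operator, and the bound on $K(2^n,\bar{x})$ uses the index inequalities for $\ker_{X_0+X_1}A$ together with Minkowski's inequality. In effect the paper performs, in concrete form, exactly the careful bookkeeping of near-optimal endpoint decompositions that your step (i)--(ii) would need, but anchored to the $J$-representation of $y$ so that the convergence issues you flag never arise. You should replace steps (i)--(iii) of your proposal by this construction (or carry out a genuine $K$-divisibility lift and verify the convergence --- which is not immediate).
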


\begin{proof} We write proofs for $1\leq q<\infty $. However, the proof for $%
q=\infty $ is similar (it only requires some obvious changes).
First we prove that the property b) holds. Since $\beta _{\infty
}(\ker _{X_{0}+X_{1}}A)<\theta $, it follows that for any $x\in
\ker _{X_{0}+X_{1}}A $ we have the estimate
\begin{equation*}
K(t,x;\vec{X})\leq \gamma t^{\theta -\varepsilon }
\end{equation*}%
for some positive constants $\gamma ,\varepsilon $ and all $t\geq
1$. This implies that
\begin{equation*}
\int_{1}^{\infty }\big (t^{-\theta }K(t,x;\vec{X})\big )^{q}\frac{dt}{t}%
<\infty .
\end{equation*}%
Similarly from $\theta <\alpha _{0}(\ker _{X_{0}+X_{1}}A)$ we have that
\begin{equation*}
K(t,x;\vec{X})\leq \gamma t^{\theta +\varepsilon },\quad \,0<t\leq 1
\end{equation*}%
and so
\begin{equation*}
\int_{0}^{1}\big (t^{-\theta }K(t,x;X_{0},X_{1})\big )^{q}\frac{dt}{t}%
<\infty .
\end{equation*}%
Hence for any $x\in \ker _{X_{0}+X_{1}}A$ we have the inequality
\begin{equation*}
\int_{0}^{\infty }\big (t^{-\theta }K(t,x;\vec{X})\big )^{q}\frac{dt}{t}%
<\infty ,
\end{equation*}%
i.e., $x\in (X_{0},X_{1})_{\theta ,q}$.

Now we show that $A((X_{0},X_{1})_{\theta
,q})=(Y_{0},Y_{1})_{\theta ,q}$. Fix $y\in (Y_{0},Y_{1})_{\theta
,q}$. Then there exists decomposition
\begin{equation*}
y=\sum_{k=-\infty }^{\infty }y_{k},
\end{equation*}%
(the series converges in the sum $Y_{0}+Y_{1}$) such that
\begin{equation*}
\bigg(\sum_{k=-\infty }^{\infty }\big( 2^{-\theta k}J(2^{k},y_{k};\vec{Y})%
\big)^{q}\bigg)^{1/q}\leq \gamma \left\Vert y\right\Vert _{\theta ,q}{%
\text{.}}
\end{equation*}%
Since $A\colon (X_{0},X_{1})\rightarrow (Y_{0},Y_{1})$ is a~surjective
operator, for each $k\in {\mathbb{Z}}$ there exist elements $x_{0}^{k}\in
X_{0}$ and $x_{1}^{k}\in X_{1}$ such that
\begin{equation*}
Ax_{0}^{k}=Ax_{1}^{k}=y_{k}\quad {\text{and\, \thinspace
\thinspace $\Vert x_{0}^{k}\Vert _{X_{0}}\leq \gamma \left\Vert
y_{k}\right\Vert _{Y_{0}},\quad\, \Vert x_{1}^{k}\Vert
_{X_{1}}\leq \gamma \left\Vert y_{k}\right\Vert _{Y_{1}}$}}
\end{equation*}%
with constant $\gamma >0$ dependent only on $A$. So $%
v_{k}=x_{0}^{k}-x_{1}^{k}\in \ker _{X_{0}+X_{1}}A$ and we have
\begin{equation*}
K(2^{k},v_{k};\vec{X})\leq \Vert x_{0}^{k}\Vert _{X_{0}}+2^{k}\Vert
x_{1}^{k}\Vert _{X_{1}}\leq \gamma J(2^{k},y_{k};\vec{Y}){\text{.}}
\end{equation*}%
For each $k\in {\mathbb{Z}}$ we consider the decomposition $%
v_{k}=v_{0}^{k}+v_{1}^{k}$ such that $v_{0}^{k}\in X_{0}$, $v_{1}^{k}\in
X_{1}$ and
\begin{equation*}
\Vert v_{0}^{k}\Vert _{X_{0}}+2^{k}\Vert v_{1}^{k}\Vert _{X_{1}}\leq
2K(2^{k},v_{k};\vec{X}).
\end{equation*}%
We put
\begin{equation*}
\tilde{x}:=\sum_{k\leq 0}x_{0}^{k}+\sum_{k>0}x_{1}^{k}-\sum_{k\leq
0}v_{0}^{k}+\sum_{k>0}v_{1}^{k}.
\end{equation*}%
Note that from the above estimation it follows that the series on the right
hand side is absolutely convergent in $Y_{0}+Y_{1}$. Moreover, it is easy to
check that for each $n\in {\mathbb{Z}}$,
\begin{equation*}
\tilde{x}:=\sum_{k\leq n}x_{0}^{k}+\sum_{k>n}x_{1}^{k}-\sum_{k\leq
n}v_{0}^{k}+\sum_{k>n}v_{1}^{k}.
\end{equation*}%
Thus we have
\begin{align*}
K(2^{n},\tilde{x};\vec{X})& \leq \sum_{k\leq n}\Vert x_{0}^{k}\Vert
_{X_{0}}+2^{n}\sum_{k>n}\Vert x_{1}^{k}\Vert _{X_{1}}+\sum_{k\leq n}\Vert
v_{0}^{k}\Vert _{X_{0}}+2^{n}\sum_{k>n}\Vert v_{1}^{k}\Vert _{X_{1}} \\
& \leq \gamma \Big (\sum_{k\leq n}J(2^{k},y_{k};\vec{Y})+2^{n}\sum_{k>n}%
\frac{1}{2^{k}}J(2^{k},y_{k};\vec{Y})\Big )=\gamma S_{d}\big (\big \{%
J(2^{k},y_{k};\vec{Y})\}\big )(2^{n}),
\end{align*}%
where $S_{d}$ is the ~discrete analog of the well-known Calder\'on
operator (see \cite{AK1}), which is bounded in the weighted sequence space $%
\ell ^{q}\big(\{ 2^{-n\theta}\}\big)$ for all parameters $\theta $ and $%
1\leq q\leq \infty$. In consequence $\tilde{x}\in
(X_{0},X_{1})_{\theta ,q}$.

Now we show that
\begin{equation*}
\bar{x}:=\sum_{k\leq 0}v_{0}^{k}-\sum_{k>0}v_{1}^{k}\in
(X_{0},X_{1})_{\theta ,q}.
\end{equation*}%
Indeed for all $n>0$ we have
\begin{equation*}
\bar{x}=\sum_{k\leq n}v_{0}^{k}-\sum_{k>n}v_{1}^{k}-\sum_{0<k\leq
n}v_{0}^{k}-\sum_{0<k\leq n}v_{1}^{k}=\sum_{k\leq
n}v_{0}^{k}-\sum_{k>n}v_{1}^{k}-\sum_{0<k\leq n}v_{k}.
\end{equation*}%
This implies that
\begin{eqnarray*}
K(2^{n},\bar{x};\vec{X}) &\leq \sum_{k\leq n}J(2^{k},y_{k};\vec{Y}%
)+2^{n}\sum_{k>n}\frac{1}{2^{k}}J(2^{k},y_{k};\vec{Y})+\sum_{0<k\leq
n}K(2^{n},v_{k};\vec{X}) \\
&\leq \gamma S_{d}\left( \left\{ J(2^{k},y_{k};\vec{Y})\right\}
\right) (2^{n})+\sum_{0<k\leq n}K(2^{n},v_{k};\vec{X}).
\end{eqnarray*}%
Since $v_{k}\in \ker _{X_{0}+X_{1}}A$ and $\theta >\beta _{\infty }(\ker
_{X_{0}+X_{1}}A)$, there exist positive constants $\gamma ,\varepsilon $
such that
\begin{equation*}
K(2^{n},v_{k};\vec{X})\leq \gamma \left( \frac{2^{n}}{2^{k}}\right) ^{\theta
-\varepsilon }K(2^{k},v_{k};\vec{X})\leq \gamma \left( \frac{2^{n}}{2^{k}}%
\right) ^{\theta -\varepsilon }J(2^{k},y_{k};\vec{Y})
\end{equation*}%
for all $0\leq k\leq n$. Hence
\begin{equation*}
\sum_{0<k\leq n}K(2^{n},v_{k};X_{0},X_{1})\leq \gamma \sum_{0<k\leq n}\left(
\frac{2^{n}}{2^{k}}\right) ^{\theta -\varepsilon }J(2^{k},y_{k};Y_{0},Y_{1}).
\end{equation*}%
Applying Minkowski's inequality we obtain
\begin{align*}
& \bigg (\sum_{n>0}\bigg (2^{-\theta n}\sum_{0<k\leq n}\bigg (\frac{2^{n}}{%
2^{k}}\bigg )^{\theta -\varepsilon }J(2^{k},y_{k};\vec{Y})\bigg )^{q}\bigg )%
^{1/q}=\bigg (\sum_{n>0}\bigg (\sum_{0<k\leq n}2^{-(n-k)\varepsilon
}2^{-k\theta }J(2^{k},y_{k};\vec{Y})\bigg )^{q}\bigg )^{1/q} \\
& \leq \sum_{j\geq 0}\bigg (\sum_{k>0}\big(2^{-j\varepsilon
}2^{-k\theta}J(2^{k},y_{k};\vec{Y})\big)^{q}\bigg )^{1/q}\leq \gamma \bigg (\sum_{k>0}%
\big(2^{-k\theta }J(2^{k},y_{k};\vec{Y})\big)^{q}\bigg )^{1/q}\leq
\gamma \left\Vert y\right\Vert _{\theta ,q}{\text{.}}
\end{align*}%
Therefore, using boundedness of the discrete analog of the
Calder\'{o}n operator $S_{d}$, we obtain
\begin{equation}
\Big (\sum_{n>0}^{\infty }\big (2^{-\theta n}K(2^{n},\bar{x};\vec{Y})\big )%
^{q}\Big )^{1/q}\leq \gamma \left\Vert y\right\Vert _{\theta ,q}{\text{.}}
\label{NM10}
\end{equation}%
Similarly we consider the case $n<0$. Indeed,
\begin{align*}
\bar{x}& =\sum_{k\leq 0}v_{0}^{k}-\sum_{k>0}v_{1}^{k}=\sum_{k\leq
n}v_{0}^{k}-\sum_{k>n}v_{1}^{k}+\sum_{n<k\leq 0}v_{0}^{k}+\sum_{n<k\leq
0}v_{1}^{k} \\
& =\sum_{k\leq n}v_{0}^{k}-\sum_{k>n}v_{1}^{k}+\sum_{n<k\leq 0}v_{k}.
\end{align*}%
Moreover,
\begin{eqnarray*}
K(2^{n},\bar{x};\vec{X}) &\leq &\sum_{k\leq n}J(2^{k},y_{k};\vec{Y}%
)+2^{n}\sum_{k>n}\frac{1}{2^{k}}J(2^{k},y_{k};\vec{Y})+\sum_{n<k\leq
0}K(2^{n},v_{k};X_{0},X_{1}) \\
&&\gamma S_{d}\left( \left\{ J(2^{k},y_{k};\vec{Y})\right\} \right)
(2^{n})+\sum_{n<k\leq 0}K(2^{n},v_{k};X_{0},X_{1}).
\end{eqnarray*}%
Since $v_{k}\in \ker _{X_{0}+X_{1}}A$ and $\theta <\alpha _{0}(\ker
_{X_{0}+X_{1}}A)$, there exist positive constants $\gamma ,\varepsilon $
such that
\begin{equation*}
K(2^{n},v_{k};\vec{X})\leq \gamma \left( \frac{2^{n}}{2^{k}}\right) ^{\theta
+\varepsilon }K(2^{k},v_{k};\vec{X})\leq \gamma \left( \frac{2^{n}}{2^{k}}%
\right) ^{\theta +\varepsilon }J(2^{k},y_{k};\vec{Y})
\end{equation*}%
for each $n\leq k\leq 0$. Again, applying the Minkowski inequality we get
\begin{align*}
\bigg (& \sum_{n<0}\Big (2^{-\theta n}\sum_{n<k\leq 0}\Big (\frac{2^{n}}{%
2^{k}}\Big )^{\theta +\varepsilon }J(2^{k},y_{k};\vec{Y})\bigg )^{q}\bigg )%
^{1/q}=\bigg (\sum_{n<0}\Big (\sum_{n<k\leq 0}2^{-(k-n)\varepsilon
}2^{-k\theta }J(2^{k},y_{k};\vec{Y})\Big )^{q}\bigg )^{1/q} \\
& \leq \sum_{j>0}\Big (\sum_{k\leq 0}\big (2^{-j\varepsilon }2^{-k\theta
}J(2^{k},y_{k};\vec{Y})\big )^{q}\Big )^{1/q}\leq \gamma \Big (\sum_{k\leq 0}%
\Big (2^{-k\theta }J(2^{k},y_{k};\vec{Y})\Big )^{q}\Big )^{1/q}\leq \gamma
\left\Vert y\right\Vert _{\theta ,q}.
\end{align*}%
This implies
\begin{equation*}
\bigg (\sum_{n<0}^{\infty }\big (2^{-\theta n}K(2^{n},\bar{x};\vec{Y})\big )%
^{q}\bigg )^{1/q}\leq \gamma \left\Vert y\right\Vert _{\theta ,q}\text{ }
\end{equation*}%
and whence by (\ref{NM10})
\begin{equation*}
\bigg (\sum_{n=-\infty }^{\infty }\big (2^{-\theta n}K(2^{n},\bar{x};\vec{Y})%
\big )^{q}\bigg )^{1/q}\leq 2\,\gamma \left\Vert y\right\Vert _{\theta ,q}{%
\text{.}}
\end{equation*}%
The estimates obtained for elements $x$ and $\tilde{x}$ show that
\begin{equation*}
x=\sum_{k\leq 0}x_{0}^{k}+\sum_{k>0}x_{1}^{k}=\bar{x}+\tilde{x}\in
(X_{0},X_{1})_{\theta ,q}.
\end{equation*}%
Since $Ax=A\Big (\sum_{k\leq 0}x_{0}^{k}+\sum_{k>0}x_{1}^{k}\Big )%
=\sum_{k\in {\mathbb{Z}}}y_{k}=y$, thus $A((X_{0},X_{1})_{\theta
,q})=(Y_{0},Y_{1})_{\theta ,q}$ as required.
\end{proof}


\subsection{Sufficient condition for the class ${\mathbb{F}}_{\protect\theta %
,q}^{2}$}

The proof of the next theorem is similar to the previous one, but more complicated.%


\begin{theorem}
\label{ST2}Let $A\colon (X_{0},X_{1})\rightarrow (Y_{0},Y_{1})$ be
a~surjective and Fredholm linear operator on endpoint spaces.
Suppose also that $\dim \left( \ker _{X_{0}+X_{1}}A\right) <\infty
$ and
\begin{equation}
\beta _{0}(\ker _{X_{0}+X_{1}}\,A)<\theta <\alpha _{\infty }(\ker
_{X_{0}+X_{1}}\,A).  \label{K5}
\end{equation}%
Then we have
\begin{itemize}
\item[{\rm{a)}}] The operator $A\colon (X_{0},X_{1})_{\theta
,q}\rightarrow (Y_{0},Y_{1})_{\theta ,q}$ is injective.
\item[{\rm{b)}}] The codimension of $A((X_{0},X_{1})_{\theta ,q})$
in $ (Y_{0},Y_{1})_{\theta ,q}$ is equal to the dimension of $\ker
_{X_{0}+X_{1}}\,A$, i.e., $A\in \mathbb{F}_{\theta
,q}^{2}$\textit{.}
\end{itemize}
\end{theorem}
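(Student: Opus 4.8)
The proof follows the pattern of the preceding theorem (the case of the class $\mathbb{F}^1_{\theta,q}$), but with the indices $\beta_\infty,\alpha_0$ of $\ker_{X_0+X_1}A$ replaced by $\beta_0,\alpha_\infty$, and with one essential new difficulty: the kernel elements no longer lie in $(X_0,X_1)_{\theta,q}$, so the ``kernel corrections'' produced along the way cannot simply be absorbed into $(X_0,X_1)_{\theta,q}$, but have to be gathered into a single element of the finite dimensional space $A(P_{X_0}(\ker_{X_0+X_1}A))$.

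\textbf{Part a).} Since $\ker_{X_0+X_1}A\cap(X_0,X_1)_{\theta,q}=V^0_{\theta,q}\cap V^1_{\theta,q}$, injectivity follows once $V^1_{\theta,q}=\{0\}$ is shown. From $\theta<\alpha_\infty(\ker_{X_0+X_1}A)$ there are $\theta''>\theta$ and $\gamma>0$ with $s^{-\theta''}K(s,v;\vec X)\le\gamma\,t^{-\theta''}K(t,v;\vec X)$ for all $v\in\ker_{X_0+X_1}A$ and $1\le s<t$; taking $s=1$ gives $K(t,v;\vec X)\ge\gamma^{-1}t^{\theta''}K(1,v;\vec X)$ for $t\ge1$, so for $v\ne0$ the integral $\int_1^\infty\big(t^{-\theta}K(t,v;\vec X)\big)^q\,\frac{dt}{t}$ diverges and $v\notin V^1_{\theta,q}$. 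The symmetric estimate from $\theta>\beta_0(\ker_{X_0+X_1}A)$ yields $\theta'\in(0,\theta)$ with $K(s,v;\vec X)\ge\gamma^{-1}s^{\theta'}K(1,v;\vec X)$ for $0<s\le1$, hence $V^0_{\theta,q}=\{0\}$ as well. These two estimates also give $\ker_{X_0+X_1}A\cap X_0\subset V^1_{\theta,q}=\{0\}$ and $\ker_{X_0+X_1}A\cap X_1\subset V^0_{\theta,q}=\{0\}$, and they show that in \eqref{NM1} one may take $\widetilde V=\ker_{X_0+X_1}A$; set $n:=\dim\ker_{X_0+X_1}A$.

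\textbf{Part b).} Fix a basis $e_1,\dots,e_n$ of $\ker_{X_0+X_1}A$ with $e_i=e_i^{(0)}+e_i^{(1)}$, $e_i^{(j)}\in X_j$, and the maps $P_{X_0},P_{X_1}$ of \eqref{PX}. Since $\ker_{X_0+X_1}A\cap X_j=\{0\}$, the map $AP_{X_0}$ is injective on $\ker_{X_0+X_1}A$, and (as in Remark \ref{REM}) $A(P_{X_0}(\ker_{X_0+X_1}A))$ is an $n$-dimensional subspace of $Y_0\cap Y_1$. It is enough to prove the topological direct sum
\begin{equation*}
(Y_0,Y_1)_{\theta,q}=A\big((X_0,X_1)_{\theta,q}\big)\oplus A\big(P_{X_0}(\ker_{X_0+X_1}A)\big),
\end{equation*}
where, moreover, every $y$ admits such a decomposition with both components of norm $\le\gamma\|y\|_{\theta,q}$. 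Granting this, the norm bound together with directness makes the projection onto the finite dimensional second summand well defined and bounded, so $A((X_0,X_1)_{\theta,q})$ is its kernel — a closed subspace of codimension $n$ — and with part a) this is exactly $A\in\mathbb{F}^2_{\theta,q}$. (Once $A((X_0,X_1)_{\theta,q})$ is known to be closed, the conclusion also follows from Theorem \ref{lowerFredholm}, since $Y_0\cap Y_1=A(X_0)\cap A(X_1)$, the functor $(\cdot)_{\theta,q}$ is regular with the decomposition property, and $\dim\widetilde V=n<\infty$.) Directness is easy: if $Ax=AP_{X_0}v$ with $x\in(X_0,X_1)_{\theta,q}$ and $v\in\ker_{X_0+X_1}A$, then $AP_{X_0}v=-AP_{X_1}v$ gives $w_1:=x-P_{X_0}v\in\ker_{X_0+X_1}A$ and $w_2:=x+P_{X_1}v\in\ker_{X_0+X_1}A$, with $w_2-w_1=v$; if $w_1\ne0$ then $K(t,x;\vec X)\ge K(t,w_1;\vec X)-\|P_{X_0}v\|_{X_0}\gtrsim t^{\theta''}$ as $t\to\infty$, impossible for $x\in(X_0,X_1)_{\theta,q}$, and if $w_2\ne0$ then $K(t,x;\vec X)\ge K(t,w_2;\vec X)-t\|P_{X_1}v\|_{X_1}\gtrsim t^{\theta'}$ as $t\to0$ (using $\theta'<1$), again impossible; hence $w_1=w_2=0$ and $v=0$.

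The substantial part is the spanning property with norm control. Given $y\in(Y_0,Y_1)_{\theta,q}$, take a $J$-representation $y=\sum_{k\in\mathbb Z}y_k$ with $\big(\sum_k(2^{-\theta k}J(2^k,y_k;\vec Y))^q\big)^{1/q}\le\gamma\|y\|_{\theta,q}$, and, $A$ being surjective on the endpoints, lift $y_k$ to $x^k_0\in X_0$, $x^k_1\in X_1$ with $Ax^k_0=Ax^k_1=y_k$ and norms controlled by $\|y_k\|_{Y_0}$, $\|y_k\|_{Y_1}$; then $v_k:=x^k_0-x^k_1\in\ker_{X_0+X_1}A$ and $K(2^k,v_k;\vec X)\le\gamma J(2^k,y_k;\vec Y)$. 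As in the $\mathbb{F}^1_{\theta,q}$ proof, $x:=\sum_{k\le0}x^k_0+\sum_{k>0}x^k_1$ converges in $X_0+X_1$, $Ax=y$, and $x=\tilde x+\bar x$ with $\tilde x\in(X_0,X_1)_{\theta,q}$, $\|\tilde x\|_{\theta,q}\le\gamma\|y\|_{\theta,q}$ (by boundedness of the discrete Calder\'on operator on $\ell^q(\{2^{-n\theta}\})$), while $\bar x$ is a combination of near-optimal decompositions of the $v_k$. The new input is that, because $\beta_0<\theta<\alpha_\infty$ on the kernel and $\dim\ker_{X_0+X_1}A<\infty$ — so on the kernel all the norms $K(t,\cdot;\vec X)$ are mutually equivalent and $K(1,v;\vec X)\le\gamma\,2^{-\theta''k}K(2^k,v;\vec X)$ for $k\ge0$, $K(1,v;\vec X)\le\gamma\,2^{-\theta'k}K(2^k,v;\vec X)$ for $k<0$ — two H\"older estimates (with exponent $q'$) show that $\sum_k v_k$ converges absolutely in the finite dimensional space $\ker_{X_0+X_1}A$, and then that $\bar x$ differs from an element of $(X_0,X_1)_{\theta,q}$ (of norm $\le\gamma\|y\|_{\theta,q}$) by an element of $P_{X_0}(\ker_{X_0+X_1}A)+\ker_{X_0+X_1}A$. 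Applying $A$ and using $A(\ker_{X_0+X_1}A)=\{0\}$ gives $y=A\tilde x+A\bar x\in A((X_0,X_1)_{\theta,q})+A(P_{X_0}(\ker_{X_0+X_1}A))$ with the claimed bounds, which is the spanning statement. For $q=\infty$ one makes the evident modifications (suprema in place of $\ell^q$-sums).

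I expect the main obstacle to be exactly this last step: pinning down, uniformly in $y$, the part of a preimage $x$ of $y$ that escapes $(X_0,X_1)_{\theta,q}$ and showing that modulo $(X_0,X_1)_{\theta,q}$ it reduces to a single element of the fixed $n$-dimensional space $A(P_{X_0}(\ker_{X_0+X_1}A))$. This uses the finiteness of $\dim\ker_{X_0+X_1}A$ in an essential way — the infinitely many scale-dependent corrections $v_k$ must add up to one kernel element — and it uses both inequalities $\beta_0<\theta$ and $\theta<\alpha_\infty$: the first to control the corrections at small scales, the second at large scales, and jointly to make the relevant H\"older series converge.
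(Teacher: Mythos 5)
Your proposal is correct and follows essentially the same route as the paper: part a) by using the upper index $\alpha_\infty$ (resp.\ lower index $\beta_0$) on the kernel to rule out nonzero elements of $V^1_{\theta,q}$ (resp.\ $V^0_{\theta,q}$), and part b) by introducing the finite-dimensional space $M=A(P_{X_0}(\ker_{X_0+X_1}A))\subset Y_0\cap Y_1$, proving $A\bigl((X_0,X_1)_{\theta,q}\bigr)\cap M=\{0\}$ via the same kernel $K$-functional estimates, and then establishing the spanning by lifting a $J$-decomposition of $y$, forming $v_k=x_0^k-x_1^k\in\ker_{X_0+X_1}A$, and summing the scale-$k$ corrections into a single kernel element using H\"older estimates that exploit $\beta_0<\theta<\alpha_\infty$ and $\dim\ker_{X_0+X_1}A<\infty$. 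The only points worth noting are organizational: your directness argument introduces two kernel elements $w_1=x-P_{X_0}v$, $w_2=x+P_{X_1}v$ and shows both vanish, whereas the paper works with a single $u=\operatorname{Pr}_{X_0}v-x$ and splits into $u=0$, $u\ne0$ (equivalent in content); and the closedness of $A\bigl((X_0,X_1)_{\theta,q}\bigr)$ can be obtained more simply, as the paper does, from the general fact that $T(X)$ is closed whenever $Y=T(X)\oplus N$ with $N$ closed, without routing through a uniform norm bound on the decomposition. Your spanning step is left at a sketch; the paper carries it out with additional bookkeeping (the two-parameter family $(V_k)_{X_0}^i,(V_k)_{X_1}^i$, the telescoping differences $(u_k)_j$, and Propositions~\ref{Proposition4},~\ref{Proposition6} and the two accompanying lemmas) to verify precisely that the residual $\tilde x$ lies in $(X_0,X_1)_{\theta,q}$ with controlled norm, but the mechanism you describe — geometric decay $2^{\pm\varepsilon k}$ paired with the $\ell^q$ control on $2^{-\theta k}K(2^k,v_k)$ — is exactly the one the paper uses.
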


We write the proof for $q<\infty $. However, the case of $q=\infty
$ is similar. We first prove that \linebreak $A$ is injective on
$(X_{0},X_{1})_{\theta ,q}$. If this would be not true, then there
would exists $x\in (X_{0},X_{1})_{\theta ,q}\linebreak \cap\,\ker
_{X_{0}+X_{1}}A$. From $\theta <\alpha _{\infty }(\ker
_{X_{0}+X_{1}}A)$, it follows that there exist positive constants
$\gamma $, $\varepsilon $ such that
\begin{equation}
K(t,x;\vec{X})\geq \gamma t^{\theta +\varepsilon }K(1,x;\vec{X})  \label{K6}
\end{equation}%
for all $t\geq 1$. So from $x\in (X_{0},X_{1})_{\theta ,q}$ we have $x=0$.

We observe that from inequalities (\ref{K5}), it follows that the operator $%
A $ is invertible on endpoint spaces. Indeed $x\in X_{0}\cap \ker
_{X_{0}+X_{1}}A$ implies $\sup_{t>0}K(t,x;\vec{X})<\infty $ and so
we arrive to a~contradiction to (\ref{K6}). Similarly we explain
that $X_{1}\cap \ker _{X_{0}+X_{1}}\,A=\{0\}$ by $\beta _{0}(\ker
_{X_{0}+X_{1}}A)<\theta $.

The proof that the codimension of $A((X_{0},X_{1})_{\theta ,q})$ in $%
(Y_{0},Y_{1})_{\theta ,q}$ is equal to the dimension of $\ker
_{X_{0}+X_{1}}A $ is more complicated. We first show that
\begin{equation*}
(Y_{0},Y_{1})_{\theta ,q}=A((X_{0},X_{1})_{\theta ,q})\oplus M,
\end{equation*}%
where $M$ is a~finite dimensional space with dimension equal to
the dimension of $\ker _{X_{0}+X_{1}}A$. We start with
construction of the space $M$. Let $g_{1},...,g_{n}$ be a~basis in
$\ker _{X_{0}+X_{1}}A$ and let us consider decompositions
\begin{equation*}
g_{m}=g_{m}^{0}+g_{m}^{1}
\end{equation*}%
such that
\begin{equation*}
\left\Vert g_{m}^{0}\right\Vert _{X_{0}}+2^{0}\left\Vert
g_{m}^{1}\right\Vert _{X_{1}}\leq 2K(2^{0},g_{m};\vec{X})=2\left\Vert
g_{m}\right\Vert _{X_{0}+X_{1}}{\text{.}}
\end{equation*}%
Now we define linear operators $\func{Pr}_{X_{0}}\colon \ker
_{X_{0}+X_{1}}A\rightarrow X_{0}$ and $\func{Pr}_{X_{1}}\colon
\ker _{X_{0}+X_{1}}A\rightarrow X_{1}$ by the formulas
\begin{equation*}
\func{Pr}{}_{X_{0}}\Big (\sum_{m=1}^{n}\alpha _{m}g_{m}\Big )%
=\sum_{m=1}^{n}\alpha_{m}g_{m}^{0},\quad \,{\text{
}}\func{Pr}_{X_{1}}(\sum_{m=1}^{n}\alpha _{m}g_{m})
=\sum_{m=1}^{n}\alpha _{m}g_{m}^{1}{%
\text{.}}
\end{equation*}%
Since $g=\func{Pr}_{X_{0}}(g)+\func{Pr}_{X_{1}}(g)$ and $g\in \ker
_{X_{0}+X_{1}}A$,
\begin{equation*}
A(\func{Pr}_{X_{0}}(g))=-A(\func{Pr}_{X_{1}}(g)),
\end{equation*}%
so
\begin{equation*}
A(\func{Pr}_{X_{0}}(\ker
_{X_{0}+X_{1}}A))=A(\func{Pr}_{X_{1}}(\ker _{X_{0}+X_{1}}A))
\end{equation*}%
and we can define the space $M$ as $A(\func{Pr}{}_{X_{0}}(\ker
_{X_{0}+X_{1}}A))$.

From the invertibility of $A\colon (X_{0},X_{1})\rightarrow
(Y_{0},Y_{1})$ it follows that ${\text{ker}}_{X_{0}+X_{1}}A\cap
X_{i}=\{0\}$ for $i=0,1$. In consequence, the operators
$\func{Pr}_{X_{0}}$, $\func{Pr}_{X_{1}}$ are both injective and
the dimension of the space $A(\func{Pr}{}_{X_{0}}(\ker
_{X_{0}+X_{1}}A))$ is equal to the dimension of $\ker
_{X_{0}+X_{1}}A$.

We claim that
\begin{equation*}
A((X_{0},X_{1})_{\theta ,q})\cap A( \func{Pr}_{X_{0}}(\ker
_{X_{0}+X_{1}}A))=\left \{0\right \}.
\end{equation*}

Suppose to the contrary that there exists an element $v\in \ker
_{X_{0}+X_{1}}A\setminus \left\{ 0\right\}$ and element $x\in
(X_{0},X_{1})_{\theta ,q}$ such that
\begin{equation*}
A(\func{Pr}_{X_{0}}(v))=Ax.
\end{equation*}%
Then $u:=\func{Pr}{}_{X_{0}}(v)-x\in \ker _{X_{0}+X_{1}}A$. There
are two cases: $u=0$ and $u\neq 0$. If $u=0$, then
$\func{Pr}{}_{X_{0}}(v)\in (X_{0},X_{1})_{\theta ,q}$ and
therefore
\begin{equation*}
K(s,v;\vec{X})\leq K(s,\func{Pr}_{X_{0}}(v);\vec{X})+K(s,\func{Pr}{}%
_{X_{1}}(v);\vec{X})\leq cs^{\theta }+cs.
\end{equation*}%
However from $\theta >\beta _{0}(\ker _{X_{0}+X_{1}}A)\geq \beta _{0}(v)$,
it follows that there exists $\varepsilon >0$ such that for all $0<s\leq 1$,
\begin{equation*}
\frac{K(s,v;\vec{X})}{K(1,v;\vec{X})}\geq \gamma s^{\theta -\varepsilon }{%
\text{ ,}}
\end{equation*}%
and we arrive at a~contradiction. If $u\neq 0$, then
\begin{equation}
K(t,u;\vec{X})\leq
K(t,\func{Pr}_{X_{0}}(v);\vec{X})+K(t,x;\vec{X})\leq c+ct^{\theta
}{\text{.}}  \label{K103}
\end{equation}%
To conclude we observe that $\theta <\alpha _{\infty }(\ker
_{X_{0}+X_{1}}A)\leq \alpha _{\infty }(u)$ implies that for some $%
\varepsilon $ and for all $t\geq 1$,
\begin{equation*}
\frac{K(1,u;\vec{X})}{K(t,u;\vec{X})}\leq \gamma \left( \frac{1}{t}\right)
^{\theta +\varepsilon }{\text{ }}{;}
\end{equation*}%
we again arrive at a~contradiction by (\ref{K103}) and this proves the
claim.

Next, we show that $(Y_{0},Y_{1})_{\theta ,q}=A((X_{0},X_{1})_{\theta ,q})+A(%
\func{Pr}_{X_{0}}(\ker _{X_{0}+X_{1}}A))$. To do this, fix $y\in
(Y_{0},Y_{1})_{\theta ,q}$; then there exist $y_{k}\in Y_{0}\cap
Y_{1}$ such that $y=\sum_{k}y_{k}$ (convergence in $Y_{0}+Y_{1}$)
and
\begin{equation*}
\bigg (\sum_{k\in {\mathbb{Z}}}\big(2^{-\theta k}J(2^{k},y_{k};\vec{Y})\big)^{q}%
\bigg )^{1/q}\leq 2\left\Vert y\right\Vert _{\theta ,q}{\text{.}}
\end{equation*}%
Since $A$ is invertible operator on endpoint spaces, for each $k$
we can find elements $x_{0}^{k}\in X_{0}$ and $x_{1}^{k}\in X_{1}$
such that
\begin{equation*}
Ax_{0}^{k}=Ax_{1}^{k}=y_{k}
\end{equation*}%
and
\begin{equation*}
\Vert x_{0}^{k}\Vert _{X_{0}}+2^{k}\Vert x_{1}^{k}\Vert _{X_{1}}\leq \gamma
J(2^{k},y_{k};\vec{Y}).
\end{equation*}%
Now we show that series $\sum_{k\leq
0}x_{0}^{k}+\sum_{k>0}x_{1}^{k}$ absolutely converge in
$X_{0}+X_{1}$ and for $x:=\sum_{k\leq
0}x_{0}^{k}+\sum_{k>0}x_{0}^{k}$ we have $Ax=y$.

Indeed, absolute convergence follows from the estimate
\begin{equation*}
\sum_{k\leq 0}\Vert x_{0}^{k}\Vert _{X_{0}}+\sum_{k>0}\Vert x_{1}^{k}\Vert
_{X_{1}}\leq \gamma \bigg (\sum_{k\leq 0}J(2^{k},y_{k};\vec{Y})+\sum_{k>0}%
\frac{1}{2^{k}}J(2^{k},y_{k};\vec{Y})\bigg )\leq \gamma \left\Vert
y\right\Vert _{\theta ,q}
\end{equation*}%
and equality $Ax=y$ follows from the fact that $Ax_{0}^{k}=Ax_{1}^{k}=y_{k}$
for each $k$.

So to construct the needed decomposition of the element $y\in
(Y_{0},Y_{1})_{\theta ,q}$, it is enough to construct
decomposition of the element $x=\sum_{k\leq
0}x_{0}^{k}+\sum_{k>0}x_{0}^{k}$ into a~sum of two elements, one of
which belongs to $(X_{0},X_{1})_{\theta, q}$ and the other to $\func{Pr}{}%
_{X_{0}}(\ker _{X_{0}+X_{1}}A)+\func{Pr}{}_{X_{1}}(\ker _{X_{0}+X_{1}}A)$.
Let us consider the elements
\begin{equation*}
V_{k}=x_{0}^{k}-x_{1}^{k}\in \ker _{X_{0}+X_{1}}A,\quad \,k\in {\mathbb{Z}}
\end{equation*}%
and let $(V_{k})_{X_{0}}^{i}\in X_{0}$, $(V_{k})_{X_{1}}^{i}\in X_{1}$ for
each $i\in {\mathbb{Z}}$ be such that
\begin{equation*}
V_{k}=(V_{k})_{X_{0}}^{i}+ (V_{k})_{X_1}^{i}, \quad\, i\in
{\mathbb{Z\setminus }}\{0\}
\end{equation*}%
and
\begin{equation}
\big \|\left( V_{k}\right) _{X_{0}}^{i}\big \|_{X_{0}}+2^{i}\big
\|\left( V_{k}\right)_{X_{1}}^{i}\big \|_{X_{1}}\leq \gamma
K(2^{i},V_{k};\vec{X}). \label{K7}
\end{equation}%
For each $k\in {\mathbb{Z}}$ we define elements $\left( V_{k}\right)
_{X_{0}}^{0}$, $\left( V_{k}\right) _{X_{1}}^{0}$ by
\begin{equation*}
\left( V_{k}\right)_{X_{0}}^{0}=\func{Pr}{}_{X_{0}}(V_{k}),{\text{
}}\left( V_{k}\right) _{X_{1}}^{0}=\func{Pr}_{X_{1}}(V_{k}).
\end{equation*}%
Since $\func{dim}(\ker _{X_{0}+X_{1}}A)<\infty $, the norms
$\left\Vert \cdot \right\Vert _{X_{0}+X_{1}}$ , $\left\Vert
\func{Pr}{}_{X_{0}}(\cdot )\right\Vert _{X_{0}{\text{ }}}$and
$\left\Vert \func{Pr}{}_{X_{1}}(\cdot )\right\Vert _{X_{1}{\text{
}}}$ are equivalent on $\ker _{X_{0}+X_{1}}A$. In consequence, the
inequality (\ref{K7}) is also valid for $i=0$.

Moreover from the invertibility of the operator $A$ on endpoint
spaces we have
\begin{equation}
K(2^{k},V_{k};\vec{X})\leq \Vert x_{0}^{k}\Vert _{X_{0}}+2^{k}\Vert
x_{1}^{k}\Vert _{X_{0}}\leq \gamma J(2^{k},y_{k};\vec{Y}).  \label{T5.3}
\end{equation}

We need the following proposition.


\begin{proposition}
\label{Proposition4} The series $\sum _{k\leq 0}\left(V_{k}\right
)_{X_{0}}^{0}$ converges absolutely in $X_{0}$ to some $%
u\in \func{Pr}{}_{X_0}(\ker _{X_{0}+X_{1}}A)$ and the series $\sum
_{k>0}\left (V_{k}\right )_{X_{1}}^{0}$ converges absolutely in
$X_{1}$ to some $v\in \func{Pr}{}_{X_{1}}(\ker _{X_{0}+X_{1}}A).$
\end{proposition}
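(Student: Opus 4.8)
The plan is to reduce everything to one scalar estimate on $\|V_k\|_{X_0+X_1}=K(1,V_k;\vec{X})$ and then transport it to $\|(V_k)^0_{X_0}\|_{X_0}$ and $\|(V_k)^0_{X_1}\|_{X_1}$ via the fact, already observed above, that on the finite-dimensional space $\ker_{X_0+X_1}A$ the three norms $\|\cdot\|_{X_0+X_1}$, $\|\func{Pr}_{X_0}(\cdot)\|_{X_0}$ and $\|\func{Pr}_{X_1}(\cdot)\|_{X_1}$ are mutually equivalent. Put $a_k:=2^{-\theta k}J(2^k,y_k;\vec{Y})$, so that $(a_k)\in\ell^q\subseteq\ell^\infty$ with $M:=\sup_k a_k\le 2\|y\|_{\theta,q}<\infty$, and recall from (\ref{T5.3}) that $K(2^k,V_k;\vec{X})\le\gamma\,2^{\theta k}a_k$ for every $k$.

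First I would treat the tail $k\le 0$. Since $\beta_0(\ker_{X_0+X_1}A)<\theta$, fix $\theta'$ with $\beta_0(\ker_{X_0+X_1}A)<\theta'<\theta$ and set $\varepsilon:=\theta-\theta'>0$. By the definition of $\beta_0$ there is $\gamma>0$ with $s^{-\theta'}K(s,V_k;\vec{X})\ge\gamma\,t^{-\theta'}K(t,V_k;\vec{X})$ for all $0<s\le t\le 1$; with $s=2^k\le 1=t$ this gives
\[
K(1,V_k;\vec{X})\le\gamma^{-1}2^{-k\theta'}K(2^k,V_k;\vec{X})\le\gamma'\,2^{k\varepsilon}a_k .
\]
Hence, using the norm equivalence on $\ker_{X_0+X_1}A$,
\[
\sum_{k\le 0}\|(V_k)^0_{X_0}\|_{X_0}=\sum_{k\le 0}\|\func{Pr}_{X_0}(V_k)\|_{X_0}\le c\sum_{k\le 0}K(1,V_k;\vec{X})\le c\,\gamma' M\sum_{k\le 0}2^{k\varepsilon}<\infty ,
\]
so $\sum_{k\le 0}(V_k)^0_{X_0}$ converges absolutely in $X_0$. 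Its partial sums lie in $\func{Pr}_{X_0}(\ker_{X_0+X_1}A)$, a finite-dimensional and hence closed subspace of $X_0$, so the limit $u$ belongs to it as well.

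The tail $k>0$ is handled symmetrically, now using the condition at infinity: fix $\theta'$ with $\theta<\theta'<\alpha_\infty(\ker_{X_0+X_1}A)$ and $\varepsilon:=\theta'-\theta>0$, so by the definition of $\alpha_\infty$ there is $\gamma>0$ with $s^{-\theta'}K(s,V_k;\vec{X})\le\gamma\,t^{-\theta'}K(t,V_k;\vec{X})$ for all $1\le s\le t<\infty$; taking $s=1\le 2^k=t$ yields $K(1,V_k;\vec{X})\le\gamma\,2^{-k\theta'}K(2^k,V_k;\vec{X})\le\gamma''\,2^{-k\varepsilon}a_k$, whence $\sum_{k>0}\|(V_k)^0_{X_1}\|_{X_1}\le c\,\gamma'' M\sum_{k>0}2^{-k\varepsilon}<\infty$, giving absolute convergence in $X_1$ to some $v\in\func{Pr}_{X_1}(\ker_{X_0+X_1}A)$. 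The argument uses $q$ only through the bound $\sup_k a_k<\infty$, so it is valid for all $q\in[1,\infty]$; the one point that needs care is pairing the right one-sided index with the right tail ($\beta_0$ with $k\le 0$, $\alpha_\infty$ with $k>0$) and keeping track of the direction of the inequality between $K(1,V_k;\vec{X})$ and $K(2^k,V_k;\vec{X})$, while the rest is just finite-dimensionality of $\ker_{X_0+X_1}A$ together with (\ref{T5.3}).
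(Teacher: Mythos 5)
Your proof is correct and follows essentially the same route as the paper: bound $\|(V_k)_{X_0}^0\|_{X_0}$ by $K(1,V_k;\vec{X})$ via the norm equivalence on the finite-dimensional kernel, transport to $K(2^k,V_k;\vec{X})$ using the one-sided index condition, bound that by $J(2^k,y_k;\vec{Y})$ via (\ref{T5.3}), and conclude summability plus membership in the closed finite-dimensional range of $\func{Pr}_{X_0}$. The only cosmetic difference is that the paper sums $2^{-k\theta}K(2^k,V_k;\vec{X})\cdot 2^{k\varepsilon}$ via H\"older against $\|y\|_{\theta,q}$, whereas you bound the first factor by its supremum; both yield absolute convergence from $\sum 2^{k\varepsilon}<\infty$.
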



\begin{proof} Since $\theta >\beta _{0}(\ker _{X_{0}+X_{1}}A)$, there
exists $\varepsilon >0$ such that for all $V\in \ker _{X_{0}+X_{1}}A$ we
have inequality
\begin{equation*}
\frac{K(s,V;\vec{X})}{K(t,V;\vec{X})}\geq \gamma \left( \frac{s}{t}\right)
^{\theta -\varepsilon },0<s<t\leq 1.
\end{equation*}%
Combining it with (\ref{K7}), (\ref{T5.3}) and H\"{o}lder's inequality we
obtain
\begin{align*}
& \sum_{k\leq 0}\big \|\left( V_{k}\right) _{X_{0}}^{0}\big \|_{X_{0}}\leq
\gamma \sum_{k\leq 0}K(2^{0},V_{k};\vec{X})\leq \gamma \sum_{k\leq
0}K(2^{k},V_{k};\vec{X})\left( \frac{2^{0}}{2^{k}}\right) ^{\theta
-\varepsilon } \\
& =\gamma \sum_{k\leq 0}2^{-k\theta }K(2^{k},V_{k};\vec{X})\cdot
2^{k\varepsilon }\leq \gamma \bigg (\sum_{k<0}\left( 2^{-k\theta
}K(2^{k},V_{k};\vec{X})\right) ^{q}\big )^{1/q}\leq \gamma \left\Vert
y\right\Vert _{\theta ,q}{\text{.}}
\end{align*}
Since element $\left( V_{k}\right) _{X_{0}}^{0}$ belongs to a~finite
dimensional subspace $\func{Pr}{}_{X_{0}}(\ker _{X_{0}+X_{1}}A)$
of the space $X_{0}$ therefore the sum $\sum_{k\leq 0}\left(
V_{k}\right) _{X_{0}}^{0}$ converges in $X_{0}$ to some element in
$\func{Pr}{}_{X_{0}}(\ker _{X_{0}+X_{1}}A)$. Similarly $\theta
<\alpha _{\infty }(\ker _{X_{0}+X_{1}}A)$ yields that there exists
$\varepsilon >0$ such that
\begin{equation*}
\frac{K(s,V;\vec{X})}{K(t,V;\vec{X})}\leq \gamma \left( \frac{s}{t}\right)
^{\theta +\varepsilon },\quad \,1\leq s<t.
\end{equation*}%
Hence
\begin{align*}
\sum_{k>0}& \Vert \left( V_{k}\right) _{X_{1}}^{0}\Vert _{X_{1}}\leq \gamma
\sum_{k>0}K(2^{0},V_{k};\vec{X})\leq \gamma \sum_{k>0}K(2^{k},V_{k};\vec{X}%
)\left( \frac{2^{0}}{2^{k}}\right) ^{\theta +\varepsilon } \\
& \leq \gamma \sum_{k>0}2^{-k\theta }K(2^{k},V_{k};\vec{X})\cdot
2^{-k\varepsilon }\leq \gamma \bigg (\sum_{k>0}\big (2^{-k\theta
}K(2^{k},V_{k};\vec{X})\big )^{q}\bigg )^{1/q}\leq \gamma \left\Vert
y\right\Vert _{\theta ,q}{\text{.}}
\end{align*}%
Since element $\left( V_{k}\right) _{X_{1}}^{0}$ belongs to
a~finite dimensional subspace $\func{Pr}{}_{X_{1}}(\ker
_{X_{0}+X_{1}}A)$ of the space $X_{1}$, the sum
$\sum_{k>0}\left( V_{k}\right) _{X_{1}}^{0}$ converges in $%
X_{1}$ to some element in $\func{Pr}{}_{X_{1}}(\ker _{X_{0}+X_{1}}A)$.
\end{proof}

Hence the series
\begin{equation*}
\sum_{k\leq 0}x_{0}^{k}+\sum_{k>0}x_{1}^{k}-\sum_{k\leq 0}\left(
V_{k}\right)_{X_{0}}^{0}+\sum_{k>0}\left( V_{k}\right)
_{X_{1}}^{0}
\end{equation*}%
converges absolutely in $X_{0}+X_{1}$ to some element $\tilde{x}$. In fact $%
\tilde{x}\in $ $(X_{0},X_{1})_{\theta ,q}$. If we prove this then from
Proposition \ref{Proposition4} we have
\begin{equation*}
x=\sum_{k\leq 0}x_{0}^{k}+\sum_{k>0}x_{1}^{k}=\tilde{x}-u+v,
\end{equation*}%
where $u\in \func{Pr}_{X_{1}}(\ker _{X_{0}+X_{1}}A),v\in
\func{Pr}_{X_{1}}(\ker _{X_{0}+X_{1}}A)$ and therefore
\begin{align*}
y& =Ax=A\tilde{x}-Au+Av\in (X_{0},X_{1})_{\theta
,q}+A(\func{Pr}_{X_{0}}(\ker
_{X_{0}+X_{1}}A))+A(\func{Pr}_{X_{1}}(\ker _{X_{0}+X_{1}}A)
\\
& =(X_{0},X_{1})_{\theta ,q}+A(\func{Pr}_{X_{0}}(\ker
_{X_{0}+X_{1}}A)
\end{align*}%
and we are done. The proof of the fact that $\tilde{x}\in $ $%
(X_{0},X_{1})_{\theta ,q}$ is based on two lemmas.

Before we state these lemmas we observe that
\begin{equation*}
\sum _{k\leq 0}\big \|(V_{k})_{X_{0}}^{k}\big \|_{X_0}+\sum _{k>0}\big \|%
(V_{k})_{X_{1}}^{k}\big \|_{X_1}\leq \gamma \bigg (\sum _{k\leq
0}J(2^{k},y_{k};\vec {Y})+\sum _{k>0}\frac{1}{2^{k}}J(2^{k},y_{k};\vec {Y})%
\bigg ).
\end{equation*}
This implies that
\begin{equation*}
\sum _{k\leq 0}x_{0}^{k}+\sum _{k>0}x_{1}^{k}-\sum _{k\leq 0}\left
(V_{k}\right )_{X_{0}}^{k}+\sum _{k>0}\left (V_{k}\right )_{X_{1}}^{k}
\end{equation*}
converges absolutely in $X_{0}+X_{1}$.

\begin{lemma}
\label{Lemma6} The element $\bar {x}=\sum _{k\leq 0}x_{0}^{k}+\sum
_{k>0}x_{1}^{k}-\sum _{k\leq 0}\left (V_{k}\right
)_{X_{0}}^{k}+\sum _{k>0}\left (V_{k}\right )_{X_{1}}^{k}$ belongs
to $(X_{0},X_{1})_{\theta ,q} $ and
\begin{equation*}
\left \Vert \bar {x}\right \Vert _{(X_{0},X_{1})_{\theta ,q}}\leq \gamma
\left \Vert y\right \Vert _{\theta ,q}
\end{equation*}
with $\gamma >0$ independent of $y\in (Y_{0},Y_{1})_{\theta ,q}$.
\end{lemma}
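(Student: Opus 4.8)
The plan is to regroup the four series defining $\bar{x}$ into a single $J$-representation whose terms lie in $X_0\cap X_1$ and are dominated by $J(2^k,y_k;\vec{Y})$; the bound on $\|\bar{x}\|_{(X_0,X_1)_{\theta,q}}$ then follows at once from the $J$-method description of the real interpolation space, with no further use of the index conditions on $\alpha_\infty,\beta_0$.

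First I would note that, since $V_k=x_0^k-x_1^k=(V_k)_{X_0}^k+(V_k)_{X_1}^k$, for every $k\in\mathbb{Z}$ one has the identity
\begin{equation*}
x_0^k-(V_k)_{X_0}^k=x_1^k+(V_k)_{X_1}^k=:w_k,
\end{equation*}
so in particular $w_k\in X_0\cap X_1$. Since the series defining $\bar{x}$ converges absolutely in $X_0+X_1$, the terms carrying index $k$ may be collected, giving $\bar{x}=\sum_{k\in\mathbb{Z}}w_k$ with absolute convergence in $X_0+X_1$ (the $k\le 0$ part of the original series contributes $x_0^k-(V_k)_{X_0}^k$ and the $k>0$ part contributes $x_1^k+(V_k)_{X_1}^k$, both equal to $w_k$).

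Next I would estimate $J(2^k,w_k;\vec{X})=\max\{\|w_k\|_{X_0},\,2^k\|w_k\|_{X_1}\}$. Bounding $w_k$ in $X_0$ via $w_k=x_0^k-(V_k)_{X_0}^k$ and in $X_1$ via $w_k=x_1^k+(V_k)_{X_1}^k$, and combining $\|x_0^k\|_{X_0}+2^k\|x_1^k\|_{X_1}\le\gamma J(2^k,y_k;\vec{Y})$ with $(\ref{K7})$ and the inequality $K(2^k,V_k;\vec{X})\le\gamma J(2^k,y_k;\vec{Y})$ from $(\ref{T5.3})$, one obtains
\begin{equation*}
J(2^k,w_k;\vec{X})\le\gamma\,J(2^k,y_k;\vec{Y}),\qquad k\in\mathbb{Z}.
\end{equation*}

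Finally, since $\bar{x}=\sum_k w_k$ with $w_k\in X_0\cap X_1$, the $J$-method description of $(X_0,X_1)_{\theta,q}$ gives
\begin{equation*}
\|\bar{x}\|_{(X_0,X_1)_{\theta,q}}\le\Big(\sum_{k\in\mathbb{Z}}\big(2^{-k\theta}J(2^k,w_k;\vec{X})\big)^q\Big)^{1/q}\le\gamma\Big(\sum_{k\in\mathbb{Z}}\big(2^{-k\theta}J(2^k,y_k;\vec{Y})\big)^q\Big)^{1/q}\le 2\gamma\,\|y\|_{\theta,q},
\end{equation*}
which is the assertion (for $q=\infty$ the $\ell^q$-sums are replaced by suprema throughout). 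There is essentially no obstacle here: the one point needing a little care is the regrouping step — checking that each $w_k$ genuinely lies in $X_0\cap X_1$ and that $\sum_k w_k$ reconstructs $\bar{x}$ — after which the computation is a routine $J$-functional estimate.
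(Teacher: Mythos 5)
Your proof is correct, and it is in essence a cleaner reformulation of the paper's own argument rather than a fundamentally different one. The key observation in both cases is the identity $w_k:=x_0^k-(V_k)_{X_0}^k=x_1^k+(V_k)_{X_1}^k\in X_0\cap X_1$. The paper uses it implicitly: it shifts the index in all four sums from $0$ to an arbitrary $n$, splits $\bar x$ at $2^n$ into an $X_0$-part $\sum_{k\le n}w_k$ and an $X_1$-part $\sum_{k>n}w_k$, estimates $K(2^n,\bar x;\vec X)$ by hand, recognizes the discrete Calder\'on operator $S_d$, and then invokes its boundedness on $\ell_q(\{2^{-n\theta}\})$. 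You make the same decomposition explicit as a $J$-representation $\bar x=\sum_k w_k$ with $J(2^k,w_k;\vec X)\lesssim J(2^k,y_k;\vec Y)$, and then invoke the $J$-method description of $(X_0,X_1)_{\theta,q}$ directly. These are two sides of the same coin: the paper's $K$-functional computation plus Calder\'on-operator boundedness is precisely what one carries out when proving the $J$-to-$K$ half of the equivalence theorem for this particular series, so you are simply quoting that standard fact instead of reproving it inline. Your version is shorter and arguably more transparent; the paper's version is more self-contained in that it displays exactly which maximal-operator estimate is being used, which matches the style of the surrounding proofs in the section (the sufficiency arguments for ${\mathbb F}_{\theta,q}^1$ and ${\mathbb F}_{\theta,q}^2$ all lean on $S_d$). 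The one point you flag as requiring care --- absolute convergence of $\sum_k w_k$ in $X_0+X_1$, so that the regrouping is legitimate and the $J$-representation is genuine --- is indeed the right thing to check, and the paper establishes it in the paragraph immediately preceding the lemma.
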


\begin{proof} From the definition of the elements $\left( V_{k}\right)
_{X_{0}}^{k},\left( V_{k}\right) _{X_{1}}^{k}$ we have for any $n\in {%
\mathbb{Z}}$
\begin{equation*}
\bar{x}=\sum_{k\leq n}x_{0}^{k}+\sum_{k>n}x_{1}^{k}-\sum_{k\leq n}\left(
V_{k}\right) _{X_{0}}^{k}+\sum_{k>n}\left( V_{k}\right) _{X_{1}}^{k}{\text{.}%
}
\end{equation*}%
We have the following estimates
\begin{align*}
K(2^{n},\bar{x};\vec{X})& \leq \sum_{k\leq n}\big \|x_{0}^{k}\big \|%
_{X_{0}}+2^{n}\sum_{k>n}\big \|x_{1}^{k}\Vert _{X_{1}}+\sum_{k\leq n}\big \|%
\left( V_{k}\right) _{X_{0}}^{k}\Vert _{X_{0}}+2^{n}\sum_{k>n}\Vert \left(
V_{k}\right) _{X_{1}}^{k}\Vert _{X_{1}} \\
& \leq \gamma \bigg (\sum_{k\leq n}J(2^{k},y_{k};\vec{Y})+\sum_{k>n}\frac{%
2^{n}}{2^{k}}J(2^{k},y_{k};\vec{Y})\bigg )=\gamma S_{d}\big (\{J(2^{k},y_{k};%
\vec{Y})\}\big )(2^{n}),
\end{align*}%
where $S_{d}$ is the discrete analog of the Calder\'{o}n operator. Since $%
S_{d}$ is a bounded operator in the sequence space $\ell
_{q}(\left\{ 2^{-n\theta }\right\} )$ for all $\theta \in (0,1)$
and $1\leq q\leq \infty $, $\bar{x}\in (X_{0},X_{1})_{\theta ,q}$
and
\begin{equation*}
\left\Vert \bar{x}\right\Vert _{(X_{0},X_{1})_{\theta ,q}}\leq \gamma
\left\Vert y\right\Vert _{\theta ,q}{\text{.}}
\end{equation*}%
\end{proof}


It follows from Lemma \ref{Lemma6} that in order to prove that the element
\begin{equation*}
\tilde{x}=\sum_{k\leq 0}x_{0}^{k}+\sum_{k>0}x_{1}^{k}-\sum_{k\leq 0}\left(
V_{k}\right) _{X_{0}}^{0}+\sum_{k>0}\left( V_{k}\right) _{X_{1}}^{0}
\end{equation*}%
is in $(X_{0},X_{1})_{\theta, q}$, it is enough to show that
\begin{align}
w& :=\sum_{k\leq 0}\left( V_{k}\right) _{X_{0}}^{k}-\sum_{k>0}\left(
V_{k}\right) _{X_{1}}^{k}-\sum_{k\leq 0}\left( V_{k}\right)
_{X_{0}}^{0}+\sum_{k>0}\left( V_{k}\right) _{X_{1}}^{0}  \label{T5.4} \\
& =\sum_{k<0}\big (\left( V_{k}\right) _{X_{0}}^{k}-\left( V_{k}\right)
_{X_{0}}^{0}\big )-\sum_{k>0}\big (\left( V_{k}\right) _{X_{1}}^{k}-\left(
V_{k}\right) _{X_{1}}^{0}\big )  \notag
\end{align}%
belongs to $(X_{0},X_{1})_{\theta ,q}$. To show this, we consider
elements
\begin{equation}
(u_{k})_{j}=\left( V_{k}\right) _{X_{0}}^{j+1}-\left( V_{k}\right)
_{X_{0}}^{j}=-\big (\left( V_{k}\right) _{X_{1}}^{j+1}-\left( V_{k}\right)
_{X_{1}}^{j}\big )\in X_{0}\cap X_{1}.  \label{NM11}
\end{equation}%
We observe that
\begin{equation*}
J(2^{j},(u_{k})_{j};\vec{X})\leq \gamma K(2^{j},V_{k};\vec{X}){\text{.}}
\end{equation*}%
Furthermore, if $k<0$ then we have
\begin{equation*}
\left( V_{k}\right) _{X_{0}}^{0}-\left( V_{k}\right)
_{X_{0}}^{k}=\sum_{j:k\leq j<0}(u_{k})_{j}.
\end{equation*}%
and for each $k>0$,
\begin{equation*}
\left( V_{k}\right) _{X_{1}}^{k}-\left( V_{k}\right)
_{X_{1}}^{0}=-\sum_{j:0\leq j<k}(u_{k})_{j}{\text{.}}
\end{equation*}%
So formally for the element $w$ defined in (\ref{T5.4}) we have
\begin{equation}
w=-\sum_{k<0}\sum_{j:k\leq j<0}(u_{k})_{j}+\sum_{k>0}\sum_{j:0\leq
j<k}(u_{k})_{j}{\text{.}}  \label{K20}
\end{equation}%
To work with this series we need the following proposition.


\begin{proposition}
\label{Proposition6} The series%
\begin{equation*}
\sum_{k<0}\sum_{j:k\leq j<0}(u_{k})_{j}, \quad\,\,\,
\sum_{k>0}\sum_{j:0\leq j<k}(u_{k})_{j}\text{\ }
\end{equation*}%
converge absolutely in $X_{0}$ and $X_{1}$, respectively.
\end{proposition}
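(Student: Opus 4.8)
The statement is purely about absolute convergence of series of norms, so there is no genuinely hard step; the only point that requires care is a joint choice of a small exponent $\varepsilon$. I treat the first series (convergence in $X_{0}$) in detail, the second (convergence in $X_{1}$) being entirely symmetric. The plan is: bound $\|(u_k)_j\|_{X_0}$ by $K(2^j,V_k;\vec X)$; use the dilation estimate coming from $\beta_0(\ker_{X_0+X_1}A)<\theta$ to compare $K(2^j,V_k;\vec X)$ with $K(2^k,V_k;\vec X)$; sum the resulting geometric series over $j$ for each fixed $k$; and finally sum over $k$ by H\"older's inequality against the $\ell^q$-sequence $\{2^{-k\theta}J(2^k,y_k;\vec Y)\}$, whose norm is controlled by $\|y\|_{\theta,q}$.

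Concretely, recall from the discussion preceding the statement that $J(2^j,(u_k)_j;\vec X)\le\gamma K(2^j,V_k;\vec X)$, whence $\|(u_k)_j\|_{X_0}\le\gamma K(2^j,V_k;\vec X)$ and $\|(u_k)_j\|_{X_1}\le\gamma\,2^{-j}K(2^j,V_k;\vec X)$; recall also $K(2^k,V_k;\vec X)\le\gamma J(2^k,y_k;\vec Y)$ from $(\ref{T5.3})$ and the normalization $\big(\sum_{k\in\mathbb{Z}}(2^{-k\theta}J(2^k,y_k;\vec Y))^q\big)^{1/q}\le 2\|y\|_{\theta,q}$. Fix $\varepsilon>0$ small enough that $\theta-\varepsilon>0$, $\theta+\varepsilon<1$, and the two dilation estimates established in the proof of Proposition \ref{Proposition4} hold with exponents $\theta-\varepsilon$ and $\theta+\varepsilon$; such $\varepsilon$ exists since $0<\theta<1$ and $\beta_0(\ker_{X_0+X_1}A)<\theta<\alpha_\infty(\ker_{X_0+X_1}A)$. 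For $k<0$ and $k\le j<0$ we have $2^k\le 2^j\le 1$, so the $\beta_0$-estimate applied to $V_k\in\ker_{X_0+X_1}A$ gives $K(2^j,V_k;\vec X)\le\gamma\,2^{(j-k)(\theta-\varepsilon)}K(2^k,V_k;\vec X)$ (the case $j=k$ being trivial). Since $\theta-\varepsilon>0$, $\sum_{j:k\le j<0}2^{(j-k)(\theta-\varepsilon)}\le\gamma\,2^{-k(\theta-\varepsilon)}$, so
\[
\sum_{k<0}\sum_{j:k\le j<0}\|(u_k)_j\|_{X_0}\le\gamma\sum_{k<0}2^{-k(\theta-\varepsilon)}K(2^k,V_k;\vec X)\le\gamma\sum_{k<0}2^{k\varepsilon}\big(2^{-k\theta}J(2^k,y_k;\vec Y)\big).
\]
By H\"older's inequality this is at most $\gamma\big(\sum_{k<0}2^{k\varepsilon q'}\big)^{1/q'}\|y\|_{\theta,q}\le\gamma\|y\|_{\theta,q}<\infty$ (for $q=1$ read $\sup_{k<0}2^{k\varepsilon}$ for the first factor), which gives absolute convergence of $\sum_{k<0}\sum_{j:k\le j<0}(u_k)_j$ in $X_0$.

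For the second series, let $k>0$ and $0\le j<k$, so $1\le 2^j<2^k$. The $\alpha_\infty$-estimate gives $K(2^j,V_k;\vec X)\le\gamma\,2^{(j-k)(\theta+\varepsilon)}K(2^k,V_k;\vec X)$, hence $\|(u_k)_j\|_{X_1}\le\gamma\,2^{-j}2^{(j-k)(\theta+\varepsilon)}K(2^k,V_k;\vec X)$. Since $\theta+\varepsilon-1<0$, for fixed $k$ we get $\sum_{j:0\le j<k}2^{-j}2^{(j-k)(\theta+\varepsilon)}=2^{-k(\theta+\varepsilon)}\sum_{j=0}^{k-1}2^{j(\theta+\varepsilon-1)}\le\gamma\,2^{-k(\theta+\varepsilon)}$, and then, invoking $(\ref{T5.3})$ and H\"older exactly as above,
\[
\sum_{k>0}\sum_{j:0\le j<k}\|(u_k)_j\|_{X_1}\le\gamma\sum_{k>0}2^{-k(\theta+\varepsilon)}J(2^k,y_k;\vec Y)=\gamma\sum_{k>0}2^{-k\varepsilon}\big(2^{-k\theta}J(2^k,y_k;\vec Y)\big)\le\gamma\|y\|_{\theta,q}<\infty,
\]
so $\sum_{k>0}\sum_{j:0\le j<k}(u_k)_j$ converges absolutely in $X_1$. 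The only delicate point in the whole argument is the selection of $\varepsilon$: it must simultaneously activate both endpoint dilation estimates and keep $\theta-\varepsilon>0$ and $\theta+\varepsilon<1$, so that each inner sum over $j$ collapses to a single geometric factor ($2^{-k(\theta-\varepsilon)}$, resp.\ $2^{-k(\theta+\varepsilon)}$) that is precisely the weight paired, via H\"older, with the $\ell^q$-summable sequence $\{2^{-k\theta}J(2^k,y_k;\vec Y)\}$. Everything else is routine, and the case $q=\infty$ follows by obvious modifications.
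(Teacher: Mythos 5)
Your proof is correct and follows essentially the same route as the paper: bound $\|(u_k)_j\|_{X_0}$ (resp.\ $\|(u_k)_j\|_{X_1}$) by $\gamma K(2^j,V_k;\vec X)$ (resp.\ $\gamma 2^{-j}K(2^j,V_k;\vec X)$), apply the $\beta_0$- and $\alpha_\infty$-dilation estimates to reduce to $K(2^k,V_k;\vec X)$, collapse the inner geometric sum over $j$, feed in $(\ref{T5.3})$, and finish with H\"older against the $\ell^q$-sequence $\{2^{-k\theta}J(2^k,y_k;\vec Y)\}$. You spell out the geometric-series step and the choice of $\varepsilon$ a bit more explicitly than the paper does, but there is no substantive difference.
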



\begin{proof} We start with the series $\sum_{k<0}\sum_{j:k\leq
j<0}(u_{k})_{j}$. Our hypothesis $\theta >\beta _{0}(\ker _{X_{0}+X_{1}}A)$
implies that there exists $\varepsilon >0$ such that for all $V\in \ker
_{X_{0}+X_{1}}A$,
\begin{equation*}
\frac{K(s,V;\vec{X})}{K(t,V;\vec{X})}\geq \gamma \left( \frac{s}{t}\right)
^{\theta -\varepsilon },\quad \,0<s<t\leq 1.
\end{equation*}%
Combining with estimate (\ref{T5.3}) and H\"older's inequality we
obtain
\begin{align*}
\sum_{k<0}\sum_{j:k\leq j<0}\left\Vert (u_{k})_{j}\right\Vert _{X_{0}}& \leq
\gamma \sum_{k<0}\sum_{j:k\leq j<0}K(2^{j},V_{k};\vec{X})\leq \gamma
\sum_{k<0}\sum_{j:k\leq j<0}K(2^{k},V_{k};\vec{X})\left( \frac{2^{j}}{2^{k}}%
\right) ^{\theta -\varepsilon } \\
& \leq \gamma \sum_{k<0}\sum_{j:k\leq j<0}J(2^{k},y_{k};\vec{Y})\left( \frac{%
2^{j}}{2^{k}}\right) ^{\theta -\varepsilon }\leq \gamma
\sum_{k<0}2^{k\varepsilon }2^{-\theta k}J(2^{k},y_{k};\vec{Y}) \\
& \leq \gamma \left( \sum_{k<0}\left( 2^{-\theta k}J(2^{k},y_{k};\vec{Y}%
\right) ^{q}\right) ^{1/q}\leq \gamma \left\Vert y\right\Vert _{\theta ,q}.
\end{align*}%
Similarly, it follows by $\theta <\alpha _{\infty }(\ker _{X_{0}+X_{1}}A)$,
that there exists $\varepsilon >0$ such that for all $V\in \ker
_{X_{0}+X_{1}}A$,
\begin{equation*}
\frac{K(s,V;\vec{X})}{K(t,V;\vec{X})}\leq \gamma \left( \frac{s}{t}\right)
^{\theta +\varepsilon },\quad \,1\leq s<t.
\end{equation*}%
Thus applying (\ref{T5.3}) and the H\"{o}lder inequality yields%
\begin{align*}
\sum_{k>0}& \sum_{j:0\leq j<k}\left\Vert (u_{k})_{j}\right\Vert _{X_{1}}\leq
\gamma \sum_{k>0}\sum_{j:0\leq j<k}\frac{1}{2^{j}}K(2^{j},V_{k};\vec{X}) \\
& \leq \gamma \sum_{k>0}\sum_{j:0\leq j<k}\frac{1}{2^{j}}K(2^{k},V_{k};\vec{X%
})\left( \frac{2^{j}}{2^{k}}\right) ^{\theta +\varepsilon }\leq \gamma
\sum_{k>0}\sum_{j:0\leq j<k}\frac{1}{2^{j}}J(2^{k},y_{k};\vec{Y})\left(
\frac{2^{j}}{2^{k}}\right) ^{\theta +\varepsilon } \\
& \leq \gamma \sum_{k>0}\frac{1}{2^{k\varepsilon }}2^{-\theta
k}J(2^{k},y_{k};\vec{Y})\leq \bigg (\sum_{k>0}\Big (2^{-\theta
k}J(2^{k},y_{k};\vec{Y}\Big )^{q}\bigg )^{1/q}\leq \gamma \left\Vert
y\right\Vert _{\theta ,q}
\end{align*}%
and this completes the proof.
\end{proof}

We have shown that the $w$ defined by formula (\ref{T5.4}) can be
written in the form (\ref{K20}). Now we are ready to prove that
$w\in (X_{0},X_{1})_{\theta ,q}$.


\begin{lemma}
The element $w=-\sum_{k<0}\sum_{j:k\leq j<0}(u_{k})_{j}+\sum_{k>0}\sum_{j:0\leq
j<k}(u_{k})_{j}$ belongs to $\left( X_{0},X_{1}\right) _{\theta ,q}$ and
\begin{equation*}
\left\Vert w\right\Vert _{(X_{0},X_{1})_{\theta ,q}}\leq \gamma \left\Vert
y\right\Vert _{\theta ,q}.
\end{equation*}
\end{lemma}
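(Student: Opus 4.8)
The plan is to realize $w$ as a $J$-representation $w=\sum_{n\in\mathbb Z}w_n$ with each $w_n\in X_0\cap X_1$, and then to bound $\|w\|_{(X_0,X_1)_{\theta,q}}$ by estimating the sequence $\big(2^{-n\theta}J(2^n,w_n;\vec X)\big)_{n\in\mathbb Z}$ in $\ell^q(\mathbb Z)$ directly in terms of $\|y\|_{\theta,q}$; this is the same mechanism already used for the class ${\mathbb F}_{\theta,q}^{2}$ above and in Lemma \ref{Lemma6}, now applied to the ``off-diagonal'' part $w$ of $x$.

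\emph{Regrouping.} First I would interchange the order of summation in the two double series of (\ref{K20}); this is legitimate because, by Proposition \ref{Proposition6}, they converge absolutely in $X_0$ and in $X_1$, respectively. Collecting for each $n\in\mathbb Z$ all the terms $(u_k)_j$ with inner index $j=n$ and inspecting the ranges of $k$ gives $w=\sum_{n\in\mathbb Z}w_n$, where $w_n=-\sum_{k\le n}(u_k)_n$ for $n<0$ and $w_n=\sum_{k>n}(u_k)_n$ for $n\ge 0$. Recalling from (\ref{NM11}) that $(u_k)_j\in X_0\cap X_1$ with $J(2^j,(u_k)_j;\vec X)\le\gamma K(2^j,V_k;\vec X)$, and using the geometric bounds from the next step, each of these series converges absolutely in both $X_0$ and $X_1$; hence $w_n\in X_0\cap X_1$ and $w=\sum_n w_n$ is an admissible $J$-representation.

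\emph{Pointwise $J$-estimate.} Next I would fix the decomposition $y=\sum_k y_k$ with $\big(\sum_k(2^{-k\theta}J(2^k,y_k;\vec Y))^q\big)^{1/q}\le 2\|y\|_{\theta,q}$ and write $b_k:=2^{-k\theta}J(2^k,y_k;\vec Y)$. For $n<0$ and $k\le n$ one has $0<2^k\le 2^n\le 1$, so the hypothesis $\beta_0(\ker_{X_0+X_1}A)<\theta$ yields $\gamma,\varepsilon>0$ with $K(2^n,V_k;\vec X)\le\gamma\,2^{(n-k)(\theta-\varepsilon)}K(2^k,V_k;\vec X)$; combined with (\ref{T5.3}) this gives $2^{-n\theta}J(2^n,w_n;\vec X)\le\gamma\sum_{k\le n}2^{-(n-k)\varepsilon}b_k$. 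Symmetrically, for $n\ge 0$ and $k>n$ one has $1\le 2^n<2^k$, so $\theta<\alpha_\infty(\ker_{X_0+X_1}A)$ yields $\gamma,\varepsilon>0$ with $K(2^n,V_k;\vec X)\le\gamma\,2^{-(k-n)(\theta+\varepsilon)}K(2^k,V_k;\vec X)$, whence $2^{-n\theta}J(2^n,w_n;\vec X)\le\gamma\sum_{k>n}2^{-(k-n)\varepsilon}b_k$.

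\emph{Conclusion.} Finally, in each regime the right-hand side is a discrete convolution of $(b_k)_k\in\ell^q(\mathbb Z)$ with a one-sided geometric kernel of finite $\ell^1$-norm, so Young's inequality for sequences (exactly as in the Minkowski-inequality steps used earlier) gives $\big(\sum_{n\in\mathbb Z}(2^{-n\theta}J(2^n,w_n;\vec X))^q\big)^{1/q}\le\gamma\big(\sum_k b_k^q\big)^{1/q}\le\gamma\|y\|_{\theta,q}$, and therefore $w\in(X_0,X_1)_{\theta,q}$ with $\|w\|_{(X_0,X_1)_{\theta,q}}\le\gamma\|y\|_{\theta,q}$. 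The only genuinely delicate point is the regrouping: one must be careful that interchanging the two summations and splitting at $n=0$ really produces a $J$-representation with $w_n\in X_0\cap X_1$. Once that bookkeeping is settled, everything else is the by-now-familiar pairing of the index inequalities (\ref{K5}) with a geometric convolution bound.
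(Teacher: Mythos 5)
Your proposal is correct and follows essentially the same route as the paper: you interchange the order of summation to produce exactly the same $J$-representation $w=\sum_{n}w_n$ with $w_n=-\sum_{k\le n}(u_k)_n$ for $n<0$ and $w_n=\sum_{k>n}(u_k)_n$ for $n\ge 0$, then apply the same two index inequalities coming from $\beta_0<\theta<\alpha_\infty$ together with $J(2^j,(u_k)_j;\vec X)\le\gamma K(2^j,V_k;\vec X)$ and $(\ref{T5.3})$, and finally close with a one-sided geometric convolution bound in $\ell^q$ (the paper phrases this as the generalized Minkowski inequality, you as Young's inequality, but they are the same estimate here). The justification of the regrouping via absolute convergence (your "delicate point") is exactly what Proposition~\ref{Proposition6} provides in the paper, and you cite it correctly.
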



\begin{proof} Since the series $-\sum_{k<0}\sum_{j:k\leq j<0}(u_{k})_{j}$ $%
+\sum_{k>0}\sum_{j:0\leq j<k}(u_{k})_{j}$ converges absolutely in $%
X_{0}+X_{1}$, $w$ can be represented as follows
\begin{equation*}
w=-\sum_{j<0}\sum_{k:k\leq j}(u_{k})_{j}+\sum_{j\geq
0}\sum_{k:k>j}(u_{k})_{j}{\text{.}}
\end{equation*}%
As in the proof of Proposition \ref{Proposition6}, we have for each $j<0$,
\begin{align*}
J\Big (2^{j},\sum_{k:k\leq j}(u_{k})_{j};\vec{X}\Big )& \leq \sum_{k:k\leq
j}\left\Vert (u_{k})_{j}\right\Vert _{X_{0}}+2^{j}\sum_{k:k\leq j}\left\Vert
(u_{k})_{j}\right\Vert _{X_{1}} \\
& \leq \gamma \sum_{k:k\leq j}K(2^{j},V_{k};\vec{X})\leq \gamma
\sum_{k:k\leq j}K(2^{k},V_{k};\vec{X})\left( \frac{2^{j}}{2^{k}}\right)
^{\theta -\varepsilon },
\end{align*}%
and for $j\geq 0$,
\begin{align*}
J\Big (2^{j},\sum_{k:k>j}(u_{k})_{j};\vec{X}\Big )& \leq
\sum_{k:k>j}\left\Vert (u_{k})_{j}\right\Vert
_{X_{0}}+2^{j}\sum_{k:k>j}\left\Vert (u_{k})_{j}\right\Vert _{X_{1}} \\
& \leq \gamma \sum_{k:k>j}K(2^{j},V_{k};\vec{X})\leq \gamma
\sum_{k:k>j}K(2^{k},V_{k};\vec{X})\left( \frac{2^{j}}{2^{k}}\right) ^{\theta
+\varepsilon }.
\end{align*}%
In consequence,
\begin{align}
\left\Vert w\right\Vert _{(X_{0},X_{1})_{\theta ,q}}& \leq \bigg (\sum_{j<0}%
\Big (2^{-\theta j}J(2^{j},\sum_{k:k\leq j}(u_{k})_{j};\vec{X}\Big )^{q}%
\bigg )^{1/q}  \label{NM12} \\
& +\bigg (\sum_{j\geq 0}\Big (2^{-\theta j}J(2^{j},\sum_{k:k>j}(u_{k})_{j};%
\vec{X})\Big )^{q}\bigg )^{1/q}.  \label{NM13}
\end{align}%
The first term on the right can be estimated by using the inequalities shown
above and the generalized Minkowski inequality,
\begin{align*}
\bigg (\sum_{j<0}& \Big (2^{-\theta j}J\Big (2^{j},\sum_{k:k\leq
j}(u_{k})_{j};\vec{X}\Big )\Big )^{q}\bigg )^{1/q}\leq \gamma \bigg (%
\sum_{j<0}\Big (2^{-\theta j}\sum_{k:k\leq j}K(2^{k},V_{k};\vec{X})\Big (%
\frac{2^{j}}{2^{k}}\Big )^{\theta -\varepsilon }\Big )^{q}\bigg )^{1/q} \\
& =\gamma \bigg (\sum_{j<0}\Big (\sum_{k:k\leq j}2^{-k\theta }K(2^{k},V_{k};%
\vec{X})\Big (\frac{2^{j}}{2^{k}}\Big )^{-\varepsilon }\Big )^{q}\bigg )%
^{1/q}\leq \gamma \bigg (\sum_{k<0}\Big (2^{-k\theta }K(2^{k},V_{k};\vec{X})%
\Big )^{q}\bigg )^{1/q} \\
& \leq \gamma \bigg (\sum_{k<0}\Big (2^{-k\theta }J(2^{k},y_{k};\vec{Y})\Big
)^{q}\bigg )^{1/q}\leq \gamma \left\Vert y\right\Vert _{\theta ,q}{\text{.}}
\end{align*}%
The second term can be estimated similarly:
\begin{align*}
\bigg (\sum_{j\geq 0}& \Big (2^{-\theta j}J\Big (2^{j},%
\sum_{k:k>j}(u_{k})_{j};\vec{X}\Big )\Big )^{q}\bigg )^{1/q}\leq \gamma %
\bigg (\sum_{j\geq 0}\Big (2^{-\theta j}\sum_{k:k>j}K(2^{k},V_{k};\vec{X})%
\Big (\frac{2^{j}}{2^{k}}\Big )^{\theta +\varepsilon }\Big )^{q}\bigg )^{1/q}
\\
& =\gamma \bigg (\sum_{j\geq 0}\Big (\sum_{k:k>j}2^{-k\theta }K(2^{k},V_{k};%
\vec{X})\Big (\frac{2^{j}}{2^{k}}\Big )^{\varepsilon }\Big )^{q}\bigg )%
^{1/q}\leq \gamma \bigg (\sum_{k>0}\Big (2^{-k\theta }K(2^{k},V_{k};\vec{X})%
\Big )^{q}\bigg )^{1/q} \\
& \leq \gamma \bigg (\sum_{k>0}\Big (2^{-k\theta }J(2^{k},y_{k};\vec{Y})\Big
)^{q}\bigg )^{1/q}\leq \gamma \left\Vert y\right\Vert _{\theta ,q}
\end{align*}%
and this completes the proof of lemma.
\end{proof}

To conclude we note that we have proved that any element $y\in
(Y_{0},Y_{1})_{\theta ,q}$ can be obtained as an image of an
element $x$ which is equal to
\begin{equation*}
x=\tilde{x}+u+v,
\end{equation*}%
where $\tilde{x}\in \left( X_{0},X_{1}\right) _{\theta ,q}$
satisfies estimates
\begin{equation*}
\left\Vert \tilde{x}\right\Vert _{(X_{0},X_{1})_{\theta ,q}}\leq \gamma
\left\Vert y\right\Vert _{\theta ,q}{\text{, }}
\end{equation*}%
with constant $\gamma >0$ independent of $y$, and elements $u\in \func{Pr}%
_{X_{0}}(\ker _{X_{0}+X_{1}}A),v\in \func{Pr}_{X_{1}}(\ker
_{X_{0}+X_{1}}A)$. Thus
\begin{equation*}
y=Ax=A(\tilde{x})+A(u+v)\in A((X_{0},X_{1})_{\theta ,q})+M,
\end{equation*}%
where $M=A(\func{Pr}_{X_{0}}(\ker _{X_{0}+X_{1}}A))=A(\func{Pr}_{X_{1}}(\ker
_{X_{0}+X_{1}}A))$. We have also proved that $M$ does not intersect with
$A((X_{0},X_{1})_{\theta ,q})$, i.e.,
\begin{equation*}
\left( Y_{0},Y_{1}\right) _{\theta ,q}=A((X_{0},X_{1})_{\theta ,q})\oplus M.
\end{equation*}%
and ${\text{dim}}\,M={\text{ker}}_{X_{0}+X_{1}}\,A$. Since $M$ is finite
dimensional, $A((X_{0},X_{1})_{\theta ,q})$ is a closed subspace of $\left(
Y_{0},Y_{1}\right) _{\theta ,q}$. This completes the proof of the theorem.


\subsection{Sufficient condition for the class ${\mathbb{F}}_{\protect%
\theta ,q}^{3}$}

The proof of the next theorem is based on Lemma $2$ in \cite{AK1}.


\begin{theorem}
Let $A\colon (X_{0},X_{1})\rightarrow (Y_{0},Y_{1})$ be
a~surjective and Fredholm operator on endpoint spaces. Suppose
that $\ker _{X_{0}+X_{1}}A=V_{\theta ,q}^{0}+V_{\theta ,q}^{1}$
and
\begin{equation*}
\beta (V_{\theta ,q}^{1})<\theta <\alpha \left( V_{\theta ,q}^{0}\right) {%
\text{.}}
\end{equation*}%
\textit{\ } Then the operator $A\colon (X_{0},X_{1})_{\theta ,q}\rightarrow
(Y_{0},Y_{1})_{\theta ,q}$ is invertible, i.e., $A\in {\mathbb{F}}_{\theta
,q}^{3}$.
\end{theorem}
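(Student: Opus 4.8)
The plan is to pass to the quotient map modulo $\ker A$, to settle injectivity by a direct estimate of the $K$-functional, and to invoke Lemma~2 of \cite{AK1} for the harder surjectivity statement. Since $A$ is surjective, Proposition~\ref{PrN2} factors it as $A=BQ$, where $Q\colon(X_{0},X_{1})\to(Q(X_{0}),Q(X_{1}))$ is the quotient map modulo $U:=\ker_{X_{0}+X_{1}}A=V_{\theta,q}^{0}+V_{\theta,q}^{1}$ and $B$ is an isomorphism of the couple $(Q(X_{0}),Q(X_{1}))$ onto $(Y_{0},Y_{1})$. As an isomorphism of couples acts invertibly on every interpolation space, it suffices to prove that $Q\colon(X_{0},X_{1})_{\theta,q}\to(Q(X_{0}),Q(X_{1}))_{\theta,q}$ is invertible, and by Proposition~\ref{KF} we may use throughout that
\begin{equation*}
K(t,Qx;Q(X_{0}),Q(X_{1}))=\inf_{v\in U}K(t,x+v;\vec{X}),\qquad x\in X_{0}+X_{1},\ t>0.
\end{equation*}

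For injectivity note that $\ker_{X_{0}+X_{1}}Q\cap(X_{0},X_{1})_{\theta,q}=V_{\theta,q}^{0}\cap V_{\theta,q}^{1}$; let $x$ lie in this intersection. Since $x\in V_{\theta,q}^{0}$ and $\theta<\alpha(V_{\theta,q}^{0})$, there are $\gamma,\varepsilon>0$ with $K(1,x;\vec{X})\le\gamma\,t^{-(\theta+\varepsilon)}K(t,x;\vec{X})$ for all $t\ge1$; plugging this into the integral $\int_{1}^{\infty}(t^{-\theta}K(t,x;\vec{X}))^{q}\,dt/t$, which is finite because $x\in V_{\theta,q}^{1}$, forces $K(1,x;\vec{X})=0$, i.e.\ $x=0$ (the case $q=\infty$ is identical, and the symmetric argument using $\beta(V_{\theta,q}^{1})<\theta$ over $(0,1]$ works as well). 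In particular $V_{\theta,q}^{0}\cap V_{\theta,q}^{1}=\{0\}$, so $U=V_{\theta,q}^{0}\oplus V_{\theta,q}^{1}$ algebraically.

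For surjectivity fix $w\in(Q(X_{0}),Q(X_{1}))_{\theta,q}$ and a representative $x$ with $Qx=w$; by the displayed identity the function $t\mapsto\inf_{v\in U}K(t,x+v;\vec{X})$ lies in $L^{q}(t^{-\theta q}\,dt/t)$, and what we need is a \emph{single} $v\in U$ with $t\mapsto K(t,x+v;\vec{X})$ in the same space, so that $x+v\in(X_{0},X_{1})_{\theta,q}$ and $Q(x+v)=w$. This is where Lemma~2 of \cite{AK1} enters: the index separation $\beta(V_{\theta,q}^{1})<\theta<\alpha(V_{\theta,q}^{0})$ together with $U=V_{\theta,q}^{0}+V_{\theta,q}^{1}$ lets one assemble $v$ scale by scale — choosing near-optimal correctors at the dyadic levels $t=2^{k}$, splitting the successive differences along $V_{\theta,q}^{0}\oplus V_{\theta,q}^{1}$ with $K$-control at the scale $2^{k}$, and summing; the $V_{\theta,q}^{0}$-components are effective below scale $2^{k}$ (because $\theta<\alpha(V_{\theta,q}^{0})$ makes their $K$-functionals $\lesssim s^{\theta+\varepsilon}$ there) and the $V_{\theta,q}^{1}$-components above it (because $\beta(V_{\theta,q}^{1})<\theta$ makes their $K$-functionals $\lesssim t^{\theta-\varepsilon}$ there), after which the bookkeeping is closed off by the boundedness of the discrete Calder\'on operator $S_{d}$ on $\ell^{q}(\{2^{-n\theta}\})$ and the generalized Minkowski inequality, exactly as in the proofs for $\mathbb{F}_{\theta,q}^{1}$ and $\mathbb{F}_{\theta,q}^{2}$ above. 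Hence $Q$ is onto; combined with injectivity it is a continuous bijection between Banach spaces, so an isomorphism by the open mapping theorem, and therefore $A=BQ\colon(X_{0},X_{1})_{\theta,q}\to(Y_{0},Y_{1})_{\theta,q}$ is invertible, i.e.\ $A\in\mathbb{F}_{\theta,q}^{3}$.

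The crux — and the step I expect to be the main obstacle — is the passage from ``the $U$-distance function $\inf_{v\in U}K(t,x+v;\vec{X})$ lies in $L^{q}(t^{-\theta q}\,dt/t)$'' to ``one corrector $v\in U$ does the job''. Inside Lemma~2 of \cite{AK1} the delicate ingredient is the $K$-bounded splitting of an element of $U=V_{\theta,q}^{0}+V_{\theta,q}^{1}$ into its two components at a prescribed scale: a priori this sum need not be topologically direct, and it is precisely the gap $\beta(V_{\theta,q}^{1})<\theta<\alpha(V_{\theta,q}^{0})$ that renders the splitting quantitatively controlled and lets the two families of correctors act at complementary ranges of scales.
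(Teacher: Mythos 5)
Your proposal follows essentially the same path as the paper's proof: the decisive step in both is Lemma~2 of \cite{AK1}, for which the paper explicitly records --- and you would also need, since $Q$ inherits only surjectivity from $A$ on the endpoint spaces --- that the lemma's proof works assuming surjectivity rather than invertibility on endpoints. Your injectivity argument via the explicit integral estimate is a valid substitute for the paper's one-line observation that $\alpha(x)\le\beta(x)$ for $x\neq 0$, and the detour through $Q$ and Proposition~\ref{PrN2} is a harmless reformulation that does not change the substance.
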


\begin{proof} First observe that for any element $0\neq x\in \ker
_{X_{0}+X_{1}}A$, $\alpha (x)\leq \beta (x)$ and so $V_{\theta
,q}^{0}\cap V_{\theta ,q}^{1}=\left\{ 0\right\} $. Thus $\ker
_{X_{0}+X_{1}}A\cap (X_{0},X_{1})_{\theta ,q}=\left\{ 0\right\} $,
i.e., the operator $A\colon (X_{0},X_{1})_{\theta ,q}\rightarrow
(Y_{0},Y_{1})_{\theta ,q}$ is injective. Since $\ker
_{X_{0}+X_{1}}A=V_{\theta ,q}^{0}\oplus V_{\theta ,q}^{1}$, it
follows from \cite[Lemma 2]{AK1} (note that in the proof of this
Lemma it is enough to use surjectivity on endpoint spaces instead
of invertibility), that the operator $A$ is an isomorphism from
$(X_{0},X_{1})_{\theta ,q}$ onto $(Y_{0},Y_{1})_{\theta ,q}$.
\end{proof}


\section{Proof of Theorem \protect\ref{TN4} (necessity)}

In order to prove Theorem \ref{TN4} we need some preliminary
results.

\subsection{Preliminary results}

In the proof below we use notation and some results from the paper
\cite{AK1}. For a~given $x\in \ker _{X_{0}+X_{1}}A$, we define the
space of all real sequences $\ell _{\theta ,q}(x)$ equipped with
the norm
\begin{equation}
\Big \|\sum_{i\in {\mathbb{Z}}}\eta _{i}e_{i}\Big \|_{\ell _{\theta ,q}(x)}=%
\bigg (\sum_{i\in {\mathbb{Z}}}\Big (|\eta _{i}|\,\frac{%
K(2^{i},x;X_{0},X_{1})}{2^{\theta i}}\Big )^{q}\bigg )^{1/q},  \label{K9}
\end{equation}%
with the standard modification for $q=\infty$. Here $\{e_{i}\}_{i\in {%
\mathbb{Z}}}$ is the standard unit basis sequence in $c_{0}$.~We denote by $%
S$ the operator of right shift in the sequence space and by $I$ the identity
operator, in particular we will often use the operator
\begin{equation*}
(S-I)\Big (\sum_{i\in {\mathbb{Z}}}\eta _{i}e_{i}\Big )=\sum_{i\in {\mathbb{Z%
}}}(\eta _{i-1}-\eta _{i})e_{n}{\text{.}}
\end{equation*}%
The kernel of the operator $S-I$ on the space $\ell _{\theta
,q}(x)$ is spaned by the element
\begin{equation*}
f_{\ast }=\sum_{i\in {\mathbb{Z}}}e_{i}
\end{equation*}%
if this element belongs to the space $\ell _{\theta ,q}(x)$, if this element
does not belong to the space $\ell _{\theta ,q}(x)$ then the operator $S-I$
is injective. In \cite{AK1} it was shown that if the operator $S-I$ is
invertible on the space $\ell _{\theta ,q}(x)$, then its inverse coincides
with one of the following two operators $T_{0}$, $T_{1}$, which are defined
by the formulas
\begin{equation}
T_{0}\Big (\sum_{i\in {\mathbb{Z}}}\eta _{i}e_{i}\Big )=\sum_{k\in {\mathbb{Z%
}}}\Big (\sum_{i>k}\eta _{i}\Big )e_{k}  \label{K10}
\end{equation}%
and
\begin{equation}
T_{1}\Big (\sum_{i\in {\mathbb{Z}}}\eta _{i}e_{i}\Big )=-\sum_{k\in {\mathbb{%
Z}}}\Big (\sum_{i\leq k}\eta _{i}\Big )e_{k}.  \label{K11}
\end{equation}

In what follows we need some more notation. Throughout the paper we
denote by $U$ the space of all finite sequences $\sum_{i\in {\mathbb{Z}}%
}\eta _{i}e_{i}$ and
\begin{equation}
U^{0}=\Big\{u=\sum_{n\in {\mathbb{Z}}}\eta _{n}e_{n}\in U;\, \mathit{\ }%
\sum_{n\in {\mathbb{Z}}}\eta _{n} = 0 \Big\} \text{.} \label{NM18}
\end{equation}
Clearly we have that
\begin{equation*}
U=U^{0}+\mathrm{span}\left\{ e_{0}\right\} {\text{.}}
\end{equation*}

\begin{remark}
\label{Remark3} Note that from formulas
$(\ref{K10})$-$(\ref{K11})$ it follows that $T_{0}=T_{1}$ on
$U^{0}$. Moreover operator $S-I$ maps $U$ onto $U^{0}$ and has on
$U^{0}$ inverse equal to $T_{0}=T_{1}$.
\end{remark}

We finish this section with a~lemma that characterizes some
properties of the operator $A$ on the space $(X_{0},X_{1})_{\theta
,q}$ in terms of the operator $S-I$ on the space $\ell _{\theta
,q}(x)$.


\begin{lemma}
\label{Lemma1} Let $A\colon (X_0,X_1)\to (Y_0,Y_1)$ be an operator between
Banach couples.

\begin{itemize}
\item[{\rm{a)}}] The operator $A\colon (X_{0},X_{1})_{\theta
,q}\rightarrow (Y_{0},Y_{1})_{\theta ,q}$ is injective if and only
if the
operator $S-I$ is injective on $\ell _{\theta ,q}(x)$ for all $x\in \mathrm{%
ker}_{X_{0}+X_{1}}A\setminus \left\{ 0\right\} $.

\item[{\rm{b)}}] Suppose that the operator $A$ is injective on $%
(X_{0},X_{1})_{\theta ,q}$ and $x\in \mathrm{ker}_{X_{0}+X_{1}}A\setminus
\left\{ 0\right\} $. Then the operator $S-I$ is invertible on the space $%
\ell _{\theta ,q}(x)$ if and only if $\theta \notin \left[\alpha
(x),\beta (x)\right]$.
\end{itemize}
\end{lemma}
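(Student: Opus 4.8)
The plan is to translate statements about $A$ on $(X_0,X_1)_{\theta,q}$ into statements about the shift $S-I$ on the model sequence spaces $\ell_{\theta,q}(x)$, using the standard $J$-representation of the real method. For part a), first recall that $\mathrm{ker}_{(X_0,X_1)_{\theta,q}}A = \mathrm{ker}_{X_0+X_1}A \cap (X_0,X_1)_{\theta,q}$. The key observation is this: given $0\neq x\in\mathrm{ker}_{X_0+X_1}A$, pick any decompositions $x = x_0^{(i)}+x_1^{(i)}$ with $\|x_0^{(i)}\|_{X_0}+2^i\|x_1^{(i)}\|_{X_1}\le 2K(2^i,x;\vec X)$. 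For a finite sequence $u=\sum_i\eta_i e_i$ one associates the element $\Phi(u)=\sum_i\eta_i\big(x_1^{(i)}-x_0^{(i-1)}\big)$, or more transparently, one checks that the telescoping element built from $(S-I)u$ lies in $(X_0,X_1)_{\theta,q}$ with norm controlled by $\|u\|_{\ell_{\theta,q}(x)}$, while $A$ annihilates the $x$-pieces. Concretely: a non-trivial element of $\mathrm{ker}_{X_0+X_1}A\cap(X_0,X_1)_{\theta,q}$ corresponds exactly to a non-zero $u$ in (the completion of) $U$ inside $\ell_{\theta,q}(x)$ with $(S-I)u=0$ — because the only way to write $x$ as a $J$-convergent series $\sum_i u_i$ with $Au_i$ summing suitably is via a sequence $\{\eta_i\}$ that telescopes, and the constraint $\sum$ of coefficients being "balanced" is precisely $(S-I)\eta = $ the coefficient sequence of a genuine $J$-decomposition. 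So $S-I$ injective on $\ell_{\theta,q}(x)$ for every such $x$ $\iff$ no such $x$ lies in $(X_0,X_1)_{\theta,q}$ $\iff$ $A$ injective on $(X_0,X_1)_{\theta,q}$. I would set up the two implications as contrapositives: if $S-I$ has a non-trivial kernel element $f_*=\sum_i e_i\in\ell_{\theta,q}(x)$, then $\sum_i(x_0^{(i)}-x_0^{(i-1)})$-type telescoping exhibits $x$ (or a multiple) in $(X_0,X_1)_{\theta,q}$; conversely if $x\in(X_0,X_1)_{\theta,q}$ then $f_*\in\ell_{\theta,q}(x)$ by comparing $K(2^i,x;\vec X)$ with the $\ell^q(2^{-i\theta})$ norm, and $f_*\in\mathrm{ker}(S-I)$.

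For part b), assume $A$ is injective on $(X_0,X_1)_{\theta,q}$, so by part a) $S-I$ is already injective on $\ell_{\theta,q}(x)$ for the given $x$; it remains to characterize surjectivity. Recall from Remark \ref{Remark3} that $S-I$ maps $U$ onto $U^0$ with inverse $T_0=T_1$ there, so the question is whether this inverse extends boundedly from $U^0$ (dense in the relevant subspace) to all of $\ell_{\theta,q}(x)$, i.e. whether $T_0$ or $T_1$ is bounded on $\ell_{\theta,q}(x)$. Now $T_0(\sum\eta_ie_i)(k)=\sum_{i>k}\eta_i$ is bounded on $\ell^q$ with weight $w_i = K(2^i,x;\vec X)2^{-i\theta}$ precisely when the "tail-sum" Hardy-type operator is bounded, and a direct weighted-Hardy computation shows this holds iff $w_k/w_i \le \gamma (2^k/2^i)^{\delta}$ for some $\delta>0$ and all $k\le i$ in the relevant range — which, unwinding the definitions of $\alpha$ and $\beta$ from \eqref{L7}-\eqref{L7_second}, is exactly $\theta < \alpha_\infty(x)$ together with $\theta > \beta_\infty(x)$, no wait — one must be careful with ranges. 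Boundedness of $T_0$ corresponds to a decay condition of $K(2^i,x;\vec X)2^{-i\theta}$ as $i\to+\infty$ and a growth condition as $i\to-\infty$; matching these against the index definitions gives that $T_0$ is bounded iff $\theta>\beta(x)$ (using $\beta$ over all $0<s\le t$), while $T_1$ is bounded iff $\theta<\alpha(x)$. Since $S-I$ is invertible iff at least one of $T_0,T_1$ is bounded (they are the only candidates for the inverse, by the cited result from \cite{AK1}), invertibility holds iff $\theta>\beta(x)$ or $\theta<\alpha(x)$, i.e. $\theta\notin[\alpha(x),\beta(x)]$ — using also that $\alpha(x)\le\beta(x)$ always, so the complement of the interval is $\{\theta:\theta<\alpha(x)\}\cup\{\theta:\theta>\beta(x)\}$.

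The main obstacle I expect is the bookkeeping in the equivalence between "$(S-I)$ invertible on $\ell_{\theta,q}(x)$" and the boundedness of the explicit operators $T_0,T_1$, together with pinning down exactly which weighted-Hardy inequality corresponds to $\alpha(x)$ versus $\beta(x)$ and which to the global indices versus $\alpha_0,\alpha_\infty,\beta_0,\beta_\infty$; the index defined in \eqref{L7} is a supremum/infimum over $0<s\le t$ with no restriction, so $\alpha(x)$ and $\beta(x)$ control the global power-type behavior of $s\mapsto s^{-\theta}K(s,x;\vec X)$, and one needs the clean fact (essentially the equivalence theorem plus a Hardy inequality, available from \cite{BK}) that a positive function $\varphi(s)$ satisfies $\varphi(s)\le\gamma(s/t)^{\theta_0}\varphi(t)$ for $s\le t$ for some $\theta_0<\theta$ exactly when the "$\int_s^\infty$" Hardy average is bounded in $L^q(s^{-\theta q}\,ds/s)$. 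The discrete version of this — boundedness of the one-sided summation operators $T_0,T_1$ on $\ell^q$ with the weight sequence $\{K(2^i,x;\vec X)2^{-i\theta}\}$ — is then routine but must be stated carefully. Once that correspondence is in hand, part b) follows by combining it with Remark \ref{Remark3} and the dichotomy from \cite{AK1} that the inverse of $S-I$, when it exists, equals $T_0$ or $T_1$.
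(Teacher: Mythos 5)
Part a) of your proposal is essentially the paper's argument, though with an unnecessary detour: the paper just observes directly that the only candidate kernel element of $S-I$ is $f_*=\sum_{i}e_i$, and that $\|f_*\|_{\ell_{\theta,q}(x)}=\big(\sum_i (2^{-\theta i}K(2^i,x;\vec X))^q\big)^{1/q}$, so $f_*\in\ell_{\theta,q}(x)$ iff $x\in (X_0,X_1)_{\theta,q}$. No telescoping or auxiliary map $\Phi$ is needed; the whole of a) is a one-line comparison of norms.

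For part b) your outline is sound but has two issues. First, a concrete labeling error that you flag with ``no wait'' but do not resolve: unwinding the formulas $(\ref{K10})$--$(\ref{K11})$ and the index definitions $(\ref{L7})$--$(\ref{L7_second})$, boundedness of $T_0$ corresponds to $\theta<\alpha(x)$ and boundedness of $T_1$ to $\theta>\beta(x)$ --- the opposite of what you wrote. (To see this, apply $T_0$ to a single $e_k$: $T_0 e_k=\sum_{j<k}e_j$, so $\|T_0e_k\|\le C\|e_k\|$ forces $2^{-\theta i}K(2^i,x;\vec X)\lesssim 2^{-\theta k}K(2^k,x;\vec X)$ for $i<k$, a decrease condition as $s\to 0$, which is the $\alpha$-side.) The final characterization $\theta\notin[\alpha(x),\beta(x)]$ still comes out right by symmetry, but the internal assignment must be corrected.

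Second, your route through a direct weighted Hardy inequality on $\ell^q(\{2^{-\theta i}K(2^i,x;\vec X)\})$ is genuinely different from the paper's. The paper avoids the full Hardy characterization: for necessity it invokes the stability result of \cite{AK1} (Lemma 5 there) to conclude that if $T_0$ inverts $S-I$ on $\ell_{\theta,q}(x)$ then it also inverts it on $\ell_{\theta+\varepsilon,q}(x)$, and then it merely tests $T_0$ on unit vectors in the perturbed space to extract the strict inequality $\theta<\alpha(x)$; for sufficiency it first proves boundedness of $T_0$ at the $\ell_{\tilde\theta,\infty}$-level and then interpolates, invoking Lemma 4 of \cite{AK1} together with injectivity from part a) to upgrade ``$T_0$ bounded'' to ``$T_0$ is the inverse.'' Your direct approach would work and is arguably more self-contained, but the step you flag as the ``main obstacle'' --- showing that $T_0$ bounded on $\ell_{\theta,q}(x)$ actually implies a strict power gain $K(s,x;\vec X)/K(t,x;\vec X)\le\gamma(s/t)^{\theta+\varepsilon}$ rather than just the exponent $\theta$ --- requires extracting geometric decay from the summability condition $\sum_{k<j}w_k^q\le Cw_j^q$, and is exactly the part you did not carry out. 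You should also be explicit that ``$S-I$ invertible iff $T_0$ or $T_1$ bounded'' needs both the dichotomy (Lemma 3 of \cite{AK1}) and the upgrade from boundedness to two-sided invertibility via injectivity (Lemma 4 of \cite{AK1} together with part a)).
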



\begin{proof} a) If the operator $S-I$ is not injective on $\ell _{\theta
,q}(x)$ for some $x\in \ker _{X_{0}+X_{1}}A\setminus \left\{ 0\right\} $,
then $f_{\ast }=\sum_{i\in {\mathbb{Z}}}e_{i}\in \ell _{\theta ,q}(x)$,
i.e.,
\begin{equation*}
\Vert f_{\ast }\Vert _{\ell _{\theta ,q}(x)}=\bigg (\sum_{i\in {\mathbb{Z}}}%
\Big (\frac{K(2^{i},x;\vec{X})}{2^{\theta i}}\Big )^{q}\bigg )^{1/q}<\infty .
\end{equation*}%
Thus the element $x$ belongs to $(X_{0},X_{1})_{\theta ,q}$. Since
$x\in \ker _{X_{0}+X_{1}}A\setminus \left\{ 0\right\} $, the
operator $A$ is not injective on $(X_{0},X_{1})_{\theta ,q}$.
Moreover, if the operator $A$ is not injective on
$(X_{0},X_{1})_{\theta ,q}$ then there exists a~non-zero $x\in
\ker _{X_{0}+X_{1}}A\cap (X_{0},X_{1})_{\theta ,q}$. It means that
$\left\Vert \sum_{i\in {\mathbb{Z}}}e_{i}\right\Vert_{\ell_{\theta
,q}(x)}<\infty $ and so the operator $S-I$ is not injective on
$\ell _{\theta ,q}(x)$.

b) Suppose that the operator $S-I$ is invertible on $\ell _{\theta ,q}(x)$
for the element $x\in \ker _{X_{0}+X_{1}}A\setminus \left\{ 0\right\} $. We
need to prove that $\theta \notin \left[ \alpha (x),\beta (x)\right] $.
First note that from Lemma 3 in \cite{AK1}, we have that the inverse to the
operator $S-I$ is the operator $T_{0}$ or the operator $T_{1}$.~If its
inverse is equal to $T_{0}$ then from Lemma 5 in \cite{AK1}, it follows that
the operator $T_{0}$ is also inverse to $S-I$ on the space $\ell _{\theta
\pm \varepsilon ,q}(x)$ for $\varepsilon >0$ small enough. Thus
\begin{equation*}
\left\Vert T_{0}e_{k}\right\Vert _{\ell _{\theta +\varepsilon ,q}(x)}\leq
\left\Vert T_{0}\right\Vert _{\ell _{\theta +\varepsilon ,q}(x)\rightarrow
l_{\theta +\varepsilon ,q}(x)}\left\Vert e_{k}\right\Vert _{\ell _{\theta
+\varepsilon ,q}(x)}
\end{equation*}%
and so for $i<k$ we get (see formula (\ref{K10}))
\begin{equation*}
\frac{K(2^{i},x;\vec{X})}{2^{(\theta +\varepsilon )i}}\leq \left\Vert
T_{0}e_{k}\right\Vert _{\ell _{\theta +\varepsilon ,q}(x)}\leq \left\Vert
T_{0}\right\Vert _{\ell _{\theta +\varepsilon ,q}(x)\rightarrow \ell
_{\theta +\varepsilon ,q}(x)}\frac{K(2^{k},x;\vec{X})}{2^{(\theta
+\varepsilon )k}}.
\end{equation*}%
This gives
\begin{equation*}
\frac{K(s,x;\vec{X})}{K(t,x;\vec{X})}\leq \gamma \left( \frac{s}{t}\right)
^{\theta +\varepsilon },\quad \,0<s<t
\end{equation*}%
and hence $\theta <\alpha (x)$. Similarly we show that if the
inverse $S-I$ is the operator $T_{1}$, then $\theta >\beta (x)$.

To prove the "if" part suppose that $\theta \notin \left[ \alpha
(x),\beta (x)\right] $. We need to show that the operator $S-I$ is
invertible on the space $\ell _{\theta ,q}(x)$. First consider the
case when $\theta <\alpha (x)$. Then for all $0<s<t$ and small
enough $\varepsilon >0$ we will have
\begin{equation*}
\frac{K(s,x;\vec{X})}{K(t,x;\vec{X})}\leq \gamma \left( \frac{s}{t}\right)
^{\theta +\varepsilon }{\text{.}}
\end{equation*}%
If $\sum_{i\in {\mathbb{Z}}}\eta _{i}e_{i}\in \ell _{\theta ,\infty }(x)$
then
\begin{equation*}
|\eta _{i}|\frac{K(2^{i},x;\vec{X})}{2^{\theta i}}\leq \Big \|\sum_{i\in {%
\mathbb{Z}}}\eta _{i}e_{i}\Big \|_{\ell _{\theta ,\infty }(x)},
\end{equation*}%
and whence the operator $T_{0}$ is bounded on $\ell _{\tilde{\theta},\infty
}(x)$ for $\tilde{\theta}<\theta +\varepsilon $. Indeed
\begin{align*}
\Big \|T_{0}\Big (\sum_{i\in {\mathbb{Z}}}\eta _{i}e_{i}\Big )\Big \|_{\ell
_{\tilde{\theta},\infty }(x)}& \leq \sup_{k\in {\mathbb{Z}}}\Big (%
\sum_{i>k}\left\vert \eta _{i}\right\vert \Big )\frac{K(2^{k},x;\vec{X})}{2^{%
\tilde{\theta}k}} \\
& \leq \sup_{k\in {\mathbb{Z}}}\Big (\sum_{i>k}\frac{2^{\tilde{\theta}i}}{%
K(2^{i},x;\vec{X})}\Big )\frac{K(2^{k},x;\vec{X})}{2^{\tilde{\theta}k}}\cdot %
\Big \|\sum_{i\in {\mathbb{Z}}}\eta _{i}e_{i}\Big \|_{\ell _{\tilde{\theta}%
,\infty }(x)} \\
& \leq \gamma \sup_{k\in {\mathbb{Z}}}\,\sum_{i>k}\frac{2^{(\theta
+\varepsilon -\tilde{\theta})k}}{2^{(\theta +\varepsilon -\tilde{\theta})i}}%
\cdot \Big \Vert\sum_{i\in {\mathbb{Z}}}\eta _{i}e_{i}\Big \|_{\ell _{\tilde{%
\theta},\infty }(x)}\leq \gamma \Big \|\sum_{i\in {\mathbb{Z}}}\eta _{i}e_{i}%
\Big \|_{\ell _{\tilde{\theta},\infty }(x)}.
\end{align*}%
Since
\begin{equation*}
\ell _{\theta ,q}(x)=\big (\ell _{\theta -\frac{\varepsilon }{2},\infty
}(x),\ell _{\theta +\frac{\varepsilon }{2},\infty }(x)\big )_{1/2,q},
\end{equation*}%
it follows by interpolation that $T_{0}$ is bounded on $\ell
_{\theta ,q}(x)$. Since injectivity of $A$ on
$(X_{0},X_{1})_{\theta ,q}$ implies injectivity of $S-I$ on $\ell
_{\theta ,q}(x)$ (which was shown in a)), it
follows from Lemma 4 in \cite{AK1} that $T_{0}$ is inverse to the operator $%
S-I$ on $\ell _{\theta ,q}(x)$. Similarly, from injectivity of $A$ on $%
(X_{0},X_{1})_{\theta ,q}$ and $\theta >\beta (x)$ we obtain that
the operator $T_{1}$ is inverse to $S-I$ on $\ell _{\theta
,q}(x)$.
\end{proof}

\bigskip

\subsection{Necessity condition for the class ${\mathbb{F}}_{\protect\theta %
,q}^{1}$}

In this subsection we prove the following result.

\begin{theorem}
\label{TTT1}Suppose that \ operator $A\colon
(X_{0},X_{1})\rightarrow
(Y_{0},Y_{1})$ is invertible on endpoint spaces and belongs to the class ${%
\mathbb{F}}_{\theta ,q}^{1}$, i.e., $\ker _{X_{0}+X_{1}}A$
$\subset (X_{0},X_{1})_{\theta ,q}$ and is finite dimensional, and
$A((X_{0},X_{1})_{\theta ,q})=(Y_{0},Y_{1})_{\theta ,q}$. Then
\begin{equation*}
\beta _{\infty }(\ker _{X_{0}+X_{1}}A)<\theta
<\alpha_{0}(\ker_{X_{0}+X_{1}}A).
\end{equation*}
\end{theorem}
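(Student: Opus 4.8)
The plan is to reduce to a single inequality and then to convert the hypothesis into a purely quantitative statement about the $K$‑functionals of kernel elements. Passing from the couple $(X_0,X_1)$ to $(X_1,X_0)$ replaces $\theta$ by $1-\theta$, leaves $(X_0,X_1)_{\theta,q}=(X_1,X_0)_{1-\theta,q}$ and all the hypotheses of the theorem unchanged, and turns $\alpha_0(\Omega)$ computed in $(X_1,X_0)$ into $1-\beta_\infty(\Omega)$ computed in $(X_0,X_1)$; hence it suffices to prove $\theta<\alpha_0(K)$, where $K:=\ker_{X_0+X_1}A$. If $K=\{0\}$ there is nothing to prove since $K(t,0;\vec X)\equiv0$, so assume $K\ne\{0\}$; invertibility of $A$ on the endpoints gives $K\cap X_0=K\cap X_1=\{0\}$.

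Next I would reformulate the hypothesis. By Proposition~\ref{PrN2}, $A=BQ$, where $Q\colon X_0+X_1\to(X_0+X_1)/K$ is the quotient map onto the quotient couple $(Z_0,Z_1)=(Q(X_0),Q(X_1))$ and $B\colon(Z_0,Z_1)\to(Y_0,Y_1)$ is an isomorphism of couples; so $B$ is an isomorphism of $(Z_0,Z_1)_{\theta,q}$ onto $(Y_0,Y_1)_{\theta,q}$, and $A((X_0,X_1)_{\theta,q})=(Y_0,Y_1)_{\theta,q}$ is equivalent to $Q((X_0,X_1)_{\theta,q})=(Z_0,Z_1)_{\theta,q}$. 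Since $\dim K<\infty$ and $K\subset(X_0,X_1)_{\theta,q}$, $Q$ is a bounded surjection of $(X_0,X_1)_{\theta,q}$ onto $(Z_0,Z_1)_{\theta,q}$ with kernel $K$, so the open mapping theorem and Proposition~\ref{KF} give a constant $C$ with
\[
\inf_{v\in K}\|x+v\|_{(X_0,X_1)_{\theta,q}}\le C\Big(\int_0^\infty\big(t^{-\theta}\inf_{v\in K}K(t,x+v;\vec X)\big)^q\,\tfrac{dt}{t}\Big)^{1/q}\qquad(x\in X_0+X_1),
\]
with the usual change for $q=\infty$. This "interchange of $\inf_{v\in K}$ with the interpolation integral, up to the $x$‑independent constant $C$" is the only way the hypothesis enters the argument.

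Then I would extract the index inequality. Fix $0\ne v\in K$ and $n\le0$, decompose $v=v_0^{(n)}+v_1^{(n)}$ with $\|v_0^{(n)}\|_{X_0}+2^{n}\|v_1^{(n)}\|_{X_1}\le2K(2^{n},v;\vec X)$, and apply the displayed estimate to $x=v_0^{(n)}\in X_0$. The coset $v_0^{(n)}+K$ meets $X_0$ (at $v_0^{(n)}$) and $X_1$ (at $-v_1^{(n)}$), so $\inf_{v'\in K}K(t,v_0^{(n)}+v';\vec X)\le2K(2^{n},v;\vec X)\min(1,2^{-n}t)$ and a direct computation of the integral gives $\|Qv_0^{(n)}\|_{(Z_0,Z_1)_{\theta,q}}\lesssim K(2^{n},v;\vec X)2^{-n\theta}$; hence some $w_n=v_0^{(n)}+v_n'$ with $v_n'\in K$ lies in $(X_0,X_1)_{\theta,q}$ with $\|w_n\|_{(X_0,X_1)_{\theta,q}}\lesssim K(2^{n},v;\vec X)2^{-n\theta}$. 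Using that $K$ is finite dimensional (so all norms on $K$ are equivalent) one controls $v_n'=w_n-v_0^{(n)}$ in $(X_0,X_1)_{\theta,q}$ by the same bound, hence $v_0^{(n)}=w_n-v_n'\in(X_0,X_1)_{\theta,q}$ with the same bound, and $v=v_0^{(n)}+v_1^{(n)}$ yields
\[
K(2^{m},v;\vec X)\le K(2^{m},v_0^{(n)};\vec X)+2^{m}\|v_1^{(n)}\|_{X_1}\lesssim 2^{(m-n)\theta}K(2^{n},v;\vec X)\qquad(m\le n\le0),
\]
uniformly in $v,m,n$, which already gives $\alpha_0(K)\ge\theta$.

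Upgrading this to the \emph{strict} inequality is the main obstacle. If $\alpha_0(K)=\theta$, one has to amplify the borderline behaviour by running the construction over infinitely many scales $2^{n_k}\to0$ simultaneously: one assembles a weighted sum $x=\sum_k c_k v_0^{(n_k)}$ of the above pieces for which the right–hand side of the interchange estimate is finite while its left–hand side is infinite, a contradiction. Carrying out this convergent construction, together with the bookkeeping of the corrections $v_n'\in K$, is precisely where the sequence–space techniques behind Lemma~\ref{Lemma1} — the weighted spaces $\ell_{\theta,q}(v)$, the one–sided inverses $T_0,T_1$ of $S-I$ from $(\ref{K10})$–$(\ref{K11})$, and boundedness of the discrete Calder\'on operator — do the real work, the $x$‑independence of $C$ being what keeps the resulting $\varepsilon>0$ and constants independent of $v$. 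Combining over $v\in K$ gives $\theta<\alpha_0(K)$, and the symmetry of the first step then also yields $\beta_\infty(K)<\theta$, completing the proof. The case $q=\infty$ needs only routine modifications, since the argument uses only upper bounds on the $K$‑functionals of elements of $K$.
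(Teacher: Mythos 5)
Your reduction to a single inequality by swapping $X_0$ and $X_1$ is correct, and the derivation of the non-strict inequality $\alpha_0(\ker_{X_0+X_1}A)\ge\theta$ is a clean and genuinely different route from the paper's: you work directly with the quotient map $Q$ and the interchange estimate from the open mapping theorem, pushing the optimal splitting $v=v_0^{(n)}+v_1^{(n)}$ through it, whereas the paper encodes the same information as boundedness of the one-sided inverse $T_0$ of $S-I$ on the subspace $\ell_{\theta,q}(x)_{(-\infty,0]}$, with constants uniform in $x$. Up to that point your argument is complete.

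The genuine gap is the passage from $\alpha_0(K)\ge\theta$ to $\theta<\alpha_0(K)$. You explicitly flag it as the main obstacle and then only sketch an "amplification over infinitely many scales" $x=\sum_k c_k v_0^{(n_k)}$ without carrying it out, deferring the bookkeeping of the corrections $v'_n\in K$ to unspecified "sequence-space techniques." This is precisely where the paper invokes a non-trivial external input — the invertibility-stability theorem of Kruglyak--Milman (Theorem~12 of \cite{KM}) applied to $S-I$ on the scale $\ell_{\tilde\theta,q}(x)$ — which converts the uniform-in-$x$ bound on $T_0$ at the single parameter $\theta$ into a uniform bound at some $\theta+\varepsilon$, $\varepsilon>0$, again independent of $x$; that $\varepsilon$ is exactly what turns the non-strict index inequality into a strict one. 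Your non-strict bound, by itself, does not self-improve: the constants in your interchange estimate are fixed, but nothing in the argument as written rules out borderline behaviour $K(2^m,v;\vec X)\asymp 2^{(m-n)\theta}K(2^n,v;\vec X)$ for a sequence $v_k\in K$, and the proposed contradiction would require showing that some finite weighted sum makes $\inf_{v'\in K}\|x+v'\|_{(X_0,X_1)_{\theta,q}}$ diverge while the $K$-functional integral on the right stays finite — a delicate uniformity claim over the finite-dimensional but non-compact set $K\setminus\{0\}$ that is not established. Without a concrete version of the stability step (either citing \cite{KM} or reproving its content for the family $\ell_{\theta,q}(x)$, $x\in K$), the proof does not reach the stated conclusion.
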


\begin{proof} We give below a proof for $\theta <\alpha _{0}(\ker
_{X_{0}+X_{1}}A)$ (the inequality $\beta _{\infty }(\ker
_{X_{0}+X_{1}}A)<\theta $ can be proved similarly by using
operator $T_{1}$ instead of the operator $T_{0}$). We first prove
that for every $x\in \ker _{X_{0}+X_{1}}A$ $\ $we have
\begin{equation}
\Big \|T_{0}\Big (\sum_{k\leq 0}\eta _{k}e_{k}\Big )\Big \|_{\ell
_{\theta ,q}(x)}\leq \gamma \Big \|\sum_{k\leq 0}\eta
_{k}e_{k}\Big \|_{\ell _{\theta ,q}(x)}  \label{K21}
\end{equation}%
with constant $\gamma >0$ independent of $x\in \ker _{X_{0}+X_{1}}A$. Let $%
x\in \ker A$ and consider decompositions
$x=x_{0}^{k}+x_{1}^{k{\text{ }}}$ such that
\begin{equation*}
\big \|x_{0}^{k}\big \|_{X_{0}}+2^{k}\big \|x_{1}^{k}\big
\|_{X_{1}}\leq \gamma K(2^{k},x;\vec{X}){\text{.}}
\end{equation*}%
Then
\begin{equation*}
y_{k}=Ax_{0}^{k}=-Ax_{1}^{k}\in Y_{0}\cap Y_{1},
\end{equation*}%
and from invertibility of $A$ on endpoint spaces we obtain%
\begin{equation}
J(2^{k},y_{k};Y_{0},Y_{1})\leq \gamma K(2^{k},x;\vec{X}){\text{.}}
\label{K100}
\end{equation}%
Let $\sum_{k\leq 0}\eta _{k}e_{k}$ be a finite sequence and
consider the element
\begin{equation*}
y=\sum_{k\leq 0}\eta _{k}y_{k}\in Y_{0}\cap Y_{1}{\text{.}}
\end{equation*}%
Note that the operator $A\colon (X_{0},X_{1})_{\theta ,q}\cap
X_{0}\rightarrow (Y_{0},Y_{1})_{\theta ,q}\cap Y_{0}$ is
invertible.~Indeed if $w\in (Y_{0},Y_{1})_{\theta ,q}\cap Y_{0}$,
then there exists $u\in
(X_{0},X_{1})_{\theta ,q}$ and $v\in X_{0}$ such that $Au=Av=w$. Thus $%
u-v\in \ker A\subset (X_{0},X_{1})_{\theta ,q}$ and so $v\in
(X_{0},X_{1})_{\theta ,q}$. Since $A\colon (X_{0},X_{1})_{\theta
,q}\cap X_{0}\rightarrow (Y_{0},Y_{1})_{\theta ,q}\cap Y_{0}$ is a
surjective operator and $A$ is injective on $X_{0}$ therefore
$A\colon (X_{0},X_{1})_{\theta ,q}\cap X_{0}\rightarrow
(Y_{0},Y_{1})_{\theta ,q}\cap Y_{0}$ is invertible. Combining with
formulas (see Propositions 2 and 4 in \cite{AK1})
\begin{equation*}
(X_{0},X_{1})_{\theta ,q}\cap X_{0}=(X_{0},X_{0}\cap
X_{1})_{\theta ,q}\quad \,{\text{and}}\quad
\,(Y_{0},Y_{1})_{\theta ,q}\cap Y_{0}=(Y_{0},Y_{0}\cap
Y_{1})_{\theta ,q},
\end{equation*}%
we conclude that $A\colon (X_{0},X_{0}\cap X_{1})_{\theta
,q}\rightarrow (Y_{0},Y_{0}\cap Y_{1})_{\theta ,q}$ is invertible.
Since $y=\sum_{k\leq 0}\eta _{k}y_{k}\in Y_{0}\cap Y_{1}\subset
(Y_{0},Y_{0}\cap Y_{1})_{\theta ,q}$, there exists $u\in
(X_{0},X_{0}\cap X_{1})_{\theta ,q}$ such that $Au=y$ and
\begin{equation*}
\left\Vert u\right\Vert _{(X_{0},X_{0}\cap X_{1})_{\theta ,q}}\leq
\gamma \left\Vert y\right\Vert _{(Y_{0},Y_{0}\cap Y_{1})_{\theta
,q}}
\end{equation*}%
and so we can find a~sequence $\{u_{k}\}$ in $X_{0}\cap X_{1}$ with $%
u=\sum_{k\in {\mathbb{Z}}}u_{k}$ (convergent in $X_{0}+X_{0}\cap
X_{1}=X_{0} $) and
\begin{align*}
\bigg (& \sum_{k\in {\mathbb{Z}}}\big (2^{-\theta
k}J(2^{k},u_{k};X_{0},X_{0}\cap X_{1})\big )^{q}\bigg )^{1/q}\leq
\gamma \left\Vert u\right\Vert _{(X_{0},X_{0}\cap X_{1})_{\theta
,q}}\leq \gamma
\left\Vert y\right\Vert _{(Y_{0},Y_{0}\cap Y_{1})_{\theta ,q}} \\
& \leq \bigg (\sum_{k\leq 0}\big (2^{-\theta k}J(2^{k},\eta
_{k}y_{k};Y_{0},Y_{0}\cap Y_{1})\big )^{q}\bigg )^{1/q}\leq \gamma \bigg (%
\sum_{k\leq 0}\big (|\eta _{k}|2^{-\theta k}J(2^{k},y_{k};\vec{Y})\big )^{q}%
\bigg )^{1/q} \\
& \leq \gamma \bigg (\sum_{k\leq 0}\big (|\eta _{k}|2^{-\theta k}K(2^{k},x;%
\vec{X})\big )^{q}\bigg )^{1/q}=\gamma \,\Big \|\sum_{k\leq 0}\eta _{k}e_{k}%
\Big \|_{\ell _{\theta ,q}(x)}.
\end{align*}%
Note that in the second line it is important that $k\leq 0$.

Since $y=\sum_{k\in \mathbb{Z}}Au_{k}=\sum_{k\leq 0}\eta _{k}y_{k}$ and $%
u_{k}\in X_{0}\cap X_{1}$
\begin{equation*}
\sum_{k\in \mathbb{Z}}(\eta _{k}y_{k}-Au_{k})=0.
\end{equation*}%
If we set
\begin{equation*}
z_{k}=\sum_{m>k}(\eta _{m}y_{m}-Au_{m})=-\sum_{m\leq k}(\eta
_{m}y_{m}-Au_{m}),
\end{equation*}%
then
\begin{align*}
\left\Vert z_{k}\right\Vert _{Y_{0}}& =\Big \|-\sum_{m\leq k}(\eta
_{m}y_{m}-Au_{m})\Big \|_{Y_{0}}\leq \sum_{m\leq k}\left\vert \eta
_{m}\right\vert \left\Vert y_{m}\right\Vert _{Y_{0}}+\sum_{m\leq
k}\left\Vert Au_{m}\right\Vert _{Y_{0}}\leq \gamma \sum_{m\leq
k}\left\vert
\eta _{m}\right\vert \left\Vert x_{0}^{m}\right\Vert _{X_{0}} \\
& +\gamma \sum_{m\leq k}\left\Vert u_{m}\right\Vert _{X_{0}}\leq
\gamma \sum_{m\leq k}\left\vert \eta _{m}\right\vert
K(2^{m},x;\vec{X})+\gamma \sum_{m\leq k}J(2^{m},u_{m};\vec{X}),
\end{align*}%
and similarly
\begin{align*}
2^{k}\left\Vert z_{k}\right\Vert _{Y_{1}}& =2^{k}\Big
\|\sum_{m>k}(\eta _{m}y_{m}-Au_{m})\Big \|_{Y_{1}}\leq
2^{k}\sum_{m>k}\left\vert \eta _{m}\right\vert \left\Vert
y_{m}\right\Vert
_{Y_{1}}+2^{k}\sum_{m>k}\left\Vert Au_{m}\right\Vert _{Y_{1}} \\
& \leq \gamma 2^{k}\sum_{m>k}\left\vert \eta _{m}\right\vert
\left\Vert x_{1}^{m}\right\Vert _{X_{1}}+\gamma
2^{k}\sum_{m>k}\left\Vert u_{m}\right\Vert _{X_{1}}\leq \gamma
\sum_{m>k}\left\vert \eta
_{m}\right\vert \frac{2^{k}}{2^{m}}K(2^{m},x;\vec{X}) \\
& +\gamma
\sum_{m>k}\frac{2^{k}}{2^{m}}J(2^{m},u_{m};\vec{X}){\text{.}}
\end{align*}%
Combining the above estimates we obtain (by using boundedness of
$S_{d}$-the discrete analog of Calder\'{o}n operator)
\begin{align*}
\bigg (\sum_{k\in {\mathbb{Z}}}\big (2^{-\theta k}J(2^{k},& z_{k};\vec{Y})%
\big )^{q}\bigg )^{1/q}\leq \gamma \bigg (\sum_{k\in {\mathbb{Z}}}\big (%
2^{-\theta k}\left( S_{d}\left[ \{|\eta _{m}|K(2^{m},x;\vec{X}%
)+J(2^{m},u_{m};\vec{X})\}\right] \right) (2^{k})\big )^{q}\bigg )^{1/q} \\
& \leq \gamma \bigg (\sum_{k\in {\mathbb{Z}}}\big (2^{-\theta
k}|\eta
_{k}|K(2^{k},x;\vec{X})\big )^{q}\bigg )^{1/q}+\gamma \bigg (\sum_{k\in {%
\mathbb{Z}}}\big (2^{-\theta k}J(2^{k},u_{k};\vec{X})(2^{k})\big )^{q}\bigg )%
^{1/q} \\
& \leq \gamma \bigg (\sum_{k\in {\mathbb{Z}}}\big (2^{-\theta
k}|\eta _{k}|K(2^{k},x;\vec{X})\big )^{q}\bigg )^{1/q}=\gamma \Big
\|\sum_{k\leq 0}\eta _{k}e_{k}\Big \|_{\ell _{\theta ,q}(x)}.
\end{align*}%
Thus to prove (\ref{K21}) we need to show
\begin{equation*}
\Big \|T_{0}\Big (\sum_{k\leq 0}\eta _{k}e_{k}\Big )\Big \|_{\ell
_{\theta ,q}(x)}\leq \gamma \bigg (\sum_{k\in {\mathbb{Z}}}\big
(2^{-\theta k}J(2^{k},z_{k};\vec{Y})\Big )^{q}\bigg
)^{1/q}{\text{.}}
\end{equation*}%
Since $y_{m}=-Ax_{1}^{m}$,
\begin{equation*}
\left\Vert z_{k}\right\Vert _{Y_{1}}=\Big \|\sum_{m>k}(\eta _{m}y_{m}-Au_{m})%
\Big \|_{Y_{1}}\geq \gamma \Big \|\sum_{m>k}(\eta _{m}x_{1}^{m}+u_{m})\Big \|%
_{X_{1}}
\end{equation*}%
and from $y_{m}=Ax_{0}^{m}$,
\begin{align*}
\left\Vert z_{k}\right\Vert _{Y_{0}}& =\Big \|\sum_{m>k}(\eta
_{m}y_{m}-Au_{m})\Big \|_{Y_{0}}=\Big \|\sum_{m\leq k}(\eta _{m}y_{m}-Au_{m})%
\Big \|_{Y_{0}} \\
& \geq \gamma \Big \|\sum_{m\leq k}(\eta _{m}x_{0}^{m}-u_{m})\Big
\|_{X_{0}}.
\end{align*}%
Now note that
\begin{equation*}
u=\sum_{k\leq 0}\eta _{k}x_{0}^{k}.
\end{equation*}%
Indeed, elements $u$ and $\sum_{k\leq 0}\eta _{k}x_{0}^{k}$ both belong to $%
X_{0}$ and so their images under the operator $A$ are equal to
$y$. Since $A$ is injective on $X_{0}$, $u=\sum_{k\leq 0}\eta
_{k}x_{0}^{k}$ and so
\begin{equation*}
\sum_{m\leq k}(\eta _{m}x_{0}^{m}-u_{m})=-\sum_{m>k}(\eta
_{m}x_{0}^{m}-u_{m}).
\end{equation*}%
Thus the above estimate implies $\left\Vert z_{k}\right\Vert
_{Y_{0}}\geq \gamma \left\Vert \sum_{m>k}(\eta
_{m}x_{0}^{m}-u_{m})\right\Vert _{X_{0}}$ and {\color{red}w}hence
\begin{align*}
\left\Vert z_{k}\right\Vert _{Y_{0}}+2^{k}\left\Vert
z_{k}\right\Vert
_{Y_{1}}& \geq \gamma \Big \|\sum_{m>k}(\eta _{m}x_{1}^{m}+u_{m})\Big \|%
_{X_{1}}+\gamma 2^{k}\Big \|\sum_{m>k}(\eta _{m}x_{0}^{m}-u_{m})\Big \|%
_{X_{0}} \\
& \geq \gamma K\big (2^{k},\big (\sum_{m>k}\eta _{m}\big
)x;\vec{X}\big ).
\end{align*}%
Combining with already obtained estimate
\begin{equation*}
\bigg (\sum_{k\in {\mathbb{Z}}}\big (2^{-\theta k}J(2^{k},z_{k};Y_{0},Y_{1})%
\big )^{q}\bigg )^{1/q}\leq \gamma \,\Big \|\sum_{k\leq 0}\eta
_{k}e_{k}\Big \|_{\ell _{\theta ,q}(x)},
\end{equation*}%
we have
\begin{equation*}
\bigg (\sum_{k\in {\mathbb{Z}}}\big (2^{-\theta k}K\big (2^{k},\big (%
\sum_{m>k}\eta _{m})x;\vec{X}\big )\big )^{q}\bigg )^{1/q}\leq \gamma \Big \|%
\sum_{k\leq 0}\eta _{k}e_{k}\Big \|_{\ell _{\theta ,q}(x)},
\end{equation*}%
i.e., (\ref{K21}) holds with constant $\gamma >0$ independent of
$x\in \ker _{X_{0}+X_{1}}A$. Since
\begin{equation*}
T_{0}\Big (\sum_{k\leq 0}\eta _{k}e_{k}\Big )=\sum_{k\in {\mathbb{Z}}}\Big (%
\sum_{m>k}\eta _{m}\Big )e_{k}=\eta _{0}e_{-1}+(\eta _{0}+\eta
_{-1})e_{-2}+(\eta _{0}+\eta _{-1}+\eta _{-2})e_{-3}+...,
\end{equation*}%
the image of $\sum_{k\leq 0}\eta _{k}e_{k}$ under the action of
operator $T_{0}$ has the form $\sum_{k\leq -1}\xi _{k}e_{k}$.
Moreover, we have
\begin{equation*}
(S-I)\big (\sum_{k\leq -1}\xi _{k}e_{k}\Big )=\xi _{-1}e_{0}+(\xi
_{-2}-\xi _{-1})e_{-1}+...
\end{equation*}%
and we see that formally
\begin{equation}
T_{0}(S-I)\Big (\sum_{k\leq -1}\xi _{k}e_{k}\Big )=\sum_{k\leq
-1}\xi _{k}e_{k},  \label{NM14}
\end{equation}%
and
\begin{equation}
(S-I)T_{0}\Big (\sum_{k\leq 0}\eta _{k}e_{k}\Big )=\sum_{k\leq
0}\eta _{k}e_{k}.  \label{NM15}
\end{equation}%
Let
\begin{equation*}
\ell _{\theta ,q}(x)_{\left( -\infty ,m\right] }:=\bigg \{\sum_{k\in {%
\mathbb{Z}}}\eta _{k}e_{k}\in \ell _{\theta ,q}(x);\,\eta _{k}=0,\,k\geq m+1%
\bigg \}
\end{equation*}%
be equipped with the norm induced from $\ell _{\theta ,q}(x)$.
Then equalities (\ref{NM14}) and (\ref{NM15}) show that the
operator $T_{0}\colon \ell _{\theta ,q}(x)_{\left( -\infty
,0\right] }\rightarrow \ell _{\theta ,q}(x)_{\left( -\infty
,-1\right] }$ is inverse to the operator $S-I\colon \ell _{\theta
,q}(x)_{\left( -\infty ,-1\right]}\rightarrow \ell _{\theta
,q}(x)_{\left( -\infty ,0\right]}$. Thus from boundedness of these
operators (boundedness of $T_0$ follows from \ref{K21},
boundedness of $S-I$ is evident) and Theorem 12 from \cite{KM}, we
obtain that the operator $T_{0}$ is a bounded operator from $\ell
_{\theta +\varepsilon ,q}(x)_{\left( -\infty ,0\right]
}\rightarrow \ell _{\theta +\varepsilon ,q}(x)_{\left( -\infty
,-1\right] }$ for $\varepsilon
>0$ small enough and its norm does not depend on $x\in \ker _{X_{0}+X_{1}}A$.
In consequence for each $i<j\leq 0$ we have
\begin{align*}
\frac{K(2^{i},x;\vec{X})}{2^{(\theta +\varepsilon )i}}&
=\left\Vert
e_{i}\right\Vert _{\ell _{\theta +\varepsilon ,q}(x)_{\left( -\infty ,-1%
\right] }}\leq \left\Vert T_{0}e_{j}\right\Vert _{\ell _{\theta
+\varepsilon
,q}(x)_{\left( -\infty ,-1\right] }} \\
& \leq \gamma \left\Vert e_{j}\right\Vert _{\ell _{\theta
+\varepsilon
,q}(x)_{\left( -\infty ,-0\right] }}=\gamma \,\frac{K(2^{j},x;\vec{X})}{%
2^{(\theta +\varepsilon )j}}{\text{.}}
\end{align*}%
This implies that
\begin{equation*}
\frac{K(s,x;\vec{X})}{K(t,x;\vec{X})}\leq \gamma \left(
\frac{s}{t}\right) ^{\theta +\varepsilon },\quad\, 0<s\leq t \leq
1
\end{equation*}%
with constant $\gamma >0$ independent of $x\in \ker A$.~This
proves that the required estimate $\theta <\alpha _{0}(\ker
_{X_{0}+X_{1}}A)$ holds.
\end{proof}


\subsection{Necessity condition for the class ${\mathbb{F}}_{\protect\theta %
,q}^{2}$}

The proof of the following theorem is rather involved, later on we
showed another proof of this theorem which used duality. Note that
duality can be used only under some additional assumptions.

\begin{theorem}
\label{TT1} Suppose that the operator $A\colon
(X_{0},X_{1})\rightarrow (Y_{0},Y_{1})$
is invertible on endpoint spaces and belongs to the class ${\mathbb{F}}%
_{\theta ,q}^{2}$, i.e. it is injective, Fredholm on
$(X_{0},X_{1})_{\theta ,q}$ and dimension of $\ker
_{X_{0}+X_{1}}A$ is finite and is equal to the codimension of
$A((X_{0},X_{1})_{\theta ,q})$ in $(Y_{0},Y_{1})_{\theta ,q}$.
Then
\begin{equation}
\beta _{0}(\ker _{X_{0}+X_{1}}A)<\theta <\alpha _{\infty }\left(
\ker _{X_{0}+X_{1}}A\right) {\text{.}}  \label{NM16}
\end{equation}
\end{theorem}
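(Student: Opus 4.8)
The plan is to first isolate what the hypotheses give for free, and then to reduce the uniform index estimate to a shift-operator estimate obtained by the method of Theorem~\ref{TTT1}, with surjectivity of $A$ on $(X_0,X_1)_{\theta,q}$ replaced by surjectivity modulo a finite-dimensional space. Since $q<\infty$, the functor $(\cdot)_{\theta,q}$ is regular and $A$ is surjective on the endpoints, so by Theorem~\ref{realfredholm} the assumption that the codimension of $A((X_0,X_1)_{\theta,q})$ equals $d(A):=\dim\ker_{X_0+X_1}A=:n$, together with the identity $\dim\widetilde V=d(A)$, forces $\dim\widetilde V=n$, hence $V_{\theta,q}^0+V_{\theta,q}^1=\{0\}$ and $\ker_{X_0+X_1}A=\widetilde V$; in particular $V_{\theta,q}^0=V_{\theta,q}^1=\{0\}$, so every $0\ne x\in\ker_{X_0+X_1}A$ satisfies $\int_0^1(t^{-\theta}K(t,x;\vec X))^q\frac{dt}{t}=\int_1^\infty(t^{-\theta}K(t,x;\vec X))^q\frac{dt}{t}=\infty$. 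Theorem~\ref{realfredholm} also gives the topological decomposition $(Y_0,Y_1)_{\theta,q}=A((X_0,X_1)_{\theta,q})\oplus M$ with $M:=A(P_{X_0}(\ker_{X_0+X_1}A))\subset Y_0\cap Y_1$, $\dim M=n$; since $A$ is invertible on the endpoints, $\ker_{X_0+X_1}A\cap X_i=\{0\}$ and $P_{X_0},P_{X_1}$ are injective on $\ker_{X_0+X_1}A$. Finally, by Lemma~\ref{Lemma1}(a), injectivity of $A$ on $(X_0,X_1)_{\theta,q}$ means precisely that $S-I$ is injective on every $\ell_{\theta,q}(x)$, i.e. $\sum_i e_i\notin\ell_{\theta,q}(x)$.

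It suffices to prove $\theta<\alpha_\infty(\ker_{X_0+X_1}A)$; the inequality $\beta_0(\ker_{X_0+X_1}A)<\theta$ then follows by applying this to the reflected couples $(X_1,X_0)$, $(Y_1,Y_0)$ and the parameter $1-\theta$, since that reflection exchanges $\alpha_\infty$ with $1-\beta_0$. I would run the scheme of Theorem~\ref{TTT1}. Fix $0\ne x\in\ker_{X_0+X_1}A$; for each $k$ choose $x=x_0^k+x_1^k$ with $\|x_0^k\|_{X_0}+2^k\|x_1^k\|_{X_1}\le\gamma K(2^k,x;\vec X)$, set $y_k:=Ax_0^k=-Ax_1^k\in Y_0\cap Y_1$, so $J(2^k,y_k;\vec Y)\le\gamma K(2^k,x;\vec X)$ by invertibility on the endpoints. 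For a finite sequence $\sum_{k\ge0}\eta_k e_k$ form $y:=\sum_{k\ge0}\eta_k y_k\in Y_0\cap Y_1$ and write $y=Au+m$ with $u\in(X_0,X_1)_{\theta,q}$, $m\in M$, $\|u\|_{(X_0,X_1)_{\theta,q}}\le\gamma\|y\|_{(Y_0,Y_1)_{\theta,q}}$ (the decomposition being topological and $A$ injective on $(X_0,X_1)_{\theta,q}$). Because $y,m\in Y_1$, $Au\in Y_1=A(X_1)$, so using invertibility of $A$ on $X_1$ and the identities $(X_0,X_1)_{\theta,q}\cap X_1=(X_1,X_0\cap X_1)_{\theta,q}$, $(Y_0,Y_1)_{\theta,q}\cap Y_1=(Y_1,Y_0\cap Y_1)_{\theta,q}$ (Propositions~2 and 4 in \cite{AK1}) one may take $u\in X_1$ with a $J$-representation $u=\sum_k u_k$, $u_k\in X_0\cap X_1$, satisfying $\sum_k(2^{-\theta k}J(2^k,u_k;\vec X))^q\le\gamma\,\|\sum_{k\ge0}\eta_k e_k\|_{\ell_{\theta,q}(x)}^q$; here the support condition $k\ge0$ is essential, so that $J(2^k,\cdot;Y_1,Y_0\cap Y_1)=J(2^k,\cdot;\vec Y)$.

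From here the computation mirrors Theorem~\ref{TTT1}: set $z_k:=-\sum_{m\le k}(\eta_m y_m-Au_m)$. The new feature is that $\sum_m(\eta_m y_m-Au_m)=y-Au=m$ need not vanish, but it is absorbed, since $M$ is finite-dimensional and contained in $Y_0\cap Y_1$, by splitting $m$ into a geometrically converging $J$-representation. Using $y_m=Ax_0^m=-Ax_1^m$, invertibility of $A$ on the endpoints, and $u=\sum_{k\ge0}\eta_k x_1^k$ (valid because $u\in X_1$ and $A|_{X_1}$ is injective), the telescoping identity of Theorem~\ref{TTT1} gives $K(2^k,(\sum_{m>k}\eta_m)x;\vec X)\le\gamma\,J(2^k,z_k;\vec Y)$, whence boundedness of the discrete Calder\'on operator $S_d$ on $\ell^q(\{2^{-k\theta}\})$ yields a bound of the form $\|T_0(\sum_{k\ge0}\eta_k e_k)\|_{\ell_{\theta,q}(x)}\le\gamma\,\|\sum_{k\ge0}\eta_k e_k\|_{\ell_{\theta,q}(x)}$, with $\gamma$ independent of $x\in\ker_{X_0+X_1}A$ (this is where the quantitative form, Remark~\ref{KMR}, is used to keep constants uniform). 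Interpreting $T_0$ as the inverse of $S-I$ on the quotient by its kernel (Remark~\ref{Remark3}) and feeding this estimate into the quotient-space perturbation theorem of \cite{KM} (Theorem~\ref{KM}), the bound persists on $\ell_{\theta+\varepsilon,q}(x)$ for some $\varepsilon>0$ independent of $x$, and evaluating it on appropriate truncations of $\ell_{\theta+\varepsilon,q}(x)$ then produces $K(s,x;\vec X)/K(t,x;\vec X)\le\gamma(s/t)^{\theta+\varepsilon}$ for all $1\le s\le t$, uniformly in $x$, i.e. $\theta<\alpha_\infty(\ker_{X_0+X_1}A)$.

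I expect the delicate point to be exactly this last transfer. Unlike in Theorem~\ref{TTT1}, here $\ker_{X_0+X_1}A$ is \emph{not} contained in $(X_0,X_1)_{\theta,q}$, so the spaces $\ell_{\theta,q}(x)$ are degenerate: $S-I$ is no longer invertible on them, only on suitable finite truncations, and one must carry the whole argument out with truncated sequence spaces while controlling every constant uniformly both in $x$ and in the truncation level before passing to the limit; the non-vanishing correction $m\in M$ and the bookkeeping between the couples $\vec X,\vec Y$ and $(X_1,X_0\cap X_1),(Y_1,Y_0\cap Y_1)$ add further technical weight. A shorter route is available under the extra hypothesis that the couples are regular (e.g. reflexive): then $((X_0,X_1)_{\theta,q})^*=(X_0^*,X_1^*)_{\theta,q'}$, and $A^*\colon(Y_0^*,Y_1^*)\to(X_0^*,X_1^*)$ is invertible on the endpoints and belongs to $\mathbb F_{\theta,q'}^1$ on the dual couples --- its kernel $(A(X_0\cap X_1))^\perp$ is $n$-dimensional and contained in $(Y_0^*,Y_1^*)_{\theta,q'}$, and its range is all of $(X_0^*,X_1^*)_{\theta,q'}$ --- so Theorem~\ref{TTT1} applied to $A^*$ gives $\beta_\infty(\ker A^*)<\theta<\alpha_0(\ker A^*)$ with respect to $(Y_0^*,Y_1^*)$, which the duality between the generalized indices of $\ker A^*$ and those of $\ker_{X_0+X_1}A$ (interchanging the endpoints $0$ and $\infty$) converts into $(\ref{NM16})$.
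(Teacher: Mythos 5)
Your opening reductions are good: the hypothesis $d(A)=\dim\ker_{X_0+X_1}A$ indeed forces $\widetilde V=\ker_{X_0+X_1}A$, hence $V_{\theta,q}^0=V_{\theta,q}^1=\{0\}$, so for every nonzero $x$ in the kernel both $\int_0^1$ and $\int_1^\infty$ of $(t^{-\theta}K(t,x;\vec X))^q\,dt/t$ diverge; via Lemma~\ref{Lemma1} this already places $\theta$ in $\Omega_A$ and makes the inductive Theorem~\ref{TT2} superfluous here. The duality route you sketch at the end is also correct and is exactly the paper's alternative proof in Section~7, with the same restrictions (regular couples, $q<\infty$).

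The direct adaptation of Theorem~\ref{TTT1}, however, has a genuine gap at the key estimate. The bound $\|T_0(\sum_{k\ge0}\eta_ke_k)\|_{\ell_{\theta,q}(x)}\le\gamma\|\sum_{k\ge0}\eta_ke_k\|_{\ell_{\theta,q}(x)}$ cannot hold: for $k<0$ the $k$-th coordinate of $T_0(\sum_{k\ge0}\eta_ke_k)$ is the constant $\sum_{m\ge0}\eta_m$, and $\sum_{k<0}(2^{-\theta k}K(2^k,x;\vec X))^q$ diverges by your own observation $V^0_{\theta,q}=\{0\}$, so the left-hand side is infinite whenever $\sum_m\eta_m\neq 0$. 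Tracing through the argument, the culprit is the claimed identity $u=\sum_{k\ge0}\eta_kx_1^k$: since $Au=y-m$ while $A(\sum\eta_kx_1^k)=-y$, these are distinct elements of $X_1$, and the telescoping of Theorem~\ref{TTT1} now produces an extra, $k$-independent shift by a kernel element (the difference between an $X_0$-preimage and an $X_1$-preimage of the correction $m$), which is not eliminated by splitting $m$ into a $J$-series. This is precisely why the paper never applies $T_0$ off the zero-sum subspace $U^0$, where the divergent tail cancels (Lemma~\ref{operatortT0}), and then needs the separate uniformity machinery of Proposition~\ref{Proposition1}, the compactness arguments of Lemmas~\ref{Lemma3} and~\ref{Lemma5}, and the quotient-couple perturbation of Lemma~\ref{Lemma4} built on the auxiliary couple $(\ell_{0,q}(x)\times\mathbb{R},\,\ell_{1,q}(x)\times\mathbb{R})$ to push boundedness of $T_0$ on $U^0$ to $\theta\pm\varepsilon$ uniformly in $x$. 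So your outline identifies the right target, but the estimate you set out to prove is false as written, and the correct version requires the $U^0$-restriction and the additional machinery the paper develops for exactly this reason.
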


Now let
\begin{equation}
\Omega _{A}=\bigcap_{x\in \ker _{X_{0}+X_{1}}A\setminus \left\{
0\right\} } \left[ \alpha (x),\beta (x)\right] .  \label{NM17}
\end{equation}%

Notice that if $\theta $ satisfies (\ref{NM16}) \ then $\theta \in
\Omega _{A}$. Indeed, for every $x\in \ker
_{X_{0}+X_{1}}A\setminus \left\{ 0\right\} $ we have
\begin{equation*}
\alpha (x)\leq \alpha _{0}(x)\leq \beta _{0}(x)\leq \beta
_{0}(\ker _{X_{0}+X_{1}}A)
\end{equation*}%
and

\begin{equation*}
\alpha _{\infty }\left( \ker _{X_{0}+X_{1}}A\right) \leq \alpha
_{\infty }\left( x\right) \leq \beta _{\infty }(x)\leq \beta (x).
\end{equation*}%
The proof of Theorem \ref{TT1} is done in two steps. On the first
step we prove Theorem \ref{TT1} for $\theta \in \Omega _{A}$
(Theorem \ref{T1}) and on the second step (see Theorem \ref{TT2})
we prove it for $\theta \notin \Omega _{A}$. Note that Theorem
\ref{T1} and Theorem \ref{TT2} provide some additional
information.

\subsubsection{ Theorem \protect\ref{TT1} for the case $\protect\theta \in
\Omega_{A}$}

Theorem \ref{TT1} for $\theta \in \Omega _{A}$ follows immediately
from the following result.

\begin{theorem}
\label{T1} Suppose that the operator $A$ is invertible on endpoint
spaces, has finite dimensional kernel $\ker _{X_{0}+X_{1}}A$ and
that $\theta \in \Omega _{A}$. If the operator $A$ \textit{i}s
injective on $(X_{0},X_{1})_{\theta
,q}$ and $A((X_{0},X_{1})_{\theta ,q})$ is a~closed subspace of $%
(Y_{0},Y_{1})_{\theta ,q}$\textit{\ then}%
\begin{equation*}
\beta _{0}(\ker _{X_{0}+X_{1}}A)<\theta <\alpha _{\infty }\left(
\ker _{X_{0}+X_{1}}A\right) .
\end{equation*}
\end{theorem}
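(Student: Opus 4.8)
The plan is to reduce everything to a statement about the single operator $S-I$ acting on the scalar sequence spaces $\ell_{\theta,q}(x)$, $x\in\ker_{X_0+X_1}A\setminus\{0\}$, and then to translate invertibility/Fredholmness of $A$ on the interpolation spaces into boundedness of one of the shift-inverses $T_0,T_1$ on a \emph{shifted} exponent $\ell_{\theta\pm\varepsilon,q}(x)$, \emph{uniformly} in $x$. Concretely, since $\ker_{X_0+X_1}A$ is finite dimensional, it has a basis $g_1,\dots,g_n$; fixing decompositions $g_m=g_m^{0}+g_m^{1}$ nearly realizing $K(1,g_m;\vec X)$ and using invertibility of $A$ on the endpoint spaces, each $g_m$ gives rise to an element $y_m^{(k)}=A(\text{(dyadic piece of }g_m))\in Y_0\cap Y_1$ with $J(2^k,y_m^{(k)};\vec Y)\lesssim K(2^k,g_m;\vec X)$. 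The hypothesis that $A((X_0,X_1)_{\theta,q})$ is closed, together with injectivity of $A$ on $(X_0,X_1)_{\theta,q}$, gives (via the Banach inverse theorem) a uniform estimate $\|x+v\|_{(X_0,X_1)_{\theta,q}}\asymp\|Ax\|_{(Y_0,Y_1)_{\theta,q}}$ over $v$ in the (now trivial) kernel inside the interpolation space, and since $\theta\in\Omega_A$ the kernel in $(X_0,X_1)_{\theta,q}$ is indeed $\{0\}$. Running the construction of Theorem~\ref{TTT1} on the element $y=\sum_{k}\eta_k y_k$ (for an arbitrary finite scalar sequence $\{\eta_k\}$ supported on $k\le 0$, resp.\ $k\ge 0$) produces, after the $S_d$-Calderón-operator estimates already used there, the uniform inequality
\[
\Big\|T_0\Big(\sum_{k\le 0}\eta_k e_k\Big)\Big\|_{\ell_{\theta,q}(x)}\le\gamma\,\Big\|\sum_{k\le 0}\eta_k e_k\Big\|_{\ell_{\theta,q}(x)},
\]
with $\gamma$ independent of $x\in\ker_{X_0+X_1}A$, and symmetrically for $T_1$ on sequences supported on $k\ge 0$.

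Next I would pass from these one-sided, fixed-$\theta$ bounds to two-sided bounds at a shifted exponent. Using the truncated spaces $\ell_{\theta,q}(x)_{(-\infty,m]}$ and the identities $(S-I)T_0=\mathrm{id}$, $T_0(S-I)=\mathrm{id}$ on the appropriate truncations (formulas~(\ref{NM14})--(\ref{NM15})), the operator $S-I\colon\ell_{\theta,q}(x)_{(-\infty,-1]}\to\ell_{\theta,q}(x)_{(-\infty,0]}$ is invertible with inverse $T_0$, and its norm and the norm of the inverse are bounded uniformly in $x$. Invoking the stability-of-invertibility result for quotient/truncated sequence spaces from \cite{KM} (Theorem~12 there, used exactly as in the proof of Theorem~\ref{TTT1}), $T_0$ stays bounded on $\ell_{\theta+\varepsilon,q}(x)_{(-\infty,0]}$ for some $\varepsilon>0$ that depends only on $\|A\|_{X_i\to Y_i}$ and the norm of the relevant inverse — hence uniformly in $x$. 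Evaluating $\|T_0 e_j\|$ versus $\|e_i\|$ for $i<j\le 0$ in $\ell_{\theta+\varepsilon,q}(x)$ yields
\[
\frac{K(s,x;\vec X)}{K(t,x;\vec X)}\le\gamma\Big(\frac{s}{t}\Big)^{\theta+\varepsilon},\qquad 0<s\le t\le 1,
\]
uniformly over $x\in\ker_{X_0+X_1}A$, which is precisely $\theta<\alpha_{\infty}(\ker_{X_0+X_1}A)$; wait — the truncation at $0$ and the constraint $i<j\le 0$ give the \emph{small-$t$}, i.e.\ $\alpha_0$-type estimate for $A$ on $(X_0,X_1)_{\theta,q}$, while to obtain $\alpha_\infty$ I instead run the symmetric argument with $T_1$ and the truncations $\ell_{\theta,q}(x)_{[m,\infty)}$, getting $\theta<\alpha_\infty(\ker_{X_0+X_1}A)$; and the lower bound $\beta_0(\ker_{X_0+X_1}A)<\theta$ comes from the same $T_0$-argument run on the other half-line (sequences supported on $k\ge 0$), exactly as the $T_0\leftrightarrow T_1$ symmetry in Theorem~\ref{TTT1} indicates.

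The main obstacle — and the place requiring genuine care rather than bookkeeping — is the \emph{uniformity in $x\in\ker_{X_0+X_1}A$} of all constants, in particular of the $\varepsilon>0$ produced by the perturbation theorem of \cite{KM}. Finite dimensionality of the kernel is what makes this possible: the norms $\|\cdot\|_{X_0+X_1}$, $\|\mathrm{Pr}_{X_0}(\cdot)\|_{X_0}$, $\|\mathrm{Pr}_{X_1}(\cdot)\|_{X_1}$ are all equivalent on $\ker_{X_0+X_1}A$ (compactness of the unit sphere), so the decomposition estimates entering the $S_d$-bounds hold with a single constant, and the operator-norm and inverse-norm bounds on $S-I$ on the truncated $\ell_{\theta,q}(x)$ are controlled solely by $\|A\|_{X_i\to Y_i}$ and the norm of the inverse of $A$ on the relevant endpoint-type interpolation spaces — quantities independent of $x$. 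Once this uniform $\varepsilon$ is secured, Remark~\ref{KMR} guarantees the inverse norm at the shifted exponent is likewise uniformly bounded, and reading off the $K$-functional inequality for the generating basis vector $e_j$ against $e_i$ finishes the proof. The closedness hypothesis on $A((X_0,X_1)_{\theta,q})$ enters only once, to license the Banach-inverse-theorem equivalence $\|Ax\|\asymp\|x\|$ that converts the abstract Fredholm data into the concrete norm inequality feeding the construction.
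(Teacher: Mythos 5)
The central estimate you claim to obtain by ``running the construction of Theorem~\ref{TTT1},'' namely
\begin{equation*}
\Big\|T_{0}\Big(\sum_{k\le 0}\eta_{k}e_{k}\Big)\Big\|_{\ell_{\theta,q}(x)}\le\gamma\Big\|\sum_{k\le 0}\eta_{k}e_{k}\Big\|_{\ell_{\theta,q}(x)},
\end{equation*}
is in fact false under the hypotheses of Theorem~\ref{T1}. Take $\eta=e_{0}$, which is supported on $k\le 0$. Then $T_{0}e_{0}=\sum_{k<0}e_{k}$, and since $\theta\in\Omega_{A}$ (so $\theta\in[\alpha(x),\beta(x)]$ for \emph{every} $x\in\ker_{X_{0}+X_{1}}A\setminus\{0\}$), Corollary~\ref{Corollary1} shows $\sum_{k<0}e_{k}\notin\ell_{\theta,q}(x)$, i.e.\ the left side is $+\infty$, while $\|e_{0}\|_{\ell_{\theta,q}(x)}=K(1,x;\vec X)<\infty$. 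So no such $\gamma$ exists, even for a single fixed $x$. The underlying reason is that the derivation of the one-sided estimate~(\ref{K21}) in the proof of Theorem~\ref{TTT1} hinges on $A$ being \emph{surjective} from $(X_{0},X_{1})_{\theta,q}$ onto $(Y_{0},Y_{1})_{\theta,q}$ with $\ker_{X_{0}+X_{1}}A\subset(X_{0},X_{1})_{\theta,q}$ (used to show $A\colon(X_{0},X_{0}\cap X_{1})_{\theta,q}\to(Y_{0},Y_{0}\cap Y_{1})_{\theta,q}$ is invertible and thus to place $u=\sum_{k\le 0}\eta_{k}x_{0}^{k}$ inside $(X_{0},X_{1})_{\theta,q}$). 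In the present class ${\mathbb F}^{2}_{\theta,q}$ the kernel lies \emph{outside} $(X_{0},X_{1})_{\theta,q}$ and $A$ is not surjective, so that step is simply unavailable.

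What makes the argument work in the paper is that boundedness of $T_{0}$ can only be proved on the \emph{zero-sum} subspace $U^{0}$, not on sequences merely supported on a half-line: if $\sum_{k}\eta_{k}=0$ then $u=\sum_{k}\eta_{k}\big(x_{0}(k)-x_{0}(0)\big)$ lands in $X_{0}\cap X_{1}\subset(X_{0},X_{1})_{\theta,q}$, and only then does the Banach inverse theorem for $A$ on the interpolation space yield $\|u\|_{(X_{0},X_{1})_{\theta,q}}\lesssim\|y\|_{(Y_{0},Y_{1})_{\theta,q}}$ (this is Lemma~\ref{operatortT0}). Having $T_{0}$ bounded only on $U^{0}$ is not by itself enough to invoke the perturbation/stability result of \cite{KM}; one still needs the uniform lower bound on $\operatorname{dist}_{\ell_{\theta,q}(x)}(e_{0},U^{0})$ over $x\in\ker_{X_{0}+X_{1}}A$ (Proposition~\ref{Proposition1}), which is what makes the auxiliary operator $T(x,\lambda)=(S-I)x+\lambda e_{0}$ on the couple $B_{j}=\ell_{j,q}(x)\times{\mathbb R}$ uniformly invertible at $\theta$, hence still invertible at nearby exponents. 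Your proposal contains neither the restriction to $U^{0}$, nor the distance estimate, nor the intermediate growth bound $2^{-\theta n}K(2^{n},x;\vec X)\ge\gamma K(1,x;\vec X)$ of Lemma~\ref{Lemma3}, whose proof uses compactness of the unit sphere of the finite-dimensional kernel together with Corollary~\ref{Corollary1}. You are right that uniformity in $x$ is the delicate point and that finite dimensionality of the kernel is what saves it, but the route you propose to secure the starting estimate is blocked in this regime.
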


Everywhere below we use the facts, without restating them
explicitly, that $\theta \in \Omega _{A}$ and that $A$ is
invertible on endpoint spaces. We will give a detailed proof of
the left hand side inequality, i.e., $\beta _{0}(\ker
_{X_{0}+X_{1}}A)<\theta $, because the right hand side can be
obtained similarly by using operator $T_1$ instead of operator
$T_0$. We will need to prove several lemmas. The first lemma
stated below shows that from injectivity of $A$ on
$(X_{0},X_{1})_{\theta ,q}$ and closedness of $A\left(
(X_{0},X_{1})_{\theta ,q}\right) $ in $(Y_{0},Y_{1})_{\theta ,q}$,
it follows invertibility of $T_{0}$ on the subspace $U^{0}$
(defined in (\ref{NM18})) of $\ell _{\theta ,q}(x)$ for all $x\in
\ker _{X_{0}+X_{1}}A\setminus \left\{ 0\right\} $.


\begin{lemma}
\label{operatortT0} Suppose that the operator $A$ is injective on $%
(X_{0},X_{1})_{\theta ,q}$ and $A\left ((X_{0},X_{1})_{\theta
,q}\right )$ is closed in $(Y_{0},Y_{1})_{\theta ,q}$. Then the
operator $T_{0}$ is bounded on the subspace $U^{0}$ of $\ell
_{\theta ,q}(x)$ for any $x\in \ker _{X_{0}+X_{1}}A\setminus \left
\{0\right \}$. Moreover, the norm of the operator $T_{0}$ on
$U^{0}$ can be estimated by a constant which does not depend on
$x$ from the $\ker _{X_{0}+X_{1}}A\setminus \left \{0\right \}$.
Since $T_{0}=T_{1}$ on $U^{0}$, the same result holds for $T_{1}$.
\end{lemma}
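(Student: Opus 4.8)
The plan is to run the argument of the proof of Theorem \ref{TTT1} (where the bound (\ref{K21}) for $T_0$ on sequences supported in $k\le 0$ was obtained), with the truncation ``$k\le 0$'' replaced by the zero-sum constraint that defines $U^{0}$; this is exactly what is needed to land an auxiliary element inside $(X_{0},X_{1})_{\theta ,q}$ and thereby invoke the bounded inverse of $A$ supplied by the hypotheses. Fix $x\in \ker_{X_{0}+X_{1}}A\setminus\{0\}$ and a finite sequence $u=\sum_{i}\eta_{i}e_{i}\in U^{0}$, so that $\sum_{i}\eta_{i}=0$ and $T_{0}u$ again has finite support. For each $i$ in the support of $u$ pick a near-optimal decomposition $x=x_{0}^{i}+x_{1}^{i}$ with $\|x_{0}^{i}\|_{X_{0}}+2^{i}\|x_{1}^{i}\|_{X_{1}}\le \gamma K(2^{i},x;\vec{X})$, and set $y_{i}:=Ax_{0}^{i}=-Ax_{1}^{i}\in Y_{0}\cap Y_{1}$; invertibility of $A$ on the endpoints gives $J(2^{i},y_{i};\vec{Y})\le \gamma K(2^{i},x;\vec{X})$ with $\gamma$ absolute.

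The crucial point is that, because $\sum_{i}\eta_{i}=0$, the element $u_{\ast}:=\sum_{i}\eta_{i}x_{0}^{i}=-\sum_{i}\eta_{i}x_{1}^{i}$ lies in $X_{0}\cap X_{1}\subset (X_{0},X_{1})_{\theta ,q}$, and $y:=\sum_{i}\eta_{i}y_{i}=Au_{\ast}$; using the $y_{i}$ as a $J$-representation of $y$ yields $\|y\|_{(Y_{0},Y_{1})_{\theta ,q}}\le \gamma\|u\|_{\ell_{\theta ,q}(x)}$. Since by hypothesis $A$ is injective on $(X_{0},X_{1})_{\theta ,q}$ with closed range there, the Banach theorem on the inverse operator provides a bounded inverse onto $A((X_{0},X_{1})_{\theta ,q})$, whence $\|u_{\ast}\|_{(X_{0},X_{1})_{\theta ,q}}\le \gamma\|u\|_{\ell_{\theta ,q}(x)}$; I then fix a $J$-representation $u_{\ast}=\sum_{k}u_{k}$, $u_{k}\in X_{0}\cap X_{1}$, with $\big(\sum_{k}(2^{-k\theta}J(2^{k},u_{k};\vec{X}))^{q}\big)^{1/q}\le \gamma\|u\|_{\ell_{\theta ,q}(x)}$ (recall $u_{\ast}=\sum_{i}\eta_{i}x_{0}^{i}=-\sum_{i}\eta_{i}x_{1}^{i}$ by construction).

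From here I would carry out the telescoping estimate exactly as in the derivation of (\ref{K21}): put $z_{k}:=\sum_{m>k}(\eta_{m}y_{m}-Au_{m})=-\sum_{m\le k}(\eta_{m}y_{m}-Au_{m})$ (the two expressions agree because $\sum_{m}(\eta_{m}y_{m}-Au_{m})=y-Au_{\ast}=0$), bound $\|z_{k}\|_{Y_{0}}$ and $2^{k}\|z_{k}\|_{Y_{1}}$ from the two sides to get $J(2^{k},z_{k};\vec{Y})\le \gamma\, S_{d}\big(\{|\eta_{m}|K(2^{m},x;\vec{X})+J(2^{m},u_{m};\vec{X})\}\big)(2^{k})$, and conclude by boundedness of the discrete Calder\'on operator $S_{d}$ on $\ell^{q}(\{2^{-n\theta}\})$ that $\big(\sum_{k}(2^{-k\theta}J(2^{k},z_{k};\vec{Y}))^{q}\big)^{1/q}\le \gamma\|u\|_{\ell_{\theta ,q}(x)}$. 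Conversely, using $y_{m}=-Ax_{1}^{m}=Ax_{0}^{m}$ and invertibility of $A$ on the endpoints, $\|z_{k}\|_{Y_{1}}\ge \gamma\big\|\sum_{m>k}(\eta_{m}x_{1}^{m}+u_{m})\big\|_{X_{1}}$ and $\|z_{k}\|_{Y_{0}}\ge \gamma\big\|\sum_{m>k}(\eta_{m}x_{0}^{m}-u_{m})\big\|_{X_{0}}$ (here $\sum_{m\le k}=-\sum_{m>k}$ since $\sum_{m}(\eta_{m}x_{0}^{m}-u_{m})=u_{\ast}-u_{\ast}=0$); adding these and observing that the two inner sums add up to $\big(\sum_{m>k}\eta_{m}\big)x$ gives $J(2^{k},z_{k};\vec{Y})\ge \gamma\,\big|\sum_{m>k}\eta_{m}\big|\,K(2^{k},x;\vec{X})$. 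Since $\|T_{0}u\|_{\ell_{\theta ,q}(x)}=\big(\sum_{k}|\sum_{m>k}\eta_{m}|^{q}(2^{-k\theta}K(2^{k},x;\vec{X}))^{q}\big)^{1/q}$, the two chains combine to $\|T_{0}u\|_{\ell_{\theta ,q}(x)}\le \gamma\|u\|_{\ell_{\theta ,q}(x)}$, and the statement for $T_{1}$ follows since $T_{0}=T_{1}$ on $U^{0}$ (Remark \ref{Remark3}).

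The step to get right — and the only genuinely new one relative to Theorem \ref{TTT1} — is the passage $u_{\ast}\in X_{0}\cap X_{1}\subset (X_{0},X_{1})_{\theta ,q}$, which makes the bounded inverse of $A$ on $(X_{0},X_{1})_{\theta ,q}$ applicable; the zero-sum condition is essential here. The second thing to watch is uniformity in $x$: every $\gamma$ above is controlled only by $\|A\|$ on the endpoints, the norms of the endpoint inverses, the norm of the inverse of $A$ on $(X_{0},X_{1})_{\theta ,q}$, the absolute constant in the $J$-representation, and $\|S_{d}\|$ on $\ell^{q}(\{2^{-n\theta}\})$ — none of which depends on $x$ — which gives the asserted $x$-independent bound.
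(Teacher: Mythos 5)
Your proposal is correct and follows essentially the same route as the paper's proof: near-optimal decompositions $x=x_{0}(k)+x_{1}(k)$, the elements $y_{k}=Ax_{0}(k)=-Ax_{1}(k)$, the observation that the zero-sum condition forces $u_{\ast}=\sum_{i}\eta_{i}x_{0}^{i}\in X_{0}\cap X_{1}$, the bounded inverse of $A$ on $(X_{0},X_{1})_{\theta,q}$ supplied by the closed-range hypothesis, the telescoped elements $z_{k}$, the Calder\'on-operator bound, and the identity $\sum_{m>k}(\eta_{m}x_{0}^{m}-u_{m})+\sum_{m>k}(\eta_{m}x_{1}^{m}+u_{m})=\big(\sum_{m>k}\eta_{m}\big)x$ to extract $K(2^{k},x;\vec X)\big|\sum_{m>k}\eta_{m}\big|$. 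The paper writes $u_{\ast}=\sum_{k}\eta_{k}(x_{0}(k)-x_{0}(0))$, which equals your $\sum_{i}\eta_{i}x_{0}^{i}$ by the zero-sum condition, and your remarks about the uniformity of the constant in $x$ match the paper's.
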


\begin{proof} The proof is very similar to the proof of Lemma 6
in \cite{AK1}. However, some details are different, and for the
sake of completeness we repeat some essential parts of the proof.
Everywhere below $x\in \ker
_{X_{0}+X_{1}}A\setminus \left\{ 0\right\} $ and $\sum_{i\in {\mathbb{Z}}%
}\eta _{i}e_{i}\in U^{0}$. We need to show that
\begin{equation}
\Big \|T_{0}\Big (\sum_{i\in {\mathbb{Z}}}\eta _{i}e_{i}\Big )\Big
\|_{\ell
_{\theta ,q}(x)}\leq \gamma \Big \|\sum_{i\in {\mathbb{Z}}}\eta _{i}e_{i}%
\Big \|_{\ell _{\theta ,q}(x)}.  \label{K22}
\end{equation}%
For each $k\in {\mathbb{Z}}$ we can find $x_{0}(k)\in X_{0}$ and $%
x_{1}(k)\in X_{1}$ such that $x=x_{0}(k)+x_{1}(k)$ and
\begin{equation*}
\Vert x_{0}(k)\Vert _{X_{0}}+2^{k}\Vert x_{1}(k)\Vert _{X_{1}}\leq
2K(2^{k},x;\vec{X}),{\text{ }}k\in {\mathbb{Z}}
\end{equation*}%
and consider the element
\begin{equation}
y=\sum_{k\in {\mathbb{Z}}}\eta _{k}y_{k},  \label{K24}
\end{equation}%
where $y_{k}=Ax_{0}(k)=-Ax_{1}(k)\in Y_{0}\cap Y_{1}$ , as $x\in
\ker _{X_{0}+X_{1}}A\setminus \left\{ 0\right\} $. Then from the
invertibility of $A$ on the endpoint spaces we have
\begin{equation*}
J(2^{k},y_{k};\vec{Y})\leq \gamma (\left\Vert x_{0}(k)\right\Vert
_{X_{0}}+2^{k}\left\Vert x_{1}(k)\right\Vert _{X_{1}})\leq \gamma K(2^{k},x;%
\vec{X})
\end{equation*}%
with $\gamma >0$ independent of $k\in {\mathbb{Z}}$ and $x\in \ker
_{X_{0}+X_{1}}A$. Since $\sum_{k\in {\mathbb{Z}}}\eta _{k}=0$ and
the sum in (\ref{K24}) consists of finite number of terms we
obtain
\begin{equation*}
y=\sum_{k\in {\mathbb{Z}}}\eta _{k}Ax_{0}(k)=\sum_{k\in
{\mathbb{Z}}}\eta
_{k}Ax_{0}(k)-\Big (\sum_{k\in {\mathbb{Z}}}\eta _{k}\Big )Ax_{0}(0)=A\Big (%
\sum_{k\in {\mathbb{Z}}}\eta _{k}(x_{0}(k)-x_{0}(0)\Big
){\text{.}}
\end{equation*}%
Thus $y=Au$, where
\begin{equation*}
u=\sum_{k\in {\mathbb{Z}}}\eta _{k}(x_{0}(k)-x_{0}(0))\in
X_{0}\cap X_{1}.
\end{equation*}%
From the assumptions that the operator $A$ is injective on $%
(X_{0},X_{1})_{\theta ,q}$ and that the image of
$(X_{0},X_{1})_{\theta ,q}$ under operator $\ A$ is a closed
subspace of $(Y_{0},Y_{1})_{\theta ,q}$, it follows from the
Banach theorem on inverse operator that
\begin{equation*}
\left\Vert u\right\Vert _{(X_{0},X_{1})_{\theta ,q}}\leq \gamma
\left\Vert y\right\Vert _{(Y_{0},Y_{1})_{\theta ,q}}{\text{.}}
\end{equation*}%
Therefore, there exists a decomposition of $u=\sum_{k\in
{\mathbb{Z}}}u_{k}$ with $u_{k}\in X_{0}\cap X_{1}$ such that
\begin{align*}
\bigg (\sum_{k\in {\mathbb{Z}}}\big (2^{-\theta
k}J(2^{k},u_{k};\vec{X})\big )^{q}\bigg )^{1/q}& \leq \gamma
\left\Vert y\right\Vert
_{(Y_{0},Y_{1})_{\theta ,q}}\leq \gamma \bigg (\sum_{k\in {\mathbb{Z}}}\big (%
2^{-\theta k}J(2^{k},\eta _{k}y_{k};\vec{Y})\big )^{q}\bigg )^{1/q} \\
& \leq \gamma \bigg (\sum_{k\in {\mathbb{Z}}}\big (2^{-\theta
k}\left\vert
\eta _{k}\right\vert K(2^{k},x;\vec{X})\big )^{q}\bigg )^{1/q}=\gamma \Big \|%
\sum_{i\in {\mathbb{Z}}}\eta _{i}e_{i}\Big \|_{\ell _{\theta ,q}(x)}{\text{.}%
}
\end{align*}%
Since $y=Au=\sum_{k\in {\mathbb{Z}}}Au_{k}$ (the series absolutely
converges
in $Y_{0}+Y_{1}$), $\sum_{k\in {\mathbb{Z}}}(\eta _{k}y_{k}-Au_{k})=0$ in $%
Y_{0}+Y_{1}$ and we can define the elements
\begin{equation}
z_{k}=\sum_{m>k}(\eta _{m}y_{m}-Au_{m})=-\sum_{m\leq k}(\eta
_{m}y_{m}-Au_{m}){\text{.}}  \label{T4.2}
\end{equation}%
Now repeating the arguments from \cite{AK1} (see, pp.~238--239),
we obtain
\begin{equation*}
\bigg (\sum_{k\in {\mathbb{Z}}}\big (2^{-\theta
k}J(2^{k},z_{k};\vec{Y})\big
)^{q}\bigg )^{1/q}\leq \gamma \Big \|\sum_{i\in {\mathbb{Z}}}\eta _{i}e_{i}%
\Big \|_{\ell _{\theta ,q}(x)}{\text{.}}
\end{equation*}%
Since $u\in X_{0}\cap X_{1}$ and $\sum_{m\leq k}u_{k}\in X_{0}$,
\begin{equation*}
\sum_{m>k}u_{k}=u-\sum_{m\leq k}u_{k}\in X_{0}{\text{.}}
\end{equation*}%
Hence from the invertibility of the operator $A$ on the endpoint spaces and $%
y_{m}=Ax_{0}(m)=-Ax_{1}(m)$ we obtain
\begin{equation*}
\left\Vert z_{k}\right\Vert _{Y_{1}}=\Big \|\sum_{m>k}(\eta _{m}y_{m}-Au_{m})%
\Big \|_{Y_{1}}\geq \gamma \Big \|\sum_{m>k}(\eta _{m}x_{1}(m)+u_{m})\Big \|%
_{X_{1}}
\end{equation*}%
and from (\ref{T4.2})
\begin{align*}
\left\Vert z_{k}\right\Vert _{Y_{0}}& =\Big \|\sum_{m>k}(\eta
_{m}y_{m}-Au_{m})\Big \|_{Y_{0}}=\Big \|-\sum_{m\leq k}(\eta
_{m}y_{m}-Au_{m})\Big \|_{Y_{0}} \\
& =\Big \|A\Big (\sum_{m\leq k}(\eta _{m}x_{0}(m)-u_{m})\Big )\Big \|%
_{Y_{0}}\geq \gamma \Big \|\sum_{m\leq k}(\eta _{m}x_{0}(m)-u_{m}\Big \|%
_{X_{0}} \\
& =\gamma \Big \|(\sum_{m\leq k}(\eta _{m}x_{0}(m)-u\Big )+\Big (%
u-\sum_{m\leq k}u_{m}\Big )\Big \|_{X_{0}}=\gamma \Big
\|-\sum_{m>k}(\eta _{m}x_{0}(m)-u_{m})\Big \|_{X_{0}}{\text{.}}
\end{align*}%
In the last line we used the fact that from $\sum_{k\in
{\mathbb{Z}}}\eta _{k}=0$ it follows
\begin{equation*}
u=\sum_{k\in {\mathbb{Z}}}\eta _{k}(x_{0}(k)-x_{0}(0))=\sum_{k\in {\mathbb{Z}%
}}\eta _{k}x_{0}(k)
\end{equation*}%
and so
\begin{equation*}
\left\Vert z_{k}\right\Vert _{Y_{0}}+2^{k}\left\Vert
z_{k}\right\Vert
_{Y_{1}}\geq \gamma \bigg (\Big \|\sum_{m>k}(\eta _{m}x_{0}(m)-u_{m})\Big \|%
_{X_{0}}+2^{k}\bigg \|\sum_{m>k}(\eta _{m}x_{1}(m)+u_{m})\Big \|_{X_{1}}%
\bigg ){\text{.}}
\end{equation*}%
Since
\begin{equation*}
\sum_{m>k}(\eta _{m}x_{0}(m)-u_{m})+\sum_{m>k}(\eta
_{m}x_{1}(m)+u_{m})=\Big (\sum_{m>k}\eta _{m}\Big )x,
\end{equation*}%
\begin{equation*}
\left\Vert z_{k}\right\Vert _{Y_{0}}+2^{k}\left\Vert
z_{k}\right\Vert
_{Y_{1}}\geq \gamma K(2^{k},x;\vec{X})\Big |\sum_{m>k}\eta _{m}\Big |{\text{.%
}}
\end{equation*}%
This implies
\begin{align*}
\bigg (\sum_{k\in {\mathbb{Z}}}\big (2^{-\theta
k}J(2^{k},z_{k};\vec{X})\big
)^{q}\bigg )^{1/q}& \geq \gamma \bigg (\sum_{k\in {\mathbb{Z}}}\Big (%
2^{-\theta k}K(2^{k},x;\vec{X})\Big |\sum_{m>k}\eta _{m}\Big |\Big )^{q}%
\bigg )^{1/q} \\
& =\gamma \Big \|T_{0}\Big (\sum_{i\in {\mathbb{Z}}}\eta
_{i}e_{i}\Big )\Big \|_{\ell _{\theta ,q}(x)}
\end{align*}%
and we obtain
\begin{equation*}
\Big \|T_{0}\Big (\sum_{i\in {\mathbb{Z}}}\eta _{i}e_{i}\Big )\Big
\|_{\ell _{\theta ,q}(x)}\leq \bigg (\sum_{k\in {\mathbb{Z}}}\big
(2^{-\theta k}J(2^{k},z_{k};\vec{X})\big )^{q}\bigg )^{1/q}\leq
\gamma \Big \|\sum_{i\in {\mathbb{Z}}}\eta _{i}e_{i}\Big \|_{\ell
_{\theta ,q}(x)}{\text{.}}
\end{equation*}%
This means that the operator $T_{0}$ is bounded on the subspace $U^{0}$\ of $%
\ell _{\theta ,q}(x)$ and the above proof shows that its norm does
not depend on $x$\ from the $\ker _{X_{0}+X_{1}}A\setminus \left\{
0\right\} $.
\end{proof}


\begin{corollary}
\label{Corollary1} Let $A$ be an injective operator on $(X_{0},X_{1})_{%
\theta ,q}$ such that $A\left( (X_{0},X_{1})_{\theta ,q}\right) $
is closed in $(Y_{0},Y_{1})_{\theta ,q}$. If $x\in \ker
_{X_{0}+X_{1}}A\setminus \left\{ 0\right\} $ and $\theta \in
\left[ \alpha (x),\beta (x)\right] $ then $U^{0}$ is not dense in
$U$ in the norm of $\ell _{\theta ,q}(x)$ and
\begin{equation*}
\sum_{k<0}e_{k}\notin \ell _{\theta ,q}(x){\text{, }}\sum_{k\geq
0}e_{k}\notin \ell _{\theta ,q}(x).
\end{equation*}
\end{corollary}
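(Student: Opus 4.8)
The plan is to argue by contradiction: assuming that $U^0$ is dense in $U$ with respect to the norm of $\ell_{\theta ,q}(x)$, I will show that $S-I$ is invertible on $\ell_{\theta ,q}(x)$, which is impossible by Lemma \ref{Lemma1}(b), since by hypothesis $\theta \in [\alpha(x),\beta(x)]$ and $A$ is injective on $(X_{0},X_{1})_{\theta ,q}$. Once the first assertion is established, the two statements about $\sum_{k<0}e_k$ and $\sum_{k\ge 0}e_k$ will follow quickly. I write the proof for $1\le q<\infty$; for $q=\infty$ one replaces $(\cdot)_{\theta ,q}$ and $\ell_{\theta ,q}(x)$ by the regular substitutes $(\cdot)_{\theta ,c_{0}}$ and the associated sequence space (as introduced earlier in the paper), so that the ``tail tends to $0$'' conclusions used below remain available.

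\textbf{Step 1: preliminary estimates for $S-I$.} Since $A$ is injective on $(X_{0},X_{1})_{\theta ,q}$, Lemma \ref{Lemma1}(a) gives that $S-I$ is injective on $\ell_{\theta ,q}(x)$ for every $x\in\ker_{X_{0}+X_{1}}A\setminus\{0\}$; and since in addition $A((X_{0},X_{1})_{\theta ,q})$ is closed, Lemma \ref{operatortT0} provides a constant $\gamma>0$, independent of $x$, with $\|T_{0}u\|_{\ell_{\theta ,q}(x)}\le\gamma\,\|u\|_{\ell_{\theta ,q}(x)}$ for all $u\in U^{0}$. By Remark \ref{Remark3}, $S-I$ maps $U$ onto $U^{0}$ and $T_{0}(S-I)v=v$ for every $v\in U$; combining these two facts yields $\|v\|_{\ell_{\theta ,q}(x)}\le\gamma\,\|(S-I)v\|_{\ell_{\theta ,q}(x)}$ for all $v\in U$. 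As the finite sequences (that is, $U$) are dense in $\ell_{\theta ,q}(x)$ when $q<\infty$ and $S-I$ is bounded, this lower estimate extends by continuity to all of $\ell_{\theta ,q}(x)$; hence $S-I$ is bounded below, and so has closed range.

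\textbf{Step 2: density of $U^0$ forces invertibility; conclusion of the remaining assertions.} Suppose $U^{0}$ is dense in $U$. Since $U$ is dense in $\ell_{\theta ,q}(x)$, so is $U^{0}$, and therefore the range of $S-I$, which contains $(S-I)(U)=U^{0}$, is dense; being also closed by Step 1, it equals $\ell_{\theta ,q}(x)$. Thus $S-I$ is a bounded bijection of $\ell_{\theta ,q}(x)$, i.e. invertible, contradicting Lemma \ref{Lemma1}(b). Hence $U^{0}$ is not dense in $U$; equivalently, recalling $U=U^{0}+\mathrm{span}\{e_{0}\}$, we have $e_{0}\notin\overline{U^{0}}$ (closure in $\ell_{\theta ,q}(x)$). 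For the last two claims: if $\sum_{k<0}e_{k}\in\ell_{\theta ,q}(x)$, then $\sum_{k<0}\big(K(2^{k},x;\vec X)/2^{\theta k}\big)^{q}<\infty$, so $\|e_{k}\|_{\ell_{\theta ,q}(x)}=K(2^{k},x;\vec X)/2^{\theta k}\to0$ as $k\to-\infty$; since $e_{0}-e_{-n}\in U^{0}$ for every $n\ge1$ and $e_{0}-e_{-n}\to e_{0}$ in $\ell_{\theta ,q}(x)$, this forces $e_{0}\in\overline{U^{0}}$, a contradiction. The same argument with $e_{0}-e_{n}\in U^{0}$ and $n\to+\infty$ shows $\sum_{k\ge0}e_{k}\notin\ell_{\theta ,q}(x)$.

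\textbf{Main obstacle.} The delicate point is Step 1. The boundedness of $T_{0}$ is granted only on the subspace $U^{0}$ (Lemma \ref{operatortT0}), not on all of $\ell_{\theta ,q}(x)$, so one cannot invert $S-I$ directly; the trick is to route everything through the algebraic identities of Remark \ref{Remark3} (namely $(S-I)T_{0}=\mathrm{id}$ on $U^{0}$ and $T_{0}(S-I)=\mathrm{id}$ on $U$) together with density of finite sequences, upgrading the estimate on $U$ to a genuine lower bound for $S-I$ everywhere. For $q=\infty$ the corresponding regular $(\cdot)_{\theta ,c_{0}}$ framework must be substituted throughout so that the density and tail arguments of Steps 1 and 2 still go through.
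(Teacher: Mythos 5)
Your proof is correct, and it takes a genuinely different route from the paper's. The paper argues that if $U^{0}$ were dense in $U$, then the bounded operator $T_{0}|_{U^{0}}$ would extend to a bounded operator on all of $\ell_{\theta,q}(x)$, and then cites Lemma~4 of \cite{AK1} to conclude that this extension is a two-sided inverse of $S-I$, contradicting Lemma~\ref{Lemma1}(b); for the remaining two assertions, the paper argues via the equivalence between unboundedness of $T_{0}$ (resp.~$T_{1}$) on $U$ and $T_{0}e_{0}\notin\ell_{\theta,q}(x)$ (resp.~$T_{1}e_{0}\notin\ell_{\theta,q}(x)$). You instead work with $S-I$ directly: from $T_{0}(S-I)=\mathrm{id}$ on $U$ and the uniform bound of Lemma~\ref{operatortT0} you deduce that $S-I$ is bounded below on $U$, hence (by density of finite sequences for $q<\infty$) on all of $\ell_{\theta,q}(x)$, so it has closed range; density of $U^{0}=(S-I)(U)$ would then force surjectivity and invertibility, contradicting Lemma~\ref{Lemma1}(b). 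For the two final claims you bypass $T_{0}$ and $T_{1}$ entirely: the $q$-summability of the tail forces $\|e_{-n}\|_{\ell_{\theta,q}(x)}\to 0$, so $e_{0}-e_{-n}\to e_{0}$ with $e_{0}-e_{-n}\in U^{0}$, directly placing $e_{0}$ in $\overline{U^{0}}$. This buys you a proof that is self-contained (it avoids the external black box Lemma~4 of \cite{AK1}) and, for the last two assertions, somewhat more transparent. The trade-off is that you must be explicit about the $q=\infty$ case, where $U$ is no longer dense in $\ell_{\theta,\infty}(x)$ and the tail need not vanish: your indicated fix of passing to the $c_{0}$-substitute $\ell_{\theta,c_{0}}(x)$ (the closure of $U$, where density and the vanishing-tail argument do hold) is in the spirit of the paper's own handling of $q=\infty$, but this substitution should be carried out carefully, whereas the paper's route through $T_{0}e_{0}$ is more indifferent to the distinction.
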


\begin{proof} Indeed, if we suppose that $U^{0}$ is dense in $U$, then
the operator $T_{0}$ can be extended to a~bounded operator on $U$
and therefore (see \cite[lemma 4]{AK1}), it will be an inverse
operator to $S-I$ on the whole space $\ell _{\theta ,q}(x)$. But
from Lemma \ref{Lemma1} we know that $S-I$ is not invertible on
$\ell _{\theta ,q}(x)$ for $\theta \in \left[ \alpha (x),\beta
(x)\right] $, so $U^{0}$ is not dense in $U$ in the norm of $\ell
_{\theta ,q}(x)$. Since $U=U^{0}+\mathrm{span}\left\{
e_{0}\right\} $ and $T_{0}$ is bounded on $U^{0}$, unboundedness
of $T_{0}$ on $U$ is equivalent to $T_{0}e_{0}\notin \ell _{\theta
,q}(x)$. Similarly, the unboundedness of $T_{1}$ on $U$ is
equivalent to $T_{1}e_{0}=\sum_{k\geq 0}e_{k}\notin \ell _{\theta
,q}(x).$
\end{proof}

Let us observe that $e_{n}-e_{0}\in U^{0}$ for $n<0$. Then from Lemma \ref%
{operatortT0}, it follows that for every $x\in \ker
_{X_{0}+X_{1}}A$ and each $k$ with $n<k<0$ we have
\begin{align}
2^{-\theta k}K(2^{k},x;\vec{X})& \leq \left\Vert
T_{0}(e_{n}-e_{0})\right\Vert _{\ell _{\theta ,q}(x)}\leq \gamma
\left\Vert
e_{n}-e_{0}\right\Vert _{\ell _{\theta ,q}(x)}  \label{T4.5} \\
& \leq \gamma (2^{-\theta n}K(2^{n},x;\vec{X})+K(1,x;\vec{X})),
\notag
\end{align}%
where $\gamma >0$ does not depend on $n<0$ and $x\in \ker
_{X_{0}+X_{1}}A$.

The next lemma shows that the second term on the right hand side
can be estimated by the first. In the proof we will use assumption
that the kernel of $A$ in $X_{0}+X_{1}$ is a finite dimensional
space (we did not use this assumption above).


\begin{lemma}
\label{Lemma3} Suppose that an operator $A\colon
(X_{0},X_{1})_{\theta ,q}\rightarrow (Y_{0},Y_{1})_{\theta ,q}$ is
injective and $A\left( (X_{0},X_{1})_{\theta ,q}\right) $ is
closed in $(Y_{0},Y_{1})_{\theta ,q}$. Then there exists a
constant $\gamma >0$ such that for all $x\in $ $\ker
_{X_{0}+X_{1}}A$ and all $n\in {\mathbb{Z}}$,
\begin{equation*}
2^{-\theta n}K(2^{n},x;\vec{X})\geq \gamma K(1,x;\vec{X}).
\end{equation*}
\end{lemma}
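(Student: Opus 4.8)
The plan is a compactness argument on the finite-dimensional kernel, driven by the uniform boundedness of $T_{0}$ on $U^{0}$ furnished by Lemma~\ref{operatortT0}. \emph{Reduction.} Write $a_{k}(x):=2^{-\theta k}K(2^{k},x;\vec{X})$; for each $k$ this is a norm on $\ker_{X_{0}+X_{1}}A$, being the restriction to that subspace of the norm $2^{-\theta k}K(2^{k},\cdot\,;\vec{X})$ of $X_{0}+X_{1}$. The inequality to prove, $a_{n}(x)\ge\gamma\,a_{0}(x)$, is homogeneous, so it suffices to show $\inf_{n\in\mathbb{Z}}\inf_{x\in\Sigma}a_{n}(x)>0$, where $\Sigma:=\{x\in\ker_{X_{0}+X_{1}}A:a_{0}(x)=1\}$, a set which is compact precisely because $\ker_{X_{0}+X_{1}}A$ is finite-dimensional. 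For each fixed $n$ the inner infimum is a positive number (a continuous positive function on a compact set), so the whole point is uniformity in $n$.

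\emph{The basic estimate.} Since $e_{m}-e_{n}\in U^{0}$ (see (\ref{NM18})) and $T_{0}(e_{m}-e_{n})=-\sum_{k=m}^{n-1}e_{k}$, the bound $\|T_{0}u\|_{\ell_{\theta,q}(x)}\le\gamma\|u\|_{\ell_{\theta,q}(x)}$ of Lemma~\ref{operatortT0} (with $\gamma$ independent of $x$) gives, for all integers $m<n$ and all $x\in\ker_{X_{0}+X_{1}}A\setminus\{0\}$,
\[
\Big(\sum_{k=m}^{n-1}a_{k}(x)^{q}\Big)^{1/q}\le\gamma\big(a_{m}(x)^{q}+a_{n}(x)^{q}\big)^{1/q},
\]
that is, inequality (\ref{T4.5}) extended to arbitrary index windows (and $T_{1}$ yields the same estimate). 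Consequently, writing $T_{m}(x):=\sum_{k\ge m}a_{k}(x)^{q}$, whenever $a_{n}(x)\to0$ as $n\to+\infty$ one obtains, by letting $n\to\infty$ above, $T_{m}(x)\le\gamma^{q}a_{m}(x)^{q}$ for every $m$, whence the tails decay geometrically: $T_{m+1}(x)\le(1-\gamma^{-q})T_{m}(x)$.

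\emph{Contradiction.} Suppose $\inf_{n}\inf_{\Sigma}a_{n}=0$; pick $x_{j}\in\Sigma$ and $n_{j}$ with $a_{n_{j}}(x_{j})\to0$, and pass to a subsequence with $x_{j}\to x^{\ast}\in\Sigma$ (so $x^{\ast}\ne0$ and $x^{\ast}\in\ker_{X_{0}+X_{1}}A$, the kernel being closed). A constant subsequence of $(n_{j})$ would force $a_{n^{\ast}}(x^{\ast})=0$, impossible, so $|n_{j}|\to\infty$; after a further subsequence assume $n_{j}\to+\infty$ (the case $n_{j}\to-\infty$ is symmetric, $T_{1}$ replacing $T_{0}$, and leads to $\alpha(x^{\ast})>\theta$). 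Applying the basic estimate to $x_{j}$ with $m=0$, $n=n_{j}$ and letting $j\to\infty$ term by term gives $\sum_{k\ge0}a_{k}(x^{\ast})^{q}\le\gamma^{q}<\infty$; in particular $a_{n}(x^{\ast})\to0$, so the geometric-tail remark applies to $x^{\ast}$ and yields a power bound $K(t,x^{\ast};\vec{X})\le C\,t^{\theta-\varepsilon}$ for $t\ge1$ with some $\varepsilon>0$. On the other hand, $A$ is injective on $(X_{0},X_{1})_{\theta,q}$ and $x^{\ast}\in\ker_{X_{0}+X_{1}}A\setminus\{0\}$, so $x^{\ast}\notin(X_{0},X_{1})_{\theta,q}$, i.e. $\sum_{k\in\mathbb{Z}}a_{k}(x^{\ast})^{q}=\infty$; letting $m\to-\infty$ in $T_{m}(x^{\ast})\le\gamma^{q}a_{m}(x^{\ast})^{q}$ then forces $a_{m}(x^{\ast})\to\infty$, i.e. $K(t,x^{\ast};\vec{X})/t^{\theta}\to\infty$ as $t\to0$. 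Finally $x^{\ast}\notin X_{0}\cup X_{1}$, because $A$ is invertible on the endpoints. These properties of $K(\cdot,x^{\ast};\vec{X})$ give $\beta(x^{\ast})<\theta$, contradicting the standing assumption $\theta\in\Omega_{A}\subseteq[\alpha(x^{\ast}),\beta(x^{\ast})]$ (see (\ref{NM17})); hence the uniform lower bound holds.

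\emph{Where the work is.} The only non-routine step is this last implication: that a nonzero kernel element whose $K$-functional has a strict power decay at $+\infty$, blows up faster than $t^{\theta}$ at $0$, and is in neither endpoint space, cannot satisfy $\theta\in[\alpha(x^{\ast}),\beta(x^{\ast})]$. I expect to establish it by self-improving the quasi-monotonicity $\sum_{k\ge m}a_{k}(x^{\ast})^{q}\le\gamma^{q}a_{m}(x^{\ast})^{q}$ to a genuine two-sided power comparison (a discrete reverse-H\"older/Gehring argument on the tails $T_{m}(x^{\ast})$), together with the invertibility-under-perturbation results recalled in Theorem~\ref{KM} and Remark~\ref{KMR} (cf. Lemma~5 of \cite{AK1}) and Lemma~\ref{Lemma1}(b) — the same $\varepsilon$-room device that is used afterwards to upgrade $\beta_{0}(\ker_{X_{0}+X_{1}}A)\le\theta$ to a strict inequality. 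The companion inequality $2^{-\theta n}K(2^{n},x;\vec{X})\ge\gamma K(1,x;\vec{X})$ for $n>0$ is proved verbatim with $T_{1}$ in place of $T_{0}$.
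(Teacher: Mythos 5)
Your proof follows the same architecture as the paper's: both rest on the uniform bound for $T_{0}$ on $U^{0}$ from Lemma~\ref{operatortT0}, a compactness argument on the finite-dimensional kernel, passage to a limit element $x^{\ast}$, and a contradiction via Corollary~\ref{Corollary1}. The difference lies in the last step, and there you take a detour that costs you the cleanest argument. Once you have established $\sum_{k\ge0}a_{k}(x^{\ast})^{q}\le\gamma^{q}<\infty$ — i.e., $\sum_{k\ge0}e_{k}\in\ell_{\theta,q}(x^{\ast})$ — Corollary~\ref{Corollary1} (applicable because $\theta\in\Omega_{A}\subseteq[\alpha(x^{\ast}),\beta(x^{\ast})]$) already states $\sum_{k\ge0}e_{k}\notin\ell_{\theta,q}(x^{\ast})$, and you are done; this is exactly what the paper does (it reaches $\sum_{k<0}a_{k}(x_{\ast})^{q}<\infty$ and invokes the same corollary). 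You instead continue to establish $\beta(x^{\ast})<\theta$. Two caveats about that step: the three properties you display (power decay of $K$ at $\infty$, blowup of $K(t)/t^{\theta}$ at $0$, $x^{\ast}\notin X_{0}\cup X_{1}$) do \emph{not} by themselves force $\beta(x^{\ast})<\theta$; what does is the stronger tail inequality $T_{m}(x^{\ast})\le\gamma^{q}a_{m}(x^{\ast})^{q}$, valid for every $m\in\mathbb{Z}$. And that inequality does in fact close the gap by the elementary iteration you sketch, with no need for Theorem~\ref{KM}, Remark~\ref{KMR} or Lemma~\ref{Lemma1}(b): $T_{m+1}\le(1-\gamma^{-q})T_{m}$ gives $a_{k}\le\gamma(1-\gamma^{-q})^{(k-m)/q}a_{m}$ for all $k\ge m$, hence $K(t)/t^{\theta-\varepsilon}$ is quasi-decreasing with $\varepsilon=-q^{-1}\log_{2}(1-\gamma^{-q})>0$, so $\beta(x^{\ast})\le\theta-\varepsilon<\theta$. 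So your proposal is correct and can be completed, but it re-derives inside Lemma~\ref{Lemma3} a fragment of the $\varepsilon$-room argument that the paper defers to Lemma~\ref{Lemma4}; the direct appeal to Corollary~\ref{Corollary1} the moment you have $\sum_{k\ge0}a_{k}(x^{\ast})^{q}<\infty$ is shorter and keeps the self-improvement machinery where it belongs.
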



\begin{proof} We prove the inequality for $n<0$ (for $n>0$ the proof is
similar; in this case we need to use the operator $T_{1}$ instead of $T_{0}$%
). Let $x\in \ker _{X_{0}+X_{1}}A$ and $n<0$. As $e_{n}-e_{0}\in
U^{0}$ then
from boundedness of $T_{0}$ on $U^{0}$ in $\ell _{\theta ,q}(x)$ (see Lemma %
\ref{operatortT0}) we have
\begin{align}
\bigg (\sum_{k:n\leq k<0}\big (2^{-\theta k}K(2^{k},x;\vec{X})\big )^{q}%
\bigg )^{1/q}& =\Vert T_{0}(e_{n}-e_{0})\Vert _{\ell _{\theta
,q}(x)}\leq \gamma \left\Vert e_{n}-e_{0}\right\Vert _{\ell
_{\theta ,q}(x)}
\label{K101} \\
& \leq \gamma \left( 2^{-\theta
n}K(2^{n},x;\vec{X})+K(1,x;\vec{X})\right) . \notag
\end{align}%
From Corollary \ref{Corollary1}, it follows that
$\sum_{k<0}e_{k}\notin \ell _{\theta ,q}(x)$. Thus the left hand
side approaches infinity as $n\rightarrow -\infty$, and so the
right hand side also approaches infinity. Therefore, there exists
a~positive integer $N(x)$ such that for all $n<-N(x)$,
\begin{equation*}
2^{-\theta n}K(2^{n},x;\vec{X})\geq K(1,x;\vec{X}){\text{.}}
\end{equation*}%
Let $N(x)$ be the smallest positive integer for which the above
inequality holds for all $n<-N(x)$ and put
\begin{equation*}
N:=\sup_{x\in \ker _{X_{0}+X_{1}}A}N(x){\text{.}}
\end{equation*}%
Then $N<\infty $. Indeed, if $N$ were not finite, then there would
exists a
sequence $\{x_{k}\}$ in $\ker _{X_{0}+X_{1}}A$ such that $%
N(x_{k})\rightarrow \infty $ as $k\rightarrow \infty $ and
\begin{equation*}
2^{\theta N(x_{k})}K(2^{-N(x_{k})},x_{k};\vec{X})<K(1,x_{k};\vec{X}){\text{.}%
}
\end{equation*}%
By homogeneity we may assume that
\begin{equation*}
\left\Vert x_{k}\right\Vert
_{X_{0}+X_{1}}=K(1,x_{k};\vec{X})=1{\text{.}}
\end{equation*}%
Since the space $\ker _{X_{0}+X_{1}}A$ is finite dimensional,
without loss
of generality we may assume that the sequence $\{x_{k}\}$ converges in $%
\left\Vert \cdot \right\Vert _{X_{0}+X_{1}}$ to some element
$x_{\ast }\in
\ker _{X_{0}+X_{1}}A\setminus \left\{ 0\right\} $. Then for each integer $%
m<0 $ we have (using that $N(x_{k})\rightarrow \infty $ when
$k\rightarrow \infty $)
\begin{align*}
\bigg (& \sum_{l:m\leq l<0}\big (2^{-\theta l}K(2^{l},x_{\ast
};\vec{X})\big
)^{q}\bigg )^{1/q}=\lim_{k\rightarrow \infty }\bigg (\sum_{l:m\leq l<0}\big (%
2^{-\theta l}K(2^{l},x_{k};\vec{X})\big )^{q}\bigg )^{1/q} \\
& \leq \lim_{k\rightarrow \infty }\bigg (\sum_{l:-N(x_{k})\leq l<0}\big (%
2^{-\theta l}K(2^{l},x_{k};\vec{X})\big )^{q}\bigg )^{1/q}=\lim_{k%
\rightarrow \infty }\left\Vert
T_{0}(e_{-N(x_{k})}-e_{0})\right\Vert _{\ell
_{\theta ,q}(x_{k})} \\
& \leq \lim_{k\rightarrow \infty }\gamma (2^{-\theta
(-N(x_{k}))}K(2^{-N(x_{k})},x_{k};\vec{X})+K(1,x_{k};\vec{X}))\leq
\lim_{k\rightarrow \infty }\gamma K(1,x_{k};\vec{X})=\gamma
{\text{.}}
\end{align*}%
In the last inequality we used the fact that the constant $\gamma >0$ does not depend on $%
x_{k}$ (norms of operators $T_{0{\text{ }}}$ are uniformly bounded). Since $%
m<0$ is arbitrary,
\begin{equation*}
\bigg (\sum_{l:l<0}\big (2^{-\theta l}K(2^{l},x_{\ast };\vec{X})\big )^{q}%
\bigg )^{1/q}\leq \gamma ,
\end{equation*}%
and this contradicts the property that $\sum_{k<0}e_{k}\notin \ell
_{\theta ,q}(x_{\ast })$ (see Corollary \ref{Corollary1}). Thus we
have proved that there exists a~natural number $N$ such that for
every $x\in \ker _{X_{0}+X_{1}}A$ and each $n<-N$
\begin{equation*}
2^{-\theta n}K(2^{n},x;\vec{X})\geq K(1,x;\vec{X}){\text{.}}
\end{equation*}%
Combining this with the fact that $x\mapsto 2^{-\theta
n}K(2^{n},x;X_{0},X_{1})$ are equivalent norms on the finite dimensional space $%
\ker _{X_{0}+X_{1}}A$ for each integer $n$, we conclude that there
exists a~constant $\gamma >0$ such that for each $n<0$
\begin{equation*}
2^{-\theta n}K(2^{n},x;\vec{X})\geq \gamma
K(1,x;\vec{X}){\text{{}}}
\end{equation*}%
and this completes the proof.
\end{proof}


\begin{remark}
As the left hand side of $(\ref{K101})$ approaches to infinity
as $n\rightarrow -\infty $, we have%
\begin{equation}
\lim_{n\to - \infty} 2^{-\theta n}K(2^{n},x;\vec{X})= \infty
\text{.} \label{K102}
\end{equation}
\end{remark}

Note that from inequality (\ref{T4.5}) and Lemma \ref{Lemma3}, we
obtain
\begin{equation*}
\frac{K(2^{n},x;\vec{X})}{K(2^{k},x;\vec{X})}\geq \gamma \left( \frac{2^{n}}{%
2^{k}}\right) ^{\theta }
\end{equation*}%
with a~constant $\gamma >0$ independent of $n<k<0$ and any $x\in
\ker
_{X_{0}+X_{1}}A\setminus \left\{ 0\right\} $. This implies that for all $%
0<s\leq t\leq 1$ and all $x\in \ker _{X_{0}+X_{1}}A\setminus
\left\{ 0\right\} $,
\begin{equation*}
\frac{K(s,x;\vec{X})}{K(t,x;\vec{X})}\geq \gamma \left(
\frac{s}{t}\right) ^{\theta }.
\end{equation*}%
This shows that
\begin{equation*}
\beta _{0}(\ker _{X_{0}+X_{1}}A)\leq \theta {\text{.}}
\end{equation*}%
However, to prove the required inequality $\beta _{0}(\ker
_{X_{0}+X_{1}}A)<\theta $ we need some extra considerations. We
start with following key proposition.


\begin{proposition}
\label{Proposition1}Let
\begin{equation*}
\mathrm{dist}_{\ell _{\theta
,q}(x)}(e_{0},U^{0}):=\func{inf}_{g\in U^{0}}\left\Vert
e_{0}-g\right\Vert _{\ell _{\theta ,q}(x)}
\end{equation*}%
be the distance from the element $e_{0}$ to the subspace $U^{0}$
in the metric of $\ell _{\theta ,q}(x)$. Then there exists
constant $\gamma >0$ such that for all $x\in \ker _{X_{0}+X_{1}}A$
we have
\begin{equation*}
\mathrm{dist}_{\ell _{\theta ,q}(x)}(e_{0},U^{0})\geq \gamma
\left\Vert x\right\Vert _{X_{0}+X_{1}}{\text{ .}}
\end{equation*}
\end{proposition}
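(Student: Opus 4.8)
The plan is to identify $\mathrm{dist}_{\ell_{\theta,q}(x)}(e_0,U^0)$ with a weighted $\ell^{q'}$ quantity (here $q'$ is the conjugate exponent, $1/q+1/q'=1$) and then to bound that quantity, uniformly in $x\in\ker_{X_0+X_1}A\setminus\{0\}$, by $K(1,x;\vec X)=\|x\|_{X_0+X_1}$; the case $x=0$ is trivial, and I carry out the argument for $1\le q<\infty$. Write $w_n=w_n(x)=K(2^n,x;\vec X)/2^{\theta n}$ and $r_n=r_n(x)=w_n/w_0$, so $r_0=1$ and $w_0=\|x\|_{X_0+X_1}$. For any finitely supported $g=\sum_n\eta_ne_n\in U^0$ the coefficients $\xi_n$ of $e_0-g$ satisfy $\sum_n\xi_n=1$, hence by Hölder's inequality
\[
1=\Big|\sum_n\xi_n\Big|\le\sum_n|\xi_n|\le\Big(\sum_n|\xi_n|^qw_n^q\Big)^{1/q}\Big(\sum_n w_n^{-q'}\Big)^{1/q'}=\|e_0-g\|_{\ell_{\theta,q}(x)}\,w_0^{-1}\Big(\sum_n r_n^{-q'}\Big)^{1/q'}.
\]
Thus $\|e_0-g\|_{\ell_{\theta,q}(x)}\ge w_0\big(\sum_n r_n^{-q'}\big)^{-1/q'}$, and it suffices to produce a constant $C$, independent of $x$, with $\sum_{n\in\mathbb Z}r_n(x)^{-q'}\le C$.

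For the uniform summability I would use the two inputs already available. Lemma~\ref{Lemma3} supplies a uniform $\gamma_0>0$ with $r_n(x)\ge\gamma_0$ for all $n$ and all $x$; in particular $B_m:=\sum_{k=m}^{-1}r_k^q\ge|m|\gamma_0^q$ for $m<0$. Lemma~\ref{operatortT0} (together with Remark~\ref{Remark3}, $T_0=T_1$ on $U^0$) gives a uniform bound $M$, which we may take $\ge 2$, for the norms of $T_0$ and $T_1$ on $U^0\subset\ell_{\theta,q}(x)$. Applying this to the test vectors $e_0-e_m\in U^0$, and using $T_0(e_0-e_m)=\sum_{m\le k<0}e_k$ for $m<0$ and $T_1(e_0-e_m)=-\sum_{0\le k<m}e_k$ for $m>0$, one gets
\[
\sum_{m\le k<0}r_k^q\le M^q(1+r_m^q)\ \ (m<0),\qquad \sum_{0\le k<m}r_k^q\le M^q(1+r_m^q)\ \ (m>0).
\]

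The heart of the matter is to convert the first inequality into geometric growth of $r_m$ as $m\to-\infty$, uniformly in $x$. Writing $r_m^q=B_m-B_{m+1}$ it becomes $B_{m+1}\le\sigma B_m+1$ with $\sigma=1-M^{-q}\in(0,1)$, i.e. $B_{m+1}-M^q\le\sigma(B_m-M^q)$. Since $B_m\ge|m|\gamma_0^q$, there is a uniform index $m_\ast$ (depending only on $\gamma_0,M$, e.g. $m_\ast=\lceil 2M^q/\gamma_0^q\rceil$) with $B_{-m_\ast}\ge 2M^q$; iterating the recursion downward from $-m_\ast$ yields $B_m\ge M^q+\sigma^{-(|m|-m_\ast)}(B_{-m_\ast}-M^q)\ge\sigma^{-(|m|-m_\ast)}$ for all $m\le-m_\ast$, and then $r_m^q=B_m-B_{m+1}\ge(\sigma^{-1}-1)(B_{m+1}-M^q)\ge c_0\,\sigma^{-|m|}$ for $|m|$ past a uniform threshold, with $c_0>0$ uniform. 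The symmetric argument with $T_1$ and the companion inequality gives $r_m^q\ge c_0\,\sigma^{-|m|}$ for $m$ past a uniform positive threshold. Summing, $\sum_n r_n^{-q'}$ is bounded by finitely many terms each at most $\gamma_0^{-q'}$ plus $2\sum_{j\ge 1}(c_0\sigma^{-j})^{-q'/q}<\infty$, a bound $C$ independent of $x$; feeding this back into the first paragraph gives $\mathrm{dist}_{\ell_{\theta,q}(x)}(e_0,U^0)\ge C^{-1/q'}\|x\|_{X_0+X_1}$.

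I expect the main obstacle to be exactly the uniformity in $x$ in this last step: the recursion $B_{m+1}-M^q\le\sigma(B_m-M^q)$ only forces geometric growth once $B_m$ has crossed the level $M^q$, and one needs that crossing to occur at an index $-m_\ast$ that is independent of $x$ — which is precisely what the uniform lower bound $r_n\ge\gamma_0$ of Lemma~\ref{Lemma3} and the $x$-free operator bound $M$ of Lemma~\ref{operatortT0} provide. (Tracking the constants also uses $\sigma^{-1}/(\sigma^{-1}-1)=M^q\le\tfrac12 B_{-m_\ast}$ at the base of the iteration, so that the geometric factor dominates the additive $M^q$ term.)
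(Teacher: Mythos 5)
Your proof is correct and takes a genuinely different route. The paper argues by contradiction: supposing a normalized $x\in\ker_{X_0+X_1}A$ and $g\in U^0$ with $\|e_0-g\|_{\ell _{\theta ,q}(x)}$ arbitrarily small, it uses Lemma~\ref{Lemma3} to force $|1-g_0|$ small and all other $|g_i|$ small, notes that one of $|g_0+\sum_{i<0}g_i|$, $|g_0+\sum_{i>0}g_i|$ must be $\geq 1/3$, and then reaches a contradiction because the uniform bound on $T_0$ from Lemma~\ref{operatortT0} cannot coexist with $T_0g$ carrying a coefficient of modulus $\geq 1/12$ at a remote coordinate $e_{-N}$ whose weight $2^{\theta N}K(2^{-N},x;\vec X)$ diverges (formula (\ref{K102})). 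You argue directly: H\"older converts the distance into the lower bound $w_0\big(\sum_n r_n^{-q'}\big)^{-1/q'}$, and the uniform boundedness of $T_0,T_1$ applied to the test vectors $e_0-e_m$ yields a linear recursion on the partial sums $B_m$ with fixed point $M^q$; together with the uniform lower bound $r_n\geq\gamma_0$ from Lemma~\ref{Lemma3}, which locates a uniform crossing index $m_\ast$, this forces geometric growth $r_m^q\gtrsim\sigma^{-|m|}$ and hence uniform summability of $\sum_n r_n^{-q'}$. Both routes rest on the same two lemmas, but yours extracts a sharper, quantitative conclusion --- a uniform geometric rate of growth of the normalized $K$-functional of kernel elements, with an explicit constant in terms of $\gamma_0$ and $M$ --- where the paper's contradiction argument only uses the qualitative divergence. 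One caveat: you confine the H\"older step to $1\leq q<\infty$, whereas the paper's argument also covers $q=\infty$ (used in Lemma~\ref{Lemma4}'s $c_0$ branch); your $B_m$-recursion is a sum estimate and would need to be replaced by a variant adapted to the partial-maximum inequality that $q=\infty$ delivers.
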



\begin{proof} Suppose that the statement is not true. Then for any $%
\varepsilon >0$ there exists $x\in \ker _{X_{0}+X_{1}}A$ such that ${\text{%
dist}}_{\ell _{\theta ,q}(x)}(e_{0},U^{0})<\varepsilon \left\Vert
x\right\Vert _{X_{0}+X_{1}}$. Moreover, from the homogeneity of the norm in $%
\ell _{\theta ,q}(x)$, it follows that we can suppose that
$\left\Vert x\right\Vert _{X_{0}+X_{1}}=1$. Then there exists an
element $g\in U^{0}$ such that
\begin{equation*}
\left\Vert e_{0}-g\right\Vert _{\ell _{\theta ,q}(x)}<\varepsilon
.
\end{equation*}%
Thus if $g=\sum_{i\in {\mathbb{Z}}}g_{i}e_{i}$, then
\begin{equation*}
\bigg (\sum_{i\in {\mathbb{Z\setminus }}\left\{ 0\right\} }\Big (\frac{%
\left\vert g_{i}\right\vert K(2^{i},x;\vec{X})}{2^{\theta i}}\Big
)^{q}+\Big
(\frac{\left\vert 1-g_{0}\right\vert K(2^{0},x;\vec{X})}{2^{\theta \cdot 0}}%
\Big )^{q}\bigg )^{1/q}<\varepsilon.
\end{equation*}%
In particular this yields the estimate
\begin{equation}
\left\vert 1-g_{0}\right\vert <\varepsilon \text{.}  \label{K12}
\end{equation}%
As from Lemma \ref{Lemma3} we have
\begin{equation*}
2^{-\theta i}K(2^{i},x;\vec{X})\geq \gamma K(1,x;\vec{X})=\gamma
\left\Vert x\right\Vert _{X_{0}+X_{1}}=\gamma
\end{equation*}%
with a~positive $\gamma $ independent of $x\in \ker _{X_{0}+X_{1}}A{\mathbb{%
\setminus }}\left\{ 0\right\} $, thus for all $i\in {\mathbb{Z\setminus }}%
\left\{ 0\right\} $ we have
\begin{equation}
\left\vert g_{i}\right\vert <\varepsilon /\gamma {\text{.}}
\label{K13}
\end{equation}%
Since $g\in U^{0}$, $\sum_{i\in {\mathbb{Z}}}g_{i}=0$ we have
\begin{equation*}
1-\varepsilon \leq |g_{0}|=\Big |2g_{0}+\sum_{i\in {\mathbb{Z\setminus }}%
\left\{ 0\right\} }g_{i}\Big |\leq \Big |g_{0}+\sum_{i<0}g_{i}\Big |+\Big |%
g_{0}+\sum_{i>0}g_{i}\Big |\,.
\end{equation*}%
In consequence $\varepsilon <1/3$ implies
\begin{equation*}
\Big |g_{0}+\sum_{i<0}g_{i}\Big |\geq 1/3\quad
\,{\text{or\thinspace \thinspace\ $\Big |g_{0}+\sum_{i>0}g_{i}\Big
|\geq 1/3$}}.
\end{equation*}%
Let us consider the case when
\begin{equation}
{\text{ }}\Big |g_{0}+\sum_{i>0}g_{i}\Big |\geq 1/3,  \label{K14}
\end{equation}%
(the case when $\left\vert g_{0}+\sum_{i<0}g_{i}\right\vert \geq
1/3$ can be
considered analogously by using operator $T_{1}$ instead of the operator $%
T_{0}$). Since $\left\Vert g\right\Vert _{\ell _{\theta
,q}(x)}\leq \left\Vert e_{0}\right\Vert _{\ell _{\theta
,q}(x)}+\left\Vert e_{0}-g\right\Vert _{\ell _{\theta
,q}(x)}<1+\varepsilon $, it follows from boundedness of $T_{0}$ on
$U^{0}$ (with constant independent of $x\in \ker
_{X_{0}+X_{1}}A{\mathbb{\setminus }}\left\{ 0\right\} $) that
\begin{equation*}
\left\Vert T_{0}g\right\Vert _{\ell _{\theta ,q}(x)}\leq \gamma
_{1}.
\end{equation*}%
Let us choose a~number $N$ such that $N\frac{\varepsilon }{\gamma
}\leq 1/4$
with constant $\gamma >0$ from (\ref{K13}). Then from estimates (\ref{K13})-(%
\ref{K14}) and formula (\ref{K10}) for $T_{0}$ we obtain
\begin{equation*}
\left\Vert T_{0}g\right\Vert _{\ell _{\theta ,q}(x)}\geq \left\vert \frac{1}{%
3}-\frac{1}{4}\right\vert \frac{K(2^{-N},x;\vec{X})}{2^{-\theta
N}}.
\end{equation*}%
If $\varepsilon \rightarrow 0$, then $N\rightarrow \infty $ and
$2^{\theta N}K(2^{-N},x;\vec{X})\rightarrow \infty $ (see
(\ref{K102})) a contradiction by \linebreak $\left\Vert
T_{0}g\right\Vert _{\ell _{\theta ,q}(x)}\leq \gamma _{1}$.
\end{proof}

The next lemma shows that if $\theta \in \Omega _{A}$, then closedness of $%
A\left( (X_{0},X_{1})_{\theta ,q}\right) $ in
$(Y_{0},Y_{1})_{\theta ,q}$ implies boundedness of $T_{0}$ on
$U^{0}$ not only on $\ell _{\theta ,q}(x)$ but also on $\ell
_{\tilde{\theta},q}(x)$ for $\tilde{\theta}\in \left( \theta
-\varepsilon ,\theta +\varepsilon \right) $ with $\varepsilon >0$
independent of $x\in \ker _{X_{0}+X_{1}}A\setminus \left\{
0\right\} .$


\begin{lemma}
\label{Lemma4} Suppose that $A$ is injective on
$(X_{0},X_{1})_{\theta ,q}$
and $A\left ((X_{0},X_{1})_{\theta ,q}\right )$ is closed in $%
(Y_{0},Y_{1})_{\theta ,q}$. If $\theta \in \Omega _{A}$ then there exists $%
\varepsilon >0$\ such that for all $\tilde {\theta }\in \left
(\theta -\varepsilon ,\theta +\varepsilon \right )$
\begin{equation*}
\left \Vert T_{0}\right \Vert _{U^{0}\cap \,\ell _{\tilde {\theta }%
,q}(x)\rightarrow \ell _{\tilde {\theta },q}(x)}\leq \gamma ,
\end{equation*}
where $\gamma $\ is some positive constant which does not depend
on $x$\ from the $\ker _{X_{0}+X_{1}}A$.
\end{lemma}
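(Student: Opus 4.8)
The plan is to reduce the estimate for $T_0$ to a quantitative Šneĭberg-type stability statement for the operator $S-I$, in the spirit of the argument already used in the proof of Theorem~\ref{TTT1}. The first step is the elementary reformulation: since $S-I$ maps $U$ bijectively onto $U^0$ with inverse $T_0=T_1$ on $U^0$ (Remark~\ref{Remark3}), the required inequality
\begin{equation*}
\Big\|T_0\Big(\sum_{i\in{\mathbb Z}}\eta_i e_i\Big)\Big\|_{\ell_{\tilde\theta,q}(x)}\le\gamma\Big\|\sum_{i\in{\mathbb Z}}\eta_i e_i\Big\|_{\ell_{\tilde\theta,q}(x)},\qquad \sum_{i\in{\mathbb Z}}\eta_i e_i\in U^0,
\end{equation*}
is equivalent to the lower bound $\|(S-I)u\|_{\ell_{\tilde\theta,q}(x)}\ge\gamma^{-1}\|u\|_{\ell_{\tilde\theta,q}(x)}$ for all $u\in U$ (apply it with $u=T_0 v$, $v=(S-I)u$). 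So it suffices to produce $\varepsilon>0$, independent of $x\in\ker_{X_0+X_1}A\setminus\{0\}$, such that $S-I$ is bounded below on $U$ in the norm of $\ell_{\tilde\theta,q}(x)$ for every $\tilde\theta\in(\theta-\varepsilon,\theta+\varepsilon)$, with a lower bound independent of $x$ and $\tilde\theta$.

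At $\tilde\theta=\theta$ this is precisely Lemma~\ref{operatortT0}, which gives that $T_0$ is bounded on $U^0$ in $\ell_{\theta,q}(x)$ with norm $\le\gamma_0$ independent of $x$; equivalently $S-I$ is bounded below on $U$ with constant $\gamma_0^{-1}$, uniformly in $x$. Moreover $S-I$ is always upper semi-Fredholm on $\ell_{\tilde\theta,q}(x)$ (its kernel is at most one-dimensional, spanned by $f_*=\sum_i e_i$ when the latter belongs to the space), its closed range contains $U^0$ and therefore has codimension $\le1$ because $U=U^0+\mathrm{span}\{e_0\}$; at $\tilde\theta=\theta$ the hypothesis $\theta\in\Omega_A$ enters through Corollary~\ref{Corollary1} and Proposition~\ref{Proposition1}, which ensure $f_*\notin\ell_{\theta,q}(x)$ (so $S-I$ is injective) and $\mathrm{dist}_{\ell_{\theta,q}(x)}(e_0,U^0)\ge\gamma\|x\|_{X_0+X_1}>0$ (so the codimension is exactly one), all constants being uniform in $x$. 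For $1\le q<\infty$ the finite sequences are dense in $\ell_{\tilde\theta,q}(x)$; for $q=\infty$ I would replace $\ell_{\tilde\theta,q}(x)$ everywhere by the closure of $U$ in it, the sequence analogue of the $(\cdot)_{\tilde\theta,c_0}$ functor, so that $U$ stays dense and the orbital machinery of Section~3.3 remains available.

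The next step is to view $\{\ell_{\tilde\theta,q}(x)\}_{\tilde\theta}$ as an interpolation scale with constants independent of $x$. Indeed $\ell_{\tilde\theta,q}(x)$ is the weighted sequence space $\ell^q$ with weight $2^{-\tilde\theta i}K(2^i,x;\vec X)$, so for any fixed $0<\theta_0<\theta<\theta_1<1$ one has
\begin{equation*}
\ell_{\tilde\theta,q}(x)=\big(\ell_{\theta_0,\infty}(x),\ell_{\theta_1,\infty}(x)\big)_{\eta,q},\qquad\eta=\frac{\tilde\theta-\theta_0}{\theta_1-\theta_0},
\end{equation*}
with equivalence constants depending only on $q,\theta_0,\theta_1$; in the language of Section~3.3 this says $\ell_{\tilde\theta,q}(x)$ is the orbit $Orb_{a_{\tilde\theta}}$ of one fixed couple, and the distance $\rho(Orb_{a_{\tilde\theta_0}},Orb_{a_{\tilde\theta_1}})$ is controlled by $|\tilde\theta_0-\tilde\theta_1|/\min(\tilde\theta_0,1-\tilde\theta_1)$ as in $(\ref{AP4})$, uniformly in $x$. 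Now I would apply the quantitative stability theorem for quotient/sub-spaces from \cite{KM} (Theorem~\ref{KM} with Remark~\ref{KMR}, the same tool used in the proof of Theorem~\ref{TTT1}), after casting ``$S-I$ bounded below on $U$ with closed range of codimension $\le1$'' as a genuine invertibility between closed subspaces exactly as was done with the half-line spaces $\ell_{\theta,q}(x)_{(-\infty,m]}$ in Theorem~\ref{TTT1}. The conclusion is an $\varepsilon>0$ depending only on $\|S-I\|\le3$, on $\gamma_0$, and on the scale constants — hence independent of $x$ — such that $S-I$ remains bounded below on $U$ in $\ell_{\tilde\theta,q}(x)$ for $|\tilde\theta-\theta|<\varepsilon$, with an $x$- and $\tilde\theta$-uniform lower bound. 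By the reformulation of the first paragraph this is the lemma.

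The main obstacle I anticipate is the uniformity in $x$ of the admissible $\varepsilon$: Šneĭberg-type theorems a priori tie $\varepsilon$ to the particular pair of spaces, whereas here both the scale and the operator $S-I$ vary with $x$. The delicate point to verify is that every input to the \cite{KM} estimate as quantified in Remark~\ref{KMR} — the operator norm of $S-I$, the norm of the inverse of the associated invertible operator at $\tilde\theta=\theta$ (controlled by $\gamma_0$ from Lemma~\ref{operatortT0}), and the embedding constants in the interpolation identity above — is bounded by a constant not depending on $x\in\ker_{X_0+X_1}A\setminus\{0\}$; this is exactly where the $x$-uniformity in Lemma~\ref{operatortT0} and the universality of power-weight interpolation constants are used. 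Granting this, the argument closes.
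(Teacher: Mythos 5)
Your reduction to a lower bound for $S-I$ on $U$, your identification of the $\tilde\theta\mapsto\ell_{\tilde\theta,q}(x)$ scale as a genuine interpolation scale with $x$-independent constants, and your recognition that Lemma~\ref{operatortT0} and Proposition~\ref{Proposition1} supply the $x$-uniformity at $\tilde\theta=\theta$ are all correct and match the ingredients used in the paper. But the proposal has a structural gap at the step where you ``cast `$S-I$ bounded below on $U$ with closed range of codimension $\le1$' as a genuine invertibility between closed subspaces exactly as was done with the half-line spaces in Theorem~\ref{TTT1}''. That analogy fails: the half-line spaces $\ell_{\theta,q}(x)_{(-\infty,m]}$ are themselves real interpolation spaces $(D_0,D_1)_{\theta,q}$ (restrict the weighted $\ell^q$ scale to a half-line), so Theorem~12 of \cite{KM} applies directly there. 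Here, by contrast, the codomain is the closure $\ell_{\theta,q}^{U^0}(x)$ of the codimension-one subspace $U^0$, and this space is \emph{not} of the form $(D_0,D_1)_{\tilde\theta,q}$ — the paper flags this difficulty explicitly and constructs a workaround for it. Without that workaround, you have no invertible operator between real interpolation spaces to which the \v{S}ne\u{\i}berg-type stability result can be applied, and interpolation by itself propagates boundedness but not lower bounds, so your argument does not close.

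The missing device is the paper's direct-sum augmentation: set $B_i=\ell_{i,q}(x)\times\mathbb{R}$ and define
\begin{equation*}
T\colon B_0+B_1\longrightarrow\ell_{0,q}(x)+\ell_{1,q}(x),\qquad T(u,\lambda)=(S-I)u+\lambda e_0.
\end{equation*}
Then $T$ is invertible on the honest interpolation space $(B_0,B_1)_{\theta,q}=\ell_{\theta,q}(x)\times\mathbb{R}$ (resp.\ on $(B_0,B_1)_{\theta,c_0}$ when $q=\infty$), and the codimension-one obstruction has been absorbed into the extra scalar coordinate. Proposition~\ref{Proposition1} is then used not to verify that the codimension is exactly one, as you suggest, but to bound $\|T^{-1}\|$ uniformly over $x\in\ker_{X_0+X_1}A\setminus\{0\}$ — it controls the ``angle'' between $\operatorname{ran}(S-I)$ and $\operatorname{span}\{e_0\}$. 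With $T$ invertible between genuine interpolation spaces and $\|T^{-1}\|$ uniform in $x$, Theorem~12 of \cite{KM} (and Theorem~\ref{KM} with Remark~\ref{KMR} for $q=\infty$) yields invertibility of $T$ on the shifted scales for $\tilde\theta\in(\theta-\varepsilon,\theta+\varepsilon)$ with $\varepsilon$ independent of $x$; restricting $T^{-1}$ to $U^0\times\{0\}$ recovers the uniform bound on $T_0$. You identified the right stability theorem and the right source of uniformity, but without the augmentation the structural hypotheses of that theorem are simply not met.
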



\begin{proof} Let $\ell _{\theta ,q}^{U^{0}}(x)$ be the closure of the
space $U^{0}$ in $\ell _{\theta ,q}(x)$ and $\ell _{\theta
,q}^{U}(x)$ the closure of $U$ in $\ell _{\theta ,q}(x)$. Note
that for $1\leq q<\infty $, we have $\ell _{\theta ,q}^{U}(x)=\ell
_{\theta ,q}(x)$ and for $q=\infty $ we have $\ell _{\theta
,\infty }^{U}(x)=\ell _{\theta ,c_{0}}(x)$. Observe
that $\theta \in \Omega _{A}$ implies by Corollary \ref{Corollary1} that $%
U^{0}$ is not dense in $U=U^{0}+ \text{span}\left\{ e_{0}\right\} $. Hence $%
e_{0}\notin \ell _{\theta ,q}^{U^{0}}(x)$ and space $\ell _{\theta
,q}^{U^{0}}(x)$ is a~closed subspace of codimension one in $\ell
_{\theta
,q}^{U}(x)$. It is easy to check that the operator $S-I$ maps $U$ onto $%
U^{0} $ and its inverse is a operator $T_{0}=T_{1}$ (see Remark
5). Note also that the operator $S-I$ is a~bounded operator from
$\ell _{\theta ,q}^{U}(x)$ to $\ell _{\theta ,q}^{U^{0}}(x)$ and
its inverse is a~bounded operator $T_{0}$, which coincides with
$T_{1}$.

Now we wish to apply the result from \cite{KM} which states that if $%
T\colon (C_{0},C_{1})\rightarrow (D_{0},D_{1})$ is an operator
between Banach couples such that the operator $T\colon
(C_{0},C_{1})_{\theta
,q}\rightarrow (D_{0},D_{1})_{\theta ,q}$ is invertible, then there exists $%
\varepsilon >0$ such that it is also invertible on $(C_{0},C_{1})_{\tilde{%
\theta},q}$ for all $\tilde{\theta}\in \left( \theta -\varepsilon
,\theta +\varepsilon \right) $. However, trying to use this
theorem we encounter difficulties connected with the fact that the
spaces $\ell _{\theta ,q}^{U^{0}}(x)$ are not of the form $\left(
D_{0},D_{1}\right) _{\theta ,q}$. To overcome these difficulties
we will use an additional construction. Let us consider a~couple
$(B_{0},B_{1})$, where
\begin{equation*}
B_{0}=\ell_{0,q}(x)\times {\mathbb{R}},\quad \,B_{1}=\ell _{1,q}(x)\times {%
\mathbb{R}}
\end{equation*}%
(norms on $\ell_{0,q}(x)$ and $\ell_{1,q}(x)$ are defined by formulas (\ref{K9}%
)). Let $T\colon B_{0}+B_{1}\rightarrow \ell _{0,q}(x)+\ell
_{1,q}(x)$ be defined by
\begin{equation*}
T(x,\lambda )=(S-I)x+\lambda e_{0}{\text{.}}
\end{equation*}%
Then the operator $T$ is invertible on $\left( B_{0},B_{1}\right)
_{\theta ,q}=\ell _{\theta ,q}(x)\times {\mathbb{R}}$ for $1\leq
q<\infty $ and also on $\left( B_{0},B_{1}\right) _{\theta
,c_{0}}=\ell _{\theta ,c_{0}}(x)\times {\mathbb{R}}$ in the case
when $q=\infty $. Moreover, it follows from Proposition
\ref{Proposition1} that the norm of its inverse is independent of
$x\in \ker _{X_{0}+X_{1}}A\setminus \left\{ 0\right\} $. Now we
apply Theorem 12 \cite{KM} for $q<\infty $ and its analog for $
(B_{0},B_{1})_{\theta ,c_{0}}$ (see Theorem \ref{KM}). We should
make here two important remarks. First, the operator $T$ maps $U$
onto $U^{0}$ and its inverse on $U^{0}$ coincide with
$T_{0}=T_{1}$ and the inverse to $T$ is bounded, we derive that
$T_{0}$ is bounded. Secondly, in Theorem 12 \cite{KM} and also in
Theorem \ref{KM} (see Remark \ref{KMR} after it) the interval
$\left(\theta -\varepsilon ,\theta +\varepsilon \right)$ can be
chosen independent of $x\in \ker A$, because of the uniform
boundedness of norms, i.e.,
\begin{equation*}
\left\Vert T_{0}\right\Vert _{U^{0}\cap \,\ell _{\theta
,q}(x)\rightarrow \ell _{\theta ,q}(x)}\leq \gamma {\text{.}}
\end{equation*}%
Moreover, from this estimate we also obtain the uniform
boundedness of norms,
\begin{equation*}
\left\Vert T_{0}\right\Vert _{U^{0}\cap \,\ell _{\tilde{\theta}%
,q}(x)\rightarrow \ell _{\tilde{\theta},q}(x)}\leq \gamma
\end{equation*}%
for all $\tilde{\theta}\in \left( \theta -\varepsilon ,\theta
+\varepsilon \right) $.
\end{proof}


The next lemma shows that the result of Lemma \ref{Lemma3} holds uniformly for $%
\tilde{\theta}\in \left( \theta -\varepsilon ,\theta +\varepsilon
\right) $.


\begin{lemma}
\label{Lemma5} There exist constants $\varepsilon >0$ and $\gamma
>0$ such that for all $\tilde {\theta }\in \left (\theta
-\varepsilon ,\theta
+\varepsilon \right )$ and all $x\in \ker _{X_{0}+X_{1}}A$\ we have for all $%
n<0$ the inequality
\begin{equation*}
2^{-\tilde {\theta }n}K(2^{n},x;\vec {X})\geq \gamma K(1,x;\vec {X}){\text {.%
}}
\end{equation*}
\end{lemma}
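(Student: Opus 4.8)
The plan is to bootstrap Lemma~\ref{Lemma3} from the fixed value $\theta$ to the whole interval $(\theta-\varepsilon,\theta+\varepsilon)$ provided by Lemma~\ref{Lemma4}. Recall that in Lemma~\ref{Lemma4} we obtained an $\varepsilon>0$, independent of $x\in\ker_{X_0+X_1}A\setminus\{0\}$, such that $\|T_0\|_{U^0\cap\ell_{\tilde\theta,q}(x)\to\ell_{\tilde\theta,q}(x)}\le\gamma$ for all $\tilde\theta\in(\theta-\varepsilon,\theta+\varepsilon)$. First I would fix such a $\tilde\theta$ and rerun the argument of Lemma~\ref{Lemma3} verbatim with $\theta$ replaced by $\tilde\theta$. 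Since $e_n-e_0\in U^0$ for $n<0$, boundedness of $T_0$ on $U^0$ in $\ell_{\tilde\theta,q}(x)$ gives, exactly as in \eqref{K101},
\begin{equation*}
\bigg(\sum_{k:n\le k<0}\big(2^{-\tilde\theta k}K(2^k,x;\vec X)\big)^q\bigg)^{1/q}
=\|T_0(e_n-e_0)\|_{\ell_{\tilde\theta,q}(x)}
\le\gamma\big(2^{-\tilde\theta n}K(2^n,x;\vec X)+K(1,x;\vec X)\big),
\end{equation*}
with $\gamma$ independent of $x$ and of $\tilde\theta$ in the interval.

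The second ingredient is that $\sum_{k<0}e_k\notin\ell_{\tilde\theta,q}(x)$ for every $x\in\ker_{X_0+X_1}A\setminus\{0\}$ and every such $\tilde\theta$. This follows from Corollary~\ref{Corollary1}: shrinking $\varepsilon$ if necessary, $\theta\in\Omega_A$ means $\theta\in[\alpha(x),\beta(x)]$ for all $x$ in the kernel, and since $\Omega_A$ is a (possibly degenerate) closed interval, for $\varepsilon$ small enough $\tilde\theta$ still lies in $[\alpha(x),\beta(x)]$ — actually one only needs $\tilde\theta\le\beta(x)$ for the statement $\sum_{k<0}e_k\notin\ell_{\tilde\theta,q}(x)$, which is the half of Corollary~\ref{Corollary1} proved via $T_0$. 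Hence the left-hand side of the displayed inequality tends to $\infty$ as $n\to-\infty$, so the right-hand side does too, and for each $x$ there is a smallest $N(x)$ with $2^{-\tilde\theta n}K(2^n,x;\vec X)\ge K(1,x;\vec X)$ for all $n<-N(x)$.

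Next I would repeat the finiteness-of-$N$ argument from Lemma~\ref{Lemma3}: if $N:=\sup_{x}N(x)=\infty$ there is a sequence $x_k$ with $N(x_k)\to\infty$ and $2^{\tilde\theta N(x_k)}K(2^{-N(x_k)},x_k;\vec X)<K(1,x_k;\vec X)$; normalizing $\|x_k\|_{X_0+X_1}=K(1,x_k;\vec X)=1$ and using finite-dimensionality of $\ker_{X_0+X_1}A$ to pass to a subsequence converging to $x_*\ne0$, the same chain of estimates (with the uniform bound on $\|T_0\|$ from Lemma~\ref{Lemma4}) yields $\sum_{l<0}\big(2^{-\tilde\theta l}K(2^l,x_*;\vec X)\big)^q<\infty$, contradicting $\sum_{k<0}e_k\notin\ell_{\tilde\theta,q}(x_*)$. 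So $N<\infty$, and since for each fixed $n$ the maps $x\mapsto 2^{-\tilde\theta n}K(2^n,x;\vec X)$ are equivalent norms on the finite-dimensional $\ker_{X_0+X_1}A$, we absorb the finitely many $n\in(-N,0)$ into the constant and conclude $2^{-\tilde\theta n}K(2^n,x;\vec X)\ge\gamma K(1,x;\vec X)$ for all $n<0$ and all $x$.

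The only genuine subtlety — and the step I expect to be the main obstacle — is making sure all the constants ($\gamma$, $N$, and $\varepsilon$) are uniform simultaneously over $x$ and over $\tilde\theta$. The uniformity in $x$ is already guaranteed by Proposition~\ref{Proposition1} and Lemmas~\ref{operatortT0}–\ref{Lemma4}; the uniformity in $\tilde\theta$ comes from the fact that Lemma~\ref{Lemma4} delivers a single $\varepsilon$ and a single bound $\gamma$ valid for the whole interval, and one must check that the compactness/subsequence argument above does not secretly depend on $\tilde\theta$ — it does not, since the only inputs are the uniform $\|T_0\|$-bound and the $\tilde\theta$-independent fact that $K(1,x;\vec X)=\|x\|_{X_0+X_1}$. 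One should also note that for $n>0$ the parallel statement follows by the same reasoning with $T_1$ in place of $T_0$, using the other half of Corollary~\ref{Corollary1}; combining the two cases gives the lemma for all $n\in\mathbb Z$, which is what the statement asserts.
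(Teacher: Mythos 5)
Your plan to ``rerun Lemma~\ref{Lemma3} verbatim with $\theta$ replaced by $\tilde\theta$'' runs into a genuine obstruction at the very step you flag as a possible obstacle, and the two patches you suggest for it do not work. The finiteness-of-$N$ argument in Lemma~\ref{Lemma3} derives a contradiction with Corollary~\ref{Corollary1}, namely with the fact that $\sum_{k<0}e_k\notin\ell_{\theta,q}(x_{\ast})$. To rerun the argument at a fixed $\tilde\theta$ you would need the analogous statement $\sum_{k<0}e_k\notin\ell_{\tilde\theta,q}(x)$ for every $x\in\ker_{X_0+X_1}A\setminus\{0\}$, but Corollary~\ref{Corollary1} is not available at $\tilde\theta$: its hypotheses are that $A$ is injective on $(X_0,X_1)_{\tilde\theta,q}$ with closed range in $(Y_0,Y_1)_{\tilde\theta,q}$, and you only know this at $\theta$. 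Your first proposed fix (shrink $\varepsilon$ so that $\tilde\theta\in[\alpha(x),\beta(x)]$ for all $x$) fails because $\Omega_A$ may be a single point $\{\theta\}$, and in that case for some $x$ one has $\alpha(x)=\beta(x)=\theta$, so any $\tilde\theta\neq\theta$ exits the interval; your second fix (``one only needs $\tilde\theta\le\beta(x)$, the $T_0$ half of Corollary~\ref{Corollary1}'') is both not guaranteed for the same reason and not sufficient, since that half of the proof also uses injectivity and closedness at the interpolation parameter in question. A direct check of the weight $K(2^k,x)2^{-\tilde\theta k}$ at $k<0$ shows the weight is increasing in $\tilde\theta$, so for $\tilde\theta>\theta$ the nonmembership $\sum_{k<0}e_k\notin\ell_{\tilde\theta,q}(x)$ does follow from the $\theta$ case by monotonicity, and your argument closes there; but for $\tilde\theta<\theta$ the weight is \emph{smaller}, the implication runs the wrong way, and the conclusion $\sum_{l<0}(2^{-\tilde\theta l}K(2^l,x_{\ast}))^q<\infty$ obtained at the end of your compactness argument gives no contradiction with anything you know. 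Trying to establish $\sum_{k<0}e_k\notin\ell_{\tilde\theta,q}(x)$ for $\tilde\theta<\theta$ by first principles would essentially amount to proving $\beta_0(\ker_{X_0+X_1}A)<\theta$, which is the conclusion of Lemma~\ref{Lemma5} itself, so the reasoning would be circular.

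The paper avoids this by a single, combined contradiction argument rather than a for-each-$\tilde\theta$ argument: if the uniform lower bound fails, one extracts sequences $\theta_k\to\theta$, $N_k\to\infty$, and normalized $x_k\to x_{\ast}$, and because $\theta_k\to\theta$, the limit of the chain of estimates lands at $\theta$ itself, producing $\sum_{m\le 0}(2^{-\theta m}K(2^m,x_{\ast}))^q<\infty$, which directly contradicts Corollary~\ref{Corollary1} applied at $\theta$ (where all the hypotheses are available). This is the missing idea in your proposal: the parameter in the contradiction must be allowed to drift toward $\theta$ so that the nonmembership of $\sum_{k<0}e_k$ is needed only in $\ell_{\theta,q}(x_{\ast})$, not in $\ell_{\tilde\theta,q}(x)$. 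Once that single uniform $N$ is obtained, the paper, like you, absorbs the finitely many $n\in[-N,0)$ using equivalence of norms on the finite-dimensional kernel.
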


\begin{proof} We first show that there exist $\varepsilon >0$ and a~positive
integer $N$ such that
\begin{equation*}
2^{-\theta n}K(2^{n},x;\vec{X})\geq \frac{1}{2}K(1,x;\vec{X})
\end{equation*}%
for $n<-N$ for all $\tilde{\theta}\in \left( \theta -\varepsilon
,\theta +\varepsilon \right) $ and all $x\in \ker
_{X_{0}+X_{1}}A$. Suppose that this does not hold. Now take
$\varepsilon _{k}=\frac{\varepsilon _{0}}{2^{k}}$, where
$\varepsilon _{0}>0$ is a~constant from Lemma \ref{Lemma4}. Then
there exists $\theta _{k}$ such that $\left\vert \theta
_{k}-\theta \right\vert <\varepsilon _{k}$, elements $x_{k}\in
\ker _{X_{0}+X_{1}}A\setminus \left\{ 0\right\} $ and numbers
$N_{k}\rightarrow +\infty $ such that
\begin{equation*}
2^{-\theta _{k}(-N_{k})}K(2^{-N_{k}},x_{k};\vec{X})<\frac{1}{2}K(1,x_{k};%
\vec{X}).
\end{equation*}%
By the homogeneity of the norm we can assume that each element
$x_{k}$ has norm one in $X_{0}+X_{1}$. Since $\ker
_{X_{0}+X_{1}}A$ is a finite dimensional space, we can find a
subsequence of the sequence $\left\{ x_{k}\right\} $ which
converges in $X_{0}+X_{1}$ to some element $x_{\ast}\in \ker
_{X_{0}+X_{1}}A\setminus \left\{ 0\right\} $. Since
$e_{-N_{k}}-e_{0}\in U^{0}$, it follows from Lemma \ref{Lemma4}
that
\begin{equation*}
\left\Vert T_{0}(e_{-N_{k}}-e_{0})\right\Vert _{\ell _{\theta
_{k},q}(x_{k})}\leq \gamma \left\Vert e_{-N_{k}}-e_{0}\right\Vert
_{\ell _{\theta _{k},q}(x_{k})}.
\end{equation*}%
Fix $n<0$; then for each $k<0$ such that $n>-N_{k}$ we have
\begin{align*}
\bigg (\sum_{m:n\leq m\leq 0}\big (2^{-\theta _{k}m}K(2^{m},x_{k};\vec{X})%
\big )^{q}\bigg )^{1/q}& \leq \gamma \big (2^{-\theta
_{k}(-N_{k})}K(2^{-N_{k}},x_{k};\vec{X})+K(1,x_{k};\vec{X})\big ) \\
& \leq \gamma K(1,x_{k};\vec{X}).
\end{align*}%
As $N_{k}\rightarrow \infty $ and so taking the limit
$k\rightarrow \infty $ we obtain that for all $n<0$ we have
\begin{equation*}
\bigg (\sum_{n\leq m\leq 0}\big (2^{-\theta m}K(2^{m},x_{\ast
};\vec{X})\big )^{q}\bigg )^{1/q}\leq \gamma K(1,x_{\ast
};\vec{X})
\end{equation*}%
with $\gamma >0$ independent of $n<0$. Hence
\begin{equation*}
\bigg (\sum_{m\leq 0}\big (2^{-\theta m}K(2^{m},x_{\ast };\vec{X})\big )^{q}%
\bigg )^{1/q}\leq \gamma K(1,x_{\ast };\vec{X})
\end{equation*}%
gives $T_{0}e_{0}\in \ell _{\theta ,q}(x_{\ast })$. However this
is a~contradiction by Corollary \ref{Corollary1}. Thus we have
shown that there exists $\varepsilon >0$ and natural $N$ such that
for all $n<-N$ and all $\tilde{\theta}\in
\left(\theta-\varepsilon,\theta +\varepsilon \right)$ and all
$x\in \ker_{X_{0}+X_{1}}A$ we have
\begin{equation*}
2^{-\tilde{\theta}n}K(2^{n},x;\vec{X})\geq
\frac{1}{2}K(1,x;\vec{X})
\end{equation*}%
and we just need to notice that for $-N\leq n<0$ we have
\begin{equation*}
2^{-\tilde{\theta}n}K(2^{n},x;\vec{X})\geq K(2^{-N},x;\vec{X})
\end{equation*}%
and norm $K(1,x;\vec{X})$ is equivalent to the norm
$K(2^{-N},x;\vec{X})$ on finite dimensional space \linebreak $\ker
_{X_{0}+X_{1}}A$. This completes the proof.
\end{proof}


Now we are ready to prove Theorem \ref{T1}.

\begin{proof} (of Theorem \ref{T1}) Since $e_{n}-e_{0}\in U^{0}$ for any $%
n<0$, it follows from Lemma \ref{Lemma4} there exists $\varepsilon
>0$ that
for every $x\in \ker _{X_{0}+X_{1}}A$ and each $k$ with $n<k<0$ we have for $%
\tilde{\theta}\in \left( \theta -\varepsilon ,\theta +\varepsilon
\right) $ the following inequality
\begin{align*}
2^{-\tilde{\theta}k}K(2^{k},x;\vec{X})& \leq \left\Vert
T_{0}(e_{n}-e_{0})\right\Vert _{\ell _{\tilde{\theta},q}(x)}\leq
\gamma
\left\Vert e_{n}-e_{0}\right\Vert _{\ell _{\tilde{\theta},q}(x)} \\
& \leq \gamma \big (2^{-\tilde{\theta}n}K(2^{n},x;\vec{X})+K(1,x;\vec{X}X)%
\big )
\end{align*}%
with $\gamma >0$ independent of $n<0$ and $x\in \ker
_{X_{0}+X_{1}}A$. Therefore from Lemma \ref{Lemma5} we obtain that
for any $x\in \ker _{X_{0}+X_{1}}A$ and any $n<k<0$ we have for
$\tilde{\theta}\in \left( \theta -\varepsilon ,\theta +\varepsilon
\right) $
\begin{equation*}
2^{-\tilde{\theta}k}K(2^{k},x;\vec{X})\leq \gamma 2^{-\tilde{\theta}%
n}K(2^{n},x;\vec{X}),
\end{equation*}%
which is equivalent to
\begin{equation*}
\frac{K(s,x;\vec{X})}{K(t,x;\vec{X})}\geq \gamma \left( \frac{s}{t}\right) ^{%
\tilde{\theta}},\quad \,0<s<t\leq 1{\text{.}}
\end{equation*}%
This means that $\theta >\beta _{0}(\ker _{X_{0}+X_{1}}A)$.
\end{proof}

\subsubsection{ Theorem \protect\ref{TT1} for the case $\protect\theta
\protect\notin \Omega _{A}$}

The following corollary from a result in \cite{AK1} will be useful
for us.

\begin{corollary}
\label{Corollary2} Suppose that the operator $A$ is invertible on
the end spaces and has one dimensional kernel $\ker
_{X_{0}+X_{1}}A=span\left\{ x_{\ast }\right\} $. Then operator $A$
is invertible on $(X_{0},X_{1})_{\theta ,q}$
if and only if $\theta \notin \left[ \alpha (x_{\ast }),\beta (x_{\ast })%
\right]$.
\end{corollary}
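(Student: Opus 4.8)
The plan is to combine Lemma \ref{Lemma1} with the invertibility criterion of \cite{AK1} and to specialise everything to a one-dimensional kernel. Since $\ker_{X_{0}+X_{1}}A=\mathrm{span}\{x_{\ast}\}$, every nonzero kernel element equals $\lambda x_{\ast}$ with $\lambda\neq0$; replacing $x_{\ast}$ by $\lambda x_{\ast}$ multiplies the norm of $\ell_{\theta,q}(x_{\ast})$ by $|\lambda|$ and leaves the operator $S-I$ and the indices $\alpha(x_{\ast}),\beta(x_{\ast})$ unchanged, so only the single space $\ell_{\theta,q}(x_{\ast})$ is relevant. I would also record the elementary fact (used in the proof of sufficiency for ${\mathbb{F}}_{\theta,q}^{3}$) that $\alpha(y)\le\beta(y)$ for every $0\neq y\in X_{0}+X_{1}$, so $[\alpha(x_{\ast}),\beta(x_{\ast})]$ is a genuine interval and $\theta\notin[\alpha(x_{\ast}),\beta(x_{\ast})]$ means $\theta<\alpha(x_{\ast})$ or $\theta>\beta(x_{\ast})$.

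The combinatorial heart is the equivalence: $S-I$ is invertible on $\ell_{\theta,q}(x_{\ast})$ if and only if $\theta\notin[\alpha(x_{\ast}),\beta(x_{\ast})]$. If $\theta<\alpha(x_{\ast})$, pick $\theta'\in(\theta,\alpha(x_{\ast}))$ and $\gamma>0$ with $s^{-\theta'}K(s,x_{\ast};\vec{X})\le\gamma\,t^{-\theta'}K(t,x_{\ast};\vec{X})$ for all $0<s\le t$; taking $s=1$ gives $K(t,x_{\ast};\vec{X})\ge\gamma^{-1}K(1,x_{\ast};\vec{X})\,t^{\theta'}$ for $t\ge1$, hence $\sum_{i\ge0}\bigl(2^{-\theta i}K(2^{i},x_{\ast};\vec{X})\bigr)^{q}=\infty$, so $f_{\ast}=\sum_{i}e_{i}\notin\ell_{\theta,q}(x_{\ast})$ and, by Lemma \ref{Lemma1}(a), $A$ is injective on $(X_{0},X_{1})_{\theta,q}$; the case $\theta>\beta(x_{\ast})$ is symmetric (interchange the endpoints). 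On the other hand, invertibility of $S-I$ on $\ell_{\theta,q}(x_{\ast})$ entails its injectivity there, hence again injectivity of $A$ on $(X_{0},X_{1})_{\theta,q}$ by Lemma \ref{Lemma1}(a). Thus in both directions the hypothesis of Lemma \ref{Lemma1}(b) is met, and Lemma \ref{Lemma1}(b) says precisely that under this hypothesis $S-I$ is invertible on $\ell_{\theta,q}(x_{\ast})$ iff $\theta\notin[\alpha(x_{\ast}),\beta(x_{\ast})]$, which is the claimed equivalence.

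Finally I would quote the main theorem of \cite{AK1}: for $A$ invertible on the endpoint spaces, $A\colon(X_{0},X_{1})_{\theta,q}\to(Y_{0},Y_{1})_{\theta,q}$ is invertible if and only if $S-I$ is invertible on $\ell_{\theta,q}(x)$ for every $x\in\ker_{X_{0}+X_{1}}A\setminus\{0\}$; for a one-dimensional kernel this reduces to $\ell_{\theta,q}(x_{\ast})$, and together with the previous paragraph it yields the corollary. For the ``if'' direction one may alternatively argue inside the present paper: $\theta<\alpha(x_{\ast})$ forces, by the $K$-functional estimates above, $x_{\ast}\in V_{\theta,q}^{0}$ and $x_{\ast}\notin V_{\theta,q}^{1}$, whence $V_{\theta,q}^{0}=\ker_{X_{0}+X_{1}}A$, $V_{\theta,q}^{1}=\{0\}$, so $\ker_{X_{0}+X_{1}}A=V_{\theta,q}^{0}+V_{\theta,q}^{1}$ and $\beta(V_{\theta,q}^{1})=0<\theta<\alpha(x_{\ast})=\alpha(V_{\theta,q}^{0})$, and part c) of Theorem \ref{TN3} applies (its hypotheses only ask surjectivity on the endpoints); the case $\theta>\beta(x_{\ast})$ is symmetric. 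The main obstacle is the ``only if'' direction: turning surjectivity of $A$ on $(X_{0},X_{1})_{\theta,q}$ into surjectivity of $S-I$ on $\ell_{\theta,q}(x_{\ast})$ requires the shift-operator bookkeeping of \cite{AK1} (the same mechanism as in Lemma \ref{operatortT0} and Theorem \ref{TTT1}: reading off a preimage of a given sequence under $S-I$ from a preimage under $A$ of the associated element of $(Y_{0},Y_{1})_{\theta,q}$), and it is exactly this step that I would cite rather than reprove.
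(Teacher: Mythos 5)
Your proof is correct and ultimately relies on the same external input as the paper's, namely the invertibility criterion of Theorem 11 in \cite{AK1}, though organized differently. The paper's proof is a one-liner: for a one-dimensional kernel the only configurations with $\ker_{X_0+X_1}A = V^0_{\theta,q}\oplus V^1_{\theta,q}$ are $V^0_{\theta,q}=\mathrm{span}\{x_*\}$, $V^1_{\theta,q}=\{0\}$ (condition $0<\theta<\alpha(x_*)$) or the symmetric one (condition $\beta(x_*)<\theta<1$), and together these amount to $\theta\notin[\alpha(x_*),\beta(x_*)]$. You route the argument through invertibility of $S-I$ on $\ell_{\theta,q}(x_*)$ via Lemma~\ref{Lemma1}, and your alternative sufficiency argument via Theorem~\ref{TN3}(c) combined with the $K$-functional estimate (which correctly shows that $\theta<\alpha(x_*)$ forces $V^0_{\theta,q}=\mathrm{span}\{x_*\}$ and $V^1_{\theta,q}=\{0\}$, and symmetrically for $\theta>\beta(x_*)$) supplies exactly the computation the paper's one sentence compresses. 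One caution: the statement you attribute to \cite{AK1} --- ``$A$ invertible iff $S-I$ invertible on $\ell_{\theta,q}(x)$ for every nonzero kernel element $x$'' --- is not how Theorem 11 is phrased there; it is stated in terms of $V^0_{\theta,q}$, $V^1_{\theta,q}$ and their dilation indices, and for a multidimensional kernel a literal reading of your version would miss the required uniformity over kernel elements. That issue is vacuous for a one-dimensional kernel, and you do verify the injectivity hypothesis of Lemma~\ref{Lemma1}(b) in both directions, so there is no gap here.
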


\begin{proof} In the case when $V_{\theta ,q}^{0}=span\left\{ x_{\ast
}\right\} $ and $V_{\theta ,q}^{1}=\left\{ 0\right\} $ condition
(3.4) of the Theorem 11 in \cite{AK1} coincide with $0<\theta
<\alpha (x_{\ast })$. If $V_{\theta ,q}^{0}=\left\{ 0\right\} $
and $V_{\theta ,q}^{1}=span\left\{ x_{\ast }\right\} $ then
condition is $\beta (x_{\ast })<\theta <1$.
\end{proof}

Theorem \ref{TT1} for $\theta \notin \Omega _{A}$ follows
immediately from the following result.

\begin{theorem}
\label{TT2}Suppose that the operator $A$ is invertible on end
spaces, injective
on $(X_{0},X_{1})_{\theta ,q}$, has finite dimensional kernel $%
\ker _{X_{0}+X_{1}}A$ and $A((X_{0},X_{1})_{\theta ,q})$ is
a~closed subspace of $(Y_{0},Y_{1})_{\theta ,q}$. Then
\begin{itemize}
\item[{\rm{a)}}] The codimension of $A((X_{0},X_{1})_{\theta ,q})$
in $(Y_{0},Y_{1})_{\theta ,q}$\textit{\ is not more than dimension
of \ }$\ker _{X_{0}+X_{1}}A$. \item[{\rm{b)}}] If $\theta \notin
\Omega _{A}$ than codimension of $
A((X_{0},X_{1})_{\theta ,q})$ in $(Y_{0},Y_{1})_{\theta ,q}$%
is strictly less than dimension of $\ker _{X_{0}+X_{1}}A$.
\end{itemize}
\end{theorem}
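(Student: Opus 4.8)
The plan is to pass to the quotient couple determined by $\ker_{X_0+X_1}A$ and then to isolate a finite-dimensional ``defect'' space. Since $A$ is invertible, hence surjective, on the endpoint spaces, Proposition~\ref{PrN2} applies with $U=\ker_{X_0+X_1}A$ and yields a factorization $A=BQ$, where $Q\colon(X_0,X_1)\to(Z_0,Z_1):=(Q(X_0),Q(X_1))$ is the quotient map by $U$ and $B\colon(Z_0,Z_1)\to(Y_0,Y_1)$ is an isomorphism of couples; in particular $B\colon(Z_0,Z_1)_{\theta,q}\to(Y_0,Y_1)_{\theta,q}$ is an isomorphism, so $A((X_0,X_1)_{\theta,q})$ has finite codimension in $(Y_0,Y_1)_{\theta,q}$ if and only if $Q((X_0,X_1)_{\theta,q})$ does in $(Z_0,Z_1)_{\theta,q}$, with equal codimensions. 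Fix, as in the proof of Theorem~\ref{ST2}, a basis $g_1,\dots,g_N$ of $\ker_{X_0+X_1}A$ (so $N=\dim\ker_{X_0+X_1}A<\infty$), fix decompositions $g_m=g_m^0+g_m^1$ with $g_m^0\in X_0$, $g_m^1\in X_1$, and let $P_{X_0},P_{X_1}$ be the associated linear maps on $\ker_{X_0+X_1}A$; invertibility of $A$ on the endpoints gives $\ker_{X_0+X_1}A\cap X_i=\{0\}$, whence $P_{X_0}$ is injective and $M:=A(P_{X_0}(\ker_{X_0+X_1}A))=A(P_{X_1}(\ker_{X_0+X_1}A))$ is an $N$-dimensional subspace of $Y_0\cap Y_1$.

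For part a) the plan is to prove $(Z_0,Z_1)_{\theta,q}=Q((X_0,X_1)_{\theta,q})+Q(P_{X_0}(\ker_{X_0+X_1}A))$. Given $w\in Z_0\cap Z_1$, lift it to $x_0\in X_0$, $x_1\in X_1$ with $Qx_0=Qx_1=w$; then $\tilde v:=x_0-x_1\in\ker_{X_0+X_1}A$, and $x_0-P_{X_0}(\tilde v)=x_1+P_{X_1}(\tilde v)\in X_0\cap X_1\subset(X_0,X_1)_{\theta,q}$, so $w=Q(x_0-P_{X_0}(\tilde v))+Q(P_{X_0}(\tilde v))$ lies in the asserted sum. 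Hence $Z_0\cap Z_1\subset Q((X_0,X_1)_{\theta,q})+Q(P_{X_0}(\ker_{X_0+X_1}A))$; the right-hand side is a subset of $(Z_0,Z_1)_{\theta,q}$ (the first summand by functoriality, the second because $Q(P_{X_0}(\ker_{X_0+X_1}A))\subset Z_0\cap Z_1$), it is the sum of a closed subspace (closedness of $A((X_0,X_1)_{\theta,q})$ transported by $B$) and a finite-dimensional space, hence closed, and for $1\le q<\infty$ it contains the dense subset $Z_0\cap Z_1$ of $(Z_0,Z_1)_{\theta,q}$; therefore it equals $(Z_0,Z_1)_{\theta,q}$. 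Applying $B$ gives $(Y_0,Y_1)_{\theta,q}=A((X_0,X_1)_{\theta,q})+M$, so $\mathrm{codim}\,A((X_0,X_1)_{\theta,q})\le\dim M=N$. For $q=\infty$ one runs the same argument through the regular functor $(\cdot)_{\theta,c_0}$, using its orbital description and the perturbation results for orbital functors (Theorem~\ref{KM}) from the preliminary section; I would carry out $1\le q<\infty$ in detail.

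For part b), suppose $\theta\notin\Omega_A$ and choose $x_*\in\ker_{X_0+X_1}A\setminus\{0\}$ with $\theta\notin[\alpha(x_*),\beta(x_*)]$; by the symmetry between the two endpoints (and between $T_0$ and $T_1$) it suffices to treat $\theta<\alpha(x_*)$. Then there exist $\gamma,\varepsilon>0$ with $K(s,x_*;\vec X)\le\gamma(s/t)^{\theta+\varepsilon}K(t,x_*;\vec X)$ for all $0<s<t$; in particular $K(2^k,x_*;\vec X)\le\gamma\,2^{k(\theta+\varepsilon)}K(1,x_*;\vec X)$ for $k\le 0$. Take near-optimal decompositions $x_*=x_*^{(0,k)}+x_*^{(1,k)}$ for $t=2^k$ and put $u_k:=x_*^{(0,k)}-x_*^{(0,k+1)}=x_*^{(1,k+1)}-x_*^{(1,k)}\in X_0\cap X_1$; then $J(2^k,u_k;\vec X)\le\gamma K(2^k,x_*;\vec X)$. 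Since $\|x_*^{(0,m)}\|_{X_0}\to 0$ as $m\to-\infty$, the series $\sum_{k<0}u_k$ converges absolutely in $X_0$ to $x_*^{(0,0)}$, and $\sum_{k<0}\big(2^{-\theta k}J(2^k,u_k;\vec X)\big)^q<\infty$, so the $J$-representation shows $x_*^{(0,0)}\in(X_0,X_1)_{\theta,q}$; as $P_{X_0}(x_*)-x_*^{(0,0)}\in X_0\cap X_1$, also $P_{X_0}(x_*)\in(X_0,X_1)_{\theta,q}$. Consequently $0\neq A(P_{X_0}(x_*))\in M\cap A((X_0,X_1)_{\theta,q})$ (the element is nonzero because $\ker_{X_0+X_1}A\cap X_0=\{0\}$ and $P_{X_0}$ is injective), and combining this with $(Y_0,Y_1)_{\theta,q}=A((X_0,X_1)_{\theta,q})+M$ from part a) we get $\mathrm{codim}\,A((X_0,X_1)_{\theta,q})=\dim\big(M/(M\cap A((X_0,X_1)_{\theta,q}))\big)\le N-1$.

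The conceptual point, and the only place where the hypothesis $\theta\notin\Omega_A$ enters, is the observation that $\theta<\alpha(x_*)$ (respectively $\theta>\beta(x_*)$) forces the $X_0$-component (respectively the $X_1$-component) of the single kernel vector $x_*$ into $(X_0,X_1)_{\theta,q}$, which collapses one unit of codimension; the rest is the routine estimate for the telescoping $J$-pieces summed against the weight $2^{-\theta k}$, together with the bookkeeping of the quotient-couple reduction. The step I expect to need the most care is the $q=\infty$ case of part a), where $Z_0\cap Z_1$ is no longer dense in $(Z_0,Z_1)_{\theta,\infty}$ and the density argument has to be replaced by working in $(\cdot)_{\theta,c_0}$ and invoking Theorem~\ref{KM}.
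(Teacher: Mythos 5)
Your argument is correct for $1\le q<\infty$ and takes a genuinely different route from the paper's. The paper proves Theorem~\ref{TT2} by induction on $n=\dim\ker_{X_0+X_1}A$: the base case $n=1$ combines Corollary~\ref{Corollary2} with Theorems~\ref{T1} and~\ref{ST2}, and in the inductive step for $\theta\notin\Omega_A$ one quotients out a single kernel vector $x_*$ with $\theta\notin[\alpha(x_*),\beta(x_*)]$, passes to the induced operator $A_2$ with $n$-dimensional kernel, and applies the inductive hypothesis, so that strictness in b) comes for free from the drop in kernel dimension. You instead give a one-shot argument: part~a) is essentially a quotient-couple rephrasing of the implication (ii)$\Rightarrow$(i) in Theorem~\ref{lowerFredholm} with $\widetilde V$ replaced by the full kernel, giving $(Y_0,Y_1)_{\theta,q}=A((X_{0},X_{1})_{\theta,q})+M$ where $\dim M=\dim\ker_{X_0+X_1}A$; part~b) uses the telescoping $J$-series to show that the nonzero element $A(P_{X_0}(x_*))$ of $M$ already lies in $A((X_{0},X_{1})_{\theta,q})$, so one unit of codimension collapses. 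Your route makes the role of $\theta\notin\Omega_A$ very transparent and avoids the induction machinery; what the paper's induction buys is a uniform treatment of all $q\in[1,\infty]$ by offloading the hard analysis to Theorem~\ref{ST2}.

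The one genuine gap is the $q=\infty$ case of part~a), which you flag. Your proposed remedy, running the argument through $(\cdot)_{\theta,c_0}$ and invoking Theorem~\ref{KM}, does not close it: the $c_0$-argument would only show that the closed subspace $Q((X_{0},X_{1})_{\theta,\infty})+Q(P_{X_{0}}(\ker_{X_0+X_1}A))$ contains $(Z_0,Z_1)_{\theta,c_0}$, i.e.\ the closure of $Z_0\cap Z_1$, and a closed subspace of $(Z_0,Z_1)_{\theta,\infty}$ containing that closure can still have infinite codimension; moreover Theorem~\ref{KM} is a perturbation-of-$\theta$ invertibility statement and has no bearing on the density step you need. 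To make $q=\infty$ work within your framework one would have to replace the density argument by an explicit $J$-representation construction of a preimage for \emph{every} $z\in(Z_0,Z_1)_{\theta,\infty}$ (lifting each piece $z_k\in Z_0\cap Z_1$ and absorbing the kernel contributions into $M$), which is precisely what the proof of Theorem~\ref{ST2} does and what the paper's induction imports wholesale.
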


\begin{proof} We prove a) by induction. Let us first consider the case when
dimension of $\ker _{X_{0}+X_{1}}A$ is equal to one, i.e. $\ker
_{X_{0}+X_{1}}A= \text{span}\left\{ x_{\ast }\right\}$. In this
case from Corollary \ref{Corollary2} it follows that operator $A$
is invertible on $(X_{0},X_{1})_{\theta ,q}$ if $\theta \notin
\Omega _{A}=\left[ \alpha (x_{\ast }),\beta (x_{\ast })\right]$.
So we have property b). Moreover, from Theorem \ref{ST2} on
sufficiency if follows that if $\theta $ satisfies
\begin{equation*}
\beta_{0}(x_{\ast })<\theta <\alpha _{\infty }(x_{\ast })
\end{equation*}%
then $A((X_{0},X_{1})_{\theta ,q})$ is a~closed subspace of $
(Y_{0},Y_{1})_{\theta ,q}$ with codimension equal to one, and from
Theorem \ref{T1} we see that $A((X_{0},X_{1})_{\theta
,q})$\textit{\ }is not a~closed subspace of $(Y_{0},Y_{1})_{\theta
,q}$ if $\theta \in \left[ \alpha (x_{\ast }),\beta (x_{\ast
})\right] $ and do not satisfy inequality $\beta _{0}(x_{\ast
})<\theta <\alpha _{\infty }(x_{\ast })$. So the theorem is true
when $\dim (\ker _{X_{0}+X_{1}}A)=1$ .

Suppose that properties a) and b) are satisfied for $\dim (\ker
_{X_{0}+X_{1}}A)=n$. We would like to prove that these properties
are satisfied when $\dim (\ker _{X_{0}+X_{1}}A)=n+1$.

If $\theta \in \Omega _{A}$ then from Theorem \ref{T1} and Theorem
\ref{ST2} we see that properties a) and b) are correct. So we need
just to consider the case when $\theta \notin \Omega _{A}$. In
this case it is possible to
find element $x_{\ast }\in \ker _{X_{0}+X_{1}}A$ such that $\theta \notin %
\left[ \alpha (x_{\ast }),\beta (x_{\ast })\right] $. Then the
operator
\begin{equation*}
A_{1}\colon X_{0}+X_{1}\rightarrow
(X_{0}+X_{1})/{\rm{span}}\left\{ x_{\ast }\right\}
\end{equation*}%
can be considered as an operator from the couple $(X_{0},X_{1})$
to the couple $(A_{1}(X_{0}),A_{1}(X_{1}))$. Note that the
operator $A$ can be written as a superposition of the operator
$A_{1}$ and some operator
$A_{2}$, i.e., $A=A_{2}A_{1}$, where $A_{2}$ is an operator from $%
(A_{1}(X_{0}),A_{1}(X_{1}))$ to $(Y_{0},Y_{1})$ with kernel equal to $%
A_{1}(\ker _{X_{0}+X_{1}}A)$. As an operator $A$ is invertible on
end spaces therefore the operators $A_{1}$ and $A_{2}$ are also
invertible on end spaces. Indeed, from invertibility of $A$ it
follows that $A_{1}$ is injective on end spaces. As it maps
$X_{i}$ onto $A_{1}(X_{i})$ for $i=0,1$, from the Banach theorem
of inverse operator it is invertible on end spaces. Similarly we
obtain that operator $A_{2}$ is invertible on end spaces.

As $\theta \notin \left[ \alpha (x_{\ast }),\beta (x_{\ast
})\right] $ therefore the operator $A_{1}$ is invertible on
$(X_{0},X_{1})_{\theta ,q}$. So from the injectivity $A$ on
$(X_{0},X_{1})_{\theta ,q}$ it follows that $A_{2}$ is injective
on $(A_{1}(X_{0}),A_{1}(X_{1}))_{\theta ,q}$ and
\begin{equation*}
A_{2}((A_{1}(X_{0}),A_{1}(X_{1}))_{\theta
,q})=A((X_{0},X_{1})_{\theta ,q}).
\end{equation*}%
So applying induction to the operator $A_{2}$ we obtain that codimension in $%
(Y_{0},Y_{1})_{\theta ,q}$ is not more than dimension of the $\ker
_{X_{0}+X_{1}}A_{2}$, which is equal to $n$ and is strictly less
than $\dim (\ker _{X_{0}+X_{1}}A)$.
\end{proof}


\subsection{Necessity condition for the class ${\mathbb{F}}_{\protect\theta %
,q}^{3}$}

The next result follows immediately from Theorem 11 in \cite{AK1}.

\begin{theorem}
Suppose that an operator $A\colon (X_{0},X_{1})\rightarrow
(Y_{0},Y_{1})$ is invertible on endpoint spaces, $\ker
_{X_{0}+X_{1}}A=V_{\theta ,q}^{0}+V_{\theta ,q}^{1}$ and the
restriction of the operator $A\colon (X_{0},X_{1})_{\theta
,q}\rightarrow (Y_{0},Y_{1})_{\theta ,q}$ is invertible. Then
\begin{equation*}
\beta (V_{\theta ,q}^{1})<\theta <\alpha \left( V_{\theta ,q}^{0}\right) {%
\text{.}}
\end{equation*}
\end{theorem}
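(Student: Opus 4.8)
The plan is to read the statement off Theorem 11 of \cite{AK1}, which characterizes, for an operator $A$ invertible on the endpoint spaces, exactly when the restriction $A\colon (X_{0},X_{1})_{\theta ,q}\rightarrow (Y_{0},Y_{1})_{\theta ,q}$ remains invertible. That theorem asserts that this restriction is invertible if and only if $\ker _{X_{0}+X_{1}}A=V_{\theta ,q}^{0}+V_{\theta ,q}^{1}$ together with the inequality expressed by its condition (3.4). In our situation the first of these two requirements is one of the hypotheses, so only the inequality remains to be extracted.

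First I would make the bookkeeping explicit: the linear subspaces $V_{\theta ,q}^{0}$, $V_{\theta ,q}^{1}$ defined by $(\ref{L2})$--$(\ref{L3})$ are precisely the subspaces entering \cite[Theorem 11]{AK1}, and for a kernel of the split form $V_{\theta ,q}^{0}\oplus V_{\theta ,q}^{1}$ its condition (3.4) reads exactly
\[
\beta (V_{\theta ,q}^{1})<\theta <\alpha (V_{\theta ,q}^{0});
\]
this is nothing but the computation already carried out in the proof of Corollary \ref{Corollary2} in the one-dimensional case, and the identification is routine in general (here one reads $\alpha (\{0\})=1$ and $\beta (\{0\})=0$). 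Note also that the sum $V_{\theta ,q}^{0}+V_{\theta ,q}^{1}$ is automatically direct in our setting, since every $x\in V_{\theta ,q}^{0}\cap V_{\theta ,q}^{1}$ lies in $\ker _{X_{0}+X_{1}}A\cap (X_{0},X_{1})_{\theta ,q}$, which is $\{0\}$ because $A$ is invertible (in particular injective) on $(X_{0},X_{1})_{\theta ,q}$. Hence the hypotheses of \cite[Theorem 11]{AK1} are met, $A$ is invertible on the endpoint spaces, and the restriction $A\colon (X_{0},X_{1})_{\theta ,q}\rightarrow (Y_{0},Y_{1})_{\theta ,q}$ is invertible by assumption; that theorem then forces the displayed inequality, which is the conclusion.

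The only genuine (and minor) obstacle is this identification of $V_{\theta ,q}^{0},V_{\theta ,q}^{1}$ and of condition (3.4) with the data of the present statement, so that the implication ``$A$ invertible on $(X_{0},X_{1})_{\theta ,q}$ $\Rightarrow$ (3.4)'' can be quoted verbatim from \cite{AK1}. If one preferred not to invoke \cite[Theorem 11]{AK1} at all, the same conclusion could be obtained by the induction on $\dim (\ker _{X_{0}+X_{1}}A)$ used in the proof of Theorem \ref{TT2}, with Corollary \ref{Corollary2} serving as the one-dimensional base case; but the quoted form is considerably shorter.
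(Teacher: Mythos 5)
Your proposal is correct and follows exactly the paper's argument: observe that invertibility of $A$ on $(X_{0},X_{1})_{\theta ,q}$ forces $V_{\theta ,q}^{0}\cap V_{\theta ,q}^{1}=\{0\}$, so the hypothesized sum is actually a direct sum $\ker _{X_{0}+X_{1}}A=V_{\theta ,q}^{0}\oplus V_{\theta ,q}^{1}$, and then quote Theorem 11 of \cite{AK1} to extract the required inequality. The extra remarks about condition (3.4) and the alternative inductive route are useful context but do not change the fact that this is the same proof.
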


\begin{proof} Since the operator $A\colon (X_{0},X_{1})_{\theta
,q}\rightarrow (Y_{0},Y_{1})_{\theta ,q}$ is invertible we have
$V_{\theta ,q}^{0}\cap V_{\theta ,q}^{1}=\left\{ 0\right\} $. Then
$\ker _{X_{0}+X_{1}}A=V_{\theta ,q}^{0}\oplus V_{\theta ,q}^{1}$,
so from Theorem 11 in \cite{AK1} it follows required inequality.
\end{proof}


\section{Duality}

Below we will give a~short proof of Theorem \ref{TT1} (necessity condition for
the class ${\mathbb{F}}_{\theta ,q}^{2}$) based on duality and Theorem \ref%
{TTT1} (necessity condition for the class ${\mathbb{F}}_{\theta ,q}^{1}$).
As we said before, duality requires some additional restrictions.

Recall that if $\vec{X}=(X_{0},X_{1})$ is a~Banach couple then the dual
couple $\vec{X^{\prime }}:=(X_{0}^{\prime },X_{1}^{\prime })$ is defined by
\begin{equation*}
X_{j}^{\prime }:=(X_{0}\cap X_{1},\Vert \cdot \Vert _{X_{j}})^{\ast },\quad
\,j=0,1.
\end{equation*}%
It is well known that $X_{0}^{\prime }+X_{1}^{\prime }=(X_{0}\cap
X_{1})^{\ast }$ isometrically. If $\vec{X}$ is regular, we furthermore have $%
X_{j}^{\prime }\cong X_{j}^{\ast }$ and $X_{0}^{\prime }\cap X_{1}^{\prime
}\cong (X_{0}+X_{1})^{\ast }$ isometrical isomorphisms.

If $A\colon (X_{0},X_{1})\rightarrow (Y_{0},Y_{1})$, then the dual operator $%
(A|_{X_{0}\cap X_{1}})^{\ast }\colon (Y_{0}\cap Y_{1})^{\ast
}\rightarrow (X_{0}\cap X_{1})^{\ast }$ is denoted by $A^{\prime
}$. It is well known that $A^{\prime }\colon (Y_{0}^{\prime
},Y_{1}^{\prime })\rightarrow (X_{0}^{\prime },X_{1}^{\prime })$.

We are now ready to prove the following.

\begin{theorem}
\label{N1T} Let $(X_0, X_1)$ and $(Y_0, Y_1)$ be regular Banach
couples. Suppose that an operator $A\colon (X_0,X_1)\to (Y_0,Y_1)$
is invertible and that the kernel $\mathrm{ker}_{X_0+X_1}A$ is
finite dimensional. Then

\begin{itemize}
\item[{\rm(i)}] $Y_{0}\cap Y_{1}=A(X_{0}\cap X_{1})\oplus A(P_{X_{0}}(%
\mathrm{ker}_{X_{0}+X_{1}}\,A))$ and dimension of $A(P_{X_{0}}(\mathrm{ker}%
_{X_{0}+X_{1}}A))$ is equal to the dimension of $\mathrm{ker}%
_{X_{0}+X_{1}}\,A$.

\item[{\rm(ii)}] If $A^{\prime }\colon (Y_{0}^{\prime
},Y_{1}^{\prime
})\rightarrow (X_{0}^{\prime },X_{1}^{\prime })$ is a dual operator, then $%
\mathrm{\dim }(\mathrm{ker}_{Y_{0}^{\prime }+Y_{1}^{\prime }}\,A^{\prime })=%
\mathrm{\dim }(\mathrm{ker}_{X_{0}+X_{1}}\,A))$ and the following formulas
hold\textrm{:}%
\begin{equation*}
\beta _{0}(\ker _{Y_{0}^{\prime } + Y_{1}^{\prime }}A^{\prime
})=\beta _{\infty }(\ker _{X_{0}+X_{1}}A),\quad \,\alpha _{\infty
}(\ker _{Y_{0}^{\prime} + Y_{1}^{\prime}}A^{\prime })=\alpha _{0}(\ker
_{X_{0}+X_{1}}A).
\end{equation*}
\end{itemize}
\end{theorem}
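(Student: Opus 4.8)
The plan is to prove part (i) by a direct finite-dimensional argument and then to deduce part (ii) from (i) together with the classical $K$--$J$ duality for the dual couple. First I would observe that, since $A$ is invertible on the endpoint spaces, $\ker_{X_0+X_1}A\cap X_j=\{0\}$ for $j=0,1$; hence $P_{X_0},P_{X_1}$ are injective on $\ker_{X_0+X_1}A$, and since $AP_{X_0}(v)=-AP_{X_1}(v)\in Y_0\cap Y_1$ for every $v\in\ker_{X_0+X_1}A$, the operator $AP_{X_0}$ is injective from $\ker_{X_0+X_1}A$ into $Y_0\cap Y_1$; in particular $\dim A(P_{X_0}(\ker_{X_0+X_1}A))=n:=\dim\ker_{X_0+X_1}A$. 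To get $Y_0\cap Y_1\subset A(X_0\cap X_1)+A(P_{X_0}(\ker_{X_0+X_1}A))$ I would repeat the argument from the proof of Theorem \ref{lowerFredholm}: given $y\in Y_0\cap Y_1$ pick $x_j\in X_j$ with $Ax_j=y$, put $v=x_0-x_1\in\ker_{X_0+X_1}A$, and note that $x_0-P_{X_0}(v)=x_1+P_{X_1}(v)\in X_0\cap X_1$, so $y=A(x_0-P_{X_0}(v))+AP_{X_0}(v)$; directness of the sum is checked exactly as there, using $\ker_{X_0+X_1}A\cap X_j=\{0\}$. This proves (i).

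For (ii), I would first record that regularity of $\vec X$ together with invertibility of $A|_{X_j}$ forces $A'|_{Y_j'}$ to be, under the canonical isometries, the adjoint of $A|_{X_j}$, hence an isomorphism onto $X_j'$; so $A'\colon(Y_0',Y_1')\to(X_0',X_1')$ is invertible on endpoint spaces. Using the isometric identification $Y_0'+Y_1'=(Y_0\cap Y_1)^*$ one gets $\ker_{Y_0'+Y_1'}A'=(A(X_0\cap X_1))^\perp$; by (i) the subspace $A(X_0\cap X_1)$ has the finite-dimensional algebraic complement $W:=A(P_{X_0}(\ker_{X_0+X_1}A))$ in $Y_0\cap Y_1$, hence it is closed of codimension $n$ (by the fact recalled in Section 1), so $\dim\ker_{Y_0'+Y_1'}A'=n=\dim\ker_{X_0+X_1}A$. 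Restricting functionals from $Y_0\cap Y_1$ to $W$ and composing with the isomorphism $v\mapsto AP_{X_0}(v)$ then produces a non-degenerate bilinear pairing $(\phi,v)\mapsto\langle\phi,AP_{X_0}(v)\rangle$ on $\ker_{Y_0'+Y_1'}A'\times\ker_{X_0+X_1}A$, identifying each of these $n$-dimensional spaces with the dual of the other.

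The technical core of (ii) is the estimate
\begin{equation*}
K(t,\phi;Y_0',Y_1')\asymp\sup_{0\neq v\in\ker_{X_0+X_1}A}\frac{\langle\phi,AP_{X_0}(v)\rangle}{K(1/t,v;X_0,X_1)},\qquad t>0,\ \ \phi\in\ker_{Y_0'+Y_1'}A',
\end{equation*}
with constants depending only on $\|A|_{X_j}\|$ and $\|(A|_{X_j})^{-1}\|$, which I would establish in three steps: (a) the exact duality $K(t,\phi;Y_0',Y_1')=\sup_{0\neq y\in Y_0\cap Y_1}|\langle\phi,y\rangle|/J(1/t,y;\vec Y)$, valid for the couple-dual because $\phi\mapsto K(t,\phi;Y_0',Y_1')$ is the norm on $(Y_0\cap Y_1)^*$ dual to $y\mapsto J(1/t,y;\vec Y)$ on $Y_0\cap Y_1$; (b) since $\phi$ annihilates $A(X_0\cap X_1)$ and $Y_0\cap Y_1=A(X_0\cap X_1)\oplus W$, every $y\in Y_0\cap Y_1$ is uniquely $w+Ax$ with $w\in W$, $x\in X_0\cap X_1$, and $\langle\phi,y\rangle=\langle\phi,w\rangle$, so $K(t,\phi;Y_0',Y_1')=\sup_{0\neq w\in W}|\langle\phi,w\rangle|/\inf_{x\in X_0\cap X_1}J(1/t,w+Ax;\vec Y)$; (c) writing $w=AP_{X_0}(v)$, the substitution $\xi_0=P_{X_0}(v)+x$, $\xi_1=P_{X_1}(v)-x$ is a bijection between $x\in X_0\cap X_1$ and decompositions $v=\xi_0+\xi_1$ with $\xi_j\in X_j$, and $w+Ax=A\xi_0=-A\xi_1$, so invertibility of $A|_{X_j}$ and $\max(a,b)\asymp a+b$ yield $\inf_x J(1/t,w+Ax;\vec Y)\asymp K(1/t,v;\vec X)$.

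Finally, writing $\Lambda(t,\phi)$ for the right-hand supremum and substituting $t\mapsto1/t$, the defining inequalities of $\beta_0$ and $\alpha_\infty$ for $\ker_{Y_0'+Y_1'}A'$ (ranges $0<s\le t\le1$ and $1\le s<t<\infty$) turn into inequalities for $\Lambda$ over the ranges $1\le s\le t<\infty$ and $0<s<t\le1$. One direction of each equality is immediate: multiplying the pointwise-in-$v$ estimates supplied by $\beta_\infty(\ker_{X_0+X_1}A)<\theta$, resp. $\alpha_0(\ker_{X_0+X_1}A)>\theta$, into the supremum defining $\Lambda$ gives $\beta_0(\ker_{Y_0'+Y_1'}A')\le\beta_\infty(\ker_{X_0+X_1}A)$ and $\alpha_\infty(\ker_{Y_0'+Y_1'}A')\ge\alpha_0(\ker_{X_0+X_1}A)$. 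For the reverse inequalities, with $v_0$ and exponents $u\le U$ fixed, I would use Hahn--Banach on the finite-dimensional space $\ker_{X_0+X_1}A$ and the identification $\ker_{Y_0'+Y_1'}A'\cong(\ker_{X_0+X_1}A)^*$ to choose $\phi_0$ with $v\mapsto\langle\phi_0,AP_{X_0}(v)\rangle$ norming $v_0$ in the norm $v\mapsto K(u,v;\vec X)$; then $\Lambda(1/u,\phi_0)\asymp1$ while $\Lambda(1/U,\phi_0)\gtrsim K(u,v_0;\vec X)/K(U,v_0;\vec X)$, and feeding this into the ($\beta_0$- or $\alpha_\infty$-) inequality for $A'$ recovers the desired estimate for $v_0$, uniformly in $v_0$. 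This yields the two index equalities. I expect the main obstacles to be step (c) of the key identity — the reparametrization of decompositions of $v$ and the uniform control of the comparison constants — and this reverse index argument, where finite-dimensionality of the kernels is precisely what allows one to produce, uniformly in $v_0$, an element $\phi_0$ of $\ker_{Y_0'+Y_1'}A'$ concentrated on a single $v_0$.
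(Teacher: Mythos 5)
Your argument for part (i) and the setup of part (ii) follow the paper's route: part (i) re-derives (rather than cites, as the paper does) the case of Theorem \ref{lowerFredholm} for the intersection functor $\Delta(\vec X)=X_0\cap X_1$, and part (ii) rests, exactly as in the paper, on the $K$--$J$ duality formula, on the equivalence
$K(t,\phi;Y_0',Y_1')\approx\sup_{v\in\ker_{X_0+X_1}A}\langle\phi,AP_{X_0}v\rangle/K(1/t,v;\vec X)$
with constants controlled by the endpoint norms, and on finite-dimensional duality between $\ker_{X_0+X_1}A$ and $\ker_{Y_0'+Y_1'}A'$. The one substantive deviation is that you try to close the two index equalities by separate arguments (pointwise multiplication for the "easy'' direction, Hahn--Banach for the "reverse''), whereas the paper closes both directions at once by the standard fact that a norm inequality $\mu_1\le C\mu_2$ on a finite-dimensional space is equivalent to $\mu_2^*\le C\mu_1^*$ on the dual.

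Your Hahn--Banach step is correct for the $\alpha$-equality but has a direction slip for the $\beta$-equality. To prove $\beta_\infty(\ker_{X_0+X_1}A)\le\beta_0(\ker_{Y_0'+Y_1'}A')$, you must bound $K(U,v_0;\vec X)$ above by a constant times $(U/u)^\theta K(u,v_0;\vec X)$ for $1\le u\le U$. For that, $\phi_0$ should norm $v_0$ in the norm $K(U,\cdot;\vec X)$ (the one being bounded), giving $\Lambda(1/U,\phi_0)=1$ and the pairing bound $K(U,v_0)=\langle\phi_0,AP_{X_0}v_0\rangle\le\Lambda(1/u,\phi_0)\,K(u,v_0)$; then the $\beta_0(\ker A')$ inequality, applied with $s=1/U\le t=1/u\le1$, gives $\Lambda(1/u,\phi_0)\le\gamma(U/u)^\theta\Lambda(1/U,\phi_0)=\gamma(U/u)^\theta$, and the desired estimate follows. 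With your choice of $\phi_0$ (norming in $K(u,\cdot;\vec X)$), you obtain $\Lambda(1/u,\phi_0)=1$, $\Lambda(1/U,\phi_0)\ge K(u,v_0)/K(U,v_0)$, and from $\beta_0(\ker A')$ only $\Lambda(1/U,\phi_0)\ge\gamma(u/U)^\theta$; these are three lower bounds on $\Lambda(1/U,\phi_0)$ and do not combine to the required upper bound on $K(U,v_0)/K(u,v_0)$. (The slip is invisible when $\dim\ker_{X_0+X_1}A=1$, where the supremum defining $\Lambda(1/U,\phi_0)$ is attained at $v_0$ and the $\ge$ is an equality.) Replacing the unified "$K(u,\cdot)$'' norming by "$K(U,\cdot)$'' in the $\beta$-case, or simply invoking the dual-norm equivalence as the paper does, repairs the argument; the rest of your proposal matches the paper.
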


\begin{proof} (i). Our hypothesis that $A\colon (X_{0},X_{1})\rightarrow
(Y_{0},Y_{1})$ is invertible on endpoint spaces easily implies that $%
A(X_{0}\cap X_{1})$ is closed in $Y_{0}\cap Y_{1}$. Clearly, the
functor $\Delta (\vec{X}):=X_{0}\cap X_{1}$ has a~decomposition
property for any Banach couple $\vec{X}$. We have $V_{\Delta
}^{0}(A)=V_{\Delta }^{1}(A)=\{0\} $ by $X_{j}\cap \ker
_{X_{0}+X_{1}}A=\{0\}$ for $j=0,1$. Then the Theorem
\ref{lowerFredholm} with $\widetilde{V}:=\ker _{X_{0}+X_{1}}A$
gives
\begin{equation*}
Y_{0} \cap Y_{1}=A(X_{0}\cap X_{1})\oplus
A(P_{X_{0}}\widetilde{V}).
\end{equation*}%
Since $A(X_{0}\cap X_{1})$ is closed in $Y_{0}\cap Y_{1}$, we obtain the
required statement by the fact that $AP_{X_{0}}$ is an injective map (see
Remark \ref{REM}).

(ii). By (i) it follows that the range $A(X_{0}\cap X_{1})$ is closed and $%
(Y_{0}\cap Y_{1})/A(X_{0}\cap X_{1})$ is isomorphic to $\ker _{X_{0}+X_{1}}A$
and so
\begin{align*}
\mathrm{dim\,(ker}_{Y_{0}^{\prime }+Y_{1}^{\prime }}A^{\prime })& =\mathrm{%
dim\,(ker}_{(Y_{0}\cap Y_{1})^{\ast }}\,A^{\ast })=\mathrm{dim}\,A(X_{0}\cap
X_{1})^{\perp } \\
& =\mathrm{dim}\,\big ((Y_{0}\cap Y_{1})/A(X_{0}\cap X_{1})\big )^{\ast } \\
& =\mathrm{dim}\,(Y_{0}\cap Y_{1})/A(X_{0}\cap X_{1})=\mathrm{{\ \func{dim}%
\,(ker}}_{X_{0}+X_{1}}A).
\end{align*}%
Now we prove formulas for indices. By the definition of the index $\beta
_{0}(\ker _{Y_{0}^{\prime }+Y_{1}^{\prime }}A^{\prime })$ is the infimum of $%
\theta \in \left[ 0,1\right] $ such that there exists $\gamma >0$ such that
for all $\psi \in \ker _{Y_{0}^{\prime }+Y_{1}^{\prime }}A^{\prime }$ and
all $0<s<t\leq 1$ we have%
\begin{equation*}
\frac{K(s,\psi ;Y_{0}^{\prime },Y_{1}^{\prime })}{K(t,\psi ;Y_{0}^{\prime
},Y_{1}^{\prime })}\geq \gamma \left( \frac{s}{t}\right) ^{\theta }
\end{equation*}%
Combining this with the well-known duality formula
\begin{equation*}
K(s,\psi ;Y_{0}^{\prime },Y_{1}^{\prime })=\sup_{y\in Y_{0}\cap Y_{1}}\frac{%
|\left\langle \psi ,y\right\rangle |}{J(s^{-1},y;Y_{0},Y_{1})}
\end{equation*}%
with $Y_{0}\cap Y_{1}=A(X_{0}\cap X_{1})\oplus A(P_{X_{0}}(\ker
_{X_{0}+X_{1}}A))$ and $\psi \in \ker _{Y_{0}^{\prime }+Y_{1}^{\prime
}}A^{\prime }$, we conclude by $\left\langle \psi ,Au\right\rangle =0$ for
all $u\in X_{0}\cap X_{1}$,
\begin{align*}
K(s,\psi ;& Y_{0}^{\prime },Y_{1}^{\prime })=\sup_{v\in \ker
_{X_{0}+X_{1}}A}\left( \sup_{u\in X_{0}\cap X_{1}}\frac{|\left\langle \psi
,A(P_{X_{0}}v)\right\rangle |}{J(s^{-1},Au+A(P_{X_{0}}v);Y_{0},Y_{1})}\right)
\\
& \approx \sup_{v\in \ker _{X_{0}+X_{1}}A}\left( \sup_{u\in X_{0}\cap X_{1}}%
\frac{|\left\langle \psi ,A(P_{X_{0}}v)\right\rangle |}{\left\Vert
Au+A(P_{X_{0}}v)\right\Vert _{Y_{0}}+\frac{1}{s}\left\Vert
Au+A(P_{X_{0}}v)\right\Vert _{Y_{1}}}\right) \\
& \approx \sup_{v\in \ker _{X_{0}+X_{1}}A}\left( \frac{|\left\langle \psi
,A(P_{X_{0}}v)\right\rangle |}{\func{inf}_{u\in X_{0}\cap X_{1}}(\left\Vert
u+(P_{X_{0}}v)\right\Vert _{X_{0}}+s^{-1}\left\Vert
-u+(P_{X_{1}}v)\right\Vert _{X_{1}})}\right) \\
& =\sup_{v\in \ker _{X_{0}+X_{1}}A}\frac{|\left\langle \psi
,A(P_{X_{0}}v)\right\rangle |}{K(s^{-1},v;X_{0},X_{1}))}{\text{,}}
\end{align*}%
where the constants of equivalence depend only on the norms of the
inverses of the operator
$A$ on endpoint spaces. Let us consider the duality given by%
\begin{equation*}
\left\langle \psi ,v\right\rangle _{A}=\left\langle \psi
,A(P_{X_{0}}v)\right\rangle
\end{equation*}%
between $\psi \in \ker _{Y_{0}^{\prime }+Y_{1}^{\prime }}A^{\prime }$ and $%
v\in \ker _{X_{0}+X_{1}}$, then the equivalence obtained,
\begin{equation*}
K(s,\psi ;Y_{0}^{\prime },Y_{1}^{\prime })\approx \sup_{v\in \ker
_{X_{0}+X_{1}}A}\frac{\left\langle \psi ,A(P_{X_{0}}v)\right\rangle }{%
K(1/s,v;X_{0},X_{1}))},
\end{equation*}%
can be interpreted as the duality between $\ker _{X_{0}+X_{1}}A$ with norm $%
K(1/s,v;X_{0},X_{1})$ and $\ker _{Y_{0}^{\prime }+Y_{1}^{\prime
}}A^{\prime } $ with norm $K(s,\psi ;Y_{0}^{\prime },Y_{1}^{\prime
})$. So the inequality
\begin{equation*}
\frac{K(s,\psi ;Y_{0}^{\prime },Y_{1}^{\prime })}{K(t,\psi ;Y_{0}^{\prime
},Y_{1}^{\prime })}\geq \gamma \left( \frac{s}{t}\right)^{\theta},
\end{equation*}
which we rewrite as an inequality between equivalent norms%
\begin{equation*}
K(t,\psi ;Y_{0}^{\prime },Y_{1}^{\prime })\leq \gamma \left( \frac{t}{s}%
\right) ^{\theta }K(s,\psi ;Y_{0}^{\prime },Y_{1}^{\prime }),
\end{equation*}%
is equivalent to dual inequalities
\begin{equation*}
K(s^{-1},v;X_{0},X_{1})\leq \gamma \left( \frac{t}{s}\right) ^{\theta
}K(t^{-1},v;X_{0},X_{1}).
\end{equation*}%
Put $t^{\prime }=1/s$ and $s^{\prime }=1/t$; then we obtain
\begin{equation*}
\frac{K(s^{\prime },v;X_{0},X_{1})}{K(t^{\prime },v;X_{0},X_{1})}\geq \gamma
\left( \frac{s}{t}\right) ^{\theta }=\gamma \left( \frac{s^{\prime }}{%
t^{\prime }}\right) ^{\theta },
\end{equation*}%
where $1\leq s^{\prime }\leq t^{\prime }$. This means that we have
\begin{equation*}
\beta _{0}(\ker _{Y_{0}^{\prime}+Y_{1}^{\prime}}A^{\prime })=\beta
_{\infty }(\ker _{X_{0}+X_{1}}A).
\end{equation*}%
Similarly it can be proved that $\alpha _{\infty }(\ker _{Y_{0}^{^{\prime
}}+Y_{1}^{^{\prime }}}A^{\prime })=\alpha _{0}(\ker _{X_{0}+X_{1}}A)$.
\end{proof}

\subsection{A short proof of Theorem \protect\ref{TT1}}

Suppose that the couples $(X_{0},X_{1})$, $(Y_{0},Y_{1})$ are
regular and $1\leq q<\infty $. Let $A:(X_{0},X_{1})\rightarrow
(Y_{0},Y_{1})$ be invertible on endpoint spaces and let it belong
to the class ${\mathbb{F}}_{\theta ,q}^{2}$, i.e., $A\colon
(X_{0},X_{1})_{\theta ,q}\rightarrow (Y_{0},Y_{1})_{\theta ,q}$ be
an injective operator and $A\left( (X_{0},X_{1})_{\theta
,q}\right)$ a~closed subspace of finite codimension equal to the
dimension of $\ker _{X_{0}+X_{1}}A$. Therefore the dual operator
$A'\colon (Y_{0}',Y_{1}')\rightarrow (X_{0}', X_{1}')$ is
invertible on endpoint spaces and is a surjective operator as
operator from $(Y_{0}', Y_{1}')_{\theta ,q^{\prime }}$ to
$(X_{0}', X_{1}')_{\theta ,q^{\prime }}$, where $1/q +
1/q^{\prime}=1$. Since (see Theorem \ref{N1T})
\begin{equation*}
Y_{0} \cap Y_{1}=A(X_{0}\cap X_{1})\oplus A(P_{X_{0}}(\ker
_{X_{0}+X_{1}}A))
\end{equation*}%
then the kernel of $A'$ consists of all $\psi \in Y_{0}' + Y_{1}'$
such that $\psi (A(P_{X_{0}}(\ker _{X_{0}+X_{1}}A)))=0$.

Since (see Theorem \ref{lowerFredholm} and Remark \ref{REM})
\begin{equation*}
(Y_{0},Y_{1})_{\theta ,q}=A\left( (X_{0},X_{1})_{\theta ,q}\right) \oplus
A(P_{X_{0}}(\widetilde{V})),
\end{equation*}
then the dimension of $\widetilde{V}$ is equal to the codimension
of $A\left( (X_{0},X_{1})_{\theta ,q}\right) $ in
$(Y_{0},Y_{1})_{\theta ,q}$. Since an operator $A$ is in the class
${\mathbb{F}}_{\theta ,q}^{2}$, the codimension of its image is equal
to the dimension of its kernel. Combining this with
$\widetilde{V}\subset \ker _{X_{0}+X_{1}}A$ yields
\begin{equation*}
\widetilde{V}=\ker _{X_{0}+X_{1}}A
\end{equation*}%
and%
\begin{equation*}
(Y_{0},Y_{1})_{\theta ,q}=A\left( (X_{0},X_{1})_{\theta ,q}\right) \oplus
A(P_{X_{0}}(\ker _{X_{0}+X_{1}}A))\text{.}
\end{equation*}%
Moreover, since dimension of $A(P_{X_{0}}(\ker _{X_{0}+X_{1}}A))$ is
equal to the dimension of $\ker _{X_{0}+X_{1}}A$, which (see
Theorem \ref{N1T}) is equal to the dimension of $\ker _{Y_{0}' +
Y_{1}'}A'$ therefore kernel of $A'$ belongs to the space
$(Y_{0}', Y_{1}')_{\theta ,q^{\prime }}$. So we have for the operator $%
A'$ the following properties: \ it is invertible on endpoint spaces, it is
surjective on $(Y_{0}', Y_{1}')_{\theta ,q^{\prime }}$ and its
kernel is finite dimensional and contained in
$(Y_{0}', Y_{1}')_{\theta ,q^{\prime }}$. So from Theorem \ref{TTT1} we have%
\begin{equation*}
\beta _{\infty }(\ker _{Y_{0}' + Y_{1}'}A')<\theta <\alpha
_{0}(\ker _{Y_{0}' + Y_{1}'}A')\text{.}
\end{equation*}%
Then from Theorem \ref{N1T} we have%
\begin{equation*}
\beta _{0}(\ker _{X_{0}+X_{1}}A)<\theta <\alpha _{\infty }(\ker
_{X_{0}+X_{1}}A).
\end{equation*}%
This is exactly the statement of Theorem \ref{TT1}. Notice that we used
conditions: $1\leq q<\infty $ and couples $(X_{0},X_{1})$, $(Y_{0},Y_{1})$
are regular. We would like also mention that duality is not working for
quasi-Banach case.

\section{Concluding remarks and applications}

In the last section of the paper we discuss possible applications
to several areas which are connected with interpolation of
Fredholm operators.

\subsection{Lions-Magenes problem of interpolation of closed subspaces}

One of the most difficult and most important from the point of
view of applications in interpolation theory is the problem
related to interpolation of closed subspaces. It was stated as
early as in the monograph \cite{LM} by Lions and Magenes in which
the authors studied a~Hilbert space of Sobolev type connected with
elliptic boundary data. Let us first formulate exactly the problem
that is naturally to call the weak Lions-Magenes problem.

\noindent \textbf{Problem} \label{LMW} \emph{Suppose that $X_{i}$ are closed
complemented subspaces of $Y_{i}$ $(i=0,1)$. Find necessary and sufficient
conditions on parameters $\theta \in \left( 0,1\right) $, $q\in \left[
1,\infty \right] $ such that $(X_{0},X_{1})_{\theta ,q}$ is a closed
subspace of $(Y_{0},Y_{1})_{\theta ,q}$ of finite codimension.}

As it was mentioned in Introduction this problem was repeated by
J.-\,L.~Lions in Notices Amer.~Math.~Soc.~22~(1975), pp.~124-126
in the section related to Problems in Section $2$ of operators and
applications. We notice that various variants of this problem were
considered and special cases were investigated in by many authors
(see, e.g., \cite{AS, IK, KX, L1, L2, W}). We also refer to a
recent paper \cite{ACK} where interpolation of subspaces is
studied.

Now we shall explain how to apply results on interpolation of Fredholm
operators in order to get an affirmative solution of the mentioned problem:
Let us denote by $A$ an embedding operator $X_{0}+X_{1}$ in $Y_{0}+Y_{1}$.
Then by Lemma \ref{reduction} applied to the functor $(\cdot )_{\theta ,q}$,
we reduce the problem to the case when operator $\widetilde{A}\colon (%
\widetilde{X}_{0},\widetilde{X}_{1})\rightarrow (Y_{0},Y_{1})$ is
invertible. Thus we can use the factorization Theorem \ref{TN2} for $%
\widetilde{A}$ and Corollary 1, which allow us to describe all
parameters $\theta \in \left( 0,1\right) $, $q\in \left[ 1,\infty
\right) $ for which the mentioned weak Lions-Magenes problem
\ref{LMW} has a~positive solution. We note that it follows from
the proof of Lemma \ref{reduction} that an operator $\tilde{A}$ is
injective on all spaces $(X_{0},X_{1})_{\theta ,q}$ spaces and so
an operator $A_{1}$ does not appear in the factorization Theorem
\ref{TN2}.

It should be pointed out that there are known interesting
applications of results on interpolation on subspaces to various
problems, e.g., in differential equations and modern analysis. We
mention here that Kalton and Ivanov in \cite{IK} show remarkable
applications to the study of exponential Riesz bases in Sobolev
spaces.

\subsection{Spectral analysis}

Let $T\colon X\to X$ be an operator on a~complex Banach space $X$ and let $%
\sigma (T)$ (resp., $\rho (T)$) denote the~spectrum (resp., the resolvent)
of $T$. The~essential spectrum $\sigma _{{\mathit{ess}}}(T)$ is the~set of
all $\lambda \in {\mathbb{C}}$ such that $\lambda I-T$ is not Fredholm.
The~essential spectral radius is given by
\begin{equation*}
r_{{\mathit{ess}}}(T):=\sup \{|\lambda |;\,\lambda \in \sigma _{{\mathit{ess}%
}}(T)\}.
\end{equation*}
The classical Fredholm theory gives that the~set
\begin{align*}
\Lambda (T)=\Lambda (T\colon X\to X):=\{\lambda \in \sigma (T);\,|\lambda
|>r_{{\mathit{ess}}}(T)\}.
\end{align*}
is at most countable and consists of isolated eigenvalues of finite
algebraic multiplicity. Another way of expressing the~essential spectral
radius is
\begin{equation*}
r_{{\mathit{ess}}}(T)=\lim _{n\to \infty }\|T^n\|_{ess}^{1/n},
\end{equation*}
where $\|\cdot \|_{{\mathit{ess}}}$ denotes the~essential norm of $T$, i.e.,
the~distance in $L(X)$ from the~space $K(X)$ of compact operators on $X$, $%
\|T\|_{{\mathit{ess}}}= \func{inf}\{\|T-A\|;\,A\in K(X)\}$. We recall
formula due to Nussbaum \cite{Nussbaum}
\begin{align*}
r_{{\mathit{ess}}}(T)=\lim _{n\to \infty }\beta (T^n)^{1/n}.
\end{align*}

Now if $A\colon (X_{0},X_{1})\rightarrow (X_{0},X_1)$ is an
operator between complex Banach spaces and let
$A_{j}:=A|_{X_{j}}\colon X_{j}\rightarrow X_{j} $ for $j=0,1$.
Assume that $\lambda \in \Lambda (A_{0})\cap \Lambda (A_{1})$
(resp., $\lambda \in \rho (A_{0})\cap \rho (A_{1})$). In this case
an operator $A_{\lambda }:=A-\lambda I\colon
(X_{0},X_{1})\rightarrow (X_{0},X_{1})$ is Fredholm (resp.,
invertible). Thus our result could be applied to verify when
$A_{\lambda }\colon (X_{0},X_{1})_{\theta ,q}\rightarrow
(X_{0},X_{1})_{\theta ,q}$ is a~Fredholm operator.

\subsection{An example connected with integral equation of the second kind}

We show applications to the operator identity minus the Hardy
operator. On the space $L_{\mathrm{loc}}^{1}$ of locally
integrable on $(0,\infty )$ equipped with the Lebesgue measure we
define the Hardy operator $H$ by the formula:
\begin{equation*}
H\!f(t)=\frac{1}{t}\int_{0}^{t}f(s)\,ds,\quad \,f\in L_{\mathrm{loc}%
}^{1},\quad \,t>0.
\end{equation*}%
We will consider the operator $I-H$, which corresponds to the
integral equation of the second kind: for a~given function $g$
find function $f$ such that%
\begin{equation*}
f- H\!f = g.
\end{equation*}%
Note that the operator $I-H$ appears in many problems in analysis.
For example for important nonlinear map $f\mapsto f^{\ast \ast
}(t)-f^{\ast }(t)$ which is used to the study of symmetric
envelope of $B\!M\!O$ (see \cite{BS} for details) we have
\begin{equation*}
f^{\ast \ast }-f^{\ast }=(H-I)f^{\ast }{\text{.}}
\end{equation*}%
where $f^{\ast }$ if the decreasing rearrangement of $|f|$. We are
interested in of Fredholm property of the operator $I-H$ on the real
interpolation space $(L^{p}(\omega _{0}),L^{p}(\omega _{1}))_{\theta
,p}=L^{p}(\omega _{0}^{1-\theta }\omega _{1}^{\theta })$ of the couple $%
(L^{p}(\omega_{0}),L^{p}(\omega _{1}))$, where
\begin{equation*}
\left\Vert f\right\Vert _{L^{p}(\omega )}=\left( \int_{0}^{\infty
}\left\vert f(t)\,\omega (t)\right\vert ^{p}\frac{dt}{t}\right)
^{1/p}
\end{equation*}%
and $1\leq p<\infty $. For simplicity of presentation we will restrict
ourselves only to the special case
\begin{equation*}
\omega _{0}(t)=\left\{
\begin{array}{c}
t^{a_{0}}{\text{ , }}0<t\leq 1, \\
t^{a_{\infty }}{\text{, }}1<t<\infty ,%
\end{array}%
\right. {\text{, \ }}\omega _{1}(t)=\left\{
\begin{array}{c}
t^{b_{0}}{\text{ , }}0<t\leq 1, \\
t^{b_{\infty }}{\text{, }}1<t<\infty ,%
\end{array}%
\right.
\end{equation*}%
where $0<a_{0}$, $a_{\infty }<1,$ $b_{0}$, $b_{\infty }<0$ some
numbers. In this case operator $I-H$ has a one-dimensional kernel
in the sum
\begin{equation*}
\ker _{L^{p}(\omega _{0})+L^{p}(\omega _{1})}(I-H)=\mathrm{span}\left\{
f_{\ast }\right\} ,
\end{equation*}%
where $f_{\ast }(t)=1$ for all $t>0$. Then by using Hardy's inequalities
\begin{equation*}
\left( \int_{0}^{\infty }\Big |\frac{1}{t}\int_{0}^{t}f(s)\,ds\Big |%
^{p}s^{a}ds\right) ^{1/p}\leq \frac{p}{\left\vert p-a-1\right\vert }\left(
\int_{0}^{\infty }\left\vert f(t)\right\vert ^{p}t^{a}\,dt\right)
^{1/p},\quad \,{\text{if $a+1<p$}},
\end{equation*}

\begin{equation*}
\left( \int_{0}^{\infty }\Big |\frac{1}{t}\int_{t}^{\infty }f(s)\,ds\Big |%
^{p}s^{a}dt\right) ^{1/p}\leq \frac{p}{\left\vert p-a-1\right\vert }\left(
\int_{0}^{\infty }\left\vert f(t)\right\vert ^{p}t^{a}\,dt\right)
^{1/p},\quad \,{\text{if $a+1>p$}}
\end{equation*}%
it is possible to prove that the operator $I-H$ is bounded (see
\cite{KMP}), and even invertible on endpoint spaces $L^{p}(\omega
_{0})$ and $L^{p}(\omega _{1})$ and its
inverse is equals to $I-K_{0}$ on $L^{p}(\omega _{0})$ and $I+K_{1}$ on $%
L^{p}(\omega _{1})$, where
\begin{equation*}
(K_{0}f)(t)=\int_{t}^{\infty }f(s)\frac{ds}{s}{\text{, \ }}(K_{1}f)(t)=\frac{%
1}{t}\int_{0}^{t}f(s)\frac{ds}{s}{\text{.}}
\end{equation*}%
Then it is easily to get the following equivalence of the
$K$-functional for $f_{\ast}$:
\begin{equation*}
K(t,f_{\ast };L^{p}(\omega _{0}),L^{p}(\omega _{1}))\approx \left\{
\begin{array}{c}
t^{\frac{a_{0}}{a_{0}-b_{0}}}{\text{ , }}0<t\leq 1, \\
t^{\frac{a_{\infty }}{a_{\infty }-b_{\infty }}}{\text{, }}1<t<\infty .%
\end{array}%
\right.
\end{equation*}%
This implies that the dilation indices of the element $f_{\ast }$ are
\begin{equation*}
\alpha _{0}(f_{\ast })=\beta _{0}(f_{\ast
})=\frac{a_{0}}{a_{0}-b_{0}}{\text{ and \ }}\alpha_{\infty
}(f_{\ast}) = \beta_{\infty} (f_{\ast}) = \frac{a_{\infty
}}{a_{\infty }-b_{\infty }}.
\end{equation*}

Note that numbers $\frac{a_{0}}{a_{0}-b_{0}}$, $\frac{a_{\infty }}{a_{\infty
}-b_{\infty }}$ could be any numbers between $0$ and $1$. Thus combining the
above facts with our results on interpolation of Fredholm operators by real
method we obtain the following:


\begin{proposition}
\label{PE1} \ Suppose that $1\leq p<\infty $ and $0<a_{0},a_{\infty }<1$, $%
b_{0},b_{\infty }<0.$
\begin{itemize}
\item[\rm{(i)}] If $\frac{a_{0}}{a_{0}-b_{0}}<\frac{a_{\infty
}}{a_{\infty }-b_{\infty }} $ then the operator $I-H$ on the spaces
$(L^{p}(\omega _{0}),L^{p}(\omega
_{1}))_{\theta ,p}$ is {\rm{a)}} invertible if $0<\theta <\frac{a_{0}}{%
a_{0}-b_{0}}$ or $\frac{a_{\infty }}{a_{\infty }-b_{\infty }}%
<\theta <1$; {\rm{b)}} injective and its image has codimension one if $%
\frac{a_{0}}{a_{0}-b_{0}}<\theta <\frac{a_{\infty }}{a_{\infty
}-b_{\infty }} $; c) not Fredholm if $\theta
=\frac{a_{0}}{a_{0}-b_{0}}$ or $\theta
=\frac{a_{\infty}}{a_{\infty }-b_{\infty }}$.
\item[\rm{(ii)}] If $\frac{a_{\infty }}{a_{\infty }-b_{\infty }}<\frac{a_{0}}{%
a_{0}-b_{0}}$ then the operator $I-H$ on the spaces $(L^{p}(\omega
_{0}),L^{p}(\omega _{1}))_{\theta, p}$ is {\rm{a)}} invertible if $%
0<\theta <\frac{a_{\infty }}{a_{\infty }-b_{\infty }}$ or $\frac{%
a_{0}}{a_{0}-b_{0}}<\theta <1$; {\rm{b)}} surjective and has one
dimensional kernel if $\frac{a_{\infty }}{a_{\infty }-b_{\infty }}<\theta <\frac{a_{0}}{%
a_{0}-b_{0}}$; {\rm{c)}} not Fredholm if $\theta
=\frac{a_{0}}{a_{0}-b_{0}}$ or $\theta =\frac{a_{\infty
}}{a_{\infty }-b_{\infty }}$. \item[\rm{(iii)}] If
$\frac{a_{0}}{a_{0}-b_{0}}=\frac{a_{\infty }}{a_{\infty
}-b_{\infty }}$, then the operator $I-H$ is invertible on the spaces $%
(L^{p}(\omega _{0}),L^{p}(\omega _{1}))_{\theta ,p}$ if \ $\theta \neq $ $%
\frac{a_{0}}{a_{0}-b_{0}}$. It is not Fredholm for $\theta =\frac{a_{0}}{%
a_{0}-b_{0}}$.
\end{itemize}
\end{proposition}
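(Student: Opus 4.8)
The plan is to apply the general machinery of the preceding sections to $A=I-H$, exploiting three facts established just above the statement: $A$ is \emph{invertible} (not merely Fredholm) on the endpoint spaces $L^{p}(\omega_{0})$ and $L^{p}(\omega_{1})$, so that both the sufficiency Theorem~\ref{TN3} and the necessity Theorem~\ref{TN4}/Corollary~\ref{TN5} are available; the exponent $q=p$ lies in $[1,\infty)$; and $\ker_{L^{p}(\omega_{0})+L^{p}(\omega_{1})}A$ is the one-dimensional space $\mathrm{span}\{f_{\ast}\}$ with $K(t,f_{\ast};L^{p}(\omega_{0}),L^{p}(\omega_{1}))\approx t^{c_{0}}$ for $0<t\le 1$ and $\approx t^{c_{\infty}}$ for $t>1$, where I abbreviate $c_{0}:=\tfrac{a_{0}}{a_{0}-b_{0}}$ and $c_{\infty}:=\tfrac{a_{\infty}}{a_{\infty}-b_{\infty}}$. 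From this $K$-functional one reads off $\alpha_{0}(f_{\ast})=\beta_{0}(f_{\ast})=c_{0}$ and $\alpha_{\infty}(f_{\ast})=\beta_{\infty}(f_{\ast})=c_{\infty}$; and deciding when $t\mapsto t^{-\theta}K(t,f_{\ast})$ is almost increasing (resp.\ almost decreasing) on all of $(0,\infty)$ gives $\alpha(f_{\ast})=\min(c_{0},c_{\infty})$ and $\beta(f_{\ast})=\max(c_{0},c_{\infty})$, so $[\alpha(f_{\ast}),\beta(f_{\ast})]=[\min(c_{0},c_{\infty}),\max(c_{0},c_{\infty})]$, which degenerates to the single point $\{c_{0}\}$ when $c_{0}=c_{\infty}$.

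For the invertibility assertions I would invoke Corollary~\ref{Corollary2}: since $\ker_{X_{0}+X_{1}}A$ is one-dimensional, $A\colon (L^{p}(\omega_{0}),L^{p}(\omega_{1}))_{\theta,p}\to(L^{p}(\omega_{0}),L^{p}(\omega_{1}))_{\theta,p}$ is invertible precisely when $\theta\notin[\alpha(f_{\ast}),\beta(f_{\ast})]$. In case (i) this interval is $[c_{0},c_{\infty}]$, giving (i)(a); in case (ii) it is $[c_{\infty},c_{0}]$, giving (ii)(a); in case (iii) it is $\{c_{0}\}$, giving the invertibility part of (iii). In the open interior of that interval, where $A$ fails to be invertible, I would use the \emph{sufficient} conditions of Theorem~\ref{TN3}. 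In case (i), for $c_{0}<\theta<c_{\infty}$ one has $\beta_{0}(\ker_{X_{0}+X_{1}}A)=c_{0}<\theta<c_{\infty}=\alpha_{\infty}(\ker_{X_{0}+X_{1}}A)$, so part~(b) of Theorem~\ref{TN3} (i.e.\ Theorem~\ref{ST2}) yields that $A$ is injective on the interpolation space with image of codimension $\dim\ker_{X_{0}+X_{1}}A=1$, which is (i)(b). In case (ii), for $c_{\infty}<\theta<c_{0}$ one has $\beta_{\infty}(\ker_{X_{0}+X_{1}}A)=c_{\infty}<\theta<c_{0}=\alpha_{0}(\ker_{X_{0}+X_{1}}A)$, so part~(a) of Theorem~\ref{TN3} yields that $\ker_{X_{0}+X_{1}}A$ is contained in $(L^{p}(\omega_{0}),L^{p}(\omega_{1}))_{\theta,p}$ (hence $A$ has one-dimensional kernel there) and $A$ is surjective, which is (ii)(b).

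It remains to prove that $A$ is \emph{not} Fredholm at the endpoints $\theta=c_{0}$, $\theta=c_{\infty}$ (and at $\theta=c_{0}=c_{\infty}$ in case (iii)). For this I would appeal to the necessity part of Corollary~\ref{TN5} (with $q=p<\infty$, $A$ invertible on the endpoints): if $A$ were Fredholm there, the strict index inequalities in Corollary~\ref{TN5}(b) would hold for the subspaces $V^{0}_{\theta,p}$, $V^{1}_{\theta,p}$, $\widetilde V$. Since $\ker_{X_{0}+X_{1}}A$ is one-dimensional, each of $V^{0}_{\theta,p},V^{1}_{\theta,p}$ is either $\{0\}$ or the whole kernel, determined by $f_{\ast}\in V^{0}_{\theta,p}\Leftrightarrow\int_{0}^{1}t^{p(c_{0}-\theta)}\frac{dt}{t}<\infty\Leftrightarrow\theta<c_{0}$ and $f_{\ast}\in V^{1}_{\theta,p}\Leftrightarrow\theta>c_{\infty}$; this determines $\widetilde V$ (via $\ker_{X_{0}+X_{1}}A=(V^{0}_{\theta,p}+V^{1}_{\theta,p})\oplus\widetilde V$) and the induced quotient operators $A_{1},A_{2}$ in each of the finitely many regimes of $\theta$. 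A short case check then shows that at $\theta\in\{c_{0},c_{\infty}\}$ exactly one of the inequalities in Corollary~\ref{TN5}(b) degenerates — one of its endpoints being precisely $c_{0}=\beta_{0}(f_{\ast})=\alpha_{0}(f_{\ast})$, or $c_{\infty}=\beta_{\infty}(f_{\ast})=\alpha_{\infty}(f_{\ast})$, or $\min(c_{0},c_{\infty})=\alpha(f_{\ast})$, $\max(c_{0},c_{\infty})=\beta(f_{\ast})$ — and hence fails, so $A$ is not Fredholm. The main obstacle here is not a conceptual one but the careful bookkeeping of these regimes — most notably that in case (ii), for $c_{\infty}<\theta<c_{0}$, both $V^{0}_{\theta,p}$ and $V^{1}_{\theta,p}$ equal the whole kernel, so $V^{0}_{\theta,p}\cap V^{1}_{\theta,p}$ is one-dimensional and $A_{1}$ genuinely annihilates $f_{\ast}$ — together with keeping the conventions $\beta(\{0\})=\beta_{0}(\{0\})=\beta_{\infty}(\{0\})=0$ and $\alpha(\{0\})=\alpha_{0}(\{0\})=\alpha_{\infty}(\{0\})=1$ straight.
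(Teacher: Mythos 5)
Your proof is correct and follows the same route as the paper. You invoke Corollary~\ref{Corollary2} for invertibility when $\theta\notin[\alpha(f_{\ast}),\beta(f_{\ast})]$, parts a) and b) of Theorem~\ref{TN3} for the interior Fredholm regimes $c_{\infty}<\theta<c_{0}$ and $c_{0}<\theta<c_{\infty}$, and the necessity machinery for the non-Fredholm endpoints $\theta\in\{c_{0},c_{\infty}\}$; the only cosmetic difference is that you appeal directly to Corollary~\ref{TN5} for the last step whereas the paper unwinds it through the factorization Theorem~\ref{TN2} and checks that one of $A_{1}$, $A_{2}$, $A_{3}$ cannot lie in its class — these are the same argument, since Corollary~\ref{TN5} is precisely the packaged form of Theorems~\ref{TN2}--\ref{TN4}. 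Your case bookkeeping (identifying $V^{0}_{\theta,p}$, $V^{1}_{\theta,p}$, $\widetilde V$ and checking which index inequality degenerates at each of $\theta=c_{0}$, $\theta=c_{\infty}$, $\theta=c_{0}=c_{\infty}$) checks out, so the proof is complete.
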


\begin{proof} Since the operator $I-H$ is invertible on endpoint spaces and its
kernel in the sum $L^{p}(\omega _{0})+L^{p}(\omega _{1})$ is one
dimensional and spanned by element $f_{\ast }$ therefore operator
$I-H$ is invertible if
\begin{equation*}
\theta \notin \Omega _{A}=\left[ \alpha (f_{\ast }),\beta (f_{\ast })\right]
=\left[ \min \left( \frac{a_{0}}{a_{0}-b_{0}},\frac{a_{\infty }}{a_{\infty
}-b_{\infty }}\right) ,\max \left( \frac{a_{0}}{a_{0}-b_{0}},\frac{a_{\infty
}}{a_{\infty }-b_{\infty }}\right) \right]
\end{equation*}
(see Corollary \ref{Corollary2}). If $\frac{a_{0}}{a_{0}-b_{0}}<\theta <%
\frac{a_{\infty }}{a_{\infty }-b_{\infty }}$ then we have $\beta
_{0}(f_{\ast })<\theta <\alpha _{\infty }(f_{\ast })$ and result follows
from Theorem \ref{TN3} part b), if $\frac{a_{\infty }}{a_{\infty }-b_{\infty
}}<\theta <\frac{a_{0}}{a_{0}-b_{0}}$ then $\beta _{\infty }(f_{\ast
})<\theta <\alpha _{0}(f_{\ast })$ result follows from Theorem \ref{TN3}
part a). If $\theta =\frac{a_{0}}{a_{0}-b_{0}}$ or $\theta =\frac{a_{\infty }%
}{a_{\infty }-b_{\infty }}$ then from factorization Theorem
\ref{TN2} (here we use condition $p<\infty $) it follows that the
operator $I-H$ can be written in the form $I-H=A_{3}A_{2}A_{1}$,
where the operator $A_{i}$ belongs to the class
${\mathbb{F}}_{\theta ,q}^{i}$, $i=1,2,3$. From Theorem \ref{TN3}
it follows that such decomposition is impossible: when $\theta $
does not satisfy $\beta _{\infty }(f_{\ast })<\theta <\alpha
_{0}(f_{\ast })$, the operator $A_{1}$ does not exist; when
$\theta $ does not satisfy $\beta _{0}(f_{\ast })<\theta <\alpha
_{\infty }(f_{\ast })$, the operator $A_{2}$ does not exist; when
$\theta \in \left[ \alpha (f_{\ast }),\beta (f_{\ast })\right]$,
then from Corollary \ref{Corollary2} we see that the operator
$A_{3}$ also does not exist.
\end{proof}

Using the above result we can construct the following example.~Let us
consider the Banach couple $\left( X_{0},X_{1}\right) $, where
\begin{equation*}
X_{0}=L^{p}(\omega _{0}^{1})\times L^{p}(\omega _{0}^{1})\times
\cdot \cdot \cdot \times L^{p}(\omega _{0}^{n}), \quad\,
X_{1}=L^{p}(\omega _{1}^{1})\times L^{p}(\omega _{1}^{1})\times
\cdot \cdot \cdot \times L^{p}(\omega _{1}^{n}),
\end{equation*}%
and
\begin{equation*}
\omega _{0}^{i}(t)=\left\{
\begin{array}{c}
t^{a_{0}^{i}}{\text{ , }}0<t\leq 1, \\
t^{a_{\infty }^{i}}{\text{, }}1<t<\infty ,%
\end{array}%
\right. {\text{ \ , }}\omega _{1}^{i}(t)=\left\{
\begin{array}{c}
t^{b_{0}^{i}}{\text{ , }}0<t\leq 1, \\
t^{b_{\infty }^{i}}{\text{, }}1<t<\infty ,%
\end{array}%
\right. {\text{\ \ , }} 1\leq i \leq n.
\end{equation*}%
Let $A\colon \left( X_{0},X_{1}\right) \rightarrow \left( X_{0},X_{1}\right)
$ be a linear operator defined as
\begin{equation*}
A\left( f_{1},...,f_{n}\right) =\left( \left( I-H\right)
f_{1},...,(I-H)f_{n}\right) ,\quad \,(f_{1},...,f_{n})\in X_{0}+X_{1}.
\end{equation*}%


As an application of Proposition \ref{PE1} for such defined couple $\left(
X_{0},X_{1}\right) $ and operator $A$ we have the following result.


\begin{corollary}
\label{PE2}Suppose that $1\leq p<\infty $ and $0<a_{0}^{i},a_{\infty }^{i}<1$,
$b_{0}^{i},b_{\infty }^{i}<0$ for each $1\leq i\leq n$. Then operator $%
A\colon (X_{0},X_{1})\rightarrow (X_{0},X_{1})$ is invertible and has $n$%
-dimensional kernel in the sum $X_{0}+X_{1}$. Moreover $A\colon
(X_{0},X_{1})_{\theta ,p}$ has the following properties{\rm:}
\begin{itemize}
\item[{\rm(i)}] $A$ is a~Fredholm operator if and only if $\theta
\neq \frac{a_{0}^{i}}{a_{0}^{i}-b_{0}^{i}}$ and $\theta \neq \frac{a_{\infty }^{i}%
}{a_{\infty }^{i}-b_{\infty }^{i}}$for each $1\leq i\leq n$.
\item[{\rm{(ii)}}] $A$ is invertible on the space
$(X_{0},X_{1})_{\theta ,p}$ if and only if
\begin{equation*}
0<\theta <\min_{1\leq i\leq n}\,\min \Big \{\frac{a_{0}^{i}}{%
a_{0}^{i}-b_{0}^{i}},\frac{a_{\infty }^{i}}{a_{\infty }^{i}-b_{\infty }^{i}}%
\Big \} \quad\,\text{or}\,\,\,\,\max_{1\leq i\leq n}\,\max \Big \{\frac{a_{0}^{i}}{%
a_{0}^{i}-b_{0}^{i}},\frac{a_{\infty }^{i}}{a_{\infty }^{i}-b_{\infty }^{i}}%
\Big \}<\theta <1.
\end{equation*}
\item[{\rm(iii)}] Suppose that $\theta \neq \frac{a_{0}^{i}}{%
a_{0}^{i}-b_{0}^{i}}$ and $\theta \neq \frac{a_{\infty }^{i}}{a_{\infty
}^{i}-b_{\infty }^{i}}$for each $1\leq i\leq n$. Denote by $k$ number of $i$
such that $\frac{a_{\infty }^{i}}{a_{\infty }^{i}-b_{\infty }^{i}}<\theta <%
\frac{a_{0}^{i}}{a_{0}^{i}-b_{0}^{i}}$ and by $l$ number of $i$ such that $%
\frac{a_{0}^{i}}{a_{0}^{i}-b_{0}^{i}}<\theta <\frac{a_{\infty }^{i}}{%
a_{\infty }^{i}-b_{\infty }^{i}}$. Then the dimension of the
kernel of $A$ in $\left( X_{0},X_{1}\right)_{\theta ,p}$ equals
$k$ and the codimension of $A(\left( X_{0},X_{1}\right) _{\theta
,p})$ in $(X_{0},X_{1})_{\theta ,p}$ equals $m${\text{.}}
\end{itemize}
\end{corollary}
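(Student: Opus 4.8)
The plan is to exploit the block--diagonal structure of $A$. Since $A$ acts on the product couple $(X_{0},X_{1})$ as the direct sum $\bigoplus_{i=1}^{n}(I-H)$ of $n$ copies of the operator studied in Proposition~\ref{PE1} on the scalar couples $\big(L^{p}(\omega_{0}^{i}),L^{p}(\omega_{1}^{i})\big)$, and since the real method commutes with finite direct sums, all three assertions reduce to Proposition~\ref{PE1} applied coordinatewise. First I would record the endpoint data: by Proposition~\ref{PE1} each $I-H$ is invertible on $L^{p}(\omega_{0}^{i})$ and on $L^{p}(\omega_{1}^{i})$, hence $A$ is invertible on $X_{0}$ and on $X_{1}$, and
\[
\ker_{X_{0}+X_{1}}A=\bigoplus_{i=1}^{n}\ker_{L^{p}(\omega_{0}^{i})+L^{p}(\omega_{1}^{i})}(I-H)=\bigoplus_{i=1}^{n}\operatorname{span}\{f_{\ast}^{(i)}\},
\]
where $f_{\ast}^{(i)}$ is the constant function $1$ placed in the $i$-th slot, so $\dim\ker_{X_{0}+X_{1}}A=n$. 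From the computation preceding Proposition~\ref{PE1}, the dilation indices of $f_{\ast}^{(i)}$ in the $i$-th couple are $\alpha_{0}(f_{\ast}^{(i)})=\beta_{0}(f_{\ast}^{(i)})=c_{0}^{i}:=a_{0}^{i}/(a_{0}^{i}-b_{0}^{i})$ and $\alpha_{\infty}(f_{\ast}^{(i)})=\beta_{\infty}(f_{\ast}^{(i)})=c_{\infty}^{i}:=a_{\infty}^{i}/(a_{\infty}^{i}-b_{\infty}^{i})$, and the associated critical interval (cf. Corollary~\ref{Corollary2}) is $\Omega_{i}:=[\min(c_{0}^{i},c_{\infty}^{i}),\max(c_{0}^{i},c_{\infty}^{i})]$.

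The key step is the identification
\[
(X_{0},X_{1})_{\theta,p}\;\cong\;\bigoplus_{i=1}^{n}\big(L^{p}(\omega_{0}^{i}),L^{p}(\omega_{1}^{i})\big)_{\theta,p},
\]
with equivalent norms, which is immediate from $K(t,(g_{1},\dots,g_{n});X_{0},X_{1})\approx\sum_{i=1}^{n}K(t,g_{i};L^{p}(\omega_{0}^{i}),L^{p}(\omega_{1}^{i}))$ together with the triangle inequality in $L^{p}(dt/t)$ (the precise shape of the product norm on $X_{0},X_{1}$ is irrelevant, all choices being equivalent for finite products). Under this identification $A|_{(X_{0},X_{1})_{\theta,p}}$ is the direct sum of the operators $(I-H)|_{(L^{p}(\omega_{0}^{i}),L^{p}(\omega_{1}^{i}))_{\theta,p}}$, and a finite direct sum of bounded operators is Fredholm (resp.\ invertible) if and only if each summand is, in which case kernels, cokernels and indices add.

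With this in hand the three claims follow from Proposition~\ref{PE1} for a single factor. For (i): $I-H$ fails to be Fredholm on the $i$-th interpolation space exactly when $\theta\in\{c_{0}^{i},c_{\infty}^{i}\}$, so $A$ is Fredholm on $(X_{0},X_{1})_{\theta,p}$ iff $\theta\notin\{c_{0}^{i},c_{\infty}^{i}\}$ for every $i$. For (ii): $I-H$ is invertible on the $i$-th space iff $\theta\notin\Omega_{i}$, hence $A$ is invertible iff $\theta\notin\bigcup_{i=1}^{n}\Omega_{i}$; since $\bigcup_{i}\Omega_{i}\subset[\min_{i}\min\{c_{0}^{i},c_{\infty}^{i}\},\max_{i}\max\{c_{0}^{i},c_{\infty}^{i}\}]$, this yields the stated range. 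For (iii): assuming $\theta\notin\{c_{0}^{i},c_{\infty}^{i}\}$ for all $i$, Proposition~\ref{PE1}(ii) shows the $i$-th summand has a one-dimensional kernel precisely when $c_{\infty}^{i}<\theta<c_{0}^{i}$ and Proposition~\ref{PE1}(i) shows its image has codimension one precisely when $c_{0}^{i}<\theta<c_{\infty}^{i}$; summing over $i$ gives $\dim\ker_{(X_{0},X_{1})_{\theta,p}}A=k$ and $\operatorname{codim}A((X_{0},X_{1})_{\theta,p})=l$ (the symbol $m$ appearing in the statement should read $l$).

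The only substantive input is Proposition~\ref{PE1} itself, whose proof already relies on the factorization Theorem~\ref{TN2} and Corollary~\ref{Corollary2}; the steps specific to this corollary -- that $(\cdot)_{\theta,p}$ commutes with finite direct sums and that Fredholm data are additive under such sums -- are entirely routine, so I do not expect a genuine obstacle. The one point requiring care is part (ii): the sharp statement is $\theta\notin\bigcup_{i=1}^{n}\Omega_{i}$, and this coincides with the single interval $[\min_{i}\min\{c_{0}^{i},c_{\infty}^{i}\},\max_{i}\max\{c_{0}^{i},c_{\infty}^{i}\}]$ only when the $\Omega_{i}$ overlap so as to leave no gap; one should either phrase (ii) with the union or add this caveat. (Alternatively, one could bypass the reduction and apply Corollary~\ref{TN5} directly by computing $V_{\theta,p}^{0}$, $V_{\theta,p}^{1}$ and $\widetilde{V}$ for the product couple, but the block--diagonal argument is shorter.)
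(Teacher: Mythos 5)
Your proposal is correct and proceeds by the route the paper clearly intends: the paper offers no explicit proof of this corollary (it is stated ``as an application of Proposition \ref{PE1}''), and your block-diagonal reduction combined with the elementary facts that $(\cdot)_{\theta,p}$ commutes with finite products and that Fredholm data (kernel, cokernel, index) are additive over direct sums is exactly the missing content. Your treatment of parts (i) and (iii) is complete, and you rightly note the typographical slip ``$m$'' for ``$l$'' in (iii). More importantly, you have correctly identified that part (ii) of the statement is not accurate as written: the sharp condition for invertibility is $\theta\notin\bigcup_{i=1}^{n}\Omega_i$ where $\Omega_i=[\min(c_0^i,c_\infty^i),\max(c_0^i,c_\infty^i)]$, and this coincides with the complement of the single interval $[\min_i\min\{c_0^i,c_\infty^i\},\max_i\max\{c_0^i,c_\infty^i\}]$ only when the $\Omega_i$ leave no gap. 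For example, with $n=2$, $\Omega_1=[0.1,0.2]$, $\Omega_2=[0.8,0.9]$, and $\theta=0.5$, every block is invertible on its $\theta$-space, hence so is $A$, yet $\theta$ lies inside the single interval. So the ``if'' direction of (ii) is fine but the ``only if'' direction fails in general; your suggestion to phrase (ii) via $\theta\notin\bigcup_i\Omega_i$ is the correct fix.

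One small polishing remark: you wrote $\bigcup_i\Omega_i\subset[\min_i\min\{\dots\},\max_i\max\{\dots\}]$, which is what establishes sufficiency of the paper's condition; for the complete ``iff'' one simply states the criterion with the union and observes that equality of the union with the big interval is the (unstated) hypothesis under which the paper's wording becomes accurate. Your alternative suggestion -- computing $V^0_{\theta,p}$, $V^1_{\theta,p}$, $\widetilde V$ for the product couple and invoking Corollary \ref{TN5} -- would also work and would make the role of the indices $\alpha,\beta,\alpha_0,\beta_0,\alpha_\infty,\beta_\infty$ for the full kernel more visible, but it is indeed longer and offers no additional insight over the direct-sum argument for this particular block-diagonal operator.
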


We finish this subsection with the following remark: the operator
$A$ defined above corresponds to the integral equation of the
second kind and if the integers $0\leq k,0\leq \ell $ are such
that $k+\ell \leq n$, then we can find numbers $0<a_{0}^{i},a_{\infty }^{i}<1$ and $%
b_{0}^{i},b_{\infty }^{i}<0$ for each $1\leq i\leq n$ such that for some $%
\theta \in \left( 0,1\right) $ the dimension of the kernel of $A$ in the
space $\left( X_{0},X_{1}\right) _{\theta ,p}$ is equal to $k$ and
codimension of $A(\left( X_{0},X_{1}\right) _{\theta ,p})$ in $\left(
X_{0},X_{1}\right) _{\theta ,p}$ is equal to $\ell $.

\subsection{An example connected to PDE's}

We conclude the paper with an example connected to PDE's in
domains with piecewise smooth boundaries that is interesting on
its own. Let consider the strip $\Pi=\left\{(x, y);\,
x\in\mathbb{R},y\in(0,\alpha)\right\} $, where $\alpha \in\left(
0,\pi\right)$ and a~family of weighted Sobolev spaces $W_{\beta
}^{l}$, which consists of the functions $f$ on the strip $\Pi$
with zero traces $f(\cdot,0)=f(\cdot,\alpha)=0$ and norm
\[
\left\Vert f\right\Vert _{W_{\beta }^{l}}=\max_{i+j\leq l}\bigg(
\int_{\Pi }\bigg( \frac{\partial ^{i}}{\partial
x^{i}}\frac{\partial ^{j}}{\partial y^{j}}\Big( e^{\beta
x}f(x,y)\Big)\bigg)^{2} dx\,dy\bigg)^{1/2},
\]
where the parameter $\beta$ is any real number and $l\in
\mathbb{Z}_{+}$. It is not difficult to show that%
\[
\big [W_{\beta_{0}}^{l},W_{\beta_{1}}^{l}\big] _{\theta} = \big(
W_{\beta_{0}}^{l},W_{\beta_{1}}^{l}\big)_{\theta,2}=
W_{(1-\theta)\beta_{0}+\theta\beta_{1}}^{l}\text{.}%
\]
It is known (see, for example, \cite{NP}) that the Laplace
operator maps $W_{\beta}^{l+2}$ to $W_{\beta}^{l}$ and it is
Fredholm and even invertible for all $\beta$ except
$\beta=\frac{k\pi}{\alpha}$, $k\in\mathbb{Z}\setminus \{0\}$, for
which the Laplace operator is not Fredholm. This example also
demonstrates that the conditions that the operator $A$ is Fredholm
on the end spaces is not sufficient to determine the Fredholm
property on interpolation spaces.

To explain connections with our results let us take
$X_{0}=W_{\beta_{0}}^{2}$, $X_{1}=W_{\beta_{1}}^{2}$, where
$\beta_{0}<\beta_{1}$ and $\beta_{0}$,
$\beta_{1}$ are not equal to $\frac{k\pi}{\alpha}$, $k\in\mathbb{Z}%
\setminus \{0\}  $, so the Laplace operator is invertible on
endpoint spaces, i.e., from $W_{\beta_{i}}^{2}$ to
$W_{\beta_{i}}^{0}$, $i=0,1$. In this concrete case the kernel of
the Laplace operator on the sum $X_{0}+X_{1}$ is finite
dimensional and has a basis that consists of functions
\[
f_{k}(x, y)=e^{-\frac{k\pi}{\alpha}x}\sin\frac{k\pi}{\alpha}y,
\quad\, (x, y)\in \Pi,
\]
where $k\in\mathbb{Z}\setminus \{0\} $ are such that $\beta
_{0}<\frac{k\pi}{\alpha}<\beta_{1}$ (see \cite{NP}). Moreover it
is possible to show that
\[
K(t,f_{k}; W_{\beta_{0}}^{2},W_{\beta_{1}}^{2})\approx
t^{\theta_{k}},
\]
where $\theta_{k}$ can be found from the equations%
\[
(1-\theta_{k})\beta_{0}+\theta_{k}\beta_{1}=\frac{k\pi}{\alpha}\text{.}%
\]

If $\theta=\theta_{k_{0}}$, then
\[
V_{\theta,2}^{0}= \text{span}\left\{f_{k};\,
\theta_{k}>\theta\right\}, \text{
}V_{\theta,2}^{1}=\text{span}\left\{ f_{k};\,
\theta_{k}<\theta\right\}\,\,\,\text{and} \,\,\, \widetilde{V}=
\text{span}\left\{ f_{k_{0}}\right\}
\]
and from Corollary \ref{TN5}, it follows that the Laplace operator is not Fredholm on%
\[
\big(W_{\beta_{0}}^{2},W_{\beta_{1}}^{2}\big)_{\theta_{k_{0}}%
,2}=W_{(1-\theta_{k_{0}})\beta_{0}+\theta_{k_{0}}\beta_{1}}^{2}=W_{\frac
{k_{0}\pi}{\alpha}}^{2}.
\]
Indeed, from $V_{\theta,2}^{0}\cap V_{\theta,2}^{1}=\left\{
0\right\}  $ it
follows that operator $A_{1}$ is an identity operator and we have%
\[
\beta_{0}(A_{1}(\widetilde{V}))=\beta_{0}(\widetilde{V})=\theta_{k_{0}}\,\,\,
\text{and}\,\,\, \alpha_{\infty}(A_{1}(\widetilde{V}))=\alpha_{\infty}(\widetilde{V})=\theta_{k_{0}%
}\text{.}%
\]
It means that condition (2.10) of Corollary \ref{TN5} is not
satisfied and Laplace operator is not Fredholm on
$W_{\frac{k_{0}\pi}{\alpha}}^{2}$. If $\theta\neq \theta_{k}$ for
each integer $k\neq 0$, by results in \cite{AK}, we conclude that
the Laplace operator is invertible on
$\big(W_{\beta_{0}}^{2},W_{\beta_{1}}^{2}\big)_{\theta,2}
=W_{(1-\theta)\beta_{0}+\theta\beta_{1}}^{2}$.

\vspace{5 mm}

{\bf Acknowledgements.} The authors thank to Dr.~Margaret
Stawiska-Friedland for corrections which improve the presentation.


\vspace{5 mm}

\noindent Department of Mathematics (MAI)\newline
Link\"{o}ping University, Sweden \newline
E-mail: \texttt{irina.asekritova@liu.se} \newline

\vspace{1.5 mm}

\noindent Department of Mathematics (MAI)\newline
Link\"{o}ping University, Sweden \newline
E-mail: \texttt{natan.kruglyak@liu.se}\newline

\vspace{1.5 mm}

\noindent Faculty of Mathematics and Computer Science\newline
Adam~Mickiewicz University; and Institute of Mathematics,\newline
Polish Academy of Science (Pozna\'{n} branch) \newline Umultowska
87, 61-614 Pozna{\'{n}}, Poland \newline E-mail:
\texttt{mastylo$@$math.amu.edu.pl}

\begin{thebibliography}{99}

\bibitem{AB}
Y.\,A.~Abramovich and C.\,D.~Aliprantis, \emph{An invitation to
operator theory}, Graduate Studies in Mathematics, 50, American
Mathematical Society, Providence, RI, 2002.

\bibitem{Ai}
P.~Aiena, \emph{Fredholm and local spectral theory, with
applications to multipliers}, Kluwer Academic Publishers,
Dordrecht, 2004.


\bibitem{ASch} E.~Albrecht and K.~Schneider, \emph{Spectrum of operators on
real interpolation spaces}, preprint.

\bibitem{AG} N.~Aronszajn and G.~Gagliardo, \emph{Interpolation spaces and
interpolation methods}, Ann. Mat. Pura Appl. \textbf{68}~(1965), 51--118.

\bibitem{ACK} I.~Asekritova, F.~Cobos and N.~Kruglyak, \emph{Interpolation
of closed subspaces and invertibility of operators}, submitted.

\bibitem{AK} I.~Asekritova and N.~Kruglyak, \emph{Invertibility of operators
in spaces of real interpolation}, Rev. Mat. Complut.
\textbf{21}~(2008), no.~1, 207--217 .

\bibitem{AK1} I.~Asekritova and N.~Kruglyak, \emph{Necessary and sufficient
conditions for invertibility of operators in spaces of real
interpolation}, J. Funct. Anal. \textbf{264}~(2013), 207--245.

\bibitem{AS} S.\,V.~Astashkin and P.~Sunehag, \emph{Real method of
interpolation on subcouples of codimension one}, Studia Math. \textbf{185}%
~(2008), no.~2, 151--168.

\bibitem{Barnes} B.~Barnes, \emph{Interpolation of spectrum of bounded
operators on Lebesgue spaces}, Proceedings of the Seventh Great
Plains Operator Theory Seminar (Lawrence, KS, 1987). Rocky
Mountain J. Math. \textbf{20}~(1990), no.~2, 359-–378.

\bibitem{BS} C.~Bennett and R.~Sharpley, \emph{Interpolation of Operators},
Academic Press, Boston, 1988.

\bibitem{BL} J.~Bergh and J.~L\"{o}fstr\"{o}m, \emph{Interpolation Spaces.
An Introduction}, Springer, Berlin, 1976.

\bibitem{BK} Yu.\,A.~Brudnyi and N.~Kruglyak, \emph{Interpolation Functors
and Interpolation Spaces}, I, North-Holland, Amsterdam, 1991.

\bibitem{CS} W.~Cao and Y.~Sagher, \emph{Stability of Fredholm properties in
interpolation scales}, Arkiv f\"{o}r Matematik,
\textbf{28}~(1990), no.~1-2, 249--258.

\bibitem{CSS} W.~Cao, Y.~Sagher and Z.~S{\l}odkowski, \emph{Fredholm
properties and interpolation of families of Banach spaces},
Illinois Journal of Math. \textbf{36}~(1992), no.~2, 286--292.

\bibitem{Cw}
M.~Cwikel, \emph{Real and complex interpolation and extrapolation
of compact operators}, Duke Math. J. \textbf{65}~(1992), no.~2,
333--343.

\bibitem{CK}
M.~Cwikel and N.\,J.~Kalton, \emph{Interpolation of compact
operators by the methods of Calder\'on and Gustavsson-Peetre},
Proc. Edinburgh Math. Soc. \textbf{38}~(1995), no.~2, 261--276.

\bibitem{CKM}
M.~Cwikel, N.~Krugljak and M.~Masty{\l}o, \emph{On complex
interpolation of compact operators}, Illinois J. Math. \textbf{40}
~(1996), no.~3, 353--364.

\bibitem{He} D.\,A.~Herrero, \emph{One-sided interpolation of Fredholm
operators}, Proc. Roy. Irish Acad. Sect. A \textbf{89}~(1989),
no.~1, 79--89.

\bibitem{IK} S.\,A.~Ivanov and N.~Kalton, \emph{Interpolation of Subspaces
and Applications to Exponential Basis}, Algebra i Analiz \textbf{13}~(2001),
93--115 (in Russian); English transl.: St. Petersburg Math. J., \textbf{13}%
~(2002), 221--239.

\bibitem{KaM} N.~Kalton and M.~Mitrea, \emph{Stability results on
interpolation scales of quasi-Banach spaces and Applications},
Trans. Amer. Math. Soc. \textbf{350}~(1998), no.~10, 3903--3922.

\bibitem{KX} S.\,V.~Kislyakov and Q.~Xu, \emph{Real interpolation and
singular integrals}, Algebra i Analiz \textbf{8}~(1996), no.~75--109 (in
Russian); English transl.: St. Petersburg Math. J. \textbf{8}~(1997),
593--615.

\bibitem{KMP}
N.~Krugljak, L.~Maligranda and L.\,E.~Persson, \emph{On an
elementary approach to the fractional Hardy inequality}, Proc.
Amer. Math. Soc. \textbf{128}~(2000), no.~3, 727--734.

\bibitem{KM} N.~Kruglyak and M.~Milman, \emph{A distance between orbits that
controls commutator estimates and invertibility of operators},
Adv. Math. \textbf{182}~(2004), no.~1, 78--123.

\bibitem{LM} J.-\,L.~Lions and E.~Magenes, \emph{Non-Homogeneous Boundary
Value Problems and Applications, Vol. $I$}, Springer, Berlin, 1972.

\bibitem{L1} J.~L\"{o}fstr\"{o}m, \emph{Real interpolation with constraints,}
J. Approx. Theory \textbf{82}~(1995), 30--53.

\bibitem{L2} J.~L\"{o}fstr\"{o}m, \emph{Interpolation of Subspaces},
Preprint, Univ. G\"{o}teborg, 1997.

\bibitem{NP} S.\,A.~Nazarov and B.\,A.~Plamenevsky, \emph{Elliptic Problems
in Domains with Piecewise Smooth Boundaries}, de Gruyter
Expositions in Mathematics 13, Walter de Gruyter \& Co., Berlin,
1994.

\bibitem{Nussbaum} R.\,D.~Nussbaum, \emph{The radius of the essential
spectrum}, Duke Math. J. \textbf{37}~(1970), 473--478.

\bibitem{Sh} I.\,Ja.~\v{S}ne\u{\i}berg, \emph{Spectral properties of linear
operators in interpolation families of Banach spaces} (Russian),
Mat. Issled. \textbf{9}~(1974), no.~2, 214--229.

\bibitem{Vo}
V.~Volpert, \emph{Elliptic Partial Differential Equations}, Volume
$1$: Fredholm Theory of Elliptic of Elliptic Problems in Unbounded
Domains, Monographs in Mathematics 101, Springer, 2011.

\bibitem{W} R.~Wallst\'{e}n, \emph{Remarks on interpolation of subspaces},
in Function Spaces and Applications, Proceedings, Lund 1986
(M.~Cwikel, J.~Peetre, Y.~Sagher, H.~Wallin, eds.), Lect. Notes in
Math. \textbf{1302}, pp. 410-419, Springer, Berlin, 1988.
\end{thebibliography}
\end{document}